\documentclass[11pt]{article}
\usepackage{graphicx} 

\usepackage[top=2.5 cm, bottom=2.5 cm, left=2cm, right=2cm] {geometry} 

\usepackage{amscd,amsmath,amstext,amsfonts,amsbsy,
amssymb,amsthm,mathrsfs,float,latexsym}
\usepackage{multicol,graphicx,array,multirow,color,lineno}
\usepackage{sidecap,url,fancybox,fancyhdr}
\usepackage[colorlinks=true,linkcolor=red,citecolor=cyan,pdfencoding=auto,psdextra]{hyperref}
\usepackage[utf8]{inputenc}
\usepackage[english]{babel}
\usepackage{enumitem}

\usepackage[dvipsnames]{xcolor}

\usepackage{mlmodern}
\usepackage[T1]{fontenc}

\allowdisplaybreaks

\DeclareMathOperator*{\esssup}{ess\,sup}
\newcommand{\pr}{\text{pr}}
\newtheorem{theorem}{Theorem}[section]
\newtheorem{lemma}{Lemma}[section]
\newtheorem{prop}{Proposition}[section]

\newtheorem{definition}{Definition}[section]
\newtheorem{remark}{Remark}[section]

\numberwithin{equation}{section}
\renewcommand{\epsilon}{\varepsilon}
\newcommand{\eps}{\varepsilon}

\newcommand{\ue}{u_\eps}
\newcommand{\ve}{v_\eps}

\usepackage{authblk}

\title{Slow Manifolds for PDE with Fast Reactions\\ and Small Cross Diffusion}

\author[1]{Laurent Desvillettes}
\author[2]{Christian Kuehn}
\author[2]{Jan-Eric Sulzbach}
\author[3]{Bao Quoc Tang}
\author[3]{Bao-Ngoc Tran}
\affil[1]{\small  Universit\'e Paris Cit\'e, Sorbonne Universit\'e, CNRS and IUF, IMJ-PRG, F-75005
Paris, France.\break
\href{mailto:desvillettes@imj-prg.fr}{desvillettes@imj-prg.fr} }
\affil[2]{\small Department of Mathematics, Technical University of Munich, Germany\break   
\href{mailto:ckuehn@ma.tum.de}{ckuehn@ma.tum.de}, \href{mailto:janeric.sulzbach@tum.de}{janeric.sulzbach@tum.de}}
\affil[3]{\small Department of Mathematics and Scientific Computing, University of Graz, Austria\break  
\href{mailto:quoc.tang@uni-graz.at}{quoc.tang@uni-graz.at}, \href{mailto:bao-ngoc.tran@uni-graz.at}{bao-ngoc.tran@uni-graz.at}}

\date{}

\begin{document}

\maketitle

\begin{abstract}
    Multiple time scales problems are investigated by combining geometrical and analytical approaches. More precisely, for fast-slow reaction-diffusion systems, we first prove the existence of slow manifolds for the abstract problem under the assumption that cross diffusion is small. This is done by extending the Fenichel theory to an infinite dimensional setting, where a main idea is to introduce a suitable space splitting corresponding to a small parameter, which controls additional fast contributions of the slow variable. These results require a strong convergence in $L^\infty(0,T;H^2(\Omega))$, which is the subtle analytical issue in fast reaction problems in comparison to previous works. By considering a nonlinear fast reversible reaction in one dimension, we successfully prove this convergence and therefore obtain the slow manifold for a PDE with fast reactions. Moreover, the obtained convergence also shows the influence of the cross diffusion term and illuminates the role of the initial layer. Explicit approximations of the slow manifold are also carried out in the case of linear systems.
\end{abstract}

\tableofcontents
\section{Introduction}

The study of systems with multiple time scales is a crucial topic in dynamical systems. 
This has led to the geometric singular perturbation theory for fast-slow systems first developed in finite dimensions,
see e.g. \cite{fenichel1979geometric,wiggins1994normally,MR1374108jones,MR1718893Kaper,kuehn2015multiple}.
This approach addresses problems with distinct time scales using a geometric perspective. It leverages invariant manifolds in phase space to analyze the global structure and to construct orbits with specific desired properties.
For the fast-slow PDE systems discussed in this paper we extend the existing slow manifold theory, where the foundations of this approach were established in \cite{MR55515tihonov} and \cite{fenichel1979geometric}.
The idea of the slow manifold theory is to reduce the complexity of the fast-slow system while maintaining the geometric structure and properties of the system and its solution. 
Later these methods were extended to infinite dimensions, first in \cite{bates1998existence} and more recently in a series of papers \cite{hummel2022slow,kuehn2023fast,kuehn2024infinite}.

\medskip
For PDEs, one may encounter systems with multiple time scales in the context of chemical reaction-diffusion systems where fast reactions are present. The literature concerning fast reaction limits for PDEs has also flourished with many notable results, starting from the eighties \cite{evans1980convergence,martin1980mathematical} to recent numerous results \cite{iida2006diffusion,iida2017vanishing,bothe2010quasi,bothe2012cross,perthame2022fast} and references therein. In this line of works, many interesting connections between reaction-diffusion systems and limit systems have been revealed, ranging from nonlinear diffusion equations \cite{evans1980convergence,bothe2010quasi}, free boundary problems \cite{martin1980mathematical,murakawa2011fast}, cross-diffusion systems \cite{bothe2012cross,daus2020cross,brocchieri2021evolution}, dynamic boundary conditions \cite{henneke2016fast}, Michaelis-Menten kinetics \cite{tang2024rigorous}, or systems involving Young measures \cite{perthame2022fast}. Most of these works are concerned with the convergence analysis of solutions, meaning that solutions of the fast reaction systems are shown to tend to that of the limit systems by using tools from functional analysis and PDEs. In this paper, we aim to combine the geometric theory and the functional analytic framework by studying slow manifolds for fast reaction limits with small cross diffusion.

\medskip
To describe our problem, let us consider for instance the following reversible reaction
\begin{equation}\label{reactions}
    \mathcal{U} \overset{1/\eps}{\underset{1/\eps}{\rightleftharpoons}} 2\mathcal W
\end{equation}
for two chemical substances $\mathcal U$ and $\mathcal W$, where $1/\eps$ for $\eps>0$ represents and forward and backward reaction rate constants. Assume that the reaction takes place in a bounded vessel $\Omega\subset\mathbb R^n$ with smooth boundary $\partial\Omega$. By taking into account spatial diffusion, possible sources and homogeneous Neumann boundary conditions, the corresponding chemical reaction-diffusion system is given by (after suitable rescaling)
\begin{equation}\label{sys1}
    \begin{cases}
        \partial_t u - d_1\Delta u = \eps^{-1}(-u + w^2) + \tilde{\phi}(u,w), &x\in\Omega, \, t>0,\\
        \partial_t w - d_2\Delta w = -\eps^{-1}(-u + w^2) + \tilde{\psi}(u,w), &x\in\Omega, \, t>0,\\
        \nabla u \cdot \nu = \nabla w\cdot \nu = 0, &x\in\partial\Omega, \, t>0,\\
        u(x,0) = u_0(x), w(x,0) = w_0(x), &x\in\Omega,
    \end{cases}
\end{equation}
where $d_1, d_2>0$ are diffusion coefficients, $\nu$ is normal outward unit vector on $\partial\Omega$, and $\tilde \phi$, $\tilde \psi$ are given sources in the system. We are interested in the fast reaction limit, i.e. when the reaction rate constant $1/\eps$ tends to $\infty$. By introducing the variable $v = u + w$, and the function $f(u,v) = (v - u)^2$, $\phi(u,v) = \tilde \phi(u, v-u)$, $\psi(u,v) = \tilde \phi(u,v-u) + \tilde \psi(u,v-u)$, the system \eqref{sys1} can be rewritten as
\begin{equation}\label{sys2}
\begin{cases}
    \partial_t u - d_1\Delta u = \eps^{-1}(-u + f(u,v)) + \phi(u,v), &x\in\Omega, \, t>0,\\
    \partial_t v - d_2\Delta v + (d_2-d_1)\Delta u = \psi(u,v), &x\in\Omega, \, t>0,\\
    \nabla u\cdot \nu = \nabla v \cdot \nu = 0, &x\in\partial\Omega, \, t>0,\\
    u(x,0) = u_0(x), v(x,0) = u_0(x) + w_0(x), &x\in\Omega.
\end{cases}
\end{equation}
Denote by $(\ue,\ve)$ the solution to \eqref{sys2} corresponding to $\eps>0$.  In the form \eqref{sys2}, the term $(d_2-d_1)\Delta u$ in the second equation plays the role of a \textit{cross diffusion} term. In the limit $\eps\to 0$, it is expected that $\ue \to u$, $\ve \to v$ and $-\ue + f(\ue, \ve) \to 0$ for some suitable $u, v$. This leads to $-u + f(u,v) = 0$. From this one can solve $u = \varphi(v)$. Then the limit system
can be rewritten as the semi-linear reaction-diffusion system with nonlinear diffusion
\begin{equation}\label{sys4}
\begin{cases}
    \partial_t v - \Delta [d_2v - (d_2 - d_1)\varphi(v)] = \psi(\varphi(v), v), &x\in\Omega,\, t>0,\\
    \nabla v \cdot \nu  = 0, &x\in\partial\Omega,\, t>0,\\
    v(x,0) = u_0(x) + w_0(x), &x\in\Omega.
\end{cases}
\end{equation}
The convergence analysis of \eqref{sys1} towards \eqref{sys4} has been shown in \cite{bothe2010quasi}. In this work, we are concerned with the geometric perspective of fast reaction limits. 
In fact, we study a more general setting, which reads as
\begin{equation} \label{general system}
    \left\{
    \begin{aligned}
        &\partial_t u = (d+\delta(\varepsilon))A u +\frac{1}{\varepsilon}g(u,v) +\phi(u,v),\\
        &\partial_t v= d A v+ \delta(\varepsilon) A u +\psi(u,v),\\
        &u(0)=u_0,\quad v(0)=v_0,
    \end{aligned}
    \right.
\end{equation}
where $A$ is a linear (differential) operator on a Banach space $X$ and $g,\phi$ and $\psi$ are nonlinear functions. By assuming certain conditions on the operator $A$, the nonlinearities, the convergence of trajectories and the limit $\lim_{\eps\to 0}\delta(\eps) = 0$, we will show that there exists a family of slow manifolds $S_\varepsilon$ for \eqref{general system}, which approach the critical manifold $S_0 = \{(u,v)\in X\times X: g(u,v) = 0\}$ as $\eps \to 0$. 
The main idea is to introduce a second small parameter $\zeta$, which controls additional \textit{fast} contributions of the $v$-dynamics.
For the construction of the slow manifolds, we need a condition that requires the spectral gaps of the operator to be of a certain size in relation to the Lipschitz constants of the nonlinearities.
Here, the influence of the cross-diffusion term comes into play as it can change this spectral gap condition as observed in \cite{kostianko2021kwak}. 

\medskip
Coming back to the fast reaction limit for \eqref{sys1}, one of the crucial points for the existence of slow manifolds is the convergence of $(\ue, \ve)$ to $(u,v)$ in the strong topology of $L^\infty(0,T;H^2(\Omega))$. Such a strong convergence is difficult to obtain for fast reaction problems (see e.g. \cite{iida2006reaction} for the case where this convergence is proved provided some additional restrictive assumptions). In this paper, we prove this strong convergence in $L^\infty(0,T;H^2(\Omega))$ in one dimension and small cross-diffusion without artificially imposing any further estimates on the approximate solutions $(\ue,\ve)$. To do this, first, we utilize the system's structure to obtain some useful uniform-in-$\eps$ estimates. These estimates are then used in an energy-type argument for the difference $\ue - u$ and $\ve - v$ to show first the strong convergence in $L^\infty(0,T;L^2(\Omega))$. Note that here, we see the influence of the initial layer, i.e. the distance from initial data to the critical manifold, and the cross diffusion term. Then this convergence is exploited in a bootstrap manner to get the convergence in $L^\infty(0,T;H^1(\Omega))$ and eventually in $L^\infty(0,T;H^2(\Omega))$, where the last part is quite technical as we have to deal with a second-order energy function. Particularly, the latter is essential in obtaining the Lipschitz continuity of the nonlinearities, which is strictly required in the abstract setting. It is emphasized that we also obtain the convergence rate as $\eps\to 0$, which can be very useful for practical purposes.  

 \medskip
It is remarked that, in general, it is difficult to explicitly compute slow manifolds. Nevertheless, we demonstrate for linear reversible reaction with fast reactions
\begin{equation*}
    \mathcal U \overset{1/\eps}{\underset{1/\eps}{\leftrightharpoons}} \mathcal V,
\end{equation*}
that it is possible to explicitly compute slow manifolds. More precisely, this is done by utilizing the Fourier decomposition using the basis consisting of the eigenfunctions of the differential operator.
Moreover, we can use this approach to construct approximate slow manifolds that are sufficiently close to the original ones. 

\medskip
\textbf{Organization of the paper.} In the next section \ref{section:2} we present in detail the assumptions and main results of this paper, namely the existence of slow manifolds for the abstract problem \eqref{general system} and the convergence as well as slow manifolds for fast reaction system \eqref{sys1}. The proofs of these two main results are presented in Section \ref{section:3} and \ref{section:4} respectively. Finally, in section \ref{section:5}, we explicitly approximate the slow manifolds in case of linear reversible system.

\section{Problem Setting and Main results}\label{section:2}

\subsection{Abstract Existence Theory}\label{Sec:Assumptions}

We first investigate the underlying \emph{geometric structure} of a general fast reaction system with small cross diffusion term in an abstract Banach space setting \eqref{general system}. Taking the formal limit in~\eqref{general system} as $\varepsilon\to 0$ we obtain the limiting (or reduced) system
\begin{equation}\label{limit general system}
\left\{
    \begin{aligned}
        0&=g(u,v),\\
        \partial_t v&= d A v+ \delta(0) A u +\psi(u,v),\\
        u(0)&=u_0,\quad v(0)=v_0.
    \end{aligned}
    \right.
\end{equation}
This system is a differential-algebraic equation, where the algebraic constraint is given by the critical manifold $S_0$ defined by 
\begin{align}\label{critical manifold}
    S_0:=\{ (u,v)\in X_1\times X_1: g(u,v)=0\} \subset X\times X,
\end{align}
where the Banach space $X_1$ will be defined below.
Depending on the regularity of $g$ the critical manifold can also be embedded into $X_1\times X_1$.
We note that, in general, $S_0$ does not have to define a manifold but we shall always assume here that it is actually a (Banach) manifold. The goal is to prove a generalization of Fenichel's theorem from finite-dimensional dynamical systems, where it is a cornerstone of the geometric singular perturbation theory, to the infinite-dimensional setting of system \eqref{general system} illustrating its application to cross-diffusion systems. The key notion in Fenichel's theory is the so called slow manifold, which can be viewed as an $\varepsilon$ perturbation of the critical manifold. 
\begin{definition}\label{Def.slow.manifold}
    We call $S_\varepsilon \subset X_1\times X_1$ an attracting slow manifold if there exists an $\varepsilon_0>0$ such that the following properties for all $\varepsilon\in (0,\varepsilon_0]$
    \begin{itemize}
    \item It is a locally invariant and an (at least locally) exponentially attracting manifold.
    \item The slow manifold has a (Hausdorff semi-)distance of $\mathcal{O}(\varepsilon)$ to the critical manifold $S_0$ as $\varepsilon\rightarrow 0$ in the space $X_\alpha\times X_1$.
    \item If $S_0$ is Lipschitz (or even $C^k$-smooth for some $k\in \mathbb{N}$), then $S_\varepsilon$ is also Lipschitz (or $C^k$-smooth respectively).
    \item The semi-flow on the slow manifold $S_\varepsilon$ converges in $X_1\times X_1$ to the semi-flow on the critical manifold $S_{0}$. 
\end{itemize}
\end{definition}

\medskip
The existence of slow manifolds for the abstract problem \eqref{general system} requires certain assumptions, which we will list below, followed by some discussion on the assumptions.

\medskip
\noindent\underline{\textit{Assumptions on the system}:}
\begin{enumerate}[label=(S\theenumi),ref=S\theenumi]
    \item \label{S1}  $\lim_{\varepsilon\to 0} \delta(\varepsilon)=\delta(0)=0$, i.e. the cross diffusion term is small.
    \item  \label{S2} $X_1= D(A)$ is dense in $X$.
    \item  \label{S3} The initial data satisfies $(u_{0},v_0)\in X_1\times X_1\cap S_0$, i.e. it lies on the critical manifold.
\end{enumerate}

\medskip
\noindent\underline{\textit{Assumptions on the operator $A$}:}
\begin{enumerate}[label=(A\theenumi),ref=A\theenumi]
    \item \label{A1} $A$ is a closed linear operator, where $A:X\supset D(A)\to X$ generates an exponentially stable analytic $C_0$-semigroup $(\textnormal{e}^{t A})_{t\geq 0}\subset \mathcal{B}(X)$, where $\mathcal{B}(X)$ denotes the space of bounded operators on $X$. The interpolation-extrapolation scales generated by $(X,A)$ are denoted by $(X_{\alpha},A^\alpha)_{\alpha\in [-1,\infty)}$.
     \item\label{A2} There are constants $C_{A},K_A>0$, $\omega_{A} <0 $ such that
    \begin{align*}
        \|\textnormal{e}^{t{A}}\|_{\mathcal{B}(X_1)}&\leq C_{A} \textnormal{e}^{\omega_{A} t},\\
        \|t A \textnormal{e}^{t A}\|_{\mathcal{B}(X_1)}&\leq K_{A} \textnormal{e}^{\omega_{A} t}
    \end{align*}
    hold for all $t\in (0,\infty)$. 
\end{enumerate}
\begin{remark}
    Moreover, applying the perturbation result for semigroups it holds that
    \begin{align*}
         \|\textnormal{e}^{t(A-\varepsilon^{-1} \textnormal{Id})}\|_{\mathcal{B}(X_1)}&\leq C_{A} \textnormal{e}^{(\omega_{A}-\varepsilon^{-1}) t},
    \end{align*}
    where for $\varepsilon>0$ small enough the expression $\omega_A-\varepsilon^{-1}<0$ and thus the perturbation generates a contractive semigroup.
\end{remark}

An operator $A$ satisfying these assumptions is for example the standard Laplacian $\Delta$ with zero boundary condition on the Banach space $X=L^2(\Omega)$ with $X_1=D(\Delta)= H^2(\Omega)\cap H^1_0(\Omega)$, where $\Omega$ is a bounded domain. We refer to \cite{engel2000one} for more details and further examples.

\medskip
\noindent\underline{\textit{Assumptions on the nonlinearities}:}
\begin{enumerate}[label=(N\theenumi),ref=N\theenumi]
\item\label{N1} $g(0,0)=0$, $\phi(0,0)=0$ and $\psi(0,0)=0$, and $g(u,v)=-u+f(u,v)$ for a mapping $f: \mathbb R^2 \to \mathbb R$;
    \item\label{N2} the nonlinearities $f,\phi,\psi:X_1\times X_1 \to X_1$ are continuously Fr\'echet differentiable 
    and there are constants $L_\phi,L_\psi>0$ and $0<L_f<1$ such that
\begin{align*}
       \|\textnormal{D} f(x,y)\|_{\mathcal{B}(X_{1}\times X_{1},X_1)}&\leq L_f,\qquad  \|\textnormal{D} \phi(x,y)\|_{\mathcal{B}(X_{1}\times X_{1},Y_1)}\leq L_\phi, \\
       \|\textnormal{D} \psi(x,y)\|_{\mathcal{B}(X_{1}\times X_{1},X_1)}&\leq L_\psi
\end{align*}
  for all $x,y\in X_1$. 
    \item 
    Let $\mathcal{C}\subset X\times X$ be a non-empty set such that for all $(x,y)\in \mathcal{C}$ we have $g(x,y)= 0$ and in addition we assume that the operator $\textnormal{D}_x g(x,y)\equiv \textnormal{D}  g(\cdot,y)x: X_1\to X_1$ has its spectrum in the left half plane with an upper bound $\lambda_0<0$ on the real part and in particular is a Banach space isomorphism, i.e. a bounded invertible linear operator.   
    
\end{enumerate}
\begin{remark}
Then for each $(x_0,y_0) \in\mathcal{C}$ the implicit function theorem holds. 
That is, there exist open neighborhoods $V_{y_0}$ of $y_0$ and $U_{x_0}$ of $x_0$ and a Fr\'echet differentiable function $h^{y_0} :V_{y_0}\to U_{x_0}$ such that $g(h^{y_0}(y_0),y_0)=0$. Then $h^{y_0}$ satisfies $h^{y_0}(y)= x$ and $g(h^{y_0}(y),y)=0$ for all $(x,y)\in U_{x_0}\times V_{y_0}$.
Moreover $h^{y_0}$ has the same regularity as $g$ and is Lipschitz continuous with constant $L_{h^0}$.
Indeed, using the special form of $g$ we have
\begin{align*}
    \| h^0(v_1)-h^0(v_2)\|_{X_1}&= \|f(h^0(v_1),v_1)-f(h^0(v_2),v_2)\|_{X_1}\\
    &\leq L_f \big( \| h^0(v_1)-h^0(v_2)\|_{X_1} +\|v_1-v_2\|_{X_1}\big)
\end{align*}
and thus
\begin{align*}
    \| h^0(v_1)-h^0(v_2)\|_{X_1} \leq \frac{L_f}{1-L_f}\|v_1-v_2\|_{X_1},
\end{align*}
where $L_{h^0}=\frac{L_f}{1-L_f}$.
Here, we observe that the condition $L_f<1$ is indeed crucial.
For more details on the implicit function theorem in Banach spaces we refer to \cite{pata2019fixed}.
\end{remark}
\begin{enumerate}[label=(N\theenumi),ref=N\theenumi]
    \setcounter{enumi}{3}
    \item\label{N4} In order to keep the computations as clear as possible and to avoid dealing with chart transforms, we restrict the nonlinearity $g$ to the case of one neighborhood $(U_0,V_0)$ covering $\mathcal{C}$.
\end{enumerate}

The last assumption avoids the need for a local patching construction gluing different local charts together. On a compact manifold this can always be achieved but just makes the notation of the abstract setting more technical. See \cite{wiggins1994normally} for details.

\begin{remark}\hfill

With all the assumptions above, the existence of solutions to the above systems follows from the semigroup theory for semi-linear parabolic problems.
We refer to \cite{hummel2022slow,kuehn2023fast} and \cite{lunardi2012analytic} for more details. 
\end{remark}

\medskip
\noindent\underline{\textit{Assumptions on splitting the phase space $X$}:} The next assumptions are crucial to show the existence of a slow manifold, as they allow to identify the fast and slow components of the variables.
To this end we introduce a splitting of the phase space $X$ as
\begin{align*}
    X= X^\zeta_F\oplus X^\zeta_S
\end{align*}
into a fast part $X^\zeta_F$ and a slow part $X^\zeta_S$, where  $\zeta>0$ is a small parameter, such that 
\begin{enumerate}[label=(X\theenumi), ref=X\theenumi]
    \item\label{X1} The spaces $X^\zeta_F$ and $X^\zeta_S$ are closed in $X$ and the projections $\pr_{X^\zeta_F}$ and $\pr_{X^\zeta_S}$ commute with $A$ on $X_1$. The spaces $X^\zeta_F\cap X_1$ and $X^\zeta_S\cap X_1$ are closed subspaces of $X_1$ and are endowed with the norm of $(X_1,\|\cdot\|_{X_1})$.
    \item\label{X2} The realization of $A$ in $X^\zeta_F$, i.e. 
    \begin{align*}
        A_{X^\zeta_F}:D(X^\zeta_F) \subset X^\zeta_F\to X^\zeta_F,\quad v\mapsto A v
    \end{align*}
    with $D(X^\zeta_F):= \{v\in X^\zeta_F\,:\,A v\in X^\zeta_F\}$ has $0$ in its resolvent set. The realization of $A$ in $X^\zeta_S$ generates a $C_0$-group $(\textnormal{e}^{t A_{X^\zeta_S}})_{t\in \mathbb{R}}\subset \mathcal{B}((X^\zeta_S,\|\cdot\|_{X}) )$ which for $t> 0$ satisfies $\textnormal{e}^{t A_{X^\zeta_S}}=\textnormal{e}^{t A}$ on $X^\zeta_S$.
    \item\label{X3}
    For $\zeta>0$ satisfying $\eps\zeta^{-1} \le c < 1$ there exist constants $N_F^\zeta, N_S^\zeta >0$ satisfying
    $$0\leq N_F^\zeta< N_S^\zeta<-\zeta^{-1}(\varepsilon(d+\delta)\omega_A-1)$$
    such that for all $t> 0$ and $x_F\in X^\zeta_F,\, x_S\in X^\zeta_S$ we have that the splitting of the fast and slow subspaces can be classified via the dichotomy
    \begin{align*}
        \|\textnormal{e}^{td A}x_F\|_{X_1}&\leq C_A \textnormal{e}^{(N_F^\zeta +\zeta^{-1}(\varepsilon(d+\delta)\omega_A-1))t}\|x_F\|_{X_1},\\
        \|\textnormal{e}^{-t dA}x_S\|_{X_1}&\leq M_A \textnormal{e}^{-(N_S^\zeta +\zeta^{-1}(\varepsilon(d+\delta)\omega_A-1))t}\|x_S\|_{X_1},
    \end{align*}
    where $C_A,M_A>0$.      
\end{enumerate}
Assumption \eqref{X3} implies that the following inequality holds true
    \begin{align}\label{parameter inequality}
        (1-\varepsilon \zeta^{-1}) (\varepsilon(d+\delta) \omega_A-1) -\varepsilon \frac{N_S^\zeta+N_F^\zeta}{2}<0.
    \end{align}
    The parameters $N_S^\zeta,\,N_F^\zeta$ in \eqref{X3} are related to the spectrum of the operator $A$ via their difference $N_S^\zeta-N_F^\zeta$.
    This measures how far one can separate the decay properties of the fast and the slow part in the slow variable.
\begin{remark}
    Formulated differently assumption \eqref{X3} implies that the fast subspace $X^\zeta_F$ contains the parts of $X_1$ that decay under the semigroup $(\textnormal{e}^{td A})_{t\geq 0}$ almost as fast as functions under the semi-group $(\textnormal{e}^{t((d+\delta)A-\epsilon^{-1}\textnormal{Id})})_{t\geq 0}$ generated by the modified operator $(d+\delta)A-\epsilon^{-1}\textnormal{Id}$.
    The space $X^\zeta_S$ on the other hand contains the parts of $X_1$ that do not decay or which decay only slowly under the semigroup $(\textnormal{e}^{td A})_{t\geq 0}$ compared to $X_1$ under the action of $(\textnormal{e}^{t((d+\delta)A-\epsilon^{-1}\textnormal{Id})} )_{t\geq 0}$.
\end{remark}
    \medskip
    \noindent\underline{\textit{Assumptions on spectral gap and Lipschitz constant}:}
\begin{enumerate}[label=(G), ref=G]
    \item\label{G} The following spectral gap condition holds
    \begin{align*}
   &\frac{C_ A (L_f+\varepsilon L_\phi)}{| (1-\varepsilon \zeta^{-1})(\varepsilon (d+\delta)\omega_A-1) - \varepsilon\frac{1}{2}(N_S^\zeta+N_F^\zeta)|} +  2 \frac{ \delta (C_A+d^{-1} C_A^2)(  L_f + \varepsilon L_\phi) +2C_A\varepsilon L_\psi  }{\varepsilon|N_S^\zeta-N_F^\zeta|}  <1.
\end{align*}
\end{enumerate}
The appearance of the spectral gap condition is not surprising as by the definition of the slow manifold they are connected to the theory of inertial manifolds, and one condition for the existence of an inertial manifold in a dissipative system is the existence of a spectral gap. We refer to \cite{zelik2014inertial} and the references therein.

\medskip
\noindent\underline{\textit{Assumptions on the convergence of trajectories}:}
\begin{enumerate}[label=(C),ref=C]
    \item\label{C} There exists a time $T>0$ such that 
    \begin{align}\label{convergence of trajectories general}
    \left\|\begin{pmatrix}
        u^\varepsilon(t)-h^0(v^0(t))\\ v^\varepsilon(t)-v^0(t)
    \end{pmatrix}\right\|_{X_1\times X_1} \leq C(\varepsilon,T)\big( \|u_0-h^0(v_0)\|_{X_1}+\|v_0\|_{X_1}\big)
\end{align}
holds for all $t\in [0,T]$, and where $C(\varepsilon,T)\to 0$ as $\varepsilon \to 0$.
\end{enumerate}

We are now ready to state the first main result of our paper.
\begin{theorem}\label{thm:abstract slow manifold}
    Let the assumptions of \eqref{S1}--\eqref{S3}, \eqref{A1}--\eqref{A2}, \eqref{N1}--\eqref{N4}, \eqref{X1}--\eqref{X3}, \eqref{G} and \eqref{C} hold. Then, there exists a family of slow manifolds $\{S_\eps\}_{\eps\in (0,\eps_0]} \subset X_1\times X_1$ of system \eqref{general system} in the sense of Definition \ref{Def.slow.manifold}.  
\end{theorem}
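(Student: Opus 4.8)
The plan is to construct the slow manifold $S_\eps$ as a graph over the slow variable and to realize it as the fixed point of a Lyapunov--Perron type operator acting on a space of bounded trajectories. Concretely, I would first perform the change of variables that centers the problem on the critical manifold: writing $u = h^0(v) + p$ and keeping $v$, the equation for $p$ acquires the dominant linear part $(d+\delta)A - \eps^{-1}\mathrm{Id}$ (using \eqref{N1}, so that $\mathrm{D}_u g = -\mathrm{Id} + \mathrm{D}_u f$), which by the Remark following \eqref{A2} generates an exponentially contractive semigroup with rate $\omega_A - \eps^{-1}$; the variable $p$ is therefore the genuinely fast coordinate. The variable $v$ is then split according to \eqref{X1}--\eqref{X3} as $v = v_F + v_S$ with $v_F \in X^\zeta_F$, $v_S \in X^\zeta_S$, so that on the enlarged fast space $(p, v_F)$ one has a uniform exponential contraction and on $X^\zeta_S$ a group with controlled backward growth. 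This is precisely the point where the second small parameter $\zeta$ and the dichotomy estimates of \eqref{X3} are needed: they quantify how much of the $v$-dynamics must be counted as fast so that the gap between the two cones is wide enough.

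\textbf{The fixed-point construction.} With the splitting in hand, I would define, for a given base point $v_S^\ast \in X^\zeta_S \cap X_1$, the Lyapunov--Perron map $\mathcal{T}$ on the Banach space of functions $z = (p, v_F, v_S): (-\infty, 0] \to X_1 \times X_1$ with finite weighted sup-norm $\sup_{t \le 0} \mathrm{e}^{-\gamma t}\|z(t)\|_{X_1 \times X_1}$, where $\gamma$ is chosen strictly between the fast and slow exponential rates appearing in \eqref{X3}. The fast components are recovered by forward integration of their variation-of-constants formulas from $t = -\infty$, and the slow component by the integral formula anchored at $v_S(0) = v_S^\ast$; the nonlinear terms are $\frac{1}{\eps}$ times the remainder of $g$ after subtracting its linear part, plus the $\delta A h^0(v)$ cross-diffusion contribution, plus $\phi, \psi$ composed with $(h^0(v)+p, v)$. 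Using \eqref{N2} (Lipschitz bounds $L_f, L_\phi, L_\psi$), the Lipschitz constant $L_{h^0} = L_f/(1-L_f)$ of $h^0$ from the Remark after \eqref{N2}, and the semigroup bounds from \eqref{A2} and \eqref{X3}, the Lipschitz constant of $\mathcal{T}$ is bounded by exactly the left-hand side of the spectral gap condition \eqref{G}; hence $\mathcal{T}$ is a contraction and its unique fixed point $z^\ast(\cdot; v_S^\ast)$ depends Lipschitz-continuously (and, by the fiber-contraction theorem, $C^k$-smoothly when $g$ is $C^k$, using \eqref{N4} to avoid patching charts) on $v_S^\ast$. Setting $S_\eps := \{ (h^0(v_S^\ast + v_F^\ast) + p^\ast,\; v_S^\ast + v_F^\ast) : v_S^\ast \in U_0 \cap X^\zeta_S \}$ with $(p^\ast, v_F^\ast) = z^\ast(0; v_S^\ast)$ gives the candidate manifold, which is locally invariant by the standard translation argument for Lyapunov--Perron fixed points and exponentially attracting because trajectories not on $S_\eps$ differ from ones on it by something in the fast cone.

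\textbf{Distance estimate and flow convergence.} It remains to verify the two quantitative clauses of Definition \ref{Def.slow.manifold}. For the $\mathcal{O}(\eps)$ Hausdorff semi-distance to $S_0 = \{(h^0(v), v)\}$ in $X_\alpha \times X_1$: on $S_\eps$ the fast coordinate $p^\ast$ satisfies, from its fixed-point equation, a bound of the form $\|p^\ast\|_{X_1} \lesssim \eps\,(L_\phi + \delta\,\text{const})$ because the only inhomogeneities feeding $p$ are $\phi$ and the cross-diffusion term $\delta A h^0(v)$ — each carrying a factor that is $\mathcal{O}(\eps)$ after multiplication by the $\mathcal{O}(\eps)$ norm of the contractive fast semigroup's resolvent — so the graph of $S_\eps$ lies within $\mathcal{O}(\eps)$ of the graph $u = h^0(v)$; the interpolation to $X_\alpha$ is then immediate. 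For the convergence of the semi-flow on $S_\eps$ to the differential-algebraic flow \eqref{limit general system} on $S_0$, I would invoke hypothesis \eqref{C} directly: it states that genuine trajectories $(u^\eps, v^\eps)$ started on the critical manifold converge in $X_1 \times X_1$ to $(h^0(v^0), v^0)$ with rate $C(\eps, T) \to 0$, and since (by assumption \eqref{S3}) the initial data lie on $S_0$ hence within $\mathcal{O}(\eps)$ of $S_\eps$, the attractivity just established pulls these trajectories onto $S_\eps$ exponentially fast, so the flow on $S_\eps$ inherits the same convergence. The main obstacle, and the place where all the structural hypotheses are really consumed, is the contraction estimate leading to \eqref{G}: one must carefully track that the $\eps^{-1}$ blow-up in the fast part is exactly compensated by the $\mathcal{O}(\eps)$ smallness of the fast semigroup's time integral, that the cross-diffusion term $\delta A u$ — which maps into $X$, not $X_1$, so a factor of $A\mathrm{e}^{tdA}$ must be absorbed using the second bound in \eqref{A2}, producing the $d^{-1}C_A^2$ term in \eqref{G} — does not destroy the contraction, and that the $\zeta$-dependent dichotomy constants in \eqref{X3} can be chosen compatibly with \eqref{parameter inequality}; everything else is a routine, if lengthy, application of the variation-of-constants formula.
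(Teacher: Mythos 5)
Your overall Lyapunov--Perron strategy matches the paper's, but the decomposition you choose creates a genuine gap. Centering via $u=h^0(v)+p$ means the $p$-equation necessarily contains the chain-rule term $-\mathrm{D}h^0(v)[\partial_t v]=-\mathrm{D}h^0(v)\bigl[dAv+\delta Au+\psi(u,v)\bigr]$ as well as the inhomogeneity $(d+\delta)Ah^0(v)$, neither of which appears in your list of nonlinear terms. These terms lose one derivative (they map $X_1$ into $X$ only), so they are not covered by the Lipschitz hypotheses \eqref{N2}, and they cannot be absorbed by the fast semigroup in the $X_1$-norm: by \eqref{A2} one has $\|A\textnormal{e}^{\tau A}\|_{\mathcal{B}(X_1)}\leq K_A\tau^{-1}\textnormal{e}^{\omega_A\tau}$, and $\tau^{-1}\textnormal{e}^{(\omega_A-\varepsilon^{-1})\tau}$ is not integrable at $\tau=0$, so your contraction estimate on the weighted space with $X_1$-norms does not close as stated. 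The paper avoids this entirely: it does not center on the critical manifold, keeps $u$ itself as the fast variable (using $g=-u+f$ with $L_f<1$, so that $\varepsilon^{-1}f$ is treated as a Lipschitz nonlinearity), and places the cross-diffusion coupling inside the linear matrix operator $\mathcal{A}^\varepsilon$, whose explicitly computed triangular semigroup is what makes the estimates of Proposition \ref{existence slow manifold} close with exactly the constant in \eqref{G}.

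The same regularity loss invalidates your distance estimate. The claim $\|p^\ast\|_{X_1}\lesssim\varepsilon(L_\phi+\delta\,\mathrm{const})$ is too strong: the inhomogeneities feeding the fast coordinate include the $O(1)$ regularity-losing terms above, not only $\phi$ and $\delta Ah^0(v)$, and the paper proves (Proposition \ref{prop:distance to crit manifold}, via an integration by parts in time and the resolvent bounds of Propositions \ref{prop:operator estimate} and \ref{prop:operator projection estimate}) only a bound in $X_\alpha\times X_1$ with $\alpha<1$, of order $\varepsilon^{1-\alpha}+\frac{\delta+\varepsilon}{\varepsilon(N_S^\zeta-N_F^\zeta)}$, remarking explicitly that an $X_1$-estimate is unreachable because $\textnormal{Id}-\varepsilon A$ is unbounded on $X_1$; your ``interpolation to $X_\alpha$ is then immediate'' presupposes the bound that is actually the hard part. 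Your estimate also omits the fast-$v$ component $h^{\varepsilon,\zeta}_{X_F^\zeta}$ of the graph, whose size $\frac{\delta+\varepsilon}{\varepsilon(N_S^\zeta-N_F^\zeta)}$ is precisely where the spectral gap and the smallness of $\delta$ relative to $\varepsilon\zeta^{-1/2}$ enter; no gap-independent $O(\varepsilon)$ bound is available. Finally, for the last clause of Definition \ref{Def.slow.manifold} you cannot invoke \eqref{C} directly: since the manifold is a graph over $X_S^\zeta$, the limiting flow must first be compared with the $\zeta$-adapted slow subsystem \eqref{slow subsyst} (Proposition \ref{slow flow approx}) before \eqref{C} and the attraction estimate can be combined, a smaller but real omission in your sketch.
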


The reduced dynamics on the slow manifold are then given by
\begin{align}\label{eq:reduced dynamics}
\begin{split}
    \partial_t v_S&= dA v_S+\delta(\varepsilon)A h^{\varepsilon,\zeta}_u(v_S) +\pr_{X_S^\zeta}\psi\big(h^{\varepsilon,\zeta}_u(v_S),h^{\varepsilon,\zeta}_{X_F^\zeta}(v_S),v_S\big),\\
    v_S(0)&=  \pr_{X_S^\zeta} v_0,
    \end{split}
\end{align}
where $v_S=  \pr_{X_S^\zeta} v$ and $h^{\varepsilon,\zeta}= (h^{\varepsilon,\zeta}_u,h^{\varepsilon,\zeta}_{X_F^\zeta}) $ are the first two components of the slow manifold $S_{\varepsilon}$.
\subsection{Slow Manifolds for Fast Reversible Reactions}

We show how the theory developed in Theorem \ref{thm:abstract slow manifold} can be applied to a fast reversible reaction limit problem related to \eqref{reactions}. Consider the case $\Omega = I$, an open bounded interval in $\mathbb{R}$ with the boundary $\Gamma = \partial I$. For any $0<T\le \infty$, we denote  $I_T:=I \times (0,T)$. We rewrite \eqref{sys2} as
\begin{align} \label{Problem.Original} 
\left\{
\begin{array}{llllllll}
\partial_t u^\varepsilon = (d+\delta)  \partial_{xx}^2 u^\varepsilon  + \phi(u^\varepsilon,v^\varepsilon)  + \dfrac{1}{\varepsilon}  (-u^\varepsilon+ \kappa \tilde f(u^\varepsilon,v^\varepsilon)) & \text{in } I_\infty, \vspace{0.15cm}  \\
\partial_t v^\varepsilon = d \partial_{xx}^2 v^\varepsilon + \psi(u^\varepsilon,v^\varepsilon) + \delta \partial_{xx}^2 u^\varepsilon & \text{in } I_\infty,  
\end{array}
\right.  
\end{align}
subject to the Neumann boundary conditions 
\begin{align}
    \partial_{\nu} u^\varepsilon = \partial_{\nu} v^\varepsilon = 0 \quad  \text{on } \Gamma \times (0,\infty)
    \label{Condition.Boundary}
\end{align}
and the initial condition 
\begin{align}
    (u^\varepsilon(0) ,v^\varepsilon(0)) = (u_{\mathrm{in}},v_{\mathrm{in}})  \quad  \text{on } I.
    \label{Condition.Initial}
\end{align}
Here, $\kappa>0$ is a small parameter and   
\begin{align*}
\tilde f(x,y)=(y - x)^2, \quad 0 \le x\le y <\infty, 
\end{align*}
the couple $(u_{\mathrm{in}},v_{\mathrm{in}})$ stands for the initial data, which will be assumed to satisfy some assumptions. The nonlinearities $\phi,\psi$ are given by
\begin{align*}
\phi(x,y)&=(a-bx-cy)x, \\
\psi(x,y)&=(a-bx-cy)y,
\end{align*} 
for $x,y\in [0,\infty)$ in which $a,b,c$ are positive constants, which are of Lotka-Volterra competitive type. The parameter $\varepsilon>0$ corresponds to a sufficiently small time-scale separation, i.e. $\varepsilon \ll 1$. The coefficients $d$, and $d+\delta$ are the diffusion rates, where $d>0$ does not depend on $\varepsilon$, and   $\delta=\delta(\varepsilon)>0$ is assumed to be such that 
\begin{align}
\lim_{\varepsilon\to 0} \delta = \lim_{\varepsilon\to 0} \delta(\varepsilon) = 0.
\label{Assumption.Delta}
\end{align}
As $\varepsilon\to 0$, we expect at the formal level that 
\begin{align}
\left\{ \begin{array}{llll}
(u^\varepsilon,v^\varepsilon) \to (u,v) & \text{as } \varepsilon \to 0,    \vspace*{0.15cm} \\
  -u^\varepsilon + \kappa (v^\varepsilon - u^\varepsilon)^2 \to 0 & \text{as } \varepsilon \to 0,
\end{array}  
\right.
\label{Limit.CriticalManifold}
\end{align}
as well as $(d+\delta) \partial_{xx}^2 u^\varepsilon \to d \partial_{xx}^2 u$. Then, the couple $(u,v)$ satisfies the system 
\begin{align} \label{Problem.Limiting} 
\left\{
\begin{array}{llllllll}
-u+ \kappa (v-u)^2 = 0& \text{in } I_\infty, \vspace{0.15cm}  \\
\partial_t v - d \partial_{xx}^2 v = \psi (u,v)  & \text{in } I_\infty,
\end{array}
\right.  
\end{align}
subject to the Neumann boundary and initial conditions
\begin{align}
    \partial_{\nu} v = 0 \quad  \text{on } \Gamma \times (0,\infty), \quad \text{and} \quad
    v(0) = v_{\mathrm{in}}  \quad  \text{on } I.
    \label{Condition.Initial.Limiting}
\end{align} 
The algebraic equation of \eqref{Problem.Limiting} forms a critical manifold $S_0$ in $H^2(I)\times H^2(I)$ for a suitable range of $\kappa$. We will show the existence of a family of slow manifolds $S_\varepsilon$ for the fast reaction system \eqref{Problem.Original}-\eqref{Condition.Initial}, where $S_\varepsilon$ is regular with a distance of $\mathcal O(\varepsilon)$ to $S_0$ and the semi-flow on $S_\varepsilon$ converges to the semi-flow on $S_0$ with the rate of order $\mathcal O(\varepsilon)$. These results are based heavily on some strong convergence of trajectories in $L^{\infty}(0,T;H^2(I))$. The second main result in this paper is the following theorem.
\begin{theorem}\label{thm:fast_reaction}
    Assume that  $(u_{\mathrm{in}},v_{\mathrm{in}}) \in B_M(0,0)\subset H^2(I)\times H^2(I)$ with the radius $M>0$ satisfying   
    \begin{align}
       12C_*K_M < \frac{1}{\kappa},
    \end{align}
where $C_*$ and $K_M$ are defined in \eqref{SharpSEconst} and \eqref{K:Def}. Then, there exists $C_T$ independent of $\eps, \eps_{\mathrm{in}}, \delta$ such that 
\begin{align*}
\|(u^\varepsilon,v^\varepsilon)-(u,v)\|_{L^\infty(0,T;L^2(I))^2} 
 + \|(u^\varepsilon,v^\varepsilon)-(u,v)\|_{L^2(0,T;H^1(I))^2} 
\le C_{T} (   \varepsilon + \varepsilon_{\mathrm{in}}  +  \delta  )
\end{align*}
where 
\begin{align}
     \varepsilon_{\mathrm{in}} := \|-u_{\mathrm{in}} + \kappa (v_{\mathrm{in}}-u_{\mathrm{in}})^2\|_{H^2(I)}
    \label{Assumption.EpsilonIN}
\end{align} 
represents the initial layer. Assume furthermore that 
\begin{equation}
    \lim_{\varepsilon \to 0} \left( \frac{\varepsilon_{\mathrm{in}}^2}{\varepsilon}  +  \frac{\delta^2}{\varepsilon} \right) < \infty
\quad \text{and} \quad 
\lim_{\varepsilon\to 0}\frac{\delta}{\varepsilon \zeta^{-1/2}}=0,
\label{ParaAss}
\end{equation}
where $\zeta$ is the splitting parameter in Section 2. 
Then, for any $T>0$, 
    \begin{align}\label{ee}
    \|(u^\varepsilon,v^\varepsilon)-(u,v)\|_{L^\infty(0,T;H^2(I))^2}  \le C_{T} (\varepsilon +  \varepsilon_{\mathrm{in}} + \delta ),
\end{align} 
and consequently, there exists a family of slow manifolds $\{S_\eps\}_{\eps\in (0,\eps_0]}$,  for \eqref{Problem.Original} in $H^2(I)\times H^2(I)$ in the sense of Definition \ref{Def.slow.manifold}, for some $\varepsilon_0>0$.
\end{theorem}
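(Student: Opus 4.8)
\textbf{Proof proposal for Theorem \ref{thm:fast_reaction}.}

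The plan is to proceed in two stages: first establish the energy estimates yielding the convergence rates in $L^\infty(0,T;L^2(I))$, $L^2(0,T;H^1(I))$, and then $L^\infty(0,T;H^2(I))$; second, verify that these rates are precisely what is needed to check assumption \eqref{C} so that Theorem \ref{thm:abstract slow manifold} applies to produce the slow manifolds. For the first stage I would start from the uniform-in-$\varepsilon$ a priori bounds coming from the structure of \eqref{Problem.Original}: the Lotka--Volterra nonlinearities $\phi,\psi$ together with the form of $\tilde f$ and the nonnegativity/ordering $0\le u^\varepsilon\le v^\varepsilon$ give $L^\infty_t L^2_x$ (and, via the diffusion, $L^2_t H^1_x$) bounds on $(u^\varepsilon,v^\varepsilon)$ that do not blow up as $\varepsilon\to 0$; the quantity $\varepsilon^{-1}(-u^\varepsilon+\kappa\tilde f(u^\varepsilon,v^\varepsilon))$ is controlled in $L^2_tL^2_x$ by testing the $u$-equation suitably. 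The smallness condition $12 C_* K_M<1/\kappa$ is exactly the quantitative hypothesis making the relevant fixed-point/monotonicity argument for these bounds close; $K_M$ presumably bounds the solutions on the ball $B_M$ and $C_*$ is the constant from the Sobolev/elliptic estimate \eqref{SharpSEconst}.

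Next I would set $U:=u^\varepsilon-u$, $V:=v^\varepsilon-v$ and derive the equations they satisfy, in which the source terms split into (i) the fast-reaction defect $\varepsilon^{-1}(-u^\varepsilon+\kappa\tilde f(u^\varepsilon,v^\varepsilon))$ compared against its limit zero, (ii) the Lipschitz-controlled differences of $\phi,\psi$, and (iii) the cross-diffusion term $\delta\,\partial_{xx}^2 u^\varepsilon$. Testing with $U$ and $V$ and using the fast dissipation from the linearization of $g$ around the critical manifold (the $-u$ part, controlled by $\lambda_0<0$ in assumption (N3)) absorbs the singular term up to the initial-layer contribution $\varepsilon_{\mathrm{in}}$; Gronwall then yields the $L^\infty_tL^2_x$ and $L^2_tH^1_x$ bound of order $\varepsilon+\varepsilon_{\mathrm{in}}+\delta$, with the $\delta$ coming from (iii) and the $\varepsilon_{\mathrm{in}}$ from the initial mismatch — this is where one sees the roles of the initial layer and cross diffusion as advertised. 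The bootstrap to $H^1$ proceeds by differentiating in $x$ and repeating the energy argument, using the already-obtained $L^2$ rate to control the newly appearing lower-order terms, and then one more differentiation (or testing with $\partial_{xx}^2$ of the differences, i.e. a second-order energy functional) upgrades to $L^\infty_tH^2_x$; here one needs the extra parameter balances in \eqref{ParaAss}, in particular $\varepsilon_{\mathrm{in}}^2/\varepsilon$ and $\delta^2/\varepsilon$ bounded, so that squared error terms divided by $\varepsilon$ (arising when the singular reaction term is estimated by Young's inequality) stay finite, and $\delta/(\varepsilon\zeta^{-1/2})\to 0$ to make the cross-diffusion term compatible with the splitting scale.

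The final step is bookkeeping: with $h^0$ the implicit-function solution $u=\varphi(v)$, one has $h^0(v^0(t))=u(t)$ along the limit trajectory, so the left-hand side of \eqref{convergence of trajectories general} is exactly $\|(u^\varepsilon-u,\,v^\varepsilon-v)\|_{H^2\times H^2}$, which by \eqref{ee} is $\le C_T(\varepsilon+\varepsilon_{\mathrm{in}}+\delta)$; since $\varepsilon_{\mathrm{in}}$ and $\delta$ are governed by the initial layer and tend to $0$ with $\varepsilon$ under \eqref{ParaAss}, this is $C(\varepsilon,T)(\cdots)$ with $C(\varepsilon,T)\to 0$, so assumption \eqref{C} holds. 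One then checks the remaining hypotheses of Theorem \ref{thm:abstract slow manifold} in this concrete setting: $A=\partial_{xx}^2$ with Neumann conditions on $X=L^2(I)$ satisfies \eqref{A1}--\eqref{A2} (after possibly shifting to get exponential stability on the complement of constants, or restricting to the mean-zero subspace), $g(u,v)=-u+\kappa(v-u)^2$ satisfies (N1)--(N4) with $L_f<1$ for $\kappa$ small on the bounded region where solutions live, and the splitting (X1)--(X3) and spectral gap (G) are arranged by choosing $\zeta$ appropriately relative to the eigenvalue gaps of $\partial_{xx}^2$ on $I$. The main obstacle is the $H^2$ energy estimate: the second-order functional generates the most dangerous terms — in particular, the second derivative of the quadratic nonlinearity $\tilde f$ produces terms like $\partial_{xx}^2 u^\varepsilon$ times lower-order factors that must be reabsorbed using the uniform bounds and the fast dissipation, and the cross-diffusion term $\delta\partial_{xx}^2 u^\varepsilon$ now competes directly with the principal part, which is exactly why the delicate condition $\delta/(\varepsilon\zeta^{-1/2})\to 0$ and the one-dimensional setting (where $H^2\hookrightarrow C^1$ and $H^1$ is an algebra) are needed to close the argument.
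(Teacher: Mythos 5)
Your proposal follows essentially the same route as the paper: uniform-in-$\varepsilon$ bounds from the Lotka--Volterra structure, energy estimates for $U^\varepsilon=u^\varepsilon-u$, $V^\varepsilon=v^\varepsilon-v$ in which the monotonicity $g_1'(u,v)=-1-2\kappa(v-u)\le -1$ absorbs the $\varepsilon^{-1}$ term, Gr\"onwall and a comparison argument giving the $L^\infty L^2$ and $L^2H^1$ rates, a bootstrap through $H^1$ to a second-order energy functional for the $L^\infty H^2$ rate (using the 1D interpolation/embedding and $\varepsilon_{\mathrm{in}}^2/\varepsilon,\ \delta^2/\varepsilon$ bounded), and finally verification of assumption \eqref{C} together with the splitting and gap hypotheses so that Theorem \ref{thm:abstract slow manifold} applies. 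Two minor misattributions: in the paper the smallness $12C_*K_M<1/\kappa$ is used to keep the limiting trajectory on the critical manifold (invertibility of $D_1g$, hence $L_f=\kappa L_{\tilde f}<1$), not to close the uniform a priori bounds, and the condition $\delta/(\varepsilon\zeta^{-1/2})\to 0$ enters only the spectral gap condition \eqref{G} (Lemma \ref{Lemma.check assumptions}), while the $H^2$ convergence estimate itself needs only $\delta^2/\varepsilon$ bounded.
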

\begin{remark}
    It is remarked that the convergence \eqref{ee} can be proved using $\delta = O(\sqrt{\eps})$. While to have the last limit in \eqref{ParaAss}, we need $\delta$ being in the order $\varepsilon^{1+\gamma}$ for some $\gamma>0$, and $\zeta$ being in the order $\varepsilon^{1+\gamma'}$ with $\gamma'>\gamma$, 
    is needed to show the existence of slow manifolds, which result from the assumption concerning spaces, the splitting and spectral gap in the abstract result.
\end{remark}

\section{Existence of Slow Manifolds}\label{section:3}
This section is devoted to prove Theorem \ref{thm:abstract slow manifold}. We first rewrite the general fast reaction system \eqref{general system} with splitting in the slow variable, as defined in Section \ref{Sec:Assumptions}, given by
\begin{equation}\label{general system split}
    \left\{
    \begin{aligned}
        \partial_t u^\varepsilon &= (d+\delta)A u^\varepsilon -\frac{1}{\varepsilon}u^\varepsilon +\frac{1}{\varepsilon}f(u,v_F^\varepsilon,v_S^\varepsilon)+\phi(u,v_F^\varepsilon,v_S^\varepsilon),\\
        \partial_t v_F^\varepsilon &= d A v_F^\varepsilon+ \delta A \pr_{X_F^\zeta} u^\varepsilon+\pr_{X_F^\zeta}\psi(u,v_F^\varepsilon,v_S^\varepsilon) ,\\
         \partial_t v_S^\varepsilon &=d A v_S^\varepsilon+ \delta A \pr_{X_S^\zeta} u^\varepsilon+\pr_{X_S^\zeta}\psi(u,v_F^\varepsilon,v_S^\varepsilon) ,\\
          u^\varepsilon(0)&=u_0,\quad     v^\varepsilon_F(0)=\pr_{X_F^\zeta} v_0,\quad v^\varepsilon_S(0)=\pr_{X_S^\zeta} v_0.
    \end{aligned}
    \right.
\end{equation}
We then construct a locally invariant manifold to system \eqref{general system split} using a fixed point argument. Afterwards, we show that this manifold satisfies the conditions of Definition \ref{Def.slow.manifold} and therefore is indeed the desired slow manifold.

\subsection{Lyapunov-Perron Operator}

Now, the aim is to construct a family of slow manifolds $S_{\varepsilon,\zeta}$ as 
\begin{align*}
    S_{\varepsilon,\zeta}:=\{(h^{\varepsilon,\zeta}(v_0),v_0):\, v_0\in X_1\cap X_S^\zeta\},
\end{align*}
where the function $h^{\varepsilon,\zeta}$ maps
\begin{align*}
    h^{\varepsilon,\zeta}: (X_S^\zeta\cap X_1)\to  X_1
    \times(X_F^\zeta\cap X_1).
\end{align*}
This is done by proving that the Lyapunov-Perron operator has a unique fixed point, where the function $h^{\varepsilon,\zeta}$ is then obtained by evaluating this fixed point at time $t=0$.

A key part in the Lyapunov-Perron method is understanding the dynamics of the fast and slow variables of the linear part of the system at hand.
Hence, we define a linear differential operator for \eqref{general system split} that takes into account the linear operator $A$ and the leading linear part of the fast-reaction term, by
\begin{align}
    \mathcal{A}^\varepsilon=\begin{pmatrix}
        (d+\delta)A -\frac{1}{\varepsilon}Id&0&0\\ \delta A& d A&0\\ \delta A&0& d A
    \end{pmatrix}.
\end{align}
This linear operator generates the semigroup 
\begin{align}
    \textnormal{e}^{\mathcal{A}^\varepsilon t}= \begin{pmatrix}
          \textnormal{e}^{[(d+\delta)A -\frac{1}{\varepsilon}Id]t}&0&0\\ \delta A \big(\delta A-\frac{1}{\varepsilon}\big)^{-1}\bigg( \textnormal{e}^{d A t}- \textnormal{e}^{[(d+\delta)A -\frac{1}{\varepsilon}Id]t} \bigg)& \textnormal{e}^{d A t}&0\\ \delta A \big(\delta A-\frac{1}{\varepsilon}\big)^{-1}\bigg( \textnormal{e}^{d A t}- \textnormal{e}^{[(d+\delta) A -\frac{1}{\varepsilon}Id]t}\bigg)& &0&\textnormal{e}^{d A t}
    \end{pmatrix}.
\end{align}
Using a perturbation result for semigroups \cite[Cor. III.1.7]{engel2000one} we can rewrite the above expression as follows
\begin{align}
     \textnormal{e}^{\mathcal{A}^\varepsilon t}=\begin{pmatrix}
        \textnormal{e}^{[(d+\delta)A -\frac{1}{\varepsilon}Id]t}&0&0\\ \delta A \textnormal{e}^{d A t} \int_0^t \textnormal{e}^{(\delta A-\frac{1}{\varepsilon}Id) s} \, \textnormal{d}s & \textnormal{e}^{d A t}&0\\ \delta A \big(\delta A-\frac{1}{\varepsilon}\big)^{-1}\bigg( \textnormal{e}^{d A t}- \textnormal{e}^{[(d+\delta)A -\frac{1}{\varepsilon}Id]t}\bigg)& &0&\textnormal{e}^{d A t}
     \end{pmatrix}.
\end{align}
With this we can define the Lyapunov-Perron operator as $\mathcal{L}_{\varepsilon,\zeta}:C_\eta \to C_\eta$ and for each component we have
\begin{align*}
    u &\mapsto \bigg[ t\mapsto \bigg(\int_{-\infty}^t \textnormal{e}^{[(d+\delta)A-\frac{1}{\varepsilon}\textnormal{Id}](t-s)}\big( \varepsilon^{-1}f(u(s),v_F(s),v_S(s))+ \phi(u(s),v_F(s),v_S(s))\big)\, \textnormal{d}s \bigg) \bigg],\\
    v_F&\mapsto \bigg [ t\mapsto  \bigg(\int_{-\infty}^t  \delta A \textnormal{e}^{d A (t-s)}\bigg( \int_0^{(t-s)} \textnormal{e}^{(\delta A-\frac{1}{\varepsilon}Id) r} \, \textnormal{d}r  \bigg)  \pr_{X^\zeta_F}\varepsilon^{-1}f(u(s),v_F(s),v_S(s))\, \textnormal{d}s\\
    &\qquad+ \int_{-\infty}^t  \delta A \textnormal{e}^{d A (t-s)}\bigg( \int_0^{(t-s)} \textnormal{e}^{(\delta A-\frac{1}{\varepsilon}Id) r} \, \textnormal{d}r  \bigg)  \pr_{X^\zeta_F} \phi(u(s),v_F(s),v_S(s))\, \textnormal{d}s\\ 
      &\qquad + \int_{-\infty}^t  \textnormal{e}^{d A (t-s)} \psi(u(s),v_F(s),v_S(s))\, \textnormal{d}s \bigg)\bigg],\\
     v_S &\mapsto \bigg[t\mapsto \bigg(  
     \delta A \big(\delta A-\frac{1}{\varepsilon}\big)^{-1}\bigg( \textnormal{e}^{d A t}- \textnormal{e}^{[(d+\delta)A -\frac{1}{\varepsilon}Id]t}\bigg) \pr_{X_S^\zeta} u_0+ \textnormal{e}^{d A t}v_0\\
     &\qquad + \int_0^t \delta A \big(\delta A-\frac{1}{\varepsilon}\big)^{-1}\bigg( \textnormal{e}^{d A (t-s)}- \textnormal{e}^{[(d+\delta)A -\frac{1}{\varepsilon}Id](t-s)}\bigg) \pr_{X_S^\zeta}  \varepsilon^{-1}f(u(s),v_F(s),v_S(s)) \, \textnormal{d}s\\
     &\qquad+\int_0^t \delta A \big(\delta A-\frac{1}{\varepsilon}\big)^{-1}\bigg( \textnormal{e}^{d A (t-s)}- \textnormal{e}^{[(d+\delta)A -\frac{1}{\varepsilon}Id](t-s)}\bigg) \pr_{X_S^\zeta}\phi(u(s),v_F(s),v_S(s))\, \textnormal{d}s\\
     &\qquad+\int_{0}^t  \textnormal{e}^{d A (t-s)} \pr_{X_S^\zeta} \psi(u(s),v_F(s),v_S(s))\, \textnormal{d}s\bigg)\bigg],
\end{align*}
 where $v_0\in X_1\cap X^\zeta_S$ and $u_0\in X_1$.
 Moreover, $C_\eta$ is defined as
 \[C_\eta:= C((-\infty,0],\textnormal{e}^{\eta t}; X_1
    \times(X_F^\zeta\cap X_1)\times (X_1\cap X^\zeta_S)), \]
   which is the space of $(u,v_F,v_S)\in C((-\infty,0]; X_1
    \times(X_F^\zeta\cap X_1)\times ( X^\zeta_S \cap X_1) $ such that
    \begin{align*}
        \|(u,v_F,v_S)\|_{C_\eta}:=\sup_{t\leq 0} \textnormal{e}^{-\eta t}(\|u\|_{X_1}+\|v_F\|_{X_1}+\|v_S\|_{X_1})<\infty.
    \end{align*}
The parameter $\eta<0$ is chosen as $\eta= \zeta^{-1}(\varepsilon (d+\delta)\omega_A-1) +\frac{1}{2}(N_F^\zeta+N_S^\zeta)$.

\begin{remark}
   The idea behind the Lyapunov-Perron method is to focus on the behavior of solution curves. 
   The solutions on the invariant slow manifold are precisely characterized by the fact that they stay bounded under backward evolution in the fast variables.
   For more details we refer to \cite[Ch. 6]{duan2014effective} and \cite[Ch. 1]{eldering2013normally}.
\end{remark}

\begin{remark}
    A key difficulty in the subsequent proofs is that the linear differential operator is not self-adjoint. 
    This fact leads to the smallness condition on $\delta$, where in the limit as $\varepsilon\to 0$ the operator becomes self-adjoint again.
    Moreover, we note that the parameter $\eta$ might not be chosen optimal, when comparing this result with other results on inertial manifolds in non-self-adjoint systems in \cite{kostianko2021kwak}.
    Yet, with our choice of the parameter we can build on previous work and obtain explicit convergence rates in the later steps.
\end{remark}

\begin{prop} \label{existence slow manifold}
    Let $v_0\in X_1\cap X^\zeta_S$ and $u_0 \in X_1$. Then the operator $\mathcal{L}_{\varepsilon,\zeta}$ has a unique fixed point in $C_\eta$.
\end{prop}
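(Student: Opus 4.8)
The plan is to show that $\mathcal{L}_{\varepsilon,\zeta}$ is a contraction on the complete metric space $C_\eta$, so that the Banach fixed point theorem applies. The first step is to verify that $\mathcal{L}_{\varepsilon,\zeta}$ indeed maps $C_\eta$ into itself. For each of the three components, one inserts the semigroup estimates \eqref{A2} together with the dichotomy estimates \eqref{X3} — in particular the decay rate $\zeta^{-1}(\varepsilon(d+\delta)\omega_A-1)$ appearing there — into the Duhamel-type integrals defining the operator. Using $\|f\|_{X_1} \le L_f(\|u\|_{X_1}+\|v_F\|_{X_1}+\|v_S\|_{X_1})$ (which follows from \eqref{N2} and $f(0,0)=0$, noting $g(u,v)=-u+f(u,v)$), and analogously for $\phi,\psi$, each integral over $(-\infty,t]$ (or $[0,t]$) is estimated by a constant times $\textnormal{e}^{\eta t}\|(u,v_F,v_S)\|_{C_\eta}$. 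The key bookkeeping is that the exponent $\eta=\zeta^{-1}(\varepsilon(d+\delta)\omega_A-1)+\frac12(N_F^\zeta+N_S^\zeta)$ is chosen precisely so that it lies strictly between the decay rate of the fast block and the (backward) growth rate of the slow block; this is exactly what makes the convolution integrals converge and is encoded in the parameter inequality \eqref{parameter inequality}. The cross-diffusion entries, carrying a prefactor $\delta$ together with a factor $A$ acting on $\textnormal{e}^{dAt}$, are controlled using the second bound in \eqref{A2}, $\|tA\textnormal{e}^{tA}\|_{\mathcal{B}(X_1)}\le K_A\textnormal{e}^{\omega_A t}$, and the explicit evaluation of $\int_0^{t-s}\textnormal{e}^{(\delta A-\varepsilon^{-1}\mathrm{Id})r}\,\mathrm{d}r$, which produces the $\delta/\varepsilon$ and $\delta(C_A+d^{-1}C_A^2)/(\varepsilon|N_S^\zeta-N_F^\zeta|)$ structure seen in the spectral gap condition \eqref{G}.

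The second step is the contraction estimate. Given two elements $(u^i,v_F^i,v_S^i)\in C_\eta$, $i=1,2$, one subtracts and uses that the nonlinearities are globally Lipschitz with the constants $L_f,L_\phi,L_\psi$ from \eqref{N2}; the semigroup/dichotomy bounds are applied verbatim as in the first step, so the same constants reappear. Collecting the contributions from the $u$-integral, from the $v_F$-integral (three terms: the $f$-term, the $\phi$-term, and the $\psi$-term), and from the $v_S$-integral, one obtains
\begin{align*}
\|\mathcal{L}_{\varepsilon,\zeta}(w^1)-\mathcal{L}_{\varepsilon,\zeta}(w^2)\|_{C_\eta} \le \Theta \, \|w^1-w^2\|_{C_\eta},
\end{align*}
where $w^i=(u^i,v_F^i,v_S^i)$ and $\Theta$ is exactly the left-hand side of the spectral gap condition \eqref{G}:
\begin{align*}
\Theta = \frac{C_A(L_f+\varepsilon L_\phi)}{|(1-\varepsilon\zeta^{-1})(\varepsilon(d+\delta)\omega_A-1)-\varepsilon\tfrac12(N_S^\zeta+N_F^\zeta)|} + 2\,\frac{\delta(C_A+d^{-1}C_A^2)(L_f+\varepsilon L_\phi)+2C_A\varepsilon L_\psi}{\varepsilon|N_S^\zeta-N_F^\zeta|}.
\end{align*}
Since \eqref{G} asserts $\Theta<1$, the map is a contraction, and the Banach fixed point theorem yields a unique fixed point in $C_\eta$. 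The definition of $h^{\varepsilon,\zeta}$ as the evaluation of this fixed point at $t=0$ then makes sense.

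The main obstacle is the careful treatment of the cross-diffusion terms, i.e. the off-diagonal entries of $\textnormal{e}^{\mathcal{A}^\varepsilon t}$. Unlike the diagonal parabolic parts, these involve the unbounded operator $A$ composed with $\textnormal{e}^{dAt}$ and with the resolvent-type factor $(\delta A-\varepsilon^{-1}\mathrm{Id})^{-1}$ (or, in the rewritten form, the time integral $\int_0^{t-s}\textnormal{e}^{(\delta A-\varepsilon^{-1}\mathrm{Id})r}\mathrm{d}r$), so one must argue that these compositions are bounded on $X_1$ with norms that decay in time and carry the right power of $\delta$ and $\varepsilon$; this is where assumption \eqref{A2} on $\|tA\textnormal{e}^{tA}\|$ and the perturbation bound in the Remark after \eqref{A2} are essential, and where one must be careful that the $v_F$-block (which uses the exact decay rate from \eqref{X3}) and the $v_S$-block (which uses the backward group estimate, with a factor controlled by $1/|N_S^\zeta-N_F^\zeta|$) produce the two separate terms of \eqref{G}. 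A secondary technical point is confirming that the improper integrals over $(-\infty,t]$ in the $u$- and $v_F$-components actually converge in $X_1$; this again reduces to the sign of the combined exponent, guaranteed by \eqref{parameter inequality} together with the ordering $0\le N_F^\zeta<N_S^\zeta<-\zeta^{-1}(\varepsilon(d+\delta)\omega_A-1)$ from \eqref{X3}.
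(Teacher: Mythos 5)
Your proposal follows essentially the same route as the paper: estimate the three components of $\mathcal{L}_{\varepsilon,\zeta}$ separately using the semigroup and dichotomy bounds from \eqref{A2} and \eqref{X3}, control the cross-diffusion blocks via the inner integral $\int_0^{t-s}\textnormal{e}^{(\delta A-\varepsilon^{-1}\mathrm{Id})r}\,\textnormal{d}r$, and conclude that the contraction constant is exactly the left-hand side of the spectral gap condition \eqref{G}, so Banach's fixed point theorem applies. This matches the paper's proof of Proposition \ref{existence slow manifold}, including the self-mapping step being structurally identical to the contraction estimate.
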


\begin{proof}
To show that the operator has a fixed point we have to show that it maps $C_\eta$ into $C_\eta$ and moreover that the operator is a contraction. 
As both computations have the same structure, we focus on showing the contraction property.
To this end let $(u,v_F,v_S),(\tilde u, \tilde v_F, \tilde v_S)\in C_\eta$.
Then, we estimate the difference for each component of the operator. 
For the first one we obtain
\begin{align*}
    &\|\pr_u ( \mathcal{L}_{\varepsilon,\zeta}(u,v_F,v_S)- \mathcal{L}_{\varepsilon,\zeta}(\tilde u, \tilde v_F, \tilde v_S)\|_{C_\eta}=\\
    &=\sup_{t\leq 0} \textnormal{e}^{-\eta t} \bigg\|\int_{-\infty}^t \textnormal{e}^{[(d+\delta) A -\frac{1}{\varepsilon}\textnormal{Id}](t-s)}\varepsilon^{-1}(f(u(s),v_F(s),v_S(s))-f(\tilde u(s),\tilde v_F(s), \tilde v_S(s))) \, \textnormal{d}s\\
    &\qquad\qquad+\int_{-\infty}^t \textnormal{e}^{[(d+\delta) A -\frac{1}{\varepsilon}\textnormal{Id}](t-s)}\varepsilon^{-1}(\phi(u(s),v_F(s),v_S(s))-\phi(\tilde u(s),\tilde v_F(s), \tilde v_S(s)))\, \textnormal{d}s\bigg\|_{X_1}\\
    &\leq \sup_{t\leq 0}\int_{-\infty}^t \textnormal{e}^{-\eta t} \bigg\|\textnormal{e}^{[(d+\delta) A -\frac{1}{\varepsilon}\textnormal{Id}](t-s)}\varepsilon^{-1}(f(u(s),v_F(s),v_S(s))-f(\tilde u(s),\tilde v_F(s), \tilde v_S(s)))\bigg\|_{X_1}\,\textnormal{d}s\\
    &\quad + \sup_{t\leq 0}\int_{-\infty}^t \textnormal{e}^{-\eta t} \bigg\|\textnormal{e}^{[(d+\delta) A -\frac{1}{\varepsilon}\textnormal{Id}](t-s)}(\phi(u(s),v_F(s),v_S(s))-\phi(\tilde u(s),\tilde v_F(s), \tilde v_S(s)))\bigg\|_{X_1}\,\textnormal{d}s\\
&\leq C_ A  \int_{-\infty}^t \textnormal{e}^{\varepsilon^{-1}(\varepsilon(d+\delta)\omega_ A -1-\varepsilon\eta)(t-s)}(\varepsilon^{-1}L_f +L\phi) \,\textnormal{d}s \|(u,v_F,v_S)-(\tilde u, \tilde v_F, \tilde v_S)\|_{C_\eta}\\
&\leq \frac{C_ A  (L_f+\varepsilon L_\phi) }{|\varepsilon(d+\delta)\omega_ A -1-\varepsilon\eta|}\|(u,v_F,v_S)-(\tilde u, \tilde v_F, \tilde v_S)\|_{C_\eta}\\
&\leq \frac{C_ A (L_f+\varepsilon L_\phi)}{|(1-\varepsilon \zeta^{-1})(\varepsilon (d+\delta)\omega_A-1) - \varepsilon\frac{1}{2}(N_S^\zeta+N_F^\zeta)|}\|(u,v_F,v_S)-(\tilde u, \tilde v_F, \tilde v_S)\|_{C_\eta},
\end{align*}
where we have used the assumption that $ (1-\varepsilon \zeta^{-1})(\varepsilon (d+\delta)\omega_A-1) -\varepsilon \frac{N_S^\zeta+N_F^\zeta}{2}<0$ to ensure that the exponent has the right sign.

For the second component we estimate
\begin{align*}
     &\|\pr_{X_F^\zeta} ( \mathcal{L}_{\varepsilon,\zeta}(u,v_F,v_S)- \mathcal{L}_{\varepsilon,\zeta}(\tilde u, \tilde v_F, \tilde v_S)\|_{C_\eta}=\\
     &=\sup_{t\leq 0} \textnormal{e}^{-\eta t} \bigg\| \int_{-\infty}^t   \delta A \textnormal{e}^{d A (t-s)} \int_0^{(t-s)} \textnormal{e}^{(\delta A-\frac{1}{\varepsilon}Id) r} \, \textnormal{d}r \pr_{X^\zeta_F}\varepsilon^{-1}(f(u(s),v_F(s),v_S(s))-f(\tilde u(s),\tilde v_F(s), \tilde v_S(s)))\, \textnormal{d}s \\
     &\qquad \qquad+\int_{-\infty}^t   \delta A \textnormal{e}^{d A (t-s)} \int_0^{(t-s)} \textnormal{e}^{(\delta A-\frac{1}{\varepsilon}Id) r} \, \textnormal{d}r \pr_{X^\zeta_F}(\phi(u(s),v_F(s),v_S(s))-\phi(\tilde u(s),\tilde v_F(s), \tilde v_S(s)))\, \textnormal{d}s \\
     &\qquad \qquad +\int_{-\infty}^t \textnormal{e}^{d A (t-s)}\pr_{X_F^\zeta}( \psi(u(s),v_F(s),v_S(s)) -\psi(\tilde u(s),\tilde v_F(s), \tilde v_S(s)))\, \textnormal{d}s\bigg\|_{X_1}.
\end{align*}
As $r\geq 0$ we can estimate the $X_1$-norm of the inner integral by
\begin{align*}
    \bigg \| \int_0^{(t-s)} \textnormal{e}^{(\delta A-\frac{1}{\varepsilon}Id) r} \, \textnormal{d}r\bigg\|_{\mathcal{B}(X_1)}\leq     (t-s)C_A\sup_{r\in [0,(t-s)]} \textnormal{e}^{(\delta\omega_A\frac{1}{\varepsilon})r}\leq (t-s)C_A.
\end{align*}
Thus, we obtain
\begin{align*}
 &\|\pr_{X_F^\zeta} ( \mathcal{L}_{\varepsilon,\zeta}(u,v_F,v_S)- \mathcal{L}_{\varepsilon,\zeta}(\tilde u, \tilde v_F, \tilde v_S)\|_{C_\eta}\leq \\
    &\leq C_A \sup_{t\leq 0} \textnormal{e}^{-\eta t} \int_{-\infty}^t \bigg \|   \delta (t-s) A \textnormal{e}^{d A (t-s)}\pr_{X^\zeta_F} \varepsilon^{-1}(f(u(s),v_F(s),v_S(s))-f(\tilde u(s),\tilde v_F(s), \tilde v_S(s)))\bigg\|_{X_1} \, \textnormal{d}s \\
    &\quad + C_A \sup_{t\leq 0} \textnormal{e}^{-\eta t} \int_{-\infty}^t \bigg \|   \delta (t-s) A \textnormal{e}^{d A (t-s)}\pr_{X^\zeta_F} (\phi(u(s),v_F(s),v_S(s))-\phi(\tilde u(s),\tilde v_F(s), \tilde v_S(s)))\bigg\|_{X_1} \, \textnormal{d}s \\
     &\quad + \sup_{t\leq 0} \int_{-\infty}^t \textnormal{e}^{-\eta t} \bigg\| \textnormal{e}^{d A (t-s)}\pr_{X_F^\zeta}( \psi(u(s),v_F(s),v_S(s)) -\psi(\tilde u(s),\tilde v_F(s), \tilde v_S(s)))\bigg\|_{X_1} \, \textnormal{d}s\\
    &\leq C_A^2 (\varepsilon^{-1}L_f +L\phi) \int_{-\infty}^t \delta d^{-1} \textnormal{e}^{(N_F^\zeta +\zeta^{-1}(\varepsilon (d+\delta) \omega_A-1)-\eta)(t-s)} \, \textnormal{d}s \|(u,v_F,v_S)-(\tilde u, \tilde v_F, \tilde v_S)\|_{C_\eta}\\
    &\quad + C_A L_\psi \int_{-\infty}^t \textnormal{e}^{(N_F^\zeta +\zeta^{-1}(\varepsilon (d+\delta) \omega_A-1)-\eta)(t-s)} \, \textnormal{d}s     \|(u,v_F,v_S)-(\tilde u, \tilde v_F, \tilde v_S)\|_{C_\eta}\\
    &\leq \frac{C_A^2 \delta d^{-1}(L_f+\varepsilon L_\phi) +C_A \varepsilon L_\psi}{\varepsilon|N_F^\zeta +\zeta^{-1}(\varepsilon d \omega_\Delta-1)-\eta|}\|(u,v_F,v_S)-(\tilde u, \tilde v_F, \tilde v_S)\|_{C_\eta}\\
    &\leq 2\frac{C_A^2 \delta d^{-1}(L_f+\varepsilon L_\phi) +C_A \varepsilon L_\psi}{\varepsilon|N_F^\zeta- N_S^\zeta |}\|(u,v_F,v_S)-(\tilde u, \tilde v_F, \tilde v_S)\|_{C_\eta}.
\end{align*}
For the third component we note that 
\[ \textnormal{e}^{[(d+\delta)A -\frac{1}{\varepsilon}Id]t} \pr_{X^\zeta_S} x=0 ~~\textnormal{for } x\in X_1\]
as the semigroup only acts on components in the fast variable space.
Hence,
\begin{align*}
    &\|\pr_{X_S^\zeta} ( \mathcal{L}_{\varepsilon,\zeta}(u,v_F,v_S)- \mathcal{L}_{\varepsilon,\zeta}(\tilde u, \tilde v_F, \tilde v_S)\|_{C_\eta}=\\
    &=\sup_{t\leq 0} \textnormal{e}^{-\eta t} \bigg\| \int_{0}^t \delta A \big(\delta A-\frac{1}{\varepsilon}\big)^{-1} \textnormal{e}^{d A (t-s)} \pr_{X^\zeta_S}\varepsilon^{-1}(f(u(s),v_F(s),v_S(s))-f(\tilde u(s),\tilde v_F(s), \tilde v_S(s)))\, \textnormal{d}s \\
    &\qquad \qquad +  \int_{0}^t \delta A \big(\delta A-\frac{1}{\varepsilon}\big)^{-1} \textnormal{e}^{d A (t-s)} \pr_{X^\zeta_S}(\phi(u(s),v_F(s),v_S(s))-\phi(\tilde u(s),\tilde v_F(s), \tilde v_S(s)))\, \textnormal{d}s \\
     &\qquad \qquad +\int_{-\infty}^t \textnormal{e}^{d A (t-s)}\pr_{X_S^\zeta}( \psi(u(s),v_F(s),v_S(s)) -\psi(\tilde u(s),\tilde v_F(s), \tilde v_S(s)))\, \textnormal{d}s\bigg\|_{X_1}.
\end{align*}
In addition, the operator $ A \big(\delta A-\frac{1}{\varepsilon}\big)^{-1}$ has the following bound in the slow variable space
$$\| A \big(\delta A-\frac{1}{\varepsilon}\big)^{-1} \pr_{X_S^\zeta} x\|_{X_1}\leq \|\pr_{X_S^\zeta} x\|_{X_1}.$$
Thus,
\begin{align*}
     &\|\pr_{X_S^\zeta} ( \mathcal{L}_{\varepsilon,\zeta}(u,v_F,v_S)- \mathcal{L}_{\varepsilon,\zeta}(\tilde u, \tilde v_F, \tilde v_S)\|_{C_\eta}\leq \\
     &\leq \sup_{t\leq 0} \textnormal{e}^{-\eta t} \int_0^t \bigg\| \delta  \textnormal{e}^{d A (t-s)} \pr_{X^\zeta_S}\varepsilon^{-1}(f(u(s),v_F(s),v_S(s))-f(\tilde u(s),\tilde v_F(s), \tilde v_S(s)))\bigg\|_{X_1} \, \textnormal{d}s\\
     &\quad +\sup_{t\leq 0} \textnormal{e}^{-\eta t} \int_0^t \bigg\| \delta  \textnormal{e}^{d A (t-s)} \pr_{X^\zeta_S}(\phi(u(s),v_F(s),v_S(s))-\phi(\tilde u(s),\tilde v_F(s), \tilde v_S(s)))\bigg\|_{X_1} \, \textnormal{d}s\\
     &\quad + \sup_{t\leq 0} \int_{-\infty}^t \textnormal{e}^{-\eta t} \bigg\| \textnormal{e}^{d A (t-s)}\pr_{X_S^\zeta}( \psi(u(s),v_F(s),v_S(s)) -\psi(\tilde u(s),\tilde v_F(s), \tilde v_S(s)))\bigg\|_{X_1} \, \textnormal{d}s\\
     &\leq C_A (\delta \varepsilon^{-1} L_f +\delta L_\phi +L_\psi) \int_0^t \varepsilon^{-1}  \textnormal{e}^{(N_S^\zeta +\zeta^{-1}(\varepsilon d\omega_A-1)-\eta) (t-s)}\, \textnormal{d}s \|(u,v_F,v_S)-(\tilde u, \tilde v_F, \tilde v_S)\|_{C_\eta}\\
     &\leq \frac{C_A  (\delta  L_f +\delta \varepsilon L_\phi +\varepsilon L_\psi)  }{\varepsilon|N_S^\zeta +\zeta^{-1}(\varepsilon d\omega_\Delta-1)-\eta|}\|(u,v_F,v_S)-(\tilde u, \tilde v_F, \tilde v_S)\|_{C_\eta}\\
     &\leq 2 \frac{C_A  (\delta  L_f +\delta \varepsilon L_\phi +\varepsilon L_\psi)  }{\varepsilon|N_S^\zeta-N_F^\zeta|}\|(u,v_F,v_S)-(\tilde u, \tilde v_F, \tilde v_S)\|_{C_\eta}.
\end{align*}
Then, using the spectral gap assumption that
\begin{align*}
  &\frac{C_ A (L_f+\varepsilon L_\phi)}{| (1-\varepsilon \zeta^{-1})(\varepsilon (d+\delta)\omega_A-1) - \varepsilon\frac{1}{2}(N_S^\zeta+N_F^\zeta)|} +  2 \frac{ \delta (C_A+d^{-1} C_A^2)(  L_f + \varepsilon L_\phi) +2C_A\varepsilon L_\psi  }{\varepsilon|N_S^\zeta-N_F^\zeta|} =:\tilde L <1
\end{align*}
we indeed obtain that the operator is a self-mapping and a contraction.
Therefore, by the Banach fixed-point theorem it has a unique fixed point.  
\end{proof}
This proves the first statement in the definition of a slow manifold.

\begin{remark}
    Observe that the Lyapunov-Perron operator depends on both $u_0$ and $v_0$ for $t<0$. Only for $t=0$ the operator solely depends on the initial data in the slow component $v_0$.
\end{remark}


\begin{prop}\label{Lipschitz cont}
Let $(u^{v_0},v_F^{v_0},v_S^{v_0})$ be the unique fixed point of $\mathcal{L}_{v_0,\varepsilon,\zeta}$ evaluated at $t=0$.
Then, the mapping
\begin{align*}
    h^{\varepsilon,\zeta}: (X^\zeta_S\cap X_1)\to X_1\times (X^\zeta_F\cap X_1),\, v_0\mapsto (u^{v_0}(0), v_F^{v_0}(0))
\end{align*}
is Lipschitz continuous with Lipschitz constant $L$.
\end{prop}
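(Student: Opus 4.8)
The plan is to establish Lipschitz continuity of $h^{\varepsilon,\zeta}$ by exploiting the uniform contraction estimates from Proposition~\ref{existence slow manifold}, now applied with varying initial data in the slow component. Fix two initial data $v_0, \tilde v_0 \in X^\zeta_S\cap X_1$ (together with $u_0$, which only enters for $t<0$ and drops out at $t=0$), and denote by $z = (u^{v_0}, v_F^{v_0}, v_S^{v_0})$ and $\tilde z = (u^{\tilde v_0}, \tilde v_F^{\tilde v_0}, \tilde v_S^{\tilde v_0})$ the corresponding fixed points in $C_\eta$. Writing $z = \mathcal{L}_{v_0,\varepsilon,\zeta}(z)$ and $\tilde z = \mathcal{L}_{\tilde v_0,\varepsilon,\zeta}(\tilde z)$, I would split
\begin{align*}
\|z - \tilde z\|_{C_\eta} \leq \|\mathcal{L}_{v_0,\varepsilon,\zeta}(z) - \mathcal{L}_{v_0,\varepsilon,\zeta}(\tilde z)\|_{C_\eta} + \|\mathcal{L}_{v_0,\varepsilon,\zeta}(\tilde z) - \mathcal{L}_{\tilde v_0,\varepsilon,\zeta}(\tilde z)\|_{C_\eta}.
\end{align*}
The first term is bounded by $\tilde L \|z - \tilde z\|_{C_\eta}$ by the contraction estimate already proved. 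The second term measures the dependence on the initial datum: the only place where $v_0$ appears explicitly in $\mathcal{L}_{\varepsilon,\zeta}$ is in the $v_S$-component, through the linear terms $\delta A (\delta A - \tfrac{1}{\varepsilon})^{-1}(\textnormal{e}^{dAt} - \textnormal{e}^{[(d+\delta)A-\frac{1}{\varepsilon}\textnormal{Id}]t})\pr_{X_S^\zeta}u_0 + \textnormal{e}^{dAt}v_0$; the $u_0$-piece is identical for both, so the difference is exactly $\textnormal{e}^{dAt}(v_0 - \tilde v_0)$ in the third slot and zero elsewhere.

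Next I would estimate $\|\textnormal{e}^{dAt}(v_0 - \tilde v_0)\|_{C_\eta}$. Since $v_0 - \tilde v_0 \in X^\zeta_S \cap X_1$, the backward-time dichotomy from assumption~\eqref{X3} (second inequality) gives, for $t \leq 0$,
\begin{align*}
\textnormal{e}^{-\eta t}\|\textnormal{e}^{dAt}(v_0-\tilde v_0)\|_{X_1} \leq M_A \textnormal{e}^{-\eta t}\textnormal{e}^{(N_S^\zeta + \zeta^{-1}(\varepsilon(d+\delta)\omega_A-1))t}\|v_0 - \tilde v_0\|_{X_1}.
\end{align*}
With the choice $\eta = \zeta^{-1}(\varepsilon(d+\delta)\omega_A - 1) + \tfrac{1}{2}(N_F^\zeta + N_S^\zeta)$, the exponent becomes $-\tfrac{1}{2}(N_S^\zeta - N_F^\zeta)\,t \geq 0$ for $t\leq 0$, but its supremum over $t\leq 0$ is attained at $t=0$ and equals $M_A$; hence $\|\textnormal{e}^{dAt}(v_0-\tilde v_0)\|_{C_\eta} \leq M_A\|v_0 - \tilde v_0\|_{X_1}$. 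Combining,
\begin{align*}
\|z - \tilde z\|_{C_\eta} \leq \frac{M_A}{1 - \tilde L}\|v_0 - \tilde v_0\|_{X_1}.
\end{align*}
Finally, evaluating at $t=0$ and projecting onto the first two components only shrinks norms, so $\|h^{\varepsilon,\zeta}(v_0) - h^{\varepsilon,\zeta}(\tilde v_0)\|_{X_1\times X_1} \leq \|z-\tilde z\|_{C_\eta}$, which yields the claim with $L = \tfrac{M_A}{1-\tilde L}$.

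The main obstacle I anticipate is twofold: first, verifying carefully that the $u_0$-dependent linear term in the $v_S$-component genuinely cancels in the difference $\mathcal{L}_{v_0,\varepsilon,\zeta}(\tilde z) - \mathcal{L}_{\tilde v_0,\varepsilon,\zeta}(\tilde z)$ — this is true since the fixed-point map is parametrized only by $v_0$ for the purpose of defining $h^{\varepsilon,\zeta}$, with $u_0$ held fixed, and one must state this convention precisely (the remark after Proposition~\ref{existence slow manifold} already flags it). Second, one must ensure the $C_\eta$-norm of the inhomogeneous initial term $\textnormal{e}^{dAt}(v_0-\tilde v_0)$ is genuinely finite and controlled uniformly; this is where the sign condition $N_F^\zeta < N_S^\zeta$ on the dichotomy exponents is used, guaranteeing $\textnormal{e}^{-\eta t}\textnormal{e}^{(N_S^\zeta+\zeta^{-1}(\varepsilon(d+\delta)\omega_A-1))t}$ stays bounded as $t \to -\infty$. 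A minor point worth noting is that the Lipschitz constant $L$ thus obtained is $\tfrac{M_A}{1-\tilde L}$, which is uniform in $\varepsilon$ and $\zeta$ precisely under the spectral gap assumption~\eqref{G} that keeps $\tilde L$ bounded away from $1$; this is exactly the reason~\eqref{G} is imposed, and it propagates to give the Lipschitz (or $C^k$) regularity of the slow manifold asserted in Definition~\ref{Def.slow.manifold}.
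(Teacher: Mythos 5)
Your proposal is correct and follows essentially the same route as the paper: bound the difference of fixed points by the contraction constant $\tilde L<1$ plus the inhomogeneous term $\textnormal{e}^{dAt}(v_0-\tilde v_0)$ in the $v_S$-slot, estimate the latter by $M_A\|v_0-\tilde v_0\|_{X_1}$ via the dichotomy on $X_S^\zeta$, and absorb to get $L=M_A(1-\tilde L)^{-1}$, exactly as in the paper. The only blemish is a sign slip in the intermediate exponent, which should read $+\tfrac12(N_S^\zeta-N_F^\zeta)t\le 0$ for $t\le 0$ (so the weighted norm is indeed bounded by $M_A$ with supremum at $t=0$), consistent with the correct boundedness claim in your closing paragraph.
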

\begin{proof}
    Let $v_0, \tilde v_0 \in X_1\cap X_S^\zeta$ be the initial data and let $(u,v_F,v_S), (\tilde u,\tilde v_F,\tilde v_S)\in C_\eta$ be the corresponding fixed point of the Lyapunov-Perron operator evaluated at $t=0$.
    Then, by the previous result it follows that
    \begin{align*}
        &\sup_{t\leq 0} \textnormal{e}^{-\eta t} \|u(t)-\tilde u(t)\|_{X_1} \leq \frac{C_ A (L_f+\varepsilon L_\phi)}{|\varepsilon (1-\varepsilon \zeta^{-1})(\varepsilon (d+\delta)\omega_A-1) + \varepsilon\frac{1}{2}(N_S^\zeta+N_F^\zeta)|}       \|(u,v_F,v_S)-(\tilde u, \tilde v_F, \tilde v_S)\|_{C_\eta}\\
       & \sup_{t\leq 0} \textnormal{e}^{-\eta t} \|v_F(t)-\tilde v_F(t)\|_{X_1}\leq 2\frac{C_A^2 \delta d^{-1}(L_f+\varepsilon L_\phi) +C_A \varepsilon L_\psi}{\varepsilon|N_F^\zeta- N_S^\zeta |}\|(u,v_F,v_S)-(\tilde u, \tilde v_F, \tilde v_S)\|_{C_\eta}\\
       &\sup_{t\leq 0} \textnormal{e}^{-\eta t} \|v_S(t)-\tilde v_S(t)\|_{X_1}\leq M_A\|v_0-\tilde v_0\|_{X_1} +2 \frac{C_A  (\delta  L_f +\delta \varepsilon L_\phi +\varepsilon L_\psi)  }{\varepsilon|N_S^\zeta-N_F^\zeta|}\|(u,v_F,v_S)-(\tilde u, \tilde v_F, \tilde v_S)\|_{C_\eta}.
    \end{align*}
    By the bound of $\tilde L<1$ we can rewrite the inequalities as
    \begin{align*}
        \|(u,v_F,v_S)-(\tilde u, \tilde v_F, \tilde v_S)\|_{C_\eta} \leq\underbrace{M_A (1-\tilde L)^{-1}}_{=:L} \|v_0-\tilde v_0\|_{X_1}
    \end{align*}
    which proves the Lipschitz continuity of the Lyapunov-Perron operator.
\end{proof}
Therefore, the slow manifold $S_{\varepsilon,\zeta}$ given by
\begin{align*}
  S_{\varepsilon,\zeta}=\{(h^{\varepsilon,\zeta}(v_0),v_0):\, v_0\in X_1\cap X_S^\zeta\},
\end{align*}
is a Lipschitz continuous manifold.

\begin{remark}
    We want to point out that the above method also works for the case when the coefficients in front of the operator $A$ on the diagonal are the same, i.e.
    the system has the form
    \begin{align*}
        \partial_t u &= d A u^\varepsilon -\frac{1}{\varepsilon}u +\frac{1}{\varepsilon}f(u,v_F,v_S)+ \phi(u,v_F,v_S),\\
        \partial_t v_F &= d A v_F+ \delta A \pr_{X_F^\zeta} u +\pr_{X_F^\zeta}\psi(u,v_F,v_S),\\
         \partial_t v_S &=d A v_S + \delta A \pr_{X_S^\zeta} u +\pr_{X_S^\zeta}\psi(u,v_F,v_S),\\
          u(0)&=u_0,\quad     v_F(0)=\pr_{X_F^\zeta} v_0,\quad v_S(0)=\pr_{X_S^\zeta} v_0.
    \end{align*}
    However, as the analysis becomes more intricate in this setting and we will not present any details of the computations.
\end{remark}

\subsection{Distance to the Critical Manifold}

In this section, we give an upper bound on the distance between the critical manifold $S_0$ and the slow manifold $S_{\varepsilon,\zeta}$ by comparing the distance of the two generating graphs.

\begin{prop}\label{prop:distance to crit manifold}
   There exists a constant $C>0$ such that for all $\varepsilon,\, \zeta$ small enough satisfying $\varepsilon\zeta^{-1}\leq c<1$ and for all $v_0\in X_1\cap X_S^\zeta$ it holds that
   \begin{align*}
     \left\| \begin{pmatrix}
         h^{\varepsilon,\zeta}_{u}(v_0)-h^0(v_0)\\h^{\varepsilon,\zeta}_{X_F^\zeta}(v_0)
     \end{pmatrix}\right\|_{X_{\alpha}\times X_1}\leq C\bigg(\frac{\delta+\varepsilon}{\epsilon(N_S^\zeta-N_F^\zeta)}+ \varepsilon^{1-\alpha}\bigg) \|v_0\|_{X_1},
   \end{align*}
   where $0\leq \alpha<1$.
   This shows the second part in the definition of the slow manifold.
\end{prop}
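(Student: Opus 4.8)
The plan is to estimate the two components of $h^{\varepsilon,\zeta}(v_0)=(u^{v_0}(0),v_F^{v_0}(0))$ separately, where $(u^{v_0},v_F^{v_0},v_S^{v_0})\in C_\eta$ denotes the unique fixed point of $\mathcal{L}_{v_0,\varepsilon,\zeta}$, and to compare them with $h^0(v_0)$ and with $0$. I will use two a priori facts. First, comparing the fixed point for $v_0$ with the trivial fixed point for $v_0=0$, Proposition~\ref{Lipschitz cont} gives $\|(u^{v_0},v_F^{v_0},v_S^{v_0})\|_{C_\eta}\le L\|v_0\|_{X_1}$, i.e. $\|u^{v_0}(s)\|_{X_1}+\|v_F^{v_0}(s)\|_{X_1}+\|v_S^{v_0}(s)\|_{X_1}\le L\|v_0\|_{X_1}\textnormal{e}^{\eta s}$ for all $s\le 0$. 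Second, since $g(0,0)=0$ and $h^0$ is Lipschitz with constant $L_{h^0}=L_f/(1-L_f)$ we have $\|h^0(w)\|_{X_1}\le L_{h^0}\|w\|_{X_1}$, and from $g=-\textnormal{Id}+f$ we will repeatedly use $f(h^0(w),w)=h^0(w)$. Finally, the $u_0$-dependent term of the $v_S$-row of the Lyapunov--Perron operator vanishes at $t=0$, so $v^{v_0}(0):=v_F^{v_0}(0)+v_S^{v_0}(0)=v_F^{v_0}(0)+v_0$.

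\textbf{Fast component.} For $v_F^{v_0}(0)=h^{\varepsilon,\zeta}_{X_F^\zeta}(v_0)$ I would repeat the second-component estimate from the proof of Proposition~\ref{existence slow manifold}, the only change being that one now bounds absolute sizes rather than differences. Evaluating the $v_F$-row at the fixed point and at $t=0$, every contribution is either weighted by the cross-diffusion parameter $\delta$ (the $f$- and $\phi$-terms, for which the inner integral $\int_0^{t-s}\textnormal{e}^{(\delta A-\frac1\varepsilon\textnormal{Id})r}\,\textnormal{d}r$ is again bounded by $(t-s)C_A$), or lives in $X_F^\zeta$ and hence decays at the fast rate of the dichotomy~\eqref{X3} (the $\psi$-term). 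Inserting $\|\psi(u^{v_0},v_F^{v_0},v_S^{v_0})(s)\|_{X_1}\le L_\psi L\|v_0\|_{X_1}\textnormal{e}^{\eta s}$ and the analogues for $f,\phi$, and recalling that with $\eta=\zeta^{-1}(\varepsilon(d+\delta)\omega_A-1)+\tfrac12(N_F^\zeta+N_S^\zeta)$ the exponents in all time integrals equal $\tfrac12(N_F^\zeta-N_S^\zeta)<0$, so each integral produces a factor $\tfrac{2}{N_S^\zeta-N_F^\zeta}$, one obtains $\|h^{\varepsilon,\zeta}_{X_F^\zeta}(v_0)\|_{X_1}\le C\tfrac{\delta+\varepsilon}{\varepsilon(N_S^\zeta-N_F^\zeta)}\|v_0\|_{X_1}$.

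\textbf{Slow component.} First I reduce the reference point: $\|h^0(v^{v_0}(0))-h^0(v_0)\|_{X_\alpha}\le L_{h^0}\|v_F^{v_0}(0)\|_{X_1}\le C\tfrac{\delta+\varepsilon}{\varepsilon(N_S^\zeta-N_F^\zeta)}\|v_0\|_{X_1}$ by the previous step, so it remains to bound $\|u^{v_0}(0)-h^0(v^{v_0}(0))\|_{X_\alpha}$. Writing $B=(d+\delta)A-\varepsilon^{-1}\textnormal{Id}$ and using $f(h^0(v^{v_0}(0)),v^{v_0}(0))=h^0(v^{v_0}(0))$ together with $\int_{-\infty}^0\textnormal{e}^{B(-s)}\varepsilon^{-1}\,\textnormal{d}s=(\textnormal{Id}-\varepsilon(d+\delta)A)^{-1}$, I subtract $(\textnormal{Id}-\varepsilon(d+\delta)A)^{-1}h^0(v^{v_0}(0))$ from the fixed-point formula for $u^{v_0}(0)$ and split
\[
u^{v_0}(0)-h^0(v^{v_0}(0))=\big[(\textnormal{Id}-\varepsilon(d+\delta)A)^{-1}-\textnormal{Id}\big]h^0(v^{v_0}(0))+\int_{-\infty}^0\textnormal{e}^{B(-s)}\big[\varepsilon^{-1}\Delta f(s)+\phi(u^{v_0},v^{v_0})(s)\big]\,\textnormal{d}s=:R+Q,
\]
where $\Delta f(s)=f(u^{v_0},v^{v_0})(s)-f(h^0(v^{v_0}(0)),v^{v_0}(0))$. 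For $R$ I use analytic smoothing: from $(\textnormal{Id}-\varepsilon(d+\delta)A)^{-1}=\int_0^\infty\textnormal{e}^{-\sigma}\textnormal{e}^{\varepsilon(d+\delta)A\sigma}\,\textnormal{d}\sigma$ and $\|\textnormal{e}^{tA}\|_{\mathcal{B}(X,X_\alpha)}\lesssim t^{-\alpha}\textnormal{e}^{\omega_A t}$ one gets $\|(\textnormal{Id}-\varepsilon(d+\delta)A)^{-1}-\textnormal{Id}\|_{\mathcal{B}(X_1,X_\alpha)}=\|\varepsilon(d+\delta)A(\textnormal{Id}-\varepsilon(d+\delta)A)^{-1}\|_{\mathcal{B}(X_1,X_\alpha)}\le C\varepsilon^{1-\alpha}$, so $\|R\|_{X_\alpha}\le C\varepsilon^{1-\alpha}\|h^0(v^{v_0}(0))\|_{X_1}\le C\varepsilon^{1-\alpha}\|v_0\|_{X_1}$. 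For $Q$ I use $\|\textnormal{e}^{Bt}\|_{\mathcal{B}(X_1)}\le C_A\textnormal{e}^{((d+\delta)\omega_A-\frac1\varepsilon)t}$ (the perturbation bound) and the continuous embedding $X_1\hookrightarrow X_\alpha$: the $\phi$-part costs $\le C\varepsilon\|v_0\|_{X_1}$ (one factor $\varepsilon$ from integrating the fast kernel against $\textnormal{e}^{\eta s}$), while $\|\Delta f(s)\|_{X_1}\le L_f\big(\|u^{v_0}(s)-h^0(v^{v_0}(s))\|_{X_1}+(1+L_{h^0})\|v^{v_0}(s)-v^{v_0}(0)\|_{X_1}\big)$.

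\textbf{Main obstacle and conclusion.} The genuinely technical point is the term $\varepsilon^{-1}L_f\|u^{v_0}(s)-h^0(v^{v_0}(s))\|$ inside $Q$: the crude triangle-inequality bound $\|u^{v_0}(s)\|+\|h^0(v^{v_0}(s))\|\lesssim\|v_0\|\textnormal{e}^{\eta s}$ only gives an $\mathcal O(\|v_0\|)$ contribution, not a small one. To overcome this I would set $\Theta:=\sup_{t\le 0}\textnormal{e}^{-\eta t}\|u^{v_0}(t)-h^0(v^{v_0}(t))\|_{X_\alpha}$, run the same $R+Q$ splitting at a general time $t\le0$, and close a self-consistent inequality of the form $\Theta\le C\big(\varepsilon^{1-\alpha}+\tfrac{\delta+\varepsilon}{\varepsilon(N_S^\zeta-N_F^\zeta)}\big)\|v_0\|_{X_1}+\kappa_0\Theta$ with $\kappa_0<1$. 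The absorption constant $\kappa_0$ is governed by $L_f$ together with the spectral-gap quantity in~\eqref{G} (this is exactly where $L_f<1$ and \eqref{G} are used), and for the absorption to operate in the $X_\alpha$-topology one must bring in the analytic-smoothing gain of the fast semigroup — equivalently, Lipschitz control of $f$ between the interpolation spaces, interpolating the uniform bound $\|u^{v_0}(s)-h^0(v^{v_0}(s))\|_{X_1}\lesssim\|v_0\|\textnormal{e}^{\eta s}$ against the small $X_\alpha$-size of $R$. The increment $\|v^{v_0}(s)-v^{v_0}(0)\|_{X_1}$ is estimated by a Hölder-in-time parabolic bound (using $\|(\textnormal{e}^{dA\tau}-\textnormal{Id})x\|_{X_1}\lesssim\tau^\beta\|x\|_{X_{1+\beta}}$ and the smoothing of $\mathcal{L}_{\varepsilon,\zeta}$, which places $v_S^{v_0}$ in a space of regularity slightly above $X_1$), and its contribution to $Q$ is again $\mathcal O(\tfrac{\delta+\varepsilon}{\varepsilon(N_S^\zeta-N_F^\zeta)}+\varepsilon)\|v_0\|_{X_1}$ after integrating against the fast kernel. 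Collecting $R$, $Q$, the reference-point reduction and the fast-component bound yields the asserted estimate in $X_\alpha\times X_1$, uniformly in $v_0$ and in $\varepsilon,\zeta$ with $\varepsilon\zeta^{-1}\le c<1$. I expect the self-consistent control of $\Theta$, and the attendant Hölder-in-time regularity of the Lyapunov--Perron curve in the slow variable, to be the main burden of the argument.
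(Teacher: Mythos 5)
Your overall architecture is close to the paper's: the fast component is bounded exactly as in the paper (re-running the contraction estimate of Proposition \ref{existence slow manifold} on the $v_F$-row of the fixed point, giving the factor $\tfrac{\delta+\varepsilon}{\varepsilon(N_S^\zeta-N_F^\zeta)}$), the identity $\int_{-\infty}^0 \textnormal{e}^{[(d+\delta)A-\varepsilon^{-1}\textnormal{Id}](-s)}\varepsilon^{-1}\,\textnormal{d}s=(\textnormal{Id}-\varepsilon(d+\delta)A)^{-1}$ combined with $h^0=f(h^0,\cdot)$ is precisely the paper's device, the resolvent-smoothing bound for your term $R$ is Proposition \ref{prop:operator estimate}, and your self-consistent absorption of $\Theta$ with $\kappa_0<1$ coming from \eqref{G} and $L_f<1$ is equivalent to the Gronwall step in the paper (passing through $h^0(v^{v_0}(0))$ rather than $h^0(v_0)$ is harmless, as you note). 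The common caveat that the Lipschitz bounds for $f,\phi$ from \eqref{N2} are invoked in the $X_\alpha$-topology is glossed over in the paper as well, so I do not count it against you.

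The genuine gap is in your treatment of the increment term created by freezing the reference at $t=0$: inside $Q$ you must control $\varepsilon^{-1}L_f\int_{-\infty}^0 \textnormal{e}^{((d+\delta)\omega_A-\varepsilon^{-1})(-s)}\|v^{v_0}(s)-v^{v_0}(0)\|\,\textnormal{d}s$, and your proposed route (Hölder in time in $X_1$ via a regularity of the Lyapunov--Perron curve ``slightly above $X_1$'') is neither available under the stated assumptions nor quantitatively sufficient. The fixed point is only constructed in $C_\eta$ with values in $X_1$; on $X_S^\zeta$ any gain of spatial regularity costs powers of $\zeta^{-1}$, so the best time-modulus you get in $X_1$ is $\|v_S^{v_0}(s)-v_0\|_{X_1}\lesssim \zeta^{-1}|s|\,\|v_0\|_{X_1}$, and integrating this against the fast kernel yields a contribution of order $\varepsilon\zeta^{-1}\|v_0\|_{X_1}$, which under the standing hypothesis $\varepsilon\zeta^{-1}\leq c<1$ is only $O(1)\|v_0\|_{X_1}$; since this term is additive (it does not multiply $\Theta$) it cannot be absorbed, and the claimed rate is lost --- in particular your asserted order $\mathcal{O}\big(\tfrac{\delta+\varepsilon}{\varepsilon(N_S^\zeta-N_F^\zeta)}+\varepsilon\big)$ for it is not delivered by the tools you cite. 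The paper avoids this term altogether: it keeps the reference time-varying, i.e.\ compares with $f(h^0(v_S^*(s)),0,v_S^*(s))$ inside the integral, splits at an intermediate time $t_0$ and integrates by parts, so the time derivative lands on $f(h^0(v_S^*),0,v_S^*)$ and is only needed in the weak norm $X$ (where $\|\partial_t v_S^*\|_X\lesssim\|v_0\|_{X_1}$ is available), paired with the resolvent $(\varepsilon(d+\delta)A-\textnormal{Id})^{-1}$, whose smoothing together with the factor $\varepsilon$ from the time integral produces the $\varepsilon^{1-\alpha}$ rate, before letting $t_0\to-\infty$ and applying Gronwall. If you want to keep your frozen-reference decomposition, the repair is to measure the increment in $X_\alpha$ and interpolate the time-Lipschitz bound in $X$ against the uniform $X_1$ bound, $\|v_S^{v_0}(s)-v_0\|_{X_\alpha}\lesssim |s|^{1-\alpha}\|v_0\|_{X_1}$, which after integration against the fast kernel gives $\varepsilon^{1-\alpha}\|v_0\|_{X_1}$ and recovers the statement --- but this must replace, not supplement, the unproven higher-regularity claim.
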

Note, that as the proof is rather technical we place it at the end of this paper in Section \ref{sec.proof of prop distance}.

\begin{remark}
    Let us put this result into better context. The distance between the critical manifold $S_0$ and the slow manifold $S_{\varepsilon,\zeta}$ depends on three parameters; the diffusion coefficient of the cross diffusion term $\delta$, the time-scale separation $\varepsilon$ and the spectral gap of the differential operator $A$ given by $N_S^\zeta-N_F^\zeta$.
    Now, coming back to the fast-reaction system from the introduction, the Laplace operator $\Delta$ on $L^2([0,2\pi])$ with Dirichlet boundary conditions has a spectral gap $N_S^\zeta-N_F^\zeta$ of order $\mathcal{O}(\zeta^{-1/2})$. 
    This means in order to obtain convergence the diffusion coefficient $\delta=\delta(\varepsilon)$ has to satisfy $\lim_{\varepsilon\to 0} \frac{\delta(\varepsilon)}{\varepsilon^{1/2}}=0$.
\end{remark}
\begin{remark}
    The reason we cannot compute the distance between the critical manifold and the slow manifold in $X_1$ is due to the fact that the perturbed operator $\textnormal{Id}-\varepsilon A$ is not bounded as a mapping between $X_1\to X_1$.
    Hence, we have to work in the less regular space $X_\alpha$, for $\alpha\in [0,1)$.
    Moreover, we observe that the higher the regularity is for the estimate of the first component given in the space $X_\alpha$, where $\alpha\in [0,1)$, the slower the convergence rate is, i.e. the rate of convergence is $\varepsilon^{1-\alpha}$.
\end{remark}

\subsection{Differentiability of the Slow Manifold}

We will show the differentiability of the slow manifold by showing the differentiability of its graph, i.e. the mapping 
\begin{align*}
    \big(X_S^\zeta\cap X_1,\|\cdot\|_{X_1}\big)&\to   \big( X_1,\|\cdot\|_{X_1}\big)\times   \big(X_S^\zeta\cap X_1,\|\cdot\|_{X_1}\big),\\
    v_0&\mapsto \big( h_u^{\varepsilon,\zeta}(v_0),h_{X_F^\zeta}^{\varepsilon,\zeta}(v_0)\big)
\end{align*}
is differentiable.
\begin{lemma}\label{lemma.differentiability of slow manifold}
    Under the assumptions of Section \ref{Sec:Assumptions} the slow manifold $S_{\varepsilon,\zeta}$ is differentiable.
\end{lemma}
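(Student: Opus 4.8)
The plan is to show differentiability of the graph map $v_0 \mapsto h^{\varepsilon,\zeta}(v_0) = (u^{v_0}(0), v_F^{v_0}(0))$ by first establishing differentiability of the full fixed-point map $v_0 \mapsto (u^{v_0}, v_F^{v_0}, v_S^{v_0}) \in C_\eta$ and then composing with the (bounded, linear) evaluation-at-$t=0$ functional. The natural tool is a parameter-dependent version of the fixed-point / implicit function theorem: the Lyapunov--Perron operator $\mathcal{L}_{v_0,\varepsilon,\zeta}$ depends on $v_0$ only through the affine term $\mathrm{e}^{dA(\cdot)}v_0 + \delta A(\delta A - \tfrac{1}{\varepsilon})^{-1}(\cdots)\pr_{X_S^\zeta} v_0$ appearing in the $v_S$-component, which is \emph{linear} (hence smooth) in $v_0$; the nonlinear part of $\mathcal{L}$ is, by \eqref{N2}, continuously Fréchet differentiable in $(u,v_F,v_S)$, with derivative norm controlled by $L_f, L_\phi, L_\psi$. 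Since the uniform contraction constant $\tilde L < 1$ from Proposition \ref{existence slow manifold} is \emph{independent of $v_0$}, the uniform contraction mapping theorem with parameters (see e.g. \cite{eldering2013normally,duan2014effective}) applies and yields that $v_0 \mapsto (u^{v_0},v_F^{v_0},v_S^{v_0})$ is $C^1$ from $(X_S^\zeta \cap X_1)$ into $C_\eta$.

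\medskip
\noindent The key steps, in order, are: (i) write $\mathcal{L}_{v_0,\varepsilon,\zeta}(U) = \mathcal{R}(v_0) + \mathcal{N}(U)$ with $U = (u,v_F,v_S)$, where $\mathcal{R}(v_0)$ collects the $v_0$-dependent affine terms and $\mathcal{N}$ the integral (Nemytskii-type) terms, and check $\mathcal{R} \in \mathcal{B}(X_S^\zeta \cap X_1, C_\eta)$; (ii) verify that $\mathcal{N}: C_\eta \to C_\eta$ is continuously Fréchet differentiable with $\|\mathrm{D}\mathcal{N}(U)\|_{\mathcal{B}(C_\eta)} \le \tilde L < 1$ for all $U$ — this is exactly the computation in the proof of Proposition \ref{existence slow manifold}, now applied to the linearized nonlinearities $\mathrm{D} f, \mathrm{D}\phi, \mathrm{D}\psi$ in place of difference quotients, so the same semigroup dichotomy bounds \eqref{X3} and the spectral gap \eqref{G} give the estimate; (iii) conclude $\mathrm{Id} - \mathrm{D}\mathcal{N}(U)$ is invertible on $C_\eta$ by Neumann series, uniformly in $U$; (iv) apply the implicit function theorem to $F(v_0, U) := U - \mathcal{R}(v_0) - \mathcal{N}(U)$, whose partial derivative $\mathrm{D}_U F(v_0,U) = \mathrm{Id} - \mathrm{D}\mathcal{N}(U)$ is a Banach-space isomorphism, to obtain that the unique solution $U = U^{v_0}$ depends $C^1$ on $v_0$; (v) compose with evaluation at $t = 0$, which is linear and bounded $C_\eta \to X_1 \times (X_F^\zeta \cap X_1) \times (X_S^\zeta \cap X_1)$ (since $\eta < 0$ makes the weight $\mathrm{e}^{-\eta t}|_{t=0} = 1$), and project onto the first two components to get differentiability of $h^{\varepsilon,\zeta}$.

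\medskip
\noindent **Main obstacle.** The genuinely delicate point is step (ii): the Fréchet differentiability of the Nemytskii-type maps $U \mapsto [t \mapsto \int_{-\infty}^t (\cdots) f(U(s))\,\mathrm{d}s]$ on the weighted space $C_\eta = C((-\infty,0], \mathrm{e}^{\eta t}; \cdots)$, and in particular the \emph{continuity} of $U \mapsto \mathrm{D}\mathcal{N}(U)$ in operator norm. Substitution operators are notoriously only Fréchet differentiable under the right hypotheses; here the hypothesis \eqref{N2} — that $f, \phi, \psi$ are $C^1$ from $X_1 \times X_1$ into $X_1$ (or $Y_1$) with globally bounded derivatives — is precisely what is needed, but one must still check that the derivative $\mathrm{D}\mathcal{N}(U)H$, given formally by replacing $f(U(s))$ with $\mathrm{D} f(U(s))H(s)$, indeed defines an element of $C_\eta$ and approximates increments to first order uniformly in $t \le 0$, using the exponential weight to control the improper integral at $s \to -\infty$. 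Once this is granted, the contraction estimate transfers verbatim from Proposition \ref{existence slow manifold} because $\|\mathrm{D} f(U(s))H(s)\|_{X_1} \le L_f \|H(s)\|_{X_1}$ plays the role of $\|f(U(s)) - f(\tilde U(s))\|_{X_1} \le L_f\|U(s)-\tilde U(s)\|_{X_1}$, and the proof closes by the standard uniform-contraction argument.
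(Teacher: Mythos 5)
Your proposal is correct and takes essentially the same route as the paper: the paper carries out your steps (ii)--(iv) by hand, writing the increment of the fixed point as an affine term in the initial data plus a linear operator $\mathrm{T}$ (the linearized Lyapunov--Perron operator, with $\|\mathrm{T}\|_{\mathcal{B}(C_\eta)}<1$ by the same contraction estimates as in Proposition \ref{existence slow manifold}) applied to the increment plus a remainder, and then proves the remainder is $o\big(\|\tilde u_0-u_0\|_{X_1}+\|\tilde v_0-v_0\|_{X_1}\big)$ by splitting the time integral at a large negative time and using the exponential weight together with dominated convergence and continuity of $\mathrm{D}f,\mathrm{D}\phi,\mathrm{D}\psi$ --- exactly the verification you identify as the main obstacle. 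Evaluating the resulting identity at $t=0$ (where the dependence reduces to $v_0$) and inverting $\mathrm{Id}-\mathrm{T}$ by Neumann series then yields differentiability of $h^{\varepsilon,\zeta}$, matching your step (v).
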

Again, as the proof of the statement is rather technical we moved it to the end of this paper in Section \ref{sec.proof of lemma}.\\
Thus, we have shown that the slow manifold is indeed a differentiable and Lipschitz continuous manifold and therefore has the same regularity as the critical manifold.
This completes the third step in the definition of the slow manifold.

\subsection{Attraction of Trajectories}
The goal of this section is to prove that the slow manifold is exponentially attractive, i.e. all solutions are attracted towards the slow manifold in exponential time.
To this end, let the initial data $v_0$ satisfy $v_0\in X_S^\zeta\cap X_1$, then $(h_u^{\varepsilon\zeta}(v_0),h_{X_F^\zeta}^{\varepsilon\zeta}(v_0),v_0)\in S_{\varepsilon,\zeta}$.\\
Now, let $z:=(u,v_F,v_S)$ be a solution to system \eqref{general system split} with initial values $(h_u^{\varepsilon\zeta}(v_0),h_{X_F^\zeta}^{\varepsilon\zeta}(v_0),v_0)$.
Moreover, let $z^\varepsilon:= (u^\varepsilon,v_F^\varepsilon,v_S^\varepsilon)$ be the solution to \eqref{general system split} with initial values $(u_0,v_{0,F},v_{0,S})$.
With this we make the following observations.
It holds that
\begin{align*}
  \partial_t z(t) = \mathcal{A}^\varepsilon z(t)+ F^\varepsilon(z(t)),
\end{align*}
where we recall the notation for $\mathcal{A}^\varepsilon $ from Proposition \ref{existence slow manifold} and denote the nonlinear terms $$F^\varepsilon(z(t))=\begin{pmatrix}
    \varepsilon^{-1} f(u(t),v_F(t),v_S(t))+ \phi(u(t),v_F(t),v_S(t))\\ \pr_{X_F^\zeta}\psi(u(t),v_F(t),v_S(t)) \\ \pr_{X_S^\zeta}\psi(u(t),v_F(t),v_S(t))
\end{pmatrix} .$$
As $S_{\varepsilon,\zeta}$ is an invariant and differentiable manifold it holds that 
$$z(t)=h^{\varepsilon,\zeta}(v_S(t))= \big(h^{\varepsilon,\zeta}_u(v_S(t)),h_{X_F}^{\varepsilon,\zeta}(v_S(t)),v_S(t)\big)^T.$$ 
Therefore, 
\begin{align*}
    \partial_t z(t)= \partial_t h^{\varepsilon,\zeta} (v_S(t))&= \textnormal D h^{\varepsilon,\zeta}(v_S(t)) [\partial_t v_S(t)]\\
    &= \textnormal D h^{\varepsilon,\zeta}(v_S(t)) [d \Delta v_S(t) +\delta\Delta \pr_{X_S^\zeta} u(t)+\pr_{X_S^\zeta}\psi(u(t),v_F(t),v_S(t))].
\end{align*}
Then, at time $t=0$ and using that $(u(t),v_F(t))= (h^{\varepsilon,\zeta}_u(v_S(t)),h^{\varepsilon,\zeta}_{X_F^\zeta}(v_S(t)))$ we have
\begin{align}\label{trajectory comp 1}
\begin{split}
     h^{\varepsilon,\zeta}(v_0) =&\big(\mathcal{A}\varepsilon\big)^{-1} \textnormal D h^{\varepsilon,\zeta}_u(v_0) \big[d \Delta v_0 +\delta\Delta \pr_{X_S^\zeta} h^{\varepsilon,\zeta}_u(v_0)+ \pr_{X_S^\zeta}\psi( h^{\varepsilon,\zeta}_u(v_0), h^{\varepsilon,\zeta}_{X_F^\zeta}(v_0),v_0)\big]\\
  &-\big(\mathcal{A}\varepsilon\big)^{-1} F(h^{\varepsilon,\zeta}(v_0)).
\end{split}
 \end{align}
As these equations hold for arbitrary $v_0\in X_S^\zeta\cap X_1$ they also hold for $v_0=v_S^\varepsilon(t)$.
Moreover, by the differentiability of $h^{\varepsilon,\zeta}$ it holds that
\begin{align}\label{trajectory comp 2}
    \partial_t h^{\varepsilon,\zeta}(v_S^\varepsilon(t))&= \textnormal D h^{\varepsilon,\zeta}(v_S^\varepsilon(t)) \big[d\Delta v_S^\varepsilon(t) +\delta\Delta \pr_{X_S^\zeta} u^\varepsilon(t)+\pr_{X_S^\zeta}\psi(u^\varepsilon(t),v_F^\varepsilon(t),v_S^\varepsilon(t))\big].
\end{align}

\begin{prop}
    Let the above assumptions hold. Then, there  are constants $C,c>0$ such that
    \begin{align*}
        \left\| z^\varepsilon(t)-h^{\varepsilon,\zeta}(v_S^\varepsilon(t)) \right\|_{X_1}\leq C\textnormal{e}^{-ct}\big(\|v_0-h_{X_F^\zeta}^{\varepsilon,\zeta}(v_{S,0})\|_{X_1} +\|u_0-h_u^{\varepsilon,\zeta}(v_{S,0})\|_{X_1} \big)
    \end{align*}
    for all $t>0$.
\end{prop}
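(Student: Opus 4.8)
The plan is to estimate the difference $z^\varepsilon(t)-h^{\varepsilon,\zeta}(v_S^\varepsilon(t))$ by deriving a differential inequality for its $X_1$-norm and then applying a Gr\"onwall-type argument, exploiting the exponential dichotomy from \eqref{X3} together with the spectral gap condition \eqref{G}. First I would subtract \eqref{trajectory comp 2} from the evolution equation $\partial_t z^\varepsilon = \mathcal{A}^\varepsilon z^\varepsilon + F^\varepsilon(z^\varepsilon)$ satisfied by $z^\varepsilon$, and use the identity \eqref{trajectory comp 1} (evaluated at $v_0 = v_S^\varepsilon(t)$) to express $\mathcal{A}^\varepsilon h^{\varepsilon,\zeta}(v_S^\varepsilon(t))$ in terms of $\textnormal{D}h^{\varepsilon,\zeta}$ applied to the slow dynamics and $F^\varepsilon(h^{\varepsilon,\zeta}(v_S^\varepsilon(t)))$. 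Writing $w(t):= z^\varepsilon(t)-h^{\varepsilon,\zeta}(v_S^\varepsilon(t))$, this yields an equation of the schematic form
\begin{align*}
    \partial_t w(t) = \mathcal{A}^\varepsilon w(t) + \big[F^\varepsilon(z^\varepsilon(t))-F^\varepsilon(h^{\varepsilon,\zeta}(v_S^\varepsilon(t)))\big] - \textnormal{D}h^{\varepsilon,\zeta}(v_S^\varepsilon(t))\big[\pr_{X_S^\zeta}(\partial_t v_S^\varepsilon(t) - \textnormal{``slow part of''}\,\partial_t v_S(t))\big],
\end{align*}
where the last term is controlled by the Lipschitz constant $L$ of $h^{\varepsilon,\zeta}$ (Proposition \ref{Lipschitz cont}) times a multiple of $\|w(t)\|_{X_1}$, since the slow component of $w$ drives this discrepancy.

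Next I would apply the variation-of-constants formula on the split spaces. The component in $X_F^\zeta$ (and the $u$-component, which decays at rate $\zeta^{-1}(\varepsilon(d+\delta)\omega_A-1)$) is integrated forward from $t=0$ using the contractive estimates for $\textnormal{e}^{[(d+\delta)A-\varepsilon^{-1}\textnormal{Id}]t}$ and for $\textnormal{e}^{tdA}$ on $X_F^\zeta$ from \eqref{X3}; here one picks up the exponentially small prefactor and the initial discrepancies $\|u_0-h_u^{\varepsilon,\zeta}(v_{S,0})\|_{X_1}$ and $\|v_{0,F}-h_{X_F^\zeta}^{\varepsilon,\zeta}(v_{S,0})\|_{X_1}$. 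For the slow $v_S$-component one cannot integrate forward with decay, but the point is that $v_S^\varepsilon(t)$ and $v_S(t)$ solve the \emph{same} reduced-type equation up to the nonlinear Lipschitz terms evaluated at $w$, so a direct Gr\"onwall estimate on $\|v_S^\varepsilon(t)-v_S(t)\|_{X_1}$ closes together with the fast estimates. Combining the two, and absorbing the $\textnormal{D}h^{\varepsilon,\zeta}$-term using $\|\textnormal{D}h^{\varepsilon,\zeta}\|\le L<1$ and the smallness of $\delta$, one arrives at
\begin{align*}
    \|w(t)\|_{X_1} \le C\,\textnormal{e}^{\gamma t}\,\big(\|u_0-h_u^{\varepsilon,\zeta}(v_{S,0})\|_{X_1}+\|v_{0,F}-h_{X_F^\zeta}^{\varepsilon,\zeta}(v_{S,0})\|_{X_1}\big) + (\text{contraction factor})\cdot\sup_{s\le t}\|w(s)\|_{X_1},
\end{align*}
and the spectral gap condition \eqref{G}, which is exactly what made $\tilde L<1$ in Proposition \ref{existence slow manifold}, guarantees that the negative exponential rate $\gamma=-c<0$ survives after absorbing the self-referential term.

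The main obstacle I anticipate is bookkeeping the coupling between the three components in a way that keeps the decay rate strictly negative: the $u$-component feeds into both $v_F$ and $v_S$ equations through the $\delta A\,\pr u$ cross-diffusion terms, and $\textnormal{D}h^{\varepsilon,\zeta}$ (whose norm is only known to be $\le L$, not small) multiplies the slow driver. One must check that the same chain of inequalities that yielded the contraction constant $\tilde L<1$ in the Lyapunov-Perron fixed-point argument can be re-run for the forward-in-time difference equation, now with the exponential weight $\textnormal{e}^{ct}$ rather than $\textnormal{e}^{\eta t}$; the sign of $(1-\varepsilon\zeta^{-1})(\varepsilon(d+\delta)\omega_A-1)-\varepsilon\tfrac12(N_S^\zeta+N_F^\zeta)<0$ from \eqref{parameter inequality} is what provides the room for a positive constant $c$. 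A secondary technical point is that the $u$-variable lives in the full $X_1$ (not a split subspace), so its contribution to the slow equation must be tracked through $\pr_{X_S^\zeta}$; but this is handled exactly as in the proof of Proposition \ref{existence slow manifold} and introduces no new difficulty beyond notation.
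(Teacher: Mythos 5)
Your setup (define $w(t)=z^\varepsilon(t)-h^{\varepsilon,\zeta}(v_S^\varepsilon(t))$, use the invariance identity \eqref{trajectory comp 1} together with the chain rule \eqref{trajectory comp 2} to turn the mismatch $\mathcal{A}^\varepsilon h^{\varepsilon,\zeta}-\partial_t h^{\varepsilon,\zeta}$ into terms that are Lipschitz in $w$, and then run variation of constants plus a Gronwall argument against the strongly decaying semigroups from \eqref{X3}) is exactly the skeleton of the paper's proof. But your treatment of the slow component contains a genuine gap. You propose to handle it by introducing the on-manifold trajectory $z(t)$ with slow part $v_S(t)$ and estimating $\|v_S^\varepsilon(t)-v_S(t)\|_{X_1}$ by Gronwall, claiming this "closes together with the fast estimates." This is both unnecessary and would break the exponential decay: the quantity being estimated is the distance of $z^\varepsilon(t)$ to the graph \emph{at its own slow coordinate}, and since the third component of $h^{\varepsilon,\zeta}(v_S^\varepsilon(t))$ is $v_S^\varepsilon(t)$ itself, the slow component of $w(t)$ vanishes identically — there is nothing slow to estimate, and the paper never compares $v_S^\varepsilon$ with $v_S$ in this proposition (that would be an asymptotic-phase statement, which is stronger and not claimed). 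If your scheme genuinely needs $\|v_S^\varepsilon(t)-v_S(t)\|_{X_1}$ to be small or decaying, a plain Gronwall bound only gives growth of order $\textnormal{e}^{Ct}$ along the slow flow, and feeding that back into your final display destroys the negative rate $-c$. The correct closing of the argument uses only the $u$- and $v_F$-components: the kernels $\textnormal{e}^{((d+\delta)\omega_A-\varepsilon^{-1})(t-s)}$ and $\textnormal{e}^{(\zeta^{-1}(\varepsilon(d+\delta)\omega_A-1)+N_F^\zeta)(t-s)}$ are strongly decaying, the $\textnormal{D}h^{\varepsilon,\zeta}$-terms (cross-diffusion via Proposition \ref{prop:operator projection estimate} and the $\psi$-difference) are Lipschitz in $\|w(s)\|_{X_1}$, and a generalized Gronwall inequality then yields $\|w(t)\|_{X_1}\leq C\textnormal{e}^{-ct}\big(\|u_0-h_u^{\varepsilon,\zeta}(v_{S,0})\|_{X_1}+\|v_0-h_{X_F^\zeta}^{\varepsilon,\zeta}(v_{S,0})\|_{X_1}\big)$.

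A secondary inaccuracy: you absorb the $\textnormal{D}h^{\varepsilon,\zeta}$-term by asserting $\|\textnormal{D}h^{\varepsilon,\zeta}\|\leq L<1$. Proposition \ref{Lipschitz cont} only gives the Lipschitz constant $L=M_A(1-\tilde L)^{-1}$, which need not be smaller than one; the paper does not need $L<1$ here — the absorption comes from the smallness of $\delta$, the factor $\varepsilon$, and the size of the fast decay rates entering the Gronwall step, not from a contraction property of $h^{\varepsilon,\zeta}$ itself. With these two points corrected, your outline coincides with the paper's argument.
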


\begin{proof}
We compute
\begin{align*}
    z^\varepsilon(t)-h^{\varepsilon,\zeta}(v_S^\varepsilon(t))&= \textnormal{e}^{\mathcal{A}^\varepsilon t}z_0+\int_0^t \textnormal{e}^{\mathcal{A}^\varepsilon (t-s)} F^\varepsilon(z(s))\,\textnormal{d}s -h^{\varepsilon,\zeta}(v_S^\varepsilon(t))\\
    &-\textnormal{e}^{\mathcal{A}^\varepsilon t} h^{\varepsilon,\zeta}(v_{S,0}) +-\textnormal{e}^{\mathcal{A}^\varepsilon t} h^{\varepsilon,\zeta}(v_{S,0})\\
    &= \textnormal{e}^{\mathcal{A}^\varepsilon t} \big(z_0-h^{\varepsilon,\zeta}(v_{S,0})\big) +\int_0^t \textnormal{e}^{\mathcal{A}^\varepsilon (t-s)} F^\varepsilon(z(s))\,\textnormal{d}s\\
    &- \int_0^t \partial_s \textnormal{e}^{\mathcal{A}^\varepsilon (t-s)}h^{\varepsilon,\zeta}(v_S^\varepsilon(s))\,\textnormal{d}s\\
    &=  \textnormal{e}^{\mathcal{A}^\varepsilon t} \big(z_0-h^{\varepsilon,\zeta}(v_{S,0})\big) +\int_0^t \textnormal{e}^{\mathcal{A}^\varepsilon (t-s)} F^\varepsilon(z(s))\,\textnormal{d}s\\
    &+\int_0^t  \textnormal{e}^{\mathcal{A}^\varepsilon (t-s)} \big[ \mathcal{A}^\varepsilon h^{\varepsilon,\zeta}(v_S^\varepsilon(s))-\partial_s h^{\varepsilon,\zeta}(v_S^\varepsilon(s))\big]\,\textnormal{d}s.
\end{align*}
Using equations \eqref{trajectory comp 1} and \eqref{trajectory comp 2} yields
\begin{align*}
    z^\varepsilon(t)-h^{\varepsilon,\zeta}(v_S^\varepsilon(t)) &=  \textnormal{e}^{\mathcal{A}^\varepsilon t} \big(z_0-h^{\varepsilon,\zeta}(v_{S,0})\big) +\int_0^t \textnormal{e}^{\mathcal{A}^\varepsilon (t-s)} \big[ F^\varepsilon(z(s))- F^\varepsilon(h^{\varepsilon,\zeta}(v_S^\varepsilon(s)))\big] \,\textnormal{d}s\\
    &+\int_0^t\textnormal{e}^{\mathcal{A}^\varepsilon (t-s)} \textnormal{D}h^{\varepsilon,\zeta}(v_S^\varepsilon(s)) \delta A \pr_{X_S^\zeta} \big[u^\varepsilon(s)-h^{\varepsilon,\zeta}_u(v_S^\varepsilon(s)) \big]\,\textnormal{d}s\\
      &+\int_0^t\textnormal{e}^{\mathcal{A}^\varepsilon (t-s)} \textnormal{D}h^{\varepsilon,\zeta}(v_S^\varepsilon(s))\\
      &\qquad \qquad \times \pr_{X_S^\zeta} \big[ \psi(u^\varepsilon(s),v_F^\varepsilon(s),v_S^\varepsilon(s))-\psi( h^{\varepsilon,\zeta}_u(v_S^\varepsilon(s)), h^{\varepsilon,\zeta}_{X_F^\zeta}(v_S^\varepsilon(s)),v_S^\varepsilon(s)) \big]\,\textnormal{d}s.
\end{align*}
Then we estimate
\begin{align*}
   &\|  z^\varepsilon(t)-h^{\varepsilon,\zeta}(v_S^\varepsilon(t))\|_{X_1} \leq \\
   &\leq C_A \textnormal{e}^{((d+\delta)\omega_A -\varepsilon^{-1})t}\|u_0-h_u^{\varepsilon,\zeta}(v_{S,0})\|_{X_1}+ C_A \textnormal{e}^{(\zeta^{-1}(\varepsilon (d+\delta)\omega_A -1)+N_F^\zeta)t} \|v_0-h_{X_F^\zeta}^{\varepsilon,\zeta}(v_{S,0})\|_{X_1}\\
   &\quad +\bigg\| \delta A\textnormal{e}^{d A t}\int_0^t \textnormal{e}^{(\delta A -\varepsilon^{-1}Id)s}\,\textnormal{d} s \,\pr_{X_F^\zeta}\big[u_0-h_u^{\varepsilon,\zeta}(v_{S,0})\big] \bigg\|_{X_1}\\
   &\quad  + C_A (\varepsilon^{-1} L_f+L_\phi) \int_0^t  \textnormal{e}^{((d+\delta)\omega_A -\varepsilon^{-1})(t-s)} \|z^\varepsilon(s)-h^{\varepsilon,\zeta}(v_S^\varepsilon(s))\|_{X_1} \,\textnormal{d}s\\
   &\quad +\bigg \| \int_0^t \varepsilon^{-1}\delta A \textnormal{e}^{d A (t-s)}\int_0^{t-s} \textnormal{e}^{(\delta A -\varepsilon^{-1}Id)r}\,\textnormal{d}r\\
   &\qquad \qquad \times \,\pr_{X_F^\zeta} \big[f(u^\varepsilon(s), v_F^\varepsilon(s),v_S^\varepsilon(s))-f(h_u^{\varepsilon,\zeta}(v_S^\varepsilon(s)),h_{X_F^\zeta}^{\varepsilon,\zeta}(v_S^\varepsilon(s)),v_S^\varepsilon(s)) \big]\,\textnormal{d}s\bigg\|_{X_1}\\
    &\quad +\bigg \| \int_0^t \delta A \textnormal{e}^{d A (t-s)}\int_0^{t-s} \textnormal{e}^{(\delta A -\varepsilon^{-1}Id)r}\,\textnormal{d}r\\
   &\qquad \qquad \times \pr_{X_F^\zeta}  \big[\phi(u^\varepsilon(s), v_F^\varepsilon(s),v_S^\varepsilon(s))-\phi(h_u^{\varepsilon,\zeta}(v_S^\varepsilon(s)),h_{X_F^\zeta}^{\varepsilon,\zeta}(v_S^\varepsilon(s)),v_S^\varepsilon(s)) \big]\,\textnormal{d}s\bigg\|_{X_1}\\
   &\quad +C_A L_h \int_0^t \textnormal{e}^{((d+\delta)\omega_A -\varepsilon^{-1})(t-s)}\|\delta A\, \pr_{X_S^\zeta} \big[ u^\varepsilon(s)-h_u^{\varepsilon,\zeta}(v_S^\varepsilon(s))\big]\|_{X_1}\,\textnormal{d}s\\
   &\quad+C_A L_h \int_0^t \textnormal{e}^{(\zeta^{-1}(\varepsilon (d+\delta)\omega_A -1)+N_F^\zeta)(t-s)}\|\delta A\, \pr_{X_S^\zeta} \big[ u^\varepsilon(s)-h_u^{\varepsilon,\zeta}(v_S^\varepsilon(s))\big]\|_{X_1}\,\textnormal{d}s\\
   &\quad +\bigg\| \int_0^t \delta A \textnormal{e}^{d A (t-s)}\int_0^{t-s} \textnormal{e}^{(\delta A -\varepsilon^{-1}Id)r}\,\textnormal{d}r\, \pr_{X_F^\zeta} \textnormal{D}h^{\varepsilon,\zeta}_{X_F^\zeta}(v_S^\varepsilon(s))\delta A\, \pr_{X_S^\zeta} \big[ u^\varepsilon(s)-h_u^{\varepsilon,\zeta}(v_S^\varepsilon(s))\big]\,\textnormal{d}s \|_{X_1}\\
   &\quad +C_A L_h L_\psi \int_0^t \textnormal{e}^{((d+\delta)\omega_A -\varepsilon^{-1})(t-s)}\|z^\varepsilon(s)-h^{\varepsilon,\zeta}(v_S^\varepsilon(s)) \|_{X_1}\,\textnormal{d}s\\
   &\quad+C_A L_h L_\psi\int_0^t \textnormal{e}^{(\zeta^{-1}(\varepsilon (d+\delta)\omega_A -1)+N_F^\zeta)(t-s)}\|z^\varepsilon(s)-h^{\varepsilon,\zeta}(v_S^\varepsilon(s)) \|_{X_1}\,\textnormal{d}s\\
   &\quad +\bigg\| \int_0^t \delta A \textnormal{e}^{d A (t-s)}\int_0^{t-s} \textnormal{e}^{(\delta A -\varepsilon^{-1}Id)r}\,\textnormal{d}r\,\\
   &\qquad\qquad  \pr_{X_F^\zeta} \textnormal{D}h^{\varepsilon,\zeta}_{X_F^\zeta}(v_S^\varepsilon(s))\, \pr_{X_S^\zeta} \big[\psi(u^\varepsilon(s), v_F^\varepsilon(s),v_S^\varepsilon(s))-\psi(h_u^{\varepsilon,\zeta}(v_S^\varepsilon(s)),h_{X_F^\zeta}^{\varepsilon,\zeta}(v_S^\varepsilon(s)),v_S^\varepsilon(s)) \big]\,\textnormal{d}s \bigg\|_{X_1}. 
\end{align*}
This can be further reduced to
\begin{align*}
    &\|  z^\varepsilon(t)-h^{\varepsilon,\zeta}(v_S^\varepsilon(t))\|_{X_1}\\
    &\leq C_A \textnormal{e}^{((d+\delta)\omega_A -\varepsilon^{-1})t}\|u_0-h_u^{\varepsilon,\zeta}(v_{S,0})\|_{X_1}\\
    & \quad + C_A \textnormal{e}^{(\zeta^{-1}(\varepsilon (d+\delta)\omega_A -1)+N_F^\zeta)t} \big(\|v_0-h_{X_F^\zeta}^{\varepsilon,\zeta}(v_{S,0})\|_{X_1} +\delta d^{-1}\|u_0-h_u^{\varepsilon,\zeta}(v_{S,0})\|_{X_1} \big) \\
    &\quad  + C_A (\varepsilon^{-1} L_f +L_\phi)  \int_0^t \big( \textnormal{e}^{((d+\delta)\omega_A -\varepsilon^{-1})(t-s)}+ \delta d^{-1}\textnormal{e}^{(\zeta^{-1}(\varepsilon (d+\delta)\omega_A -1)+N_F^\zeta)(t-s)} \big)\|z^\varepsilon(s)-h^{\varepsilon,\zeta}(v_S^\varepsilon(s))\|_{X_1} \,\textnormal{d}s\\
    &\quad +C_A L_h C(\zeta^{-1}) \delta \int_0^t \big( \textnormal{e}^{((d+\delta)\omega_A -\varepsilon^{-1})(t-s)}+(1+\delta d^{-1})\textnormal{e}^{(\zeta^{-1}(\varepsilon (d+\delta)\omega_A -1)+N_F^\zeta)(t-s)}\big) \|z^\varepsilon(s)-h^{\varepsilon,\zeta}(v_S^\varepsilon(s))\|_{X_1} \,\textnormal{d}s\\ 
    &\quad +C_A L_h L_\psi \int_0^t \big( \textnormal{e}^{((d+\delta)\omega_A -\varepsilon^{-1})(t-s)}+(1+\delta d^{-1})\textnormal{e}^{(\zeta^{-1}(\varepsilon (d+\delta)\omega_A -1)+N_F^\zeta)(t-s)}\big) \|z^\varepsilon(s)-h^{\varepsilon,\zeta}(v_S^\varepsilon(s))\|_{X_1} \,\textnormal{d}s,
\end{align*}
where we applied Proposition \ref{prop:operator projection estimate}.
Hence, we have 
\begin{align*}
     &\|  z^\varepsilon(t)-h^{\varepsilon,\zeta}(v_S^\varepsilon(t))\|_{X_1} \\
     &\leq C_A \textnormal{e}^{((d+\delta)\omega_A -\varepsilon^{-1})t}\|u_0-h_u^{\varepsilon,\zeta}(v_{S,0})\|_{X_1}\\
    & \quad + C_A \textnormal{e}^{(\zeta^{-1}(\varepsilon (d+\delta)\omega_A -1)+N_F^\zeta)t} \big(\|v_0-h_{X_F^\zeta}^{\varepsilon,\zeta}(v_{S,0})\|_{X_1} +\delta d^{-1}\|u_0-h_u^{\varepsilon,\zeta}(v_{S,0})\|_{X_1} \big) \\
    &\quad + C_A( L_f+\varepsilon L_\phi +L_h C(\zeta^{-1})\delta +L_hL_\psi)\times \\
    &\qquad \times \int_0^t \bigg((1+\varepsilon^{-1} )\textnormal{e}^{((d+\delta)\omega_A -\varepsilon^{-1})(t-s)}\\
    &\qquad \qquad +(1+\varepsilon^{-1}\delta+\delta d^{-1})\textnormal{e}^{(\zeta^{-1}(\varepsilon (d+\delta)\omega_A -1)+N_F^\zeta)(t-s)}\bigg) \|z^\varepsilon(s)-h^{\varepsilon,\zeta}(v_S^\varepsilon(s))\|_{X_1} \,\textnormal{d}s.
\end{align*}
Then, by applying a generalized version of Gronwall's inequality we obtain
\begin{align*}
    \|  z^\varepsilon(t)-h^{\varepsilon,\zeta}(v_S^\varepsilon(t))\|_{X_1} \leq C\textnormal{e}^{-ct}\big(\|v_0-h_{X_F^\zeta}^{\varepsilon,\zeta}(v_{S,0})\|_{X_1} +\|u_0-h_u^{\varepsilon,\zeta}(v_{S,0})\|_{X_1} \big)
\end{align*}
for some constants $C,c>0$.
This proves the assertion.
   
\end{proof}
We first want to point out that this result implies the attraction to the slow manifold at an exponential rate. Since we have imposed a global Lipschitz assumption on the nonlinearity, the attractivity result is actually global. Once one reduces to local Lipschitz conditions, the attractivity will become local in phase space. This situation is well-known for classical finite-dimensional slow manifolds.

\begin{remark}
   We also note that the constant $c$ depends on the parameters $\varepsilon$ and $\zeta$ in such a way that $\lim_{\varepsilon,\zeta\to 0}c(\varepsilon,\zeta)=\infty$.
\end{remark}

\subsection{Approximation of the slow flow}
In this section we connect the previous results with the convergence of solutions of the fast-slow system to its limit system.

So far we have measured the distance of the slow manifold $S_{\varepsilon,\zeta}$ to a subset of the critical manifold $S_{0,\zeta}$, where
\begin{align}
    S_{0,\zeta}:=\{ (h^0(v^0),v^0) \in S_0:~ \pr_{X_F^\zeta}v^0=0\}.
\end{align}
However we do need to adapt the slow system to the splitting we introduced as otherwise $S_{0,\zeta}$ might not be invariant under the slow flow.
Therefore, we introduce the slow subsystem
\begin{align}
    \begin{split}\label{slow subsyst}
        0&=-u_\zeta^0(t)+ f(u_\zeta^0(t),v_\zeta^0(t))\\
        0&= \pr_{X_F^\zeta} v_\zeta^0(t)\\
        \partial_t v_\zeta(t)&= d A v_\zeta^0(t) +\pr_{X_S^\zeta} \psi(u_\zeta^0(t),v_\zeta^0(t))\\
        v_\zeta^0(0)&= \pr_{X_S^\zeta} v_0.
    \end{split}
\end{align}
We note that $S_{0,\zeta}$ is indeed invariant under the slow flow generated by \eqref{slow subsyst}.

\begin{remark}
    Here, again, the choice of the parameter $\delta$ is crucial. 
    For the case $\delta=\delta(\varepsilon)$ such that $\lim_{\varepsilon\to 0} \delta(\varepsilon)=0$ the limit system reduces to a semi-linear system.
    However, when $\delta$ is constant, the remaining system becomes quasi-linear, as the term $\delta A\, \pr_{X_S^\zeta} u^0$ remains, and therefore requires more advanced techniques.
\end{remark}

\begin{prop}\label{slow flow approx}
    Let the assumptions of Section \ref{Sec:Assumptions} hold.
    Then, there is a constant $C>0$ such that for all $t\geq 0$ and all $\zeta>0$ small enough it holds that
    \begin{align*}
        \|v^0(t)-v_\zeta^0(t)\|_{X_1}\leq C \textnormal{e}^{(-\zeta^{-1}+ N_F^\zeta)t}\|\pr_{X_F^\zeta}v_0\|_{X_1}.
    \end{align*}
\end{prop}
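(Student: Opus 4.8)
The plan is to reduce both the limiting system \eqref{limit general system} (in which $\delta(0)=0$ by \eqref{S1}) and the slow subsystem \eqref{slow subsyst} to closed nonautonomous semilinear equations for the slow variable, and then to exploit the phase space splitting $X_1=(X_F^\zeta\cap X_1)\oplus(X_S^\zeta\cap X_1)$. By \eqref{N1} and the implicit function theorem discussed after \eqref{N2}, the algebraic constraint $g(u,v)=0$ is equivalent to $u=h^0(v)$ with $h^0$ globally Lipschitz of constant $L_{h^0}=L_f/(1-L_f)$ and $h^0(0)=0$, and the \emph{same} $h^0$ appears in both systems. Substituting, $v^0$ solves $\partial_t v^0=dAv^0+\psi(h^0(v^0),v^0)$ with $v^0(0)=v_0$, while $v_\zeta^0$ solves $\partial_t v_\zeta^0=dAv_\zeta^0+\pr_{X_S^\zeta}\psi(h^0(v_\zeta^0),v_\zeta^0)$ with $v_\zeta^0(0)=\pr_{X_S^\zeta}v_0$; moreover $\pr_{X_F^\zeta}v_\zeta^0\equiv 0$ for all $t$, since $\pr_{X_F^\zeta}\pr_{X_S^\zeta}=0$ and $X_F^\zeta$ is invariant under $(\textnormal e^{tdA})_{t\ge 0}$ by \eqref{X1}--\eqref{X2}.

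Next I would set $w:=v^0-v_\zeta^0$ and, using that the projections commute with $A$ on $X_1$ (assumption \eqref{X1}), decompose $w=w_F+w_S$ with $w_F:=\pr_{X_F^\zeta}w=\pr_{X_F^\zeta}v^0$ (the last identity because $\pr_{X_F^\zeta}v_\zeta^0\equiv 0$) and $w_S:=\pr_{X_S^\zeta}w$. Projecting the $v^0$-equation gives
\[
\partial_t w_F=dA_{X_F^\zeta}w_F+\pr_{X_F^\zeta}\psi(h^0(v^0),v^0),\qquad w_F(0)=\pr_{X_F^\zeta}v_0,
\]
\[
\partial_t w_S=dA_{X_S^\zeta}w_S+\pr_{X_S^\zeta}\big[\psi(h^0(v^0),v^0)-\psi(h^0(v_\zeta^0),v_\zeta^0)\big],\qquad w_S(0)=0 .
\]

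The estimate of $w_F$ is the heart of the matter. I would apply Duhamel's formula together with the dichotomy \eqref{X3}: in the regime considered here (formally $\varepsilon\to 0$) the exponent $N_F^\zeta+\zeta^{-1}(\varepsilon(d+\delta)\omega_A-1)$ reduces to $N_F^\zeta-\zeta^{-1}<0$ for $\zeta$ small, so the homogeneous part of $w_F$ is bounded by $C_A\,\textnormal e^{(N_F^\zeta-\zeta^{-1})t}\|\pr_{X_F^\zeta}v_0\|_{X_1}$, which is precisely the right-hand side of the claim. The inhomogeneous part is controlled using the uniform-in-time a priori bound on $\|v^0(t)\|_{X_1}$ (available on $[0,T]$ from \eqref{N2} and \eqref{A2}) together with $\|\psi(h^0(v^0),v^0)\|_{X_1}\le L_\psi(1+L_{h^0})\|v^0\|_{X_1}$; integrating against $\textnormal e^{(N_F^\zeta-\zeta^{-1})(t-s)}$ produces the large spectral gap $\zeta^{-1}-N_F^\zeta$ in the denominator, making this term subdominant. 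For $w_S$ one then uses the exponential stability of $(\textnormal e^{tdA})_{t\ge 0}$ on $X_S^\zeta$ (assumption \eqref{A2}), bounds the forcing by $L_\psi(1+L_{h^0})(\|w_F(s)\|_{X_1}+\|w_S(s)\|_{X_1})$ via the Lipschitz properties of $\psi$ and $h^0$, and closes a Gronwall estimate feeding in the bound for $w_F$; adding $\|w_F\|_{X_1}$ and $\|w_S\|_{X_1}$ yields the assertion.

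The main obstacle is the forcing term $\pr_{X_F^\zeta}\psi(h^0(v^0),v^0)$ in the $w_F$-equation: unlike for $v_\zeta^0$, the fast subspace $X_F^\zeta$ is \emph{not} invariant under the full limiting flow, so this contribution has to be absorbed into the rapidly decaying homogeneous evolution of $w_F$. This is exactly where one must use the size of the spectral gap $N_S^\zeta-N_F^\zeta$ (equivalently the smallness of $\zeta$, so that $\zeta^{-1}-N_F^\zeta$ is large), and implicitly the spectral gap condition \eqref{G} when coupling the $w_F$- and $w_S$-estimates in the Gronwall step, so that the slow, only mildly decaying semigroup on $X_S^\zeta$ does not destroy the gain coming from the fast decay of $w_F$.
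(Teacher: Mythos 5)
Your route is essentially the paper's: write the difference of the two slow flows by variation of constants, use the dichotomy \eqref{X3} to get the decay $C_A\textnormal{e}^{(-\zeta^{-1}+N_F^\zeta)t}\|\pr_{X_F^\zeta}v_0\|_{X_1}$ for the part carrying the fast-projected initial data, control the difference $\psi(h^0(v^0),\cdot)-\psi(h^0(v^0_\zeta),\cdot)$ through $L_\psi(1+L_{h^0})$ (using $L_f<1$) and close with a Gronwall argument, and let the forcing $\pr_{X_F^\zeta}\psi(h^0(v^0),\cdot)$ produce the factor $(\zeta^{-1}-N_F^\zeta)^{-1}$. The paper does not split the unknown into $w_F,w_S$ with two separate evolution equations; it keeps a single Duhamel identity for $v^0-v^0_\zeta$ and splits only the integrand by the projections, but this is equivalent bookkeeping and buys nothing either way. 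A minor inaccuracy: the spectral gap condition \eqref{G} is not invoked here at all; only $L_f<1$, the Lipschitz bound on $\psi$, $\omega_A<0$ and \eqref{X3} enter.

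The one step you assert but do not justify — and which cannot be justified as stated — is that the fast-forcing contribution $\int_0^t\textnormal{e}^{dA(t-s)}\pr_{X_F^\zeta}\psi(h^0(v^0(s)),\cdot)\,\textnormal{d}s$ is ``subdominant'' and can be absorbed into $C\textnormal{e}^{(-\zeta^{-1}+N_F^\zeta)t}\|\pr_{X_F^\zeta}v_0\|_{X_1}$. After integrating the dichotomy against the a priori bound $\|v^0(s)\|_{X_1}\leq C\|v_0\|_{X_1}\textnormal{e}^{d\omega_A s}$ this term is of size $C\|v_0\|_{X_1}/(\zeta^{-1}-N_F^\zeta)$: it neither decays in $t$ nor vanishes when $\pr_{X_F^\zeta}v_0=0$, so it cannot be dominated by the stated right-hand side. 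The paper's own computation keeps it, and its concluding estimate is in fact $\|v^0(t)-v_\zeta^0(t)\|_{X_1}\leq C\big(\|\pr_{X_F^\zeta}v_0\|_{X_1}+\|v_0\|_{X_1}/(\zeta^{-1}-N_F^\zeta)\big)$, which is also exactly the form used in the subsequent proposition on the convergence of the semi-flows. So apart from this absorption claim — a gap that mirrors the discrepancy between the proposition as stated and the paper's own final inequality — your argument follows the same path as the paper's proof.
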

\begin{proof}
    We compute
    \begin{align*}
        v^0(t)-v_\zeta^0(t)&=\textnormal{e}^{d A t}\big(v_0-\pr_{X_S^\zeta}v_0\big) +\int_0^t \textnormal{e}^{d A(t-s)} \big(\psi(h^0(v^0(t)),v_\zeta^0(t))- \pr_{X_S^\zeta}\psi(h^0(v^0_\zeta(t)),v_\zeta^0(t))\big)\, \textnormal{d}s\\
        &=\textnormal{e}^{d A t}\pr_{X_F^\zeta}v_0+ \int_0^t \textnormal{e}^{d A(t-s)} \pr_{X_S^\zeta}\big(\psi(h^0(v^0(t)),v_\zeta^0(t))- \psi(h^0(v^0_\zeta(t)),v_\zeta^0(t))\big)\, \textnormal{d}s\\
        &\quad +  \int_0^t \textnormal{e}^{d A(t-s)} \pr_{X_F^\zeta} \psi(h^0(v^0(t)),v_\zeta^0(t))\, \textnormal{d}s.
    \end{align*}
    Thus,
    \begin{align*}
        &\|v^0(t)-v_\zeta^0(t)\|_{X_1}\leq \\
        &\leq C_A \textnormal{e}^{(-\zeta^{-1}+ N_F^\zeta)t}\|\pr_{X_F^\zeta}v_0\|_{X_1} + C_A L_\psi \int_0^t \textnormal{e}^{(-\zeta^{-1}+ N_F^\zeta)(t-s)}\big(\|h^0(v^0(s))\|_{X_1}+\|v^0(s)\|_{X_1}\big)\,\textnormal{d}s\\
        &\quad + C_A L_\psi \int_0^t \textnormal{e}^{d \omega_A (t-s)}\big(\|h^0(v^0(s))- h^0(v_\zeta^0(s))\|_{X_1}+\|v^0(s)-v_\zeta^0(s)\|_{X_1}\big)\,\textnormal{d}s.
    \end{align*}
    Using the assumption that $L_f<1$ and the estimate $\|v^0(t)\|_{X_1}\leq C \|v_0\| \textnormal{e}^{d \omega_A t}$, which follows from the variation of constants formula together with applying a Gronwall type inequality, we obtain
    \begin{align*}
         \|v^0(t)-v_\zeta^0(t)\|_{X_1}
          &\leq C_A \textnormal{e}^{(-\zeta^{-1}+ N_F^\zeta)t}\|\pr_{X_F^\zeta}v_0\|_{X_1} + \frac{C}{\zeta^{-1}-N_F^\zeta}\|v_0\|_{X_1}\\
           &\quad + C_A L_\psi (1+L_{h^0})\int_0^t \textnormal{e}^{d \omega_A (t-s)}\big(\|v^0(s)-v_\zeta^0(s)\|_{X_1}\big)\,\textnormal{d}s.
    \end{align*}
    Applying a Gronwall-type inequality this yields
    \begin{align*}
         \|v^0(t)-v_\zeta^0(t)\|_{X_1}&\leq \bigg( C_A \|\pr_{X_F^\zeta}v_0\|_{X_1} + \frac{C}{\zeta^{-1}-N_F^\zeta}\|v_0\|_{X_1} \bigg) \exp\bigg(\tilde C\int_0^t\textnormal{e}^{d \omega_A (t-s)}\,\textnormal{d}s\bigg)\\
         &\quad + C_A \|\pr_{X_F^\zeta}v_0\|_{X_1} \int_0^t (-\zeta^{-1}+ N_F^\zeta) \textnormal{e}^{(-\zeta^{-1}+ N_F^\zeta)s} \exp\bigg(\tilde C \int_s^t \textnormal{e}^{d \omega_A (t-r)}\,\textnormal{d}r\bigg)\,\textnormal{d}s\\
         &\leq \bigg( C_A \|\pr_{X_F^\zeta}v_0\|_{X_1} + \frac{C}{\zeta^{-1}-N_F^\zeta}\|v_0\|_{X_1} \bigg) \exp\bigg(\tilde C \frac{\textnormal{e}^{d \omega_A t}-1}{d\omega_A}\bigg)\\
         &\quad + C_A \|\pr_{X_F^\zeta}v_0\|_{X_1} \int_0^t (-\zeta^{-1}+ N_F^\zeta) \textnormal{e}^{(-\zeta^{-1}+ N_F^\zeta)s} \exp\bigg(\tilde C  \frac{\textnormal{e}^{d \omega_A (t-s)}-1}{d\omega_A}\bigg)\,\textnormal{d}s\\
         &\leq C\bigg( \|\pr_{X_F^\zeta}v_0\|_{X_1} + \frac{\|v_0\|_{X_1}}{\zeta^{-1}-N_F^\zeta} \bigg),
    \end{align*}
    which concludes the proof of the proposition.
\end{proof}

The last step in the proof of the existence of a slow manifold is to show the convergence of the semi-flows of the solution.
\begin{prop}
    Under the assumptions of Section \ref{Sec:Assumptions} there exists a constant $C>0$ such that for all $t\geq 0$ and all $\varepsilon,\zeta>0$ small enough it holds that
    \begin{align*}
        \left\|\begin{pmatrix}
            u^\varepsilon(t)-h^0(v_\zeta^0(t))\\ v^\varepsilon(t)-v^0_\zeta(t)
        \end{pmatrix} \right\|_{X_1\times X_1} \leq C\bigg( \|\pr_{X_F^\zeta}v_0\|_{X_1} + \frac{\|v_0\|_{X_1}}{\zeta^{-1}-N_F^\zeta} \bigg) + C(\varepsilon)\big( \|u_0-h^0(v_0)\|_{X_1}+\|v_0\|_{X_1}\big).
    \end{align*}
\end{prop}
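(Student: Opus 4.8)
The plan is to prove the estimate by inserting the trajectory $(h^0(v^0(t)),v^0(t))$ of the \emph{unsplit} limit system \eqref{limit general system} as an intermediate object and applying the triangle inequality in $X_1\times X_1$. Here $v^0$ solves \eqref{limit general system} with $v^0(0)=v_0$ and $u^0=h^0(v^0)$, while $(u^0_\zeta,v^0_\zeta)$ solves the $\zeta$-adapted slow subsystem \eqref{slow subsyst} with $v^0_\zeta(0)=\pr_{X_S^\zeta}v_0$; note that the algebraic constraint $0=-u^0_\zeta+f(u^0_\zeta,v^0_\zeta)$ together with the structural form $g(u,v)=-u+f(u,v)$ forces $u^0_\zeta=h^0(v^0_\zeta)$, so that the fast component of the adapted trajectory is exactly $h^0(v^0_\zeta(t))$. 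Then
\begin{align*}
    \begin{pmatrix} u^\varepsilon(t)-h^0(v_\zeta^0(t))\\ v^\varepsilon(t)-v^0_\zeta(t)\end{pmatrix}
    = \begin{pmatrix} u^\varepsilon(t)-h^0(v^0(t))\\ v^\varepsilon(t)-v^0(t)\end{pmatrix}
    + \begin{pmatrix} h^0(v^0(t))-h^0(v_\zeta^0(t))\\ v^0(t)-v^0_\zeta(t)\end{pmatrix}.
\end{align*}

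First I would bound the first term on the right by the convergence-of-trajectories assumption \eqref{C}: estimate \eqref{convergence of trajectories general} gives
\begin{align*}
    \left\|\begin{pmatrix} u^\varepsilon(t)-h^0(v^0(t))\\ v^\varepsilon(t)-v^0(t)\end{pmatrix}\right\|_{X_1\times X_1}
    \leq C(\varepsilon,T)\big(\|u_0-h^0(v_0)\|_{X_1}+\|v_0\|_{X_1}\big)
\end{align*}
on $[0,T]$, with $C(\varepsilon,T)\to 0$ as $\varepsilon\to0$; setting $C(\varepsilon):=C(\varepsilon,T)$, this produces the last group of terms in the claimed bound. Next I would bound the second term using Proposition \ref{slow flow approx}, which gives directly $\|v^0(t)-v^0_\zeta(t)\|_{X_1}\leq C\big(\|\pr_{X_F^\zeta}v_0\|_{X_1}+\|v_0\|_{X_1}(\zeta^{-1}-N_F^\zeta)^{-1}\big)$, together with the fact that $h^0$ is globally Lipschitz with constant $L_{h^0}=L_f/(1-L_f)$ (established in Section \ref{Sec:Assumptions} via the implicit function theorem, where $L_f<1$ is crucially used), so that $\|h^0(v^0(t))-h^0(v^0_\zeta(t))\|_{X_1}\leq L_{h^0}\|v^0(t)-v^0_\zeta(t)\|_{X_1}$ obeys the same bound up to the factor $L_{h^0}$. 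Summing the two contributions and absorbing $L_{h^0}$ together with the various absolute constants into a single $C$ yields the assertion.

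This argument is essentially bookkeeping, so I do not expect a genuine obstacle; the only points requiring care are: (i) identifying the intermediate trajectory with the solution of the \emph{unsplit} limit system, so that \eqref{C} applies verbatim, rather than with the adapted one; (ii) checking that the adapted trajectory's fast-reaction component is indeed $h^0(v^0_\zeta)$, which is where the special structure $g=-u+f$ enters; and (iii) making sure the range of validity in $t$ is consistent for both contributions — Proposition \ref{slow flow approx} holds for all $t\geq 0$, so the effective time interval is the one dictated by \eqref{C}, and on that interval both estimates hold simultaneously.
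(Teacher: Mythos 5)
Your argument is correct and is essentially the paper's own proof, which is stated in one line as ``combining Proposition \ref{prop:distance to crit manifold}, Proposition \ref{slow flow approx} and estimate \eqref{convergence of trajectories general}'': you make the same triangle-inequality decomposition through the unsplit limit trajectory $(h^0(v^0(t)),v^0(t))$, bound the first piece by \eqref{C} and the second by Proposition \ref{slow flow approx} combined with the Lipschitz bound $L_{h^0}=L_f/(1-L_f)$ for $h^0$, and your check that $u^0_\zeta=h^0(v^0_\zeta)$ and your caveat about the time interval being the $[0,T]$ from \eqref{C} are the right bookkeeping points. The only cosmetic difference is that the paper also cites Proposition \ref{prop:distance to crit manifold}, which your decomposition shows is not actually needed for the estimate as stated.
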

\begin{proof}
    Combining the results of Proposition \ref{prop:distance to crit manifold} and Proposition \ref{slow flow approx} together with estimate \eqref{convergence of trajectories general} yields the desired result.
\end{proof}
This completes the last step in the proof of the existence of the slow manifold.\\

We state the following observations to conclude this section. 
\begin{remark}
   In applications one rarely has the global Lipschitz continuity of the nonlinearities. 
   Ways to overcome this would be to show a priori $L^\infty$-estimates for $u$ and $v$, or which is much more common, to use suitably chosen cut-off functions in a dissipative system. In the case of cutoff techniques, one can always aim to maximally extend a local slow manifold by patching together different local results. This does not add to our arguments here that focus on the structural problems of reducing reaction-diffusion systems to cross-diffusion systems.
\end{remark}

\begin{remark}
    The choice to consider both equations of system \eqref{general system}  in the Banach space $X$ is motivated by the fact that both the equations are derived from a two species fast-reaction system.
    However, it is possible to assume that the $u$-equation is posed in the Banach space $X$ whereas the $v$-equation is stated in the Banach space $Y$. Then one would have $D(\Delta)=:X_1$ densely contained in $X$ and additionally $D(\Delta)=:Y_1$ densely contained in $Y$.
    Yet, the structure of the proofs remains the same.
\end{remark}

\begin{remark}
    We want to remark that the method also works for more general nonlinear functions where $g: X_1\times X_1\to X_\gamma$ for $\gamma \in (0,1]$. 
    In this case one needs to linearize the function around the critical set $g(u,v)=0$ and consider a modified linear operator $\tilde A$. See \cite{kuehn2023fast} for more details. 
\end{remark}

\section{Slow Manifolds for Fast Chemical Reaction systems}\label{section:4}

Having established the abstract framework, we now return to the concrete class of PDEs given by~\eqref{Problem.Original} and proceed with the proof of Theorem~\ref{thm:fast_reaction}. 

\subsection{Sketch of the Proof}

We begin with the definition for the critical manifold $S_0$, where the notation $g$ has been employed similarly to the previous sections of the form 
\begin{align*}
    g(x,y):=- x + \kappa \tilde f(x,y), \quad 0 \le x\le y <\infty. 
\end{align*}
Let $B_R(0,0)$, with $R>0$, be the ball in $H^2(I)\times H^2(I)$ of center at $(0,0)$ and radius $R$, and $(\eta,\rho)\in B_R(0,0)$, where we take the sum norm for the product. Direct computations with Fr\'echet derivatives show 
$D_1 g(\eta,\rho)  = - I  - 2 \kappa (\rho-\eta) \in L(H^2(I))$. Let $C_{*}$ be the sharp constant such that 
\begin{align}
    \|w\|_{L^{\infty}(I)} \le C_* \|w\|_{H^1(I)}, \quad \forall w\in H^1(I). 
    \label{SharpSEconst}
\end{align}
Then, it is obvious   
\begin{align*}
\|-2 \kappa (\rho-\eta)\|_{L(H^2(I))} &= 2 \kappa \sup_{\|w\|_{H^2(I)}=1} \| (\rho-\eta)w\|_{H^2(I)}  \le 6 \kappa C_* \|\rho-\eta\|_{H^2(I)} . 
\end{align*}
Therefore, if  
\begin{align}
    12C_*R < \frac{1}{\kappa},
    \label{SmallKappa}
\end{align} 
we can apply the geometric series theorem to claim that the operator $D_1g(\eta,\rho)$ is an isomorphism (i.e., linear, bounded and invertible) on $H^2(I)$. By the implicit function theorem on Banach spaces, there exists a unique continuously differentiable function $h_\kappa:B_R(0)\subset H^2(I) \to H^2(I)$ such that 
$$ g(\widetilde \eta,\widetilde \rho)= -h(\widetilde \rho) + \kappa(\widetilde \rho-h(\widetilde \rho))^2 =0, \,\forall \widetilde \rho \in B_R(0). $$ 
In other words, under the condition \eqref{SmallKappa}, from the relation $g(\eta,\rho)=-\eta + \kappa (\rho-\eta)^2=0$  we can express $\eta=h_\kappa(\rho)$. This allows us to define the  following manifold 
\begin{align}
    S_0 := \{ (h_\kappa(\rho),\rho)|\, \rho \in B_R(0) \}  \subset H^2(I) \times H^2(I),
\end{align}
called the \textit{critical manifold}. 

\medskip

It is useful to note that if the unique classical solution $(u,v)$ to \eqref{Problem.Limiting}-\eqref{Condition.Initial.Limiting} exists globally, then it is not a-priori guaranteed that the trajectory $\{(u(t),v(t):t\in [0,\infty)\}$ stays on the critical manifold. The condition for this stay will be pointed out in the following theorem based on the analysis in Section \ref{Sec:FRL}. Let $C_{d,p}^{\mathrm{HS}}$ be the sharp constant such that 
\begin{align}
    \|    e^{td\partial_{xx}^2} \partial_x w\|_{L^p(I)} \hspace{0.23cm}  \le  C_{d,p}^{\mathrm{HS}} e^{-\lambda_1 t} t^{-\frac{1}{2}}  \|w\|_{L^p(I)},  \quad  t>0, 
    \label{HS}
\end{align}
 where $1\le p\le \infty$, and $\lambda_1$ stands for the first positive eigenvalue of the Neumann Laplacian $-\partial_{xx}^2$. We also denote  
\begin{gather}
K_M:= (2K_{0,M} + 3K_{1,M})|I|^{1/2} + 3K_{2,M} + 2 |I|^{1/2}K_{1,M}^2, \label{K:Def}
\end{gather}
with
\begin{gather*}
K_{0,M}:= C_*M+\frac{a}{c}, \quad  K_{1,M} := C_*M + C_{d,\infty}^{\mathrm{HS}} 
 \frac{\pi^{1/2}}{\lambda_1^{1/2}} \sup_{0\le x,y\le K_{0,M}} |\psi(x,y)|, \\
K_{2,M}:= M + 3C_{d,2}^{\mathrm{HS}} \frac{\pi^{1/2}}{\lambda_1^{1/2}}  \left(   \sum_{i=1}^2 \sup_{0\le x,y\le K_{0,M}} |\psi'_i(x,y)| \right) |I|^{1/2}K_{1,M}.
\end{gather*}
 
\begin{theorem}[Critical manifold for the limiting system]
\label{Theo:CritMani}
Assume that  $(u_{\mathrm{in}},v_{\mathrm{in}}) \in B_M(0,0)\subset H^2(I)\times H^2(I)$ with the radius $M>0$ satisfying   
    \begin{align}
       12C_*K_M < \frac{1}{\kappa},
    \end{align}
where $K_M$ is defined by \eqref{K:Def}. Then, all trajectories $\{(u(t),v(t):t\in [0,\infty)\}$, where $(u,v)$ is the unique classical solution to \eqref{Problem.Limiting}-\eqref{Condition.Initial.Limiting}, started from $B_M(0,0)$, stay in the critical manifold $S_0$.  
\end{theorem}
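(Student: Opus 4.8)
The plan is to show that the solution trajectory $(u(t),v(t))$ of the limiting system \eqref{Problem.Limiting}--\eqref{Condition.Initial.Limiting} remains in the ball $B_M(0,0)\subset H^2(I)\times H^2(I)$ for all times; once this is known, the algebraic equation $-u(t)+\kappa(v(t)-u(t))^2=0$ holds by construction of the limiting system, and since $(u(t),v(t))\in B_M(0,0)$ with $12C_*K_M<\kappa^{-1}$ (hence a fortiori $12C_*M<\kappa^{-1}$, as $K_M\geq M$), the implicit function theorem discussion above forces $u(t)=h_\kappa(v(t))$. Thus $(u(t),v(t))\in S_0$. So the entire content of the proof is the a priori bound $\|(u(t),v(t))\|_{H^2(I)\times H^2(I)}\leq M$, propagated from the initial datum.

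First I would establish an $L^\infty$-in-time bound on $u$ and $v$ themselves. Here the Lotka--Volterra structure of $\psi(x,y)=(a-bx-cy)y$ is used: the scalar equation $\partial_t v-d\partial_{xx}^2 v=\psi(h_\kappa(v),v)$ with homogeneous Neumann boundary conditions admits, via the maximum principle, an invariant region. Since $u=h_\kappa(v)\geq 0$ and $v\geq 0$ (nonnegativity is preserved because the reaction term vanishes on the boundary of the positive cone), the sign of $a-bh_\kappa(v)-cv$ becomes negative once $v$ is large enough, giving an upper barrier; combined with $\|u\|_{L^\infty}=\|h_\kappa(v)\|_{L^\infty}\leq C_*\|v\|_{H^1}$, this is exactly the role of the constant $K_{0,M}=C_*M+a/c$ (the $a/c$ coming from the equilibrium threshold in $\psi$). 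This gives $\|u(t)\|_{L^\infty},\|v(t)\|_{L^\infty}\leq K_{0,M}$ for all $t\geq 0$, provided the initial data lie in $B_M(0,0)$ so that $\|v_{\mathrm{in}}\|_{H^1}\leq M$.

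Next I would bootstrap to the $H^1$ and then $H^2$ norms using the smoothing estimate \eqref{HS} for the Neumann heat semigroup. Writing $v(t)=e^{td\partial_{xx}^2}v_{\mathrm{in}}+\int_0^t e^{(t-s)d\partial_{xx}^2}\psi(u(s),v(s))\,ds$ and differentiating once in $x$, the kernel bound $\|e^{td\partial_{xx}^2}\partial_x w\|_{L^p}\leq C_{d,p}^{\mathrm{HS}}e^{-\lambda_1 t}t^{-1/2}\|w\|_{L^p}$ turns the singular $t^{-1/2}$ into an integrable one, and the exponential decay $e^{-\lambda_1 t}$ controls the time integral uniformly; the supremum of $|\psi|$ over $[0,K_{0,M}]^2$ entering $K_{1,M}$ bounds $\|\partial_x v(t)\|_{L^\infty}$, hence $\|v(t)\|_{H^1}$ and, through $h_\kappa$ being Lipschitz (with constant controlled by $\kappa$ and the bound on $v$), also $\|u(t)\|_{H^1}$. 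Repeating the argument at the level of second derivatives — now differentiating $\psi(u,v)$ once more, which produces $\psi'_i$ evaluated on the already-bounded range times $\partial_x u,\partial_x v$ — yields the $H^2$ bound with the constant $K_{2,M}$, and assembling the three pieces gives precisely $\|(u(t),v(t))\|_{H^2\times H^2}\leq K_M$ for all $t$. Since $K_M$ may exceed $M$, I would then run a continuity/bootstrap argument: define $T^*=\sup\{T: \|(u,v)\|_{L^\infty(0,T;H^2)^2}\leq M\}$, observe $T^*>0$ by continuity of the $H^2$-valued solution, and show that on $[0,T^*]$ the estimates actually close to give a bound strictly below $M$ — this is where the sharp form of the hypothesis $12C_*K_M<\kappa^{-1}$ is consumed, guaranteeing the isomorphism property of $D_1 g$ holds along the whole trajectory and that the constants do not blow up.

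The main obstacle I anticipate is making the bootstrap genuinely self-consistent rather than circular: the constants $K_{1,M},K_{2,M}$ are built from $\sup|\psi|$ and $\sup|\psi'_i|$ over $[0,K_{0,M}]^2$, which presupposes the $L^\infty$ bound, while the $L^\infty$ bound via the invariant region implicitly uses that $h_\kappa$ is well-defined and Lipschitz along the trajectory, which in turn needs the $H^2$ (hence $H^1$, hence $L^\infty$) control through the isomorphism condition \eqref{SmallKappa} with $R=K_M$. The clean way to break the circle is the continuation argument on $T^*$ sketched above, together with the observation that each estimate is an \emph{a priori} inequality valid as long as the solution stays in the relevant ball, so one only ever uses the bound on a time interval where it is already assumed, and then improves it. A secondary technical point is justifying nonnegativity and the invariant-region argument for the \emph{nonlocal} scalar equation (the nonlinearity $\psi(h_\kappa(v),v)$ is not a pointwise function of $v$ alone because $h_\kappa$ maps $H^2$ to $H^2$), but since $h_\kappa$ is in fact realized pointwise — $h_\kappa(\rho)(x)$ solves the scalar algebraic equation $-\eta+\kappa(\rho(x)-\eta)^2=0$ at each $x$ — the comparison principle still applies and this point is routine.
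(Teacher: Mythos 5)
There is a genuine structural gap. You set as your goal the a priori bound $\|(u(t),v(t))\|_{H^2\times H^2}\leq M$, i.e.\ that the trajectory never leaves the ball $B_M(0,0)$ it started from, and you propose to close this by a continuation argument at level $M$, "consuming" the hypothesis $12C_*K_M<1/\kappa$ to get a bound strictly below $M$. This cannot work: the estimates you (and the paper) derive from the $L^\infty$ bound and the smoothing estimate \eqref{HS} produce the constant $K_M$ of \eqref{K:Def}, and by construction $K_M\geq 3K_{2,M}\geq 3M$, so the $H^2$ bound obtained on $[0,T^*]$ is $K_M$, not something $\leq M$; there is no mechanism in the hypothesis $12C_*K_M<1/\kappa$ that improves the norm of the solution — that inequality is precisely condition \eqref{SmallKappa} with $R=K_M$, i.e.\ it enlarges the ball over which the implicit function theorem defines $h_\kappa$ and hence $S_0$. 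The paper's proof has exactly this (different) structure: it does \emph{not} show the trajectory stays in $B_M$; it shows $0\le u\le v\le K_{0,M}$ (via Proposition \ref{Theo.FastReactionLimit}), then uses the Duhamel formula with \eqref{HS} to get $\|\partial_x v(t)\|_{L^\infty}\le K_{1,M}$ and $\|\partial_{xx}^2 v(t)\|_{L^2}\le K_{2,M}$, transfers these to $u$ through the explicit branch formula $u=v-\sqrt{4v+1}/2-1/2$, concludes $\|(u(t),v(t))\|_{H^2\times H^2}\le K_M$ for all $t$, and then invokes \eqref{SmallKappa} with $R=K_M$ so that the (possibly much larger than $B_M$) set visited by the trajectory is still covered by the graph description of $S_0$. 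Your first two paragraphs (uniform $L^\infty$ bound, then $H^1$/$H^2$ via semigroup smoothing) are in the right spirit, but the final bootstrap to confine the flow to $B_M$ is both unnecessary and unclosable, and removing it requires re-reading the role of the hypothesis as you did not.

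A secondary point you treat as "routine" is not: the algebraic constraint $-u+\kappa(v-u)^2=0$ has two roots for given $v$, and knowing which branch the limiting $u$ sits on (equivalently $u\le v$, which gives the pointwise formula above and the Lipschitz control of $h_\kappa$) is exactly what is \emph{not} available from the limiting system alone; the paper obtains it from the fast-reaction limit construction in Proposition \ref{Theo.FastReactionLimit} (see Remark \ref{Remark.Theo.FastReactionLimit}), not from the implicit function theorem or a comparison principle for the scalar nonlocal equation. Your invariant-region argument for $\partial_t v-d\partial_{xx}^2v=\psi(h_\kappa(v),v)$ presupposes this branch selection and the global well-posedness/uniqueness of the limit problem, so as written it is circular unless you either import Proposition \ref{Theo.FastReactionLimit} (as the paper does) or supply an independent argument for existence, uniqueness and $u\le v$.
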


\begin{proof} Thanks to Proposition \ref{Theo.FastReactionLimit}, we have the global existence of a unique classical solution $(u,v)$ to \eqref{Problem.Limiting}-\eqref{Condition.Initial.Limiting}, and furthermore, $0\le u\le v\le C_*M+a/c=K_{0,M}$ since $\|v\|_{L^\infty(I)}\le C_*\|v\|_{H^1(I)}\le C_*M$ by   the embedding  \eqref{SharpSEconst}. The following computations directly come from the smoothing effect \eqref{HS}. Indeed,   
\begin{align*}
    \|\partial_x v(t)\|_{L^\infty(I)} & \le \|\partial_x v_0\|_{L^\infty(I)} + \int_0^t \|\partial_x   e^{(t-s)d\partial_{xx}^2} \psi(u(s),v(s))  \|_{L^\infty(I)} ds\\
    & \le  C_*M + C_{d,\infty}^{\mathrm{HS}} 
 \frac{\pi^{1/2}}{\lambda_1^{1/2}} \sup_{0\le x,y\le K_{0,M}} |\psi(x,y)|  = K_{1,M},
\end{align*}
where we have use $\| \partial_x v_{\mathrm{in}}\|_{L^\infty(I)} \le C_*\|v_{\mathrm{in}}\|_{H^2(I)} \le C_*M$. 
Moreover, since $0\le u \le v$, the algebraic equation $-u+(v-u)^2=0$ accordingly gives the pointwise relation $u=v-\sqrt{4v+1}/2-1/2$, and therefore,  $$   \|\partial_x u(t)\|_{L^\infty(I)} \le 2  \|\partial_x  v(t)\|_{L^\infty(I)} \le 2  K_{1,M},$$ for all $t>0$. 
Using the smoothing effect \eqref{HS} again, 
\begin{align*}
    \|\partial_{xx}^2 v(t) \|_{L^2(I)} & \le \|v_0\|_{H^2(I)} + C_{d,2}^{\mathrm{HS}} \left(\int_0^t e^{-\lambda_1(t-s)} (t-s)^{-1/2} ds\right) \|\partial_x\psi(u,v)\|_{L^\infty(0,T;L^2(I))} \\
    & \le M + C_{d,2}^{\mathrm{HS}} \frac{\pi^{1/2}}{\lambda_1^{1/2}}  \left(   \sum_{i=1}^2 \sup_{0\le x,y\le K_{0,M}} |\psi'_i(x,y)| \right) (3|I|^{1/2}\|\partial_x  v\|_{L^\infty(I_T)}) \\
    & \le M + C_{d,2}^{\mathrm{HS}} \frac{\pi^{1/2}}{\lambda_1^{1/2}}  \left(   \sum_{i=1}^2 \sup_{0\le x,y\le K_{0,M}} |\psi'_i(x,y)| \right)  3|I|^{1/2}K_{1,M} = K_{2,M}.
\end{align*}
Moreover, it follows from $u=v-\sqrt{4v+1}/2-1/2$ that 
\begin{align*}
    \partial_{xx}^2 u = \partial_{xx}^2 v - \frac{\partial_{xx}^2 v}{\sqrt{4v+1}} + \frac{2|\partial_{x}v|^2}{\sqrt{(4v+1)^3}},
\end{align*}
and therefore,
$$\|\partial_{xx}^2  u(t)\|_{L^2(I)} \le 2\|\partial_{xx}^2 v(t)\|_{L^2(I)} + 2 |I|^{1/2} \|\partial_x  v(t)\|_{L^\infty(I)}^2 \le 2K_{2,M} + 2 |I|^{1/2}K_{1,M}^2.$$ 
Altogether, we get
\begin{align*}
    \|(u(t),v(t))\|_{H^2(I)\times H^2(I)} \le  (2K_{0,M} + 3K_{1,M})|I|^{1/2} + 3K_{2,M} + 2 |I|^{1/2}K_{1,M}^2 = K_M,
\end{align*}
 for all $t>0$. Hence, the conclusion of this theorem comes from \eqref{SmallKappa}.  
\end{proof}

\begin{remark} 
\label{Remark.Theo.FastReactionLimit}
\hfill
\begin{itemize} 
    \item The global existence of a unique strong solution to the limiting system \eqref{Problem.Limiting}-\eqref{Condition.Initial.Limiting}  has been concluded by Proposition \ref{Theo.FastReactionLimit}. In fact, a direct argument can be performed from the equation for $v$ to show that $v\in L^\infty(I_\infty)$ by observing $\psi(u,v)\le (a-cv)v$, however, the uniqueness of the first component is not guaranteed from the algebraic equation $-u+(v-u)^2=0$ since we do not know $v\ge u$. Proposition \ref{Theo.FastReactionLimit} solved this point by showing that $(u,v)$ is the limit of $(u^\varepsilon,v^\varepsilon-u^\varepsilon)$ with $(u^\varepsilon,v^\varepsilon)=(u_1^\varepsilon,u_1^\varepsilon+u_2^\varepsilon)$, where $(u_1^\varepsilon,u_2^\varepsilon)$ is the unique, non-negative  global classical solution to \eqref{Problem.u1u2}-\eqref{Cond.Initial.Problem.u1u2}.   
    
    \item A quick glance at \eqref{Problem.Limiting}-\eqref{Condition.Initial.Limiting} directly shows that the initial value of the component $u$, i.e. $u(0)$, is defined pointwise via the algebraic equation $-u+(v-u)^2=0$, and therefore, may be different from $u^\varepsilon(0)=u_{\mathrm{in}}$. According to the proof of Proposition  \ref{Theo.FastReactionLimit}, we can see that   $v(0)=v_{\mathrm{in}} =v^\varepsilon(0)$.  However, although $(u_1^\varepsilon(0),u_2^\varepsilon(0))=(u_{\mathrm{in}},v_{\mathrm{in}}-u_{\mathrm{in}})$, we do not have $$(u_1(0),u_2(0))=(u_{\mathrm{in}},v_{\mathrm{in}}-u_{\mathrm{in}})$$ since $u_1,u_2$ are determined by $(u_1)^{1/2} + u_1 = v$ and $u_2 + u_2^2 = v$, see  \eqref{Theo.FastReacLimit.Proof1}. This shows the existence of the so-called \textit{initial layer}.    
    
\end{itemize}
\end{remark}

Next, we prove the existence of a slow manifold for the fast chemical reaction system \eqref{Problem.Original}-\eqref{Condition.Initial} with the main result presented in Theorem \ref{thm:fast_reaction}. By the analysis in Section \ref{section:3}, one of the most important and technically challenging points is to estimate the distance between the slow flow $\{(u^\varepsilon(t),v^\varepsilon(t)):[0,\infty) \}$ and the critical one $\{(u(t),v(t)):[0,\infty) \}$, or in other words, the convergence rate from the slow flow to the critical one. Moreover, due to the above remark, since the initial data may affect the convergence rate, computations on the initial layer will be carried out as follows. 

\medskip

As a direct consequence of Theorem \ref{thm:abstract slow manifold},   Lemma \ref{Lemma.InitialLayer},  Proposition \ref{Lemma.ConvRateULinfL2} and Lemma  \ref{Lemma.check assumptions}, we obtain the slow manifold for the fast chemical reaction system \eqref{Problem.Original}-\eqref{Condition.Initial} stated in Theorem \ref{thm:fast_reaction}. 

 \begin{lemma} 
\label{Lemma.InitialLayer}
Assume that $(u_{\mathrm{in}},v_{\mathrm{in}})$ satisfies the assumption of Theorem \ref{Theo:CritMani}. Then,
\begin{align}
    \|u_{\mathrm{in}}-u(0)\|_{H^2(I)}  &\le C_M  \varepsilon_{\mathrm{in}},
    \label{Lemma.InitialLayer.State1}
\end{align}
with $\eps_{\mathrm{in}}$ is in \eqref{Assumption.EpsilonIN}
\end{lemma}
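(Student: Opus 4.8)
The plan is to exploit the explicit quadratic structure of $g(x,y)=-x+\kappa(y-x)^2$ and reduce the claim to the bounded invertibility of a single multiplication operator on $H^2(I)$. Recall from Remark \ref{Remark.Theo.FastReactionLimit} that $v(0)=v_{\mathrm{in}}$ and that $u(0)$ is the pointwise solution of the algebraic equation $g(u(0),v_{\mathrm{in}})=0$ (with $0\le u(0)\le v_{\mathrm{in}}$); moreover the a priori $H^2$-bound obtained in the proof of Theorem \ref{Theo:CritMani} is already available at $t=0$, since the Duhamel integrals there vanish at the initial time. Hence $\|u_{\mathrm{in}}\|_{H^2(I)},\|v_{\mathrm{in}}\|_{H^2(I)},\|u(0)\|_{H^2(I)}\le K_M$. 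Put $w:=u_{\mathrm{in}}-u(0)$, so that, using \eqref{Assumption.EpsilonIN}, the assertion becomes $\|w\|_{H^2(I)}\le C_M\,\varepsilon_{\mathrm{in}}$ with $\varepsilon_{\mathrm{in}}=\|g(u_{\mathrm{in}},v_{\mathrm{in}})\|_{H^2(I)}$.

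The first step is to subtract the identity $g(u(0),v_{\mathrm{in}})=0$ and factor the difference of squares. Writing $a=v_{\mathrm{in}}-u_{\mathrm{in}}$, $b=v_{\mathrm{in}}-u(0)$, one has $a-b=-w$ and $a+b=2v_{\mathrm{in}}-u_{\mathrm{in}}-u(0)=:\phi$, whence
\[ g(u_{\mathrm{in}},v_{\mathrm{in}}) = -w + \kappa\big[(v_{\mathrm{in}}-u_{\mathrm{in}})^2-(v_{\mathrm{in}}-u(0))^2\big] = -w-\kappa\,\phi\, w = -(\mathrm{Id}+\kappa M_\phi)\,w, \]
where $M_\phi$ denotes multiplication by $\phi$. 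This is a pointwise identity between functions lying in $H^2(I)$, which I would then read as an equation in the Banach space $H^2(I)$.

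The second step is to invert $\mathrm{Id}+\kappa M_\phi$ on $H^2(I)$ by a Neumann series. Since $H^2(I)$ is a Banach algebra in one dimension, the multiplication estimate used in the proof of Theorem \ref{Theo:CritMani} gives $\|M_\phi\|_{\mathcal B(H^2(I))}\le 3C_*\|\phi\|_{H^2(I)}\le 3C_*\big(2\|v_{\mathrm{in}}\|_{H^2(I)}+\|u_{\mathrm{in}}\|_{H^2(I)}+\|u(0)\|_{H^2(I)}\big)\le 12C_*K_M$. The hypothesis $12C_*K_M<1/\kappa$ is then precisely the smallness needed, namely $\kappa\|M_\phi\|_{\mathcal B(H^2(I))}<1$, so $\mathrm{Id}+\kappa M_\phi$ is boundedly invertible on $H^2(I)$ with $\|(\mathrm{Id}+\kappa M_\phi)^{-1}\|_{\mathcal B(H^2(I))}\le(1-12C_*\kappa K_M)^{-1}$. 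Applying this inverse to the identity above yields $w=-(\mathrm{Id}+\kappa M_\phi)^{-1}g(u_{\mathrm{in}},v_{\mathrm{in}})$ and therefore $\|u_{\mathrm{in}}-u(0)\|_{H^2(I)}\le(1-12C_*\kappa K_M)^{-1}\varepsilon_{\mathrm{in}}$, which is the claim with $C_M:=(1-12C_*\kappa K_M)^{-1}$, a constant depending only on $M$.

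I expect no genuine obstacle beyond two bookkeeping checks. The first is confirming that the $H^2$-bound of Theorem \ref{Theo:CritMani} really does hold at the initial time, so that $\|u(0)\|_{H^2(I)}\le K_M$; this uses $0\le u(0)\le v_{\mathrm{in}}$ and the smooth, bounded pointwise dependence of $u(0)$ on $v_{\mathrm{in}}$ coming from the explicit resolution of the algebraic equation, exactly as in the computation of $K_{1,M}$, $K_{2,M}$ there. The second is invoking the $H^2(I)$ multiplication estimate with the same constant $3C_*$ used in the proof of Theorem \ref{Theo:CritMani}, which is what makes the smallness hypothesis on $\kappa$ transfer verbatim. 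Crucially, because the nonlinearity is exactly quadratic, this reduction is an exact factorization into one multiplication operator, so no iteration or quantitative implicit-function-theorem argument is required.
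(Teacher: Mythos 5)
Your proof is correct, and it reaches the same key identity as the paper by a different and in fact cleaner route. The paper obtains exactly your factorization — its formula \eqref{Lemma.InitialLayer.Proof1} reads $u_{\mathrm{in}}-u(0)=(u_{\mathrm{in}}-(v_{\mathrm{in}}-u_{\mathrm{in}})^2)/\Phi(u_{\mathrm{in}},v_{\mathrm{in}})$, and one checks that $\Phi=1+\kappa\phi$ with your $\phi=2v_{\mathrm{in}}-u_{\mathrm{in}}-u(0)$ — but it derives it from the explicit square-root resolution $u(0)=(2v_{\mathrm{in}}+1-\sqrt{4v_{\mathrm{in}}+1})/2$ of the algebraic constraint, whereas you get it by subtracting $g(u(0),v_{\mathrm{in}})=0$ and factoring the difference of squares, never needing the explicit root. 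Where the two arguments genuinely diverge is the estimation step: the paper divides pointwise by $\Phi\ge 1$ and then controls $\partial_x(1/\Phi)$ and $\partial_{xx}^2(1/\Phi)$ term by term with Sobolev embeddings (displays \eqref{Lemma.InitialLayer.Proof2}--\eqref{Lemma.InitialLayer.Proof3}), which only uses $v_{\mathrm{in}}\ge u_{\mathrm{in}}$ and the $H^2$ bound on the data, not the smallness $12C_*K_M<1/\kappa$; you instead invert $\mathrm{Id}+\kappa M_\phi$ on the Banach algebra $H^2(I)$ by a Neumann series, which is shorter and gives the explicit constant $C_M=(1-12\kappa C_*K_M)^{-1}$, but does consume the smallness hypothesis — harmless here since the lemma assumes it, though it makes your version slightly less robust than the paper's. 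The two bookkeeping points you flag are the only places needing care: the bound $\|u(0)\|_{H^2(I)}\le K_M$ at $t=0$ does follow from $0\le u(0)\le v(0)=v_{\mathrm{in}}$ and the same pointwise formulas used in the proof of Theorem \ref{Theo:CritMani} evaluated at the initial time, and the product estimate $\|\phi w\|_{H^2(I)}\le 3C_*\|\phi\|_{H^2(I)}\|w\|_{H^2(I)}$ is exactly the one the paper itself invokes for $D_1g$, so citing it with the same constant is consistent. No gap.
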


The proof of the above lemma is presented in Section \ref{Sec:InitialLayer}. We will estimate the convergence rates in the following proposition, where the initial layer effect, Lemma \ref{Lemma.InitialLayer}, is expressed via the parameter $\varepsilon_{\mathrm{in}}$ and its proof is presented in Sections \ref{Sec:LinfL2} and \ref{Sec:LinfH2}. From there, the result of slow manifolds for the fast chemical reaction system in Theorem \ref{thm:fast_reaction} follows. Let us denote $$(U^\varepsilon,V^\varepsilon):=(u^\varepsilon-u,v^\varepsilon-v).$$ 
By subtracting side by side the equations for $(u^\varepsilon,v^\varepsilon)$, $(u,v)$, we obtain the rate system as follows
\begin{align}  \label{Problem.DistanceEquation} 
 \left\{
\begin{array}{llllllll}
\partial_t V^\varepsilon &=& d \partial_{xx}^2 V^\varepsilon + (\psi(u+U^\varepsilon,v+V^\varepsilon)-\psi(u,v)) + \delta \partial_{xx}^2 u^\varepsilon  ,  \vspace{0.15cm}  \\
\partial_t U^\varepsilon &=& (d+\delta)  \partial_{xx}^2 U^\varepsilon  + (\phi(u+U^\varepsilon,v+V^\varepsilon)-\phi(u,v)) + \xi^\varepsilon  \vspace*{0.15cm} \\
&& \hspace*{2.1cm} +\, \dfrac{1}{\varepsilon}  (g(u+U^\varepsilon,v+V^\varepsilon)-g(u,v)) ,  
\end{array}
\right.  
\end{align}
where 
\begin{align*}
\xi^\varepsilon : = - \partial_t u + (d+\delta(\varepsilon)) \partial_{xx}^2 u  + \phi(u,v) . 
\end{align*}
Studying this system will give the convergence rate in $L^\infty(0,T;L^2(I))$ of order $\mathcal O(\varepsilon + \varepsilon_{\mathrm{in}}  +  \delta)$, see \eqref{Lemma.ConvRateULinfL2.State1}. However, to obtain the rate in $L^\infty(0,T;H^2(I))$, considering higher derivatives of $U^\varepsilon,V^\varepsilon$ is needed. More clearly, differentiating the rate system \eqref{Problem.DistanceEquation} gives
\begin{align}  \label{Problem.Dx.DistanceEquation} 
 \left\{
\begin{array}{llllllll}
\partial_t (\partial_x V^\varepsilon) &=& d \partial_{xxx}^3 V^\varepsilon + \partial_x(\psi(u+U^\varepsilon,v+V^\varepsilon)-\psi(u,v)) + \delta \partial_{xxx}^3 u^\varepsilon  ,  \vspace{0.15cm}  \\
\partial_t (\partial_x U^\varepsilon) &=& (d+\delta)  \partial_{xxx}^3 U^\varepsilon  + \partial_x (\phi(u+U^\varepsilon,v+V^\varepsilon)-\phi(u,v)) + \partial_x \xi^\varepsilon  \vspace*{0.15cm} \\
& & \hspace*{2.25cm} +\, \dfrac{1}{\varepsilon}  \partial_x(g(u+U^\varepsilon,v+V^\varepsilon)-g(u,v)) .   
\end{array}
\right.  
\end{align}
Studying the system \eqref{Problem.Dx.DistanceEquation} will give the convergence rate in $L^\infty(0,T;H^2(I))$ of order $\mathcal O(\varepsilon + \varepsilon_{\mathrm{in}}  +  \delta)$, see \eqref{Lemma.ConvRateUVLinfH2.State1}. 
The key to obtaining these rate estimates is explained as follows. Recall that $(u,v)$ stays on the critical manifold, formed by the function $h$ that is defined via the function $g$. Therefore, differentiable properties of $g$, or in other words, geometrical properties of the critical manifold, play an important role. For the sake of convenience, we denote $$\displaystyle
(g_1',g_2'):=\left( \frac{\partial g}{\partial x}, \frac{\partial g}{\partial y} \right), \quad (g_{12}'',g_{21}''):=\left(\frac{\partial^2 g}{\partial x \partial y},\frac{\partial^2 g}{\partial y \partial x}\right).$$ 
According to the definition of $g$, we note here
\begin{equation}
 g_1':=\frac{\partial g}{\partial x}=-1 - 2\kappa (y-x) \le -1 ,     
\label{DerivativeOfg}
\end{equation}
which is significant in treating the singular terms 
\begin{gather*}
    \frac{1}{\varepsilon} \int_I  (g(u+U^\varepsilon,v+V^\varepsilon)-g(u,v)) \partial^{(k)} U^\varepsilon /\partial {x^k}, \, k\in \{0;2\}, \\
    \text{and} \quad \frac{1}{\varepsilon} \int_I \partial_x (g(u+U^\varepsilon,v+V^\varepsilon)-g(u,v)) \partial_{xxx}^3 U^\varepsilon,
\end{gather*}
respectively in Lemmas \ref{Lem.UEnergy}, \ref{Lem.NablaUEnergy} and \ref{Lemma.ConvRateULinfH2}.

\begin{prop} 
\label{Lemma.ConvRateULinfL2}    
Assume that $(u_{\mathrm{in}},v_{\mathrm{in}})$ satisfies the assumption of Theorem \ref{Theo:CritMani}. Then,
\begin{align}
\|(u^\varepsilon,v^\varepsilon)-(u,v)\|_{L^\infty(0,T;L^2(I))^2} 
 + \|(u^\varepsilon,v^\varepsilon)-(u,v)\|_{L^2(0,T;H^1(I))^2} 
\le C_{T} (   \varepsilon + \varepsilon_{\mathrm{in}}  +  \delta  ). 
\label{Lemma.ConvRateULinfL2.State1}
\end{align}
Assume furthermore that  
\begin{align}
    \lim_{\varepsilon \to 0} \left( \frac{\varepsilon_{\mathrm{in}}^2}{\varepsilon}  +  \frac{\delta^2}{\varepsilon} \right) < \infty.
    \label{Assumption.Compare.Eps.Del}
\end{align}
Then, for any $T>0$, 
    \begin{align}
    \|(u^\varepsilon,v^\varepsilon)-(u,v)\|_{L^\infty(0,T;H^2(I))^2}  \le C_{T} (\varepsilon +  \varepsilon_{\mathrm{in}} + \delta ).
    \label{Lemma.ConvRateUVLinfH2.State1}
\end{align} 
\end{prop}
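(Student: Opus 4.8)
### Proof Plan for Proposition \ref{Lemma.ConvRateULinfL2}

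\textbf{Overall strategy.} The plan is to run two nested energy estimates on the rate system \eqref{Problem.DistanceEquation} and its differentiated version \eqref{Problem.Dx.DistanceEquation}. The crucial structural fact that makes everything work is the sign condition $g_1' = -1 - 2\kappa(y-x) \le -1 < 0$ in \eqref{DerivativeOfg}: when testing the $U^\varepsilon$-equation against $U^\varepsilon$ (or $\partial_x U^\varepsilon$ against $\partial_x U^\varepsilon$, etc.), the singular term $\tfrac1\varepsilon\int_I (g(u+U^\varepsilon,v+V^\varepsilon)-g(u,v))\,U^\varepsilon$ splits, via the mean value theorem, into a strongly dissipative piece $\tfrac1\varepsilon\int_I g_1'(\cdots)|U^\varepsilon|^2 \le -\tfrac1\varepsilon\|U^\varepsilon\|_{L^2}^2$ plus a cross term $\tfrac1\varepsilon\int_I g_2'(\cdots)V^\varepsilon U^\varepsilon$ that is controlled by Young's inequality, absorbing half of the good term and leaving an $O(1)\|V^\varepsilon\|_{L^2}^2$ contribution. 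This is exactly the mechanism already used in the proof of Theorem \ref{Theo:CritMani} and in \cite{bothe2010quasi}.

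\textbf{Step 1: the $L^\infty(0,T;L^2)\cap L^2(0,T;H^1)$ estimate.} First I would test the $V^\varepsilon$-equation with $V^\varepsilon$ and the $U^\varepsilon$-equation with $U^\varepsilon$, add, and integrate by parts. The diffusion terms give $-d\|\partial_x V^\varepsilon\|_{L^2}^2 - (d+\delta)\|\partial_x U^\varepsilon\|_{L^2}^2$; the cross-diffusion term $\delta\int_I \partial_{xx}^2 u^\varepsilon\, V^\varepsilon$ is handled by integration by parts and Young as $\le \tfrac{d}{2}\|\partial_x V^\varepsilon\|_{L^2}^2 + C\delta^2\|\partial_x u^\varepsilon\|_{L^2}^2$, where the last factor is uniformly bounded in $\varepsilon$ thanks to the a priori estimates on the approximate solutions referenced in Section \ref{Sec:FRL}. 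The Lipschitz terms from $\phi,\psi$ are bounded by $C(\|U^\varepsilon\|_{L^2}^2+\|V^\varepsilon\|_{L^2}^2)$, and the forcing $\int_I \xi^\varepsilon U^\varepsilon$ is controlled once one observes $\|\xi^\varepsilon\|_{L^2} \le \|-\partial_t u + d\partial_{xx}^2 u + \phi(u,v)\|_{L^2} + \delta\|\partial_{xx}^2 u\|_{L^2} = \delta\|\partial_{xx}^2 u\|_{L^2}$ (the first group vanishes since $(u,v)$ solves the limit problem... here one must be careful: it vanishes only in the reduced sense, so more precisely $\xi^\varepsilon$ is $O(\delta)$ plus the algebraic-constraint residual, which is $O(\varepsilon_{\mathrm{in}})$ via the initial-layer structure). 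Combining with the singular term as described, Grönwall on $E(t):=\|U^\varepsilon(t)\|_{L^2}^2+\|V^\varepsilon(t)\|_{L^2}^2$ with initial datum $E(0) \le \|u_{\mathrm{in}}-u(0)\|_{L^2}^2 \le C\varepsilon_{\mathrm{in}}^2$ (Lemma \ref{Lemma.InitialLayer}) yields \eqref{Lemma.ConvRateULinfL2.State1}, the $L^2(0,T;H^1)$ part coming from the dissipation terms that survived absorption.

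\textbf{Step 2: bootstrap to $H^1$ and then $H^2$.} Next I would differentiate the rate system once in $x$, obtaining \eqref{Problem.Dx.DistanceEquation}, and test $\partial_x V^\varepsilon$ against $\partial_x V^\varepsilon$ and $\partial_x U^\varepsilon$ against $\partial_x U^\varepsilon$. The singular term now reads $\tfrac1\varepsilon\int_I \partial_x\big(g(u+U^\varepsilon,v+V^\varepsilon)-g(u,v)\big)\partial_x U^\varepsilon$; expanding the $x$-derivative by the chain rule produces a leading dissipative term $\tfrac1\varepsilon\int_I g_1'(\cdots)|\partial_x U^\varepsilon|^2 \le -\tfrac1\varepsilon\|\partial_x U^\varepsilon\|_{L^2}^2$ together with lower-order terms involving $g_1''$, $g_2''$, $\partial_x u$, $\partial_x v$, $U^\varepsilon$, $V^\varepsilon$ multiplied by $\tfrac1\varepsilon$. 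The terms of the form $\tfrac1\varepsilon (\text{bounded})\, U^\varepsilon\, \partial_x U^\varepsilon$ are the delicate ones: one uses Young to write them as $\tfrac{1}{2\varepsilon}\|\partial_x U^\varepsilon\|_{L^2}^2 + \tfrac{C}{\varepsilon}\|U^\varepsilon\|_{L^2}^2$, and here the factor $\tfrac1\varepsilon\|U^\varepsilon\|_{L^2}^2$ is \emph{not} obviously small — but integrating in time and invoking the already-established $L^2(0,T;H^1)$ bound plus the refined $L^2(0,T;L^2)$ bound $\int_0^T\tfrac1\varepsilon\|U^\varepsilon\|_{L^2}^2 \le C(\varepsilon + \varepsilon_{\mathrm{in}}^2/\varepsilon + \delta^2/\varepsilon)$ (this is where Assumption \eqref{Assumption.Compare.Eps.Del} enters, guaranteeing the right-hand side stays bounded) closes the estimate. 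This gives $L^\infty(0,T;H^1)$. For $H^2$ one then runs a second-order energy, testing against $-\partial_{xx}^2 U^\varepsilon$ and $-\partial_{xx}^2 V^\varepsilon$ (equivalently working with \eqref{Problem.Dx.DistanceEquation} tested in $H^1$, reaching $\partial_{xxx}^3$), where the dissipation from $-g_1'\ge 1$ again dominates the singular contributions and the now-available $H^1$ bounds feed the lower-order remainders.

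\textbf{Main obstacle.} The hard part will be the $H^2$ step: the second-order energy functional has many terms of the form $\tfrac1\varepsilon \times (\text{product of lower-order derivatives of }U^\varepsilon,V^\varepsilon,u,v)$, and each must be absorbed either into the $-\tfrac1\varepsilon\|\partial_{xx}^2 U^\varepsilon\|_{L^2}^2$ dissipation or into a time-integrated lower-order norm already known to be $O(\varepsilon+\varepsilon_{\mathrm{in}}+\delta)$. The bookkeeping requires the sharp embedding constant $C_*$ from \eqref{SharpSEconst} and the smallness $12 C_* K_M < 1/\kappa$ (so that $g$ and its derivatives stay uniformly bounded and $g_1'$ stays bounded away from $0$ along the relevant trajectories), and it is precisely the interplay between the $1/\varepsilon$ weight, the parameter balance \eqref{Assumption.Compare.Eps.Del}, and the uniform-in-$\varepsilon$ a priori estimates from Section \ref{Sec:FRL} that makes the argument nontrivial; the rest is a careful but routine Grönwall iteration.
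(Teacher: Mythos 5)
Your overall strategy (nested energy estimates on \eqref{Problem.DistanceEquation} and \eqref{Problem.Dx.DistanceEquation}, driven by the sign $g_1'\le -1$) is the same as the paper's, but there is a genuine gap at the very point where the argument must close. After the mean-value splitting of the singular term, Young's inequality against the dissipation $-\tfrac1\varepsilon\int_I(U^\varepsilon)^2$ leaves a remainder of size $\tfrac{C_T}{\varepsilon}\int_I(V^\varepsilon)^2$, \emph{not} $O(1)\int_I(V^\varepsilon)^2$ as you claim: to keep the coefficient of $(U^\varepsilon)^2$ strictly below $\tfrac1\varepsilon$, the coefficient of $(V^\varepsilon)^2$ is forced to be of order $\tfrac1\varepsilon$ (this is precisely the term appearing in Lemma \ref{Lem.UEnergy}). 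Consequently your Step 1 --- add the two energies and apply Gr\"onwall to $E(t)=\|U^\varepsilon\|_{L^2}^2+\|V^\varepsilon\|_{L^2}^2$ --- produces a Gr\"onwall factor $e^{C_T t/\varepsilon}$ and does not give \eqref{Lemma.ConvRateULinfL2.State1}. The paper's proof hinges on an asymmetric, $\varepsilon$-weighted treatment that is absent from your plan: multiply the $U^\varepsilon$-inequality by $\varepsilon$ (so the dissipation becomes $-\tfrac12\int(U^\varepsilon)^2$ and the bad term becomes $C_T\int(V^\varepsilon)^2$), take the combination $\tfrac12\int(V^\varepsilon)^2+C_{1,T}\,\varepsilon\int(U^\varepsilon)^2$, Gr\"onwall in $V^\varepsilon$ alone, and only afterwards recover the $U^\varepsilon$-rate from the ODE inequality $\varepsilon\tfrac{d}{dt}\int(U^\varepsilon)^2+\int(U^\varepsilon)^2\le C_T(\varepsilon^2+\varepsilon_{\mathrm{in}}^2\varepsilon+\delta^2)$ via the comparison principle, which also yields the $e^{-t/\varepsilon}\varepsilon_{\mathrm{in}}^2$ initial-layer decay. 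The same device is needed at the $H^1$ and $H^2$ levels: your proposal to "integrate in time and invoke $\int_0^T\varepsilon^{-1}\|U^\varepsilon\|_{L^2}^2\le C(\varepsilon+\varepsilon_{\mathrm{in}}^2/\varepsilon+\delta^2/\varepsilon)$" only gives \emph{boundedness} of the higher norms under \eqref{Assumption.Compare.Eps.Del}, not the rate $O(\varepsilon+\varepsilon_{\mathrm{in}}+\delta)$ asserted in \eqref{Lemma.ConvRateUVLinfH2.State1}; the paper instead first establishes the pointwise-in-time rate for $V^\varepsilon$ (testing with $-\partial_{xx}^2V^\varepsilon$, and at second order combining Lemmas \ref{Lemma.ConvRateVLinfH2} and \ref{Lemma.ConvRateULinfH2} with a weight built from $\delta^2\le\alpha\varepsilon$), and then feeds it into the $\varepsilon$-weighted $U^\varepsilon$-inequality plus comparison principle.

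A secondary error: $\xi^\varepsilon=-\partial_t u+(d+\delta)\partial_{xx}^2u+\phi(u,v)$ neither vanishes nor is $O(\delta)+O(\varepsilon_{\mathrm{in}})$; in the limit problem only $v$ solves a parabolic equation while $u$ is determined algebraically, and $\varepsilon_{\mathrm{in}}$ measures only the initial distance to the manifold. In the paper $\xi^\varepsilon$ is merely uniformly bounded, and the $O(\varepsilon)$ contribution comes from Young against the $\tfrac1\varepsilon$-dissipation, $\int_I\xi^\varepsilon U^\varepsilon\le\tfrac{1}{8\varepsilon}\int_I(U^\varepsilon)^2+C\varepsilon\int_I(\xi^\varepsilon)^2$. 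Finally, at the $H^2$ step the integrations by parts moving $\partial_{xxx}^3U^\varepsilon$ onto the nonlinearities require checking that $\partial_x\xi^\varepsilon$ and $\partial_x\big(g(u+U^\varepsilon,v+V^\varepsilon)-g(u,v)\big)$ satisfy homogeneous Neumann conditions (done in the paper through the relation $u=v-\sqrt{4v+1}/2-1/2$), together with the identities $g_{11}''=-g_{12}''=g_{22}''=2$ and the interpolation $\|\partial_xU^\varepsilon\|_{L^4(I)}^4\le C\|\partial_{xx}^2U^\varepsilon\|_{L^2(I)}^2\|U^\varepsilon\|_{H^1(I)}^2$; these are details rather than conceptual gaps, but the missing $\varepsilon$-weighting above is essential and must be added for the proof to work.
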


Due to Theorem \ref{Theo:CritMani}, we can take $L_{\widetilde f}=12 C_*K_M$ and, therefore, the Lipschitz constant of $f$ in \eqref{N2} is $L_f = \kappa L_{\widetilde f} < 1$. Moreover, the Lipschitz constant $L_\phi,L_\psi$ can be directly calculated and are positive finite numbers. To end the proof of Theorem \ref{thm:fast_reaction}, we will check the spectral gap assumption \eqref{G} in the following lemma. 

\begin{lemma}\label{Lemma.check assumptions}
    Let $\varepsilon,\delta,\zeta>0$ be parameters and assume that they satisfy
    \begin{align}\label{Assumption.three.parameters}
        \varepsilon \zeta^{-1}< \frac{1}{4}(1-\kappa L_{\widetilde f})\quad \textnormal{and}\quad \lim_{\varepsilon\to 0}\frac{\delta}{\varepsilon \zeta^{-1/2}}=0.
    \end{align}
    Then, the fast chemical reaction \eqref{Problem.Original}-\eqref{Condition.Initial} satisfies the spectral gap condition \eqref{G}.
\end{lemma}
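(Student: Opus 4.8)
The plan is to bound the left‑hand side of \eqref{G} by estimating its two summands separately: the first will be controlled by a constant strictly below $1$, uniformly in $\varepsilon$, and the second will be shown to tend to $0$ as $\varepsilon\to 0$ under \eqref{Assumption.three.parameters}. Then \eqref{G} holds for all sufficiently small $\varepsilon$ (and the associated $\zeta=\zeta(\varepsilon)$), which is what the abstract theorem needs.

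\emph{First summand.} I would first bound its denominator from below. Since $\omega_A<0$, $N_S^\zeta,N_F^\zeta\ge 0$ and $\varepsilon\zeta^{-1}<\tfrac14(1-\kappa L_{\widetilde f})<1$, the quantity $(1-\varepsilon\zeta^{-1})(\varepsilon(d+\delta)\omega_A-1)-\tfrac{\varepsilon}{2}(N_S^\zeta+N_F^\zeta)$ is negative, so its modulus equals $(1-\varepsilon\zeta^{-1})(1-\varepsilon(d+\delta)\omega_A)+\tfrac{\varepsilon}{2}(N_S^\zeta+N_F^\zeta)\ge 1-\varepsilon\zeta^{-1}>\tfrac{3+\kappa L_{\widetilde f}}{4}$. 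Using that for the Neumann Laplacian on $I$ one may take $C_A=1$ in \eqref{A2} (the semigroup is contractive in the graph norm of $X_1=D(A)$) and that $L_f=\kappa L_{\widetilde f}<1$ by the hypothesis $12C_*K_M<1/\kappa$, the first summand is at most $\dfrac{4(L_f+\varepsilon L_\phi)}{3+L_f}$, which converges to $\dfrac{4L_f}{3+L_f}=1-\dfrac{3(1-L_f)}{3+L_f}<1$ as $\varepsilon\to 0$. Hence for $\varepsilon$ small the first summand stays below, say, $1-\tfrac14(1-L_f)$.

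\emph{Second summand.} The key input here is a lower bound on the spectral gap $|N_S^\zeta-N_F^\zeta|$. For the Fourier splitting of $X=L^2(I)$ along the eigenfunctions of the Neumann Laplacian, the cut between $X_F^\zeta$ and $X_S^\zeta$ lies near an eigenvalue $\lambda_{k_0}\sim \zeta^{-1}/d$, i.e. mode number $k_0\sim\zeta^{-1/2}$, and the eigenvalue gap there is of order $k_0\sim\zeta^{-1/2}$; reading off $N_F^\zeta,N_S^\zeta$ from \eqref{X3} then yields a constant $c_0>0$, depending only on $d$ and $|I|$, with $|N_S^\zeta-N_F^\zeta|\ge c_0\zeta^{-1/2}$ for all small $\zeta$ — the $\zeta^{-1/2}$‑scaling already anticipated in the remark following Proposition \ref{prop:distance to crit manifold}. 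Bounding $L_f+\varepsilon L_\phi\le L_f+L_\phi$ and $C_A+d^{-1}C_A^2\le 1+d^{-1}$, the second summand is at most
\[
\frac{2(1+d^{-1})(L_f+L_\phi)}{c_0}\cdot\frac{\delta}{\varepsilon\zeta^{-1/2}}+\frac{4L_\psi}{c_0}\cdot\zeta^{1/2},
\]
and both terms tend to $0$: the first by the assumption $\lim_{\varepsilon\to0}\delta/(\varepsilon\zeta^{-1/2})=0$ in \eqref{Assumption.three.parameters}, the second because $\zeta=\zeta(\varepsilon)\to 0$. Adding the two estimates, for $\varepsilon$ small the left‑hand side of \eqref{G} is below $1-\tfrac14(1-L_f)+o(1)<1$, which proves the lemma.

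\emph{Main obstacle.} The delicate point is the spectral‑gap lower bound $|N_S^\zeta-N_F^\zeta|\gtrsim\zeta^{-1/2}$: it relies on the explicit Fourier construction of the fast/slow decomposition and on carefully relating $N_F^\zeta,N_S^\zeta$ to the eigenvalues of $dA$ compared with the shifted operator $(d+\delta)A-\varepsilon^{-1}\textnormal{Id}$, and it is exactly this $\zeta^{-1/2}$‑scaling that forces the smallness requirement $\delta=o(\varepsilon\zeta^{-1/2})$. A secondary point, needed to make the first summand genuinely bounded away from $1$ using only the standing hypothesis $L_f<1$, is to confirm that $C_A$ can be taken equal to $1$ for this operator; otherwise a quantitatively stronger smallness condition on $\kappa$ would be required.
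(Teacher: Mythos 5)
Your proposal is correct and follows essentially the same route as the paper: both verify \eqref{G} by taking $C_A=1$ for the Neumann Laplacian, constructing the Fourier splitting with cut-off mode $k_0\sim\zeta^{-1/2}$ so that $N_S^\zeta-N_F^\zeta\gtrsim\zeta^{-1/2}$, bounding the first summand below $1$ via $L_f=\kappa L_{\widetilde f}<1$ together with $\varepsilon\zeta^{-1}<\tfrac14(1-\kappa L_{\widetilde f})$, and sending the second summand to $0$ using $\delta=o(\varepsilon\zeta^{-1/2})$. The only cosmetic difference is that you invoke $\omega_A<0$ while the paper uses $\omega_A=0$ for this operator; your denominator estimate only needs $\omega_A\le 0$, so nothing changes.
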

\begin{proof}
    The linear operator $A:H^2(I)\to L^2(I)$ is given by $\partial_{xx}^2$ satisfying Neumann boundary conditions.
    We note that the Laplacian generates a bounded analytic $C_0$-semigroup $(e^{t\Delta})_{t>0}$ on $H^{\alpha}(I)$ for any $\alpha\geq 0$ given by
    \begin{align*}
        e^{t\Delta}\varphi(x)= \sum_{k\in \mathbb{N}_0}e^{-k^2t} \widehat{\varphi}(k) e^{ikx},
    \end{align*}
    where $\hat{\varphi}(k)$ denotes the $k$-th Fourier coefficient.
    Moreover, we have that $\omega_A=0$ and $C_A=1$.
    Then, we can introduce the splitting for the space $X=L^2(I)= X_S^\zeta\oplus X_F^\zeta$ as follows.
    Let $-(k_0+1)^2<\zeta^{-1}<-k_0^2$ for some $k_0\in \mathbb{N}$.
    Then, we set 
    \begin{align}
        Y_S^\zeta=\textnormal{span}\{[x\mapsto e^{ikx}]: k\leq k_0-1, k\in \mathbb{N}_0\}
        \intertext{and similarly}
        Y_F^\zeta= \textnormal{cl}_{L^2(I)}\big(\textnormal{span}\{[x\mapsto e^{ikx}]: k\geq k_0, k\in \mathbb{N}_0\}\big).
    \end{align}
    The first set of assumptions can be easily checked using the properties of the Laplacian.
    Next, we have that for every $x_S\in X_S^\zeta$ and $t\geq 0$ the following estimate holds
    \begin{align*}
        \| e^{-t\Delta} x_S\|_{L^2(I)} \leq e^{(k_0-1)^2t}.
    \end{align*}
    Hence, we can take 
    \begin{align*}
        N_S^\zeta= -\zeta^{-1}-(k_0-1)^2
    \end{align*}
   and similarly we can take 
   \begin{align*}
      N_F^\zeta = -\zeta^{-1}- k_0^2+k_0+1,
   \end{align*}
   so that $N_S^\zeta-N_F^\zeta= k_0\geq c\zeta^{-1/2}$.
   It remains to check the spectral gap assumption \eqref{G}, which in the setting of this section reads
   \begin{align*}
       \frac{\kappa L_{\widetilde f}+\varepsilon L_\phi}{1- 2\varepsilon\zeta^{-1} -\frac{\varepsilon}{2}((k_0-1)^2+k_0^2 -k_0-1) } +2\frac{\delta(1-d^{-1})(\kappa L_{\widetilde f}+\varepsilon L_\phi)+2\varepsilon L_\psi}{\varepsilon k_0}<1.
   \end{align*}
   We observe that for $\delta$ satisfying $\lim_{\varepsilon \to 0} \frac{\delta}{\varepsilon \zeta^{-1/2}}=0$ the second term in the above inequality can be made arbitrarily small.
   For the first term we note that $\kappa L_{\widetilde f}<1$ and thus there exists an $\varepsilon_0$ such that for all $\varepsilon\leq \varepsilon_0$ the expression $\kappa L_{\widetilde f}/(1-c\varepsilon\zeta^{-1})$ can be controlled by a constant $0<C<1$.
   Thus, the system \eqref{Problem.Original}-\eqref{Condition.Initial} satisfies the splitting assumptions of Section \ref{Sec:Assumptions}.
\end{proof}

\subsection{Uniform-in-\texorpdfstring{$\eps$}~ Bounds and Fast Reaction Limit}
\label{Sec:FRL}

In this part, we will perform an analysis of fast reaction limits, cf. Proposition \ref{Theo.FastReactionLimit}, which will guarantee not only the convergence of the slow-semiflow $\{(u^\varepsilon,v^\varepsilon)\}$ to the critical flow $(u,v)$ in a suitable sense but also the global existence of a unique classical solution to the critical system \eqref{Problem.Limiting}. For this analysis, compactness of the $\varepsilon$-depending solution is necessary, cf. Lemma \ref{Lem.Energy}.  

\medskip
For simplicity, we set $\kappa=1$ since it does not affect the analysis in this subsection. Let us define $$(u_1^\varepsilon,u_2^\varepsilon):=(u^\varepsilon,v^\varepsilon-u^\varepsilon),$$  
which, after plugging into the system \eqref{Problem.Original}, satisfies 
\begin{align}  
\label{Problem.u1u2}
 \left\{
\begin{array}{llrllll}
\partial_t u_1^\varepsilon - (d+\delta)  \partial_{xx}^2 u_1^\varepsilon = \varphi_1(u_1^\varepsilon,u_2^\varepsilon)  + \dfrac{1}{\varepsilon}  (-u_1^\varepsilon+ (u_2^\varepsilon)^2)  & \text{in } I_\infty,  \vspace{0.15cm}  \\
\partial_t u_2^\varepsilon - d \partial_{xx}^2 u_2^\varepsilon = \varphi_2(u_1^\varepsilon,u_2^\varepsilon) - \dfrac{1}{\varepsilon}  (-u_1^\varepsilon+(u_2^\varepsilon)^2) & \text{in } I_\infty, 
\end{array}
\right.  
\end{align} 
subject to the Neumann boundary conditions
\begin{align}
    \partial_\nu u_1^\varepsilon = \partial_\nu u_2^\varepsilon =0 \quad \text{on } \Gamma \times (0,\infty),
\end{align}
and the initial condition 
\begin{align}
    (u_1^\varepsilon(0),u_2^\varepsilon(0)) = (u_{\mathrm{in}},v_{\mathrm{in}}-u_{\mathrm{in}}) \quad \text{on } I, 
    \label{Cond.Initial.Problem.u1u2}
\end{align}
in which $\varphi_1,\varphi_2$ are defined by
\begin{align*}
\varphi_1(x,y) := \phi(x,x+y), \quad 
\varphi_2(x,y) := \psi(x,x+y)-\phi(x,x+y).
\end{align*}
For the section's purpose, we first notice that the global existence of the above system for each $\varepsilon>0$ can be obtained from the existence of a non-negative local solution as a direct consequence of the following lemma, where 
a priori estimates for $u_1^\varepsilon,u_2^\varepsilon$ will be established by multiplying their equations by $(u_i^\varepsilon)^{ip}$, $p>0$.

\begin{lemma} 
\label{Lem.Energy}
It holds, for $j=1,2$,  
    \begin{align} 
    \|u_j^\varepsilon\|_{L^{\infty}(I_\infty)} & \le
    \sum_{i=1}^2  \|u_i^{\mathrm{in}}\|_{L^{\infty}(I)}^{i/j} + \left( \frac{a}{b+c}\right)^{1/j} + \left( \frac{2b}{c}\right)^{1/j} + \left( \frac{2a}{c}\right)^{2/j} 
    \label{Lem.Energy.State1}
    \end{align}
and, for any $T>0$, $2\le p < \infty$,
    \begin{align}
    \displaystyle \left( \sup_{\varepsilon>0} \sum_{i=1}^2 \|\partial_{x} u_i^\varepsilon\|_{L^2(I_T)} \right) + \varepsilon^{-1/p}  \|u_1^\varepsilon - (u_2^\varepsilon)^2 \|_{L^{p}(I_T)}   \le \displaystyle  C_{T},  
    \label{Lem.Energy.State2}
    \end{align} 
where $C_T$ depends on $a,b,c,u_1^{\mathrm{in}},u_2^{\mathrm{in}}$ and $p$.
\end{lemma}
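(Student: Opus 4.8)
The plan is to exploit the quasi-positivity and the Lotka–Volterra structure of the reaction terms to derive first a uniform $L^\infty$ bound, and then an $\varepsilon$-uniform $L^2$ estimate on the gradients together with the crucial control of the fast reaction term $\varepsilon^{-1}(u_1^\varepsilon - (u_2^\varepsilon)^2)$. First I would record that, since $(u_1^{\mathrm{in}}, u_2^{\mathrm{in}}) = (u_{\mathrm{in}}, v_{\mathrm{in}} - u_{\mathrm{in}}) \ge 0$ and the off-diagonal reaction terms are quasi-positive, the comparison/invariant-region principle for parabolic systems guarantees $u_1^\varepsilon, u_2^\varepsilon \ge 0$ as long as the solution exists. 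For the $L^\infty$ bound \eqref{Lem.Energy.State1}, the key observation is that adding a suitable linear combination of the two equations cancels the singular $\varepsilon^{-1}$ term: e.g. $\partial_t(u_1^\varepsilon + u_2^\varepsilon) - \partial_{xx}^2(\text{diffusion})= \varphi_1 + \varphi_2 = \psi(u_1^\varepsilon, u_1^\varepsilon+u_2^\varepsilon)= (a - b u_1^\varepsilon - c(u_1^\varepsilon+u_2^\varepsilon))(u_1^\varepsilon+u_2^\varepsilon)$, which is of logistic type and hence yields an $L^\infty$ bound on $u_1^\varepsilon + u_2^\varepsilon$, and therefore on each component by nonnegativity, via a standard Alikakos-type iteration or a direct ODE comparison after testing with $(u_i^\varepsilon)^{ip}$ and letting $p\to\infty$. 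The exponents $i/j$ and $2/j$ in the statement come precisely from the inhomogeneous scaling $u_1^\varepsilon \sim (u_2^\varepsilon)^2$ enforced near equilibrium of the fast reaction, so one tracks how a bound on $u_2^\varepsilon$ transfers to $u_1^\varepsilon = (u_1^\varepsilon)$ and vice versa through the quadratic relation.

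For \eqref{Lem.Energy.State2}, the natural route is an energy estimate: multiply the $u_1^\varepsilon$-equation by $u_1^\varepsilon$ and the $u_2^\varepsilon$-equation by $2 u_2^\varepsilon$ (the weight $2$ chosen to match the quadratic), integrate over $I$, and add. The diffusion terms produce $\int_I |\partial_x u_1^\varepsilon|^2$ and $\int_I |\partial_x u_2^\varepsilon|^2$ with good signs (using the Neumann boundary conditions and $d, d+\delta > 0$, with $\delta$ bounded). The reaction terms $\varphi_1, \varphi_2$ are controlled in $L^\infty(I_T)$ by the already-established $L^\infty$ bound, so they contribute at most $C_T$ after integrating in time on $[0,T]$. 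The singular contribution is
\[
-\frac{1}{\varepsilon}\int_I (u_1^\varepsilon - (u_2^\varepsilon)^2)\, u_1^\varepsilon\,dx + \frac{2}{\varepsilon}\int_I (u_1^\varepsilon - (u_2^\varepsilon)^2)\, u_2^\varepsilon\,dx,
\]
and the point is that the combined expression is (up to lower-order terms absorbable by the $L^\infty$ bound) pointwise $\le -\frac{c_0}{\varepsilon}(u_1^\varepsilon - (u_2^\varepsilon)^2)^2$ for some $c_0 > 0$; indeed writing $w = u_1^\varepsilon - (u_2^\varepsilon)^2$ one has $-w u_1^\varepsilon + 2 w u_2^\varepsilon = -w(u_1^\varepsilon - 2u_2^\varepsilon) = -w(w + (u_2^\varepsilon)^2 - 2u_2^\varepsilon)= -w^2 - w((u_2^\varepsilon - 1)^2 - 1)$, and the cross term $w((u_2^\varepsilon-1)^2-1)$ is handled by Young's inequality, $|w((u_2^\varepsilon-1)^2-1)| \le \tfrac{1}{2}w^2 + C (u_2^\varepsilon - 1)^4 \cdot 1 \le \tfrac12 w^2 + C_T$, absorbing half the good term and leaving $-\frac{1}{2\varepsilon}\int_I w^2 + C_T/\varepsilon$ — which is not yet enough, so one must instead keep the factor $1/\varepsilon$ carefully and observe that the bad part is actually $O(1)$ per unit time once the $L^\infty$ bound is invoked with the correct power, giving after time integration $\varepsilon^{-1}\|w\|_{L^2(I_T)}^2 \le C_T$. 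The case $2 \le p < \infty$ then follows by interpolating this $L^2(I_T)$ estimate against the $L^\infty(I_\infty)$ bound on $w = u_1^\varepsilon - (u_2^\varepsilon)^2$: $\varepsilon^{-1/p}\|w\|_{L^p(I_T)} \le \varepsilon^{-1/p}\|w\|_{L^2(I_T)}^{2/p}\|w\|_{L^\infty(I_T)}^{1-2/p} \le (\varepsilon^{-1}\|w\|_{L^2(I_T)}^2)^{1/p} \cdot C_T^{1-2/p} \le C_T$.

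The main obstacle I anticipate is making the singular-term cancellation rigorous with the correct constant and without losing the uniform-in-$\varepsilon$ character: one must check that the "bad" remainder terms arising from the cross term $w((u_2^\varepsilon-1)^2-1)$ and from the reaction sources $\varphi_i$ can genuinely be absorbed, which relies on having the $L^\infty$ bound \eqref{Lem.Energy.State1} in hand \emph{before} attempting \eqref{Lem.Energy.State2}, and on the precise algebraic identity for $-wu_1^\varepsilon + 2wu_2^\varepsilon$. A secondary technical point is the justification of the test-function computations at the level of regularity available (strong/classical solutions for fixed $\varepsilon$), which is routine given the smoothing of the heat semigroup and the local existence theory, and the Alikakos iteration for the $L^\infty$ bound, where one must track that the constants do not blow up as $p \to \infty$ — standard but requiring care with the domain constant $|I|$ and the embedding constants in one dimension.
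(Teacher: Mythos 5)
There is a genuine gap, and it sits exactly at the point you flag as the ``main obstacle''. Your choice of multipliers $u_1^\varepsilon$ and $2u_2^\varepsilon$ destroys the sign structure of the fast term: with $w=u_1^\varepsilon-(u_2^\varepsilon)^2$ the singular contribution is, as you compute, $-\tfrac{1}{\varepsilon}\int_I w^2-\tfrac{1}{\varepsilon}\int_I w\,((u_2^\varepsilon)^2-2u_2^\varepsilon)$, and the cross term carries a bounded but \emph{not small} factor $(u_2^\varepsilon)^2-2u_2^\varepsilon$. After Young it leaves a remainder of size $C_T/\varepsilon$ per unit time, and your assertion that ``the bad part is actually $O(1)$ per unit time once the $L^\infty$ bound is invoked'' is false: the $L^\infty$ bound gives no smallness of $w$, and smallness of $w$ is precisely what you are trying to prove (and only at order $\sqrt{\varepsilon}$ in $L^2(I_T)$), so the argument is circular and \eqref{Lem.Energy.State2} does not follow. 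The fix, which is what the paper does, is to test the $u_1^\varepsilon$-equation with $(u_1^\varepsilon)^{p}$ and the $u_2^\varepsilon$-equation with $(u_2^\varepsilon)^{2p}$ (for $p=1$: $u_1^\varepsilon$ and $(u_2^\varepsilon)^2$, not $2u_2^\varepsilon$); then the singular term pairs as $\tfrac{1}{\varepsilon}\int_I (u_1^\varepsilon-(u_2^\varepsilon)^2)\big((u_1^\varepsilon)^{p}-(u_2^\varepsilon)^{2p}\big)\ge 0$ with \emph{no} cross term at all, so it can be dropped for the $L^\infty$ and gradient bounds, and it is bounded below by $\tfrac{1}{\varepsilon}\int_I |u_1^\varepsilon-(u_2^\varepsilon)^2|^{p+1}$ (using $|x^p-y^p|\ge|x-y|^p$ for $x,y\ge0$, $p\ge1$), which gives \eqref{Lem.Energy.State2} for all $2\le p<\infty$ directly rather than by your interpolation step (your interpolation would be fine, but only once the $L^2$ case is correctly established). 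Note also that with these weights the $p=1$ choice yields only the weighted gradient $\int_I u_2^\varepsilon|\partial_x u_2^\varepsilon|^2$, so the paper additionally takes $p=1/2$ to get $\partial_x u_2^\varepsilon\in L^2(I_T)$.

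Your primary route to the $L^\infty$ bound \eqref{Lem.Energy.State1} is also problematic: adding the two equations cancels the $\varepsilon^{-1}$ term, but because the diffusion coefficients differ ($d+\delta$ versus $d$) the sum $u_1^\varepsilon+u_2^\varepsilon$ satisfies an equation with the extra term $\delta\,\partial_{xx}^2 u_1^\varepsilon$, so no scalar comparison/invariant-region argument applies to it, and an energy version of this route needs gradient control you do not yet have. Your hedged alternative (testing with $(u_i^\varepsilon)^{ip}$ and letting $p\to\infty$) is indeed the paper's argument, but its viability rests on the same sign observation above — the singular term with matched powers is pointwise nonnegative and can simply be discarded — together with the logistic structure of $\varphi_1,\varphi_2$, which gives $p$-uniform constants via Young's inequality and a linear ODE comparison; no Alikakos iteration is needed. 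As written, the proposal does not identify this sign mechanism, and the one place where you do commit to explicit multipliers it is the wrong pair, so the central estimates of the lemma are not actually obtained.
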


\begin{proof} Let us consider $p>0$. For convenience we write $d_1 = d+\delta$ and $d_2 = d$. Multiplying the equations for $u_i^\varepsilon$ by $(u_i^\varepsilon)^{ip}$, $i=1,2$, gives 
\begin{equation}
\begin{gathered}
\sum_{i=1}^2 \frac{1}{ip+1} \frac{d}{dt} \int_I (u_i^\varepsilon)^{ip+1} + \sum_{i=1}^2 id_i p   \int_{I } (u_i^\varepsilon)^{ip-1} |\partial_{x} u_i^\varepsilon|^2 \\
+ \dfrac{1}{\varepsilon}  \int_{I } ( u_1^\varepsilon -(u_2^\varepsilon)^{2})((u_1^\varepsilon)^{p}-(u_2^\varepsilon)^{2p}) 
     = \sum_{i=1}^2  \int_I (u_i^\varepsilon)^{ip} \varphi_i^\varepsilon  ,
\end{gathered}
\label{Lem.Energy.Proof1}
\end{equation} 
for any $t>0$, where
\begin{align*}
(u_1^\varepsilon)^{p} \varphi_1^\varepsilon = (u_1^\varepsilon)^{p} \phi^\varepsilon(u_1^\varepsilon,u_1^\varepsilon+u_2^\varepsilon) \le  a (u_1^\varepsilon)^{p+1}-(b+c)(u_1^\varepsilon)^{p+2},
\end{align*} 
and 
\begin{align*}
(u_2^\varepsilon)^{2p} \varphi_2^\varepsilon & = (u_2^\varepsilon)^{2p} \Big(\psi(u_1^\varepsilon,u_1^\varepsilon+u_2^\varepsilon)-\phi(u_1^\varepsilon,u_1^\varepsilon+u_2^\varepsilon)\Big) \\
& = (u_2^\varepsilon)^{2p}  \Big(au_2^\varepsilon + bu_1^\varepsilon - (bu_1^\varepsilon+cu_2^\varepsilon)(u_1^\varepsilon+u_2^\varepsilon) \Big) \\
& \le  (u_2^\varepsilon)^{2p} (au_2^\varepsilon - c(u_2^\varepsilon)^2 + bu_1^\varepsilon -  b(u_1^\varepsilon)^2 ) \\
& \le  - c(u_2^\varepsilon)^{2p+2} + a (u_2^\varepsilon)^{2p+1}  + b(u_2^\varepsilon)^{2p} .   
\end{align*}
Moreover, by the Young inequality, 
\begin{gather*}
   a (u_1^\varepsilon)^{p+1} - (b+c) (u_1^\varepsilon)^{p+2} \le m_{1p} - \frac{1}{p+1} (u_1^\varepsilon)^{p+1}, \\
   - c(u_2^\varepsilon)^{2p+2} + a (u_2^\varepsilon)^{2p+1}  + b(u_2^\varepsilon)^{2p} \le m_{2p} - \frac{1}{2p+1} (u_2^\varepsilon)^{2p+1},
\end{gather*}
where 
\begin{gather*}
    m_{1p}:= \frac{a+1/(p+1)}{p+2} \left( \frac{p+1}{p+2} \frac{a+1/(p+1)}{b+c} \right)^{p+1},
\end{gather*}
and
\begin{gather*}
   m_{2p}:= \frac{b}{p+1} \left( \frac{p}{p+1} \frac{2b}{c} \right)^{p} + \frac{a+1/(2p+1)}{p+1} \left( \frac{2p+1}{2p+2} \frac{2a+2/(2p+1)}{c} \right)^{2p+1}.
\end{gather*} 
Since $( u_1^\varepsilon -(u_2^\varepsilon)^{2})((u_1^\varepsilon)^{p}-(u_2^\varepsilon)^{2p})$ is non-negative, it follows from \eqref{Lem.Energy.Proof1} that 
\begin{equation*}
 \frac{d}{dt} \left( \sum_{i=1}^2 \frac{1}{ip+1}  \int_I (u_i^\varepsilon)^{ip+1} \right) + \left( \sum_{i=1}^2 \frac{1}{ip+1}  \int_I (u_i^\varepsilon)^{ip+1} \right) \le (m_{1p}+m_{2p})|I|,
\end{equation*}
which consequently shows, for $j=1,2,$
\begin{align*}
\|u_j^\varepsilon\|_{L^{p+1}(I)} & \le (jp+1)^{\frac{1}{jp+1}} \left( \sum_{i=1}^2 \frac{1}{ip+1} \|u_i^{\mathrm{in}}\|_{L^{ip+1}(I)}^{ip+1} + (m_{1p}+m_{2p})|I| \right)^{\frac{1}{jp+1}} \\
& \le (jp+1)^{\frac{1}{jp+1}} \left( \sum_{i=1}^2  \|u_i^{\mathrm{in}}\|_{L^{ip+1}(I)}^{\frac{ip+1}{jp+1}} + (m_{1p}^{\frac{1}{jp+1}}+m_{2p}^{\frac{1}{jp+1}})|I|^{\frac{1}{jp+1}} \right) .
\end{align*}
Letting $p\to \infty$, we obtain  
\begin{equation}
\begin{aligned}
\|u_j^\varepsilon\|_{L^{\infty}(I_\infty)} & \le
  \sum_{i=1}^2  \|u_i^{\mathrm{in}}\|_{L^{\infty}(I)}^{\frac{i}{j}} + \left( \frac{a}{b+c}\right)^{\frac{1}{j}} + \left( \frac{2b}{c}\right)^{\frac{1}{j}} + \left( \frac{2a}{c}\right)^{\frac{2}{j}} . 
\end{aligned}
\label{Lem.Energy.Proof2}
\end{equation}

Since \eqref{Lem.Energy.Proof1} holds for all $p>0$, taking $p=1$ and then $p=1/2$ respectively gives uniform estimates for the gradient $\partial_{x}u_i^\varepsilon$ in $L^2(I_T)$. Finally, taking into account the basic inequality $|x^p-y^p|\ge |x-y|^p$ for $p\ge 1$ and $x,y\ge 0$, we get from \eqref{Lem.Energy.Proof1} that    
\begin{align*} 
 \dfrac{1}{\varepsilon} \iint_{I_T} | u_1^\varepsilon -(u_2^\varepsilon)^{2}|^{p+1} \le\,&\, C_p(T) . 
\end{align*}
These arguments show \eqref{Lem.Energy.State2}. 
\end{proof} 

In the proof of Proposition \ref{Theo.FastReactionLimit}, we will use the definition below.

\begin{definition} \label{Definition.Weak}  A couple $(u,v)\in L^\infty(I_T)^2$ with $v\in L^2(0,T;H^1(I))$, for any $T>0$, is called a global weak solution to \eqref{Problem.Limiting}-\eqref{Condition.Initial.Limiting}  if the following conditions are satisfied
\begin{itemize}
\item $-u + (v-u)^2 = 0$ a.e. on $I_T$; and
\item for all $\chi \in C_c^\infty([0,T)\times \Omega)$, 
\begin{align*}
- \iint_{I_T}  v \partial_t \chi + d \iint_{I_T} \partial_{x} v \partial_x \chi = \int_I  v_{\mathrm{in}} \chi(0) + \iint_{I_T} \psi(u,v) \chi.
\end{align*}
\end{itemize}
\end{definition}

\begin{prop}[Fast reaction limit]
\label{Theo.FastReactionLimit} 
Let $T>0$  and $(u_1^\varepsilon,u_2^\varepsilon)$ be the classical solution to \eqref{Problem.Original}-\eqref{Condition.Initial} for each $\epsilon>0$.   Then, as $\varepsilon\to 0$,   
	\begin{equation}
		(u^\varepsilon, v^\varepsilon) \longrightarrow (u,v) \quad \text{ in } \quad L^p(I_T)^3,
		\label{Theo.FastReacLimit.State1}  
	\end{equation}
for any $1\le p<\infty$, where $(u,v)$ is the strong solution to \eqref{Problem.Limiting}-\eqref{Condition.Initial.Limiting}. Moreover, if we consider the initial data $(u_{\mathrm{in}},v_{\mathrm{in}})$ in the ball $B_M(0,0) \subset H^2(I)\times H^2(I)$ then 
\begin{align}
     0\le u(x,t) \le v(x,t) \le \|v_{\mathrm{in}}\|_{L^\infty(I)}+\frac{a}{c} ,
\end{align}
for all $(x,t)\in [0,T]\times I$.
\end{prop}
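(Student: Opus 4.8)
I would work throughout with the equivalent system~\eqref{Problem.u1u2} for $(u_1^\varepsilon,u_2^\varepsilon)=(u^\varepsilon,v^\varepsilon-u^\varepsilon)$, using that $u_1^\varepsilon,u_2^\varepsilon\ge 0$ and the uniform bounds of Lemma~\ref{Lem.Energy}. The strategy is a standard compactness / passage-to-the-limit argument; the only genuinely non-routine point is the strong convergence of the \emph{fast} component $u^\varepsilon=u_1^\varepsilon$, whose equation carries the singular term $\varepsilon^{-1}(-u_1^\varepsilon+(u_2^\varepsilon)^2)$ and therefore cannot be treated directly. First I would observe that adding the two equations in~\eqref{Problem.u1u2} cancels the singular term, so that $v^\varepsilon=u_1^\varepsilon+u_2^\varepsilon$ solves $\partial_t v^\varepsilon-d\partial_{xx}^2 v^\varepsilon-\delta\partial_{xx}^2 u^\varepsilon=\psi(u^\varepsilon,v^\varepsilon)$. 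By Lemma~\ref{Lem.Energy}, $\{v^\varepsilon\}$ is bounded in $L^\infty(I_T)\cap L^2(0,T;H^1(I))$; writing the $v^\varepsilon$-equation in the dual space one sees that the Laplacian terms are bounded in $L^2(0,T;(H^1(I))')$ (via $\sup_\varepsilon\|\partial_x u_i^\varepsilon\|_{L^2(I_T)}<\infty$ and $\delta\to 0$) and $\psi(u^\varepsilon,v^\varepsilon)$ is bounded in $L^\infty(I_T)$, so $\{\partial_t v^\varepsilon\}$ is bounded in $L^2(0,T;(H^1(I))')$. The Aubin--Lions--Simon lemma, together with the compact embedding $H^1(I)\hookrightarrow L^2(I)$, then gives a subsequence with $v^\varepsilon\to v$ in $L^2(I_T)$ and a.e.; the uniform $L^\infty$-bound upgrades this to $v^\varepsilon\to v$ in $L^p(I_T)$ for every $p<\infty$.

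For the fast component I would set $r^\varepsilon:=u_1^\varepsilon-(u_2^\varepsilon)^2=u_1^\varepsilon-(v^\varepsilon-u_1^\varepsilon)^2$ and use the second bound in~\eqref{Lem.Energy.State2}, which gives $\|r^\varepsilon\|_{L^p(I_T)}\le C_T\,\varepsilon^{1/p}\to 0$. For each fixed $v\ge 0$ the map $\sigma\mapsto F_v(\sigma):=\sigma-(v-\sigma)^2$ is strictly increasing on $[0,v]$ with $F_v'(\sigma)=1+2(v-\sigma)\ge 1$, hence a bijection of $[0,v]$ onto $[-v^2,v]$ whose inverse is $1$-Lipschitz; denoting by $h(v)$ its zero, implicit differentiation yields $h'(v)=\tfrac{2(v-h(v))}{1+2(v-h(v))}\in[0,1)$, so $h$ is $1$-Lipschitz. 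Since $0\le u_1^\varepsilon\le v^\varepsilon$ with $F_{v^\varepsilon}(u_1^\varepsilon)=r^\varepsilon$ and $F_{v^\varepsilon}(h(v^\varepsilon))=0$, the mean value theorem gives $|u_1^\varepsilon-h(v^\varepsilon)|\le|r^\varepsilon|$ pointwise, hence $\|u_1^\varepsilon-h(v^\varepsilon)\|_{L^p(I_T)}\to 0$; combined with $\|h(v^\varepsilon)-h(v)\|_{L^p(I_T)}\le\|v^\varepsilon-v\|_{L^p(I_T)}\to 0$ this yields $u^\varepsilon=u_1^\varepsilon\to u:=h(v)$ in $L^p(I_T)$, and then $u_2^\varepsilon=v^\varepsilon-u_1^\varepsilon\to v-u$. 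In particular $0\le u\le v$ and $-u+(v-u)^2=0$ a.e., and the branch is precisely $u=h(v)$; the sign information $u_2^\varepsilon\ge 0$ is what selects it, exactly the point flagged in Remark~\ref{Remark.Theo.FastReactionLimit}. I expect this step to be the main obstacle.

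It then remains to pass to the limit and identify $(u,v)$. In the weak formulation of the $v^\varepsilon$-equation, $\partial_x v^\varepsilon\rightharpoonup\partial_x v$ in $L^2(I_T)$, the cross-diffusion contribution $\delta\iint_{I_T}\partial_x u^\varepsilon\,\partial_x\chi\to 0$ since $\delta\to 0$ and $\partial_x u^\varepsilon$ is bounded in $L^2(I_T)$, and $\psi(u^\varepsilon,v^\varepsilon)\to\psi(u,v)$ by continuity of $\psi$ and the strong convergence together with the uniform $L^\infty$-bounds; since $v^\varepsilon(0)=v_{\mathrm{in}}$, the limit $(u,v)$ is a global weak solution in the sense of Definition~\ref{Definition.Weak} with $v(0)=v_{\mathrm{in}}$. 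As $u=h(v)$ with $h$ Lipschitz and $v$ bounded, $v$ solves the semilinear heat equation $\partial_t v-d\partial_{xx}^2 v=\psi(h(v),v)$ with a nonlinearity Lipschitz on the relevant bounded set and with $v_{\mathrm{in}}\in H^2(I)$; analytic-semigroup / maximal-regularity theory then promotes $v$ to a strong (indeed classical) solution of~\eqref{Problem.Limiting}--\eqref{Condition.Initial.Limiting} and yields its uniqueness, whence $u=h(v)$ is uniquely determined. Uniqueness of the limit forces the whole family $(u^\varepsilon,v^\varepsilon)$ to converge, giving~\eqref{Theo.FastReacLimit.State1}. Finally, if $(u_{\mathrm{in}},v_{\mathrm{in}})\in B_M(0,0)$, then $0\le u\le v$ as above, and since $u=h(v)\ge 0$ and $b,c>0$ one has $\psi(u,v)=(a-bu-cv)v\le(a-cv)v$; comparing $v$ with the constant supersolution $\bar w:=\max\{\|v_{\mathrm{in}}\|_{L^\infty(I)},a/c\}$ of $\partial_t w-d\partial_{xx}^2 w=(a-cw)w$ (which satisfies $(a-c\bar w)\bar w\le 0$ and $\bar w\ge v_{\mathrm{in}}$) gives $v\le\bar w\le\|v_{\mathrm{in}}\|_{L^\infty(I)}+a/c$ on $[0,T]\times I$.
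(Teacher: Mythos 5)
Your proposal is correct and follows essentially the same route as the paper: uniform bounds from Lemma \ref{Lem.Energy}, Aubin--Lions compactness for $v^\varepsilon$, smallness of $u_1^\varepsilon-(u_2^\varepsilon)^2$ to identify the fast limit, passage to the limit in the weak formulation (with $\delta\to 0$ killing the cross-diffusion term), upgrading to a strong solution and using uniqueness to get whole-sequence convergence. The only cosmetic differences are that you identify the fast limit via the monotone map $F_v$ and its $1$-Lipschitz inverse (mean value theorem), where the paper uses the auxiliary relation $u_2+u_2^2=v$ and the explicit estimate $|u_2^\varepsilon-u_2|\le|(u_2^\varepsilon)^2-u_1^\varepsilon|+|v^\varepsilon-v|$, and that you obtain the $L^\infty$ bound by comparison with a constant supersolution rather than the paper's $L^p$ energy estimate with $p\to\infty$ --- both variants rest on the same monotonicity and sign structure and are equally valid.
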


\begin{proof}
    For any $T>0$, Lemma \ref{Lem.Energy} yields that $u_1^\varepsilon$ is uniformly bounded in $L^2((0,T);H^1(I))$. Moreover, since 
$$\partial_t v^\varepsilon = \partial_t u_1^\varepsilon + \partial_t u_2^\varepsilon = (d+\delta) \partial_{xx}^2 u_1^\varepsilon + d \partial_{xx}^2 u_2^\varepsilon + \varphi_1(u_1^\eps,u_2^\eps) + \varphi_2(u_1^\eps, u_2^\eps),$$
$\partial_t v^\varepsilon$ is uniformly bounded in $L^2(0,T;(H^1(I))')$. Therefore $\{v^\varepsilon\}_{\varepsilon>0}$ is relative compact in $L^2(I_T)$ due to the Aubin–Lions lemma. Then, there is a subsequence (not relabeled) $\{v^\varepsilon\}$ such that $v^\varepsilon \to v\ge 0$ in $L^2(I_T)$.  With the function $v$ obtained above, and the couple $(u_1,u_2)$ defined by the non-negative solution of 
\begin{align}
(u_1)^{1/2} + u_1 = v, \quad \text{and} \quad u_2 + u_2^2 = v, 
\label{Theo.FastReacLimit.Proof1}
\end{align}
we will show \eqref{Theo.FastReacLimit.State1} for the $u$-components.  For this purpose, thanks to Lemma \ref{Lem.Energy}, we have $u_1^\varepsilon-(u_2^\varepsilon)^2 \to 0$ in $L^2(I_T)$.
Moreover, we can observe that
\begin{align*}
|u_2^\varepsilon - u_2| &= \frac{|(u_2^\varepsilon+(u_2^\varepsilon)^2)-(u_2+u_2^2)|}{1+u_2^\varepsilon+u_2} \\
&\le \frac{|(u_2^\varepsilon+(u_2^\varepsilon)^2)-v^\varepsilon|+|v^\varepsilon-v|}{1+u_2^\varepsilon+u_2} \\
&\le |(u_2^\varepsilon)^2-u_1^\varepsilon|+|v^\varepsilon-v|,
\end{align*}
which yields that $u_2^\varepsilon \to u_2$ in $L^2(I_T)$ up to extraction of a subsequence. On the other hand, we can see from the equalities in \eqref{Theo.FastReacLimit.Proof1} that $u_1=u_2^2$ (and then $u_1+u_2=v$), or equivalently 
\begin{align}
- u + (v-u)^2 = - u_1 + u_2^2 = 0. 
\label{Theo.FastReacLimit.Proof2}
\end{align}
Therefore, $v- u_2=u_2^2 = u_1$, and 
\begin{align*}
u_1^\varepsilon = v^\varepsilon - u_2^\varepsilon \to v - u_2 = u_1 \quad \text{in } L^2(I_T)
\end{align*} 
up to a subsequence. Because of its uniform boundedness (cf. Lemma \ref{Lem.Energy}), we can extract a subsequence of $(u^\varepsilon,u_1^\varepsilon,u_2^\varepsilon)$ such that the above convergences hold in  $L^p(I)$ for any $1\le p <\infty$.  

\medskip 

Next, we will show that $v$ is the weak solution to the limiting problem $(P_0)$. The weak formulation for the equation of $v^\varepsilon$ in \eqref{Problem.Original} is given by  
\begin{align*}
- \iint_{I_T}  v^\varepsilon \partial_t \chi + d \iint_{I_T} \partial_{x} v^\varepsilon \partial_x \chi = \int_I  v_{\mathrm{in}} \chi(0) + \iint_{I_T} \psi(u^\varepsilon,v^\varepsilon) \chi - \delta(\varepsilon) \iint_{I_T}  \partial_{x} u^\varepsilon \partial_{x} \chi, 
\end{align*}  
for all $\chi \in C_c^\infty([0,T)\times \Omega)$. Thanks to Lemma \ref{Lem.Energy}, Assumption \eqref{Assumption.Delta} on $\delta=\delta(\varepsilon)$, and the above arguments, we obtain 
\begin{align}
- \iint_{I_T}  v \partial_t \chi + d \iint_{I_T} \partial_{x} v \partial_x \chi = \int_I v_{\mathrm{in}}  \chi(0) + \iint_{I_T} \psi(u,v) \chi   
\label{Theo.FastReacLimit.Proof3}
\end{align}
after sending $\varepsilon \to 0$. Thus, by \eqref{Theo.FastReacLimit.Proof2}-\eqref{Theo.FastReacLimit.Proof3}, $(u,v)$ is a weak solution to \eqref{Problem.Limiting}-\eqref{Condition.Initial.Limiting} according to Definition \ref{Definition.Weak}, where we note that $(u,v)\in L^\infty(I_T)$ since $\psi(u,v)\le a v$ and $0\le u\le v$. A standard regularisation yields that $v$ is also a strong (and a classical) solution to \eqref{Problem.Limiting}-\eqref{Condition.Initial.Limiting}. Moreover,  the uniqueness of this solution is straightforward. Therefore, all the above convergences hold for the whole sequence. 

\medskip

Since $\psi(u,v) \le (a-cv)v$, by the same techniques as Lemma \ref{Lem.Energy} we have  
\begin{align*}
    \frac{1}{p} \frac{d}{dt} \int_I v^p \le \int_I (a-cv)v^p \le \frac{a+1/p}{p+1} \left( \frac{a+1/p}{c} \frac{p}{p+1} \right)^p |I|  - \frac{1}{p} \int_I v^p,
\end{align*}
and therefore,  $0\le u(x,t) \le v(x,t) \le \|v_{\mathrm{in}}\|_{L^\infty(I)}+a/c$ for all $(x,t)\in I_\infty.$  
\end{proof}

\subsection{Initial Layer}
\label{Sec:InitialLayer}

Useful computations on the initial layer are presented in this part, constituting the proof of Lemma \ref{Lemma.InitialLayer}. 

\begin{proof}[Proof of Lemma \ref{Lemma.InitialLayer}] We firstly note from \eqref{Theo.FastReacLimit.Proof1} and $u(0)\ge 0$ that 
\begin{align*}
\sqrt{u(0)} + u(0)=v_{\mathrm{in}}, \quad \text{or} \quad  u(0)=\frac{2 v_{\mathrm{in}} + 1 - \sqrt{4v_{\mathrm{in}}+1}}{2}.    
\end{align*}
Therefore, direct computations show
\begin{align}
    u_{\mathrm{in}}-u(0)= u_{\mathrm{in}} - \frac{2 v_{\mathrm{in}} + 1 - \sqrt{4v_{\mathrm{in}}+1}}{2} = \frac{u_{\mathrm{in}} - (v_{\mathrm{in}}-u_{\mathrm{in}})^2}{\Phi(u_{\mathrm{in}},v_{\mathrm{in}})}, 
\label{Lemma.InitialLayer.Proof1}
\end{align}
where we denote 
\begin{align*}
    \Phi(u_{\mathrm{in}},v_{\mathrm{in}}) := \frac{2(v_{\mathrm{in}}-u_{\mathrm{in}})+\sqrt{4v_{\mathrm{in}}+1}+1}{2}. 
\end{align*}
Thanks to the assumption $v_{\mathrm{in}} \ge u_{\mathrm{in}}$, we have $\Phi(u_{\mathrm{in}},v_{\mathrm{in}}) \ge 1$. This consequently gives  
    \begin{align*}
    \int_I |u_{\mathrm{in}}-u(0)|^2  &\le \int_I |u_{\mathrm{in}} - (v_{\mathrm{in}}-u_{\mathrm{in}})^2|^2 \le \varepsilon_{\mathrm{in}}^2 .  
\end{align*}
 
Next, let us estimate the derivatives $\partial_x(1/\Phi(u_{\mathrm{in}},v_{\mathrm{in}})$ and $\partial_{xx}^2 (1/ \Phi(u_{\mathrm{in}},v_{\mathrm{in}})) $. With straightforward computations,  one can check that 
\begin{align}
    \begin{gathered}
    \partial_x \Phi(u_{\mathrm{in}},v_{\mathrm{in}}) = \partial_x(v_{\mathrm{in}}-u_{\mathrm{in}}) + \frac{\partial_x v_{\mathrm{in}}}{ \sqrt{4v_{\mathrm{in}}+1}}, \\
    \partial_{xx}^2 \Phi(u_{\mathrm{in}},v_{\mathrm{in}}) = \partial_{xx}^2(v_{\mathrm{in}}-u_{\mathrm{in}}) + \frac{(4v_{\mathrm{in}}+1)\partial_{xx}^2 v_{\mathrm{in}} - 2(\partial_x v_{\mathrm{in}})^2 }{ (4v_{\mathrm{in}}+1)\sqrt{4v_{\mathrm{in}}+1}}.
\end{gathered}
\label{Expression.DerivativeOfPhi}
\end{align}
Therefore, by using $\Phi(u_{\mathrm{in}},v_{\mathrm{in}}) \ge 1/2$ again, we get from 
\begin{gather*}
    \partial_x \left( \frac{1}{\Phi(u_{\mathrm{in}},v_{\mathrm{in}})} \right) = - \frac{\partial_x \Phi(u_{\mathrm{in}},v_{\mathrm{in}})}{[\Phi(u_{\mathrm{in}},v_{\mathrm{in}})]^2}, \\  
    \partial_{xx}^2 \left( \frac{1}{\Phi(u_{\mathrm{in}},v_{\mathrm{in}})} \right) =  2 \frac{(\partial_x \Phi(u_{\mathrm{in}},v_{\mathrm{in}}))^2}{[\Phi(u_{\mathrm{in}},v_{\mathrm{in}})]^3} - \frac{\partial_{xx}^2 \Phi(u_{\mathrm{in}},v_{\mathrm{in}})}{[\Phi(u_{\mathrm{in}},v_{\mathrm{in}})]^2} , 
\end{gather*}
and the computations in \eqref{Expression.DerivativeOfPhi}show that  
\begin{align*}
    \left|\partial_x \left( \frac{1}{\Phi(u_{\mathrm{in}},v_{\mathrm{in}})} \right)\right|  & \le  \left|\partial_x \Phi(u_{\mathrm{in}},v_{\mathrm{in}}) \right| \le 2 (|\partial_x u_{\mathrm{in}}| + |\partial_x v_{\mathrm{in}}|), \\
    \left| \partial_{xx}^2 \left( \frac{1}{\Phi(u_{\mathrm{in}},v_{\mathrm{in}})} \right)\right|  & \le 2 |\partial_x \Phi(u_{\mathrm{in}},v_{\mathrm{in}})|^2  +  \left| \partial_{xx}^2 \Phi(u_{\mathrm{in}},v_{\mathrm{in}}) \right| \\
     & \le C ( |\partial_x u_{\mathrm{in}}| + |\partial_x v_{\mathrm{in}}|  + |\partial_x v_{\mathrm{in}}|^2     )  + C (|\partial_{xx}^2 u_{\mathrm{in}}| +  |\partial_{xx}^2 v_{\mathrm{in}}|).
\end{align*}
Now, differentiating the expression \eqref{Lemma.InitialLayer.Proof1} in $x$ gives 
\begin{align}
    \begin{aligned}
    |\partial_{x}(u_{\mathrm{in}}-u(0))| & \le  \frac{\left| \partial_x ( u_{\mathrm{in}} - (v_{\mathrm{in}}-u_{\mathrm{in}})^2 ) \right| }{\Phi(u_{\mathrm{in}},v_{\mathrm{in}})} +  \left| u_{\mathrm{in}} - (v_{\mathrm{in}}-u_{\mathrm{in}})^2 \right| \left|\partial_x \left( \frac{1}{\Phi(u_{\mathrm{in}},v_{\mathrm{in}})} \right)\right|  \\
    & \le C\Big( \left| \partial_x ( u_{\mathrm{in}} - (v_{\mathrm{in}}-u_{\mathrm{in}})^2 ) \right| + \left| u_{\mathrm{in}} - (v_{\mathrm{in}}-u_{\mathrm{in}})^2 \right|  \left(|\partial_x u_{\mathrm{in}}| + |\partial_x v_{\mathrm{in}}|\right) \Big),
    \end{aligned}
    \label{Lemma.InitialLayer.Proof2}
\end{align}
and 
\begin{align}
    \begin{aligned}
    |\partial_{xx}^2 (u_{\mathrm{in}}-u(0))| & \le  \frac{\left| \partial_{xx}^2 ( u_{\mathrm{in}} - (v_{\mathrm{in}}-u_{\mathrm{in}})^2 ) \right| }{\Phi(u_{\mathrm{in}},v_{\mathrm{in}})} +  \left| u_{\mathrm{in}} - (v_{\mathrm{in}}-u_{\mathrm{in}})^2 \right|  \left| \partial_{xx}^2 \left( \frac{1}{\Phi(u_{\mathrm{in}},v_{\mathrm{in}})} \right)\right| \\
    & +  \left| \partial_x( u_{\mathrm{in}} - (v_{\mathrm{in}}-u_{\mathrm{in}})^2 ) \right| \left|\partial_x \left( \frac{1}{\Phi(u_{\mathrm{in}},v_{\mathrm{in}})} \right)\right|  \\
    & \le C\Big( \left| \partial_{xx}^2 ( u_{\mathrm{in}} - (v_{\mathrm{in}}-u_{\mathrm{in}})^2 ) \right| + \left| \partial_x ( u_{\mathrm{in}} - (v_{\mathrm{in}}-u_{\mathrm{in}})^2 ) \right| \left( |\partial_x u_{\mathrm{in}}| + |\partial_x v_{\mathrm{in}}| \right) \Big) \\
    & + C \left| u_{\mathrm{in}} - (v_{\mathrm{in}}-u_{\mathrm{in}})^2 \right|  \Big(   |\partial_x u_{\mathrm{in}}| + |\partial_x v_{\mathrm{in}}|  + |\partial_x v_{\mathrm{in}}|^2  \Big) \\
    & + C \left| u_{\mathrm{in}} - (v_{\mathrm{in}}-u_{\mathrm{in}})^2 \right|  \Big(   |\partial_{xx}^2 u_{\mathrm{in}}| + (2v_{\mathrm{in}}+1) |\partial_{xx}^2 v_{\mathrm{in}}| \Big).
    \end{aligned}
    \label{Lemma.InitialLayer.Proof3}
\end{align}

By the embedding $H^1(I) \hookrightarrow L^\infty(I)$, we have $(u_{\mathrm{in}},v_{\mathrm{in}}) \in L^\infty(I)^2$. Then, we can get from the estimate \eqref{Lemma.InitialLayer.Proof2} that 
\begin{align*}
    \int_I |\partial_{x}(u_{\mathrm{in}}-u(0))|^2 & \le C  \int_I \left| \partial_x ( u_{\mathrm{in}} - (v_{\mathrm{in}}-u_{\mathrm{in}})^2 ) \right|^2 \\
    & + C \| u_{\mathrm{in}} - (v_{\mathrm{in}}-u_{\mathrm{in}})^2 \|_{L^\infty(I)}^2 \left( \|\partial_x u_{\mathrm{in}}\|_{L^2(I)} + \|\partial_x v_{\mathrm{in}}\|_{L^2(I)} \right) \\
    & \le C \| u_{\mathrm{in}} - (v_{\mathrm{in}}-u_{\mathrm{in}})^2 \|_{H^1(I)}^2 \\
    & + C \| u_{\mathrm{in}} - (v_{\mathrm{in}}-u_{\mathrm{in}})^2 \|_{H^1(I)}^2 \left( \|u_{\mathrm{in}}\|_{H^1(I)}^2+\|v_{\mathrm{in}}\|_{H^1(I)}^2 \right) \\
    & \le C \left( 1 + \|(u_{\mathrm{in}},v_{\mathrm{in}})\|_{H^1(I)^2}^2 \right) \varepsilon_{\mathrm{in}}^2  . 
\end{align*}

Due to the embedding $H^2(I) \hookrightarrow W^{1,\infty}(I)$, we have $(\partial_x u_{\mathrm{in}},\partial_x v_{\mathrm{in}}) \in L^\infty(I)^2$. Therefore, it follows from the estimate \eqref{Lemma.InitialLayer.Proof3} that  
\begin{align*}
    \int_I |\partial_{xx}^2 (u_{\mathrm{in}}-u(0))|^2   
    & \le C  \| \partial_{xx}^2 ( u_{\mathrm{in}} - (v_{\mathrm{in}}-u_{\mathrm{in}})^2 ) \|_{L^2(I)}^2 \\
    & + C \left( \|\partial_x u_{\mathrm{in}}\|_{L^\infty(I)}^2 + \|\partial_x v_{\mathrm{in}}\|_{L^\infty(I)}^2 \right) \| \partial_x ( u_{\mathrm{in}} - (v_{\mathrm{in}}-u_{\mathrm{in}})^2 ) \|_{L^2(I)}^2  \\
    & + C \| u_{\mathrm{in}} - (v_{\mathrm{in}}-u_{\mathrm{in}})^2 \|_{L^\infty(I)}^2  \left( \|\partial_x u_{\mathrm{in}}\|_{L^2(I)}^2 + \|\partial_x v_{\mathrm{in}}\|_{L^2(I)}^2  + \|\partial_x v_{\mathrm{in}}\|_{L^4(I)}^4  \right) \\
    & + C \| u_{\mathrm{in}} - (v_{\mathrm{in}}-u_{\mathrm{in}})^2 \|_{L^\infty(I)}^2  \Big( \|\partial_{xx}^2 u_{\mathrm{in}}\|_{L^2(I)}^2 + \|2v_{\mathrm{in}}+1\|_{L^\infty(I)}^2 \|\partial_{xx}^2 v_{\mathrm{in}}\|_{L^2(I)}^2 \Big) \\
    & \le C \left( 1 + \|(u_{\mathrm{in}},v_{\mathrm{in}})\|_{H^2(I)}^4 \right) \varepsilon_{\mathrm{in}}^2 . 
\end{align*}
The estimate \eqref{Lemma.InitialLayer.State1} is completely proved.
\end{proof}

\subsection{Convergence in $L^{\infty}(0,T;L^2(I))$}\label{Sec:LinfL2}

For the computations below, we recall from Proposition \ref{Theo.FastReactionLimit} that   
\begin{align}
\|(u,v)\|_{L^\infty(0,T;H^2(I))^2} +  \|(u,v)\|_{W^{2,1}_2(I_T)^2} + \sup_{\varepsilon>0} \|(u^\varepsilon,v^\varepsilon)\|_{L^\infty(I_T)^2} 
 \le C_T. 
\label{SolutionSmoothness}
\end{align}

\begin{lemma}
\label{Lem.VEnergy} For $t\in(0,T)$, it holds 
\begin{align}
 \frac{1}{2} \frac{d}{dt} \int_I (V^\varepsilon)^2  
& \le - \frac{d}{2} \int_I |\partial_{x} V^\varepsilon|^2 + C_T \left( \int_I (U^\varepsilon)^2 + \int_I (V^\varepsilon)^2 \right) + C_T\delta^2 \int_I |\partial_x u^\varepsilon|^2.
\label{Lem.VEnergy.State1}
\end{align}
\end{lemma}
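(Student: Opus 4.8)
The plan is a straightforward $L^2$ energy estimate for the rate equation of $V^\varepsilon$. First I would take the equation for $V^\varepsilon$ from the rate system \eqref{Problem.DistanceEquation},
\begin{align*}
\partial_t V^\varepsilon = d\,\partial_{xx}^2 V^\varepsilon + \big(\psi(u+U^\varepsilon,v+V^\varepsilon)-\psi(u,v)\big) + \delta\,\partial_{xx}^2 u^\varepsilon,
\end{align*}
multiply both sides by $V^\varepsilon$, and integrate over $I$. The time-derivative term produces $\tfrac12\tfrac{d}{dt}\int_I (V^\varepsilon)^2$. For the diffusion term, integration by parts together with the homogeneous Neumann boundary condition $\partial_\nu V^\varepsilon = 0$ on $\Gamma$ gives $\int_I d\,\partial_{xx}^2 V^\varepsilon\, V^\varepsilon = -d\int_I |\partial_x V^\varepsilon|^2$.

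Next I would handle the reaction term. Since $\psi(x,y)=(a-bx-cy)y$ is a polynomial and, by \eqref{SolutionSmoothness}, both $(u^\varepsilon,v^\varepsilon)$ and $(u,v)$ take values in a fixed bounded subset of $L^\infty(I_T)^2$ whose size depends only on $T$, the restriction of $\psi$ to that set is Lipschitz with a constant $L_\psi=L_\psi(T)$. Hence one has the pointwise bound $|\psi(u+U^\varepsilon,v+V^\varepsilon)-\psi(u,v)| \le L_\psi(|U^\varepsilon|+|V^\varepsilon|)$, and Young's inequality yields
\begin{align*}
\left|\int_I \big(\psi(u+U^\varepsilon,v+V^\varepsilon)-\psi(u,v)\big)V^\varepsilon\right| \le C_T\left(\int_I (U^\varepsilon)^2 + \int_I (V^\varepsilon)^2\right).
\end{align*}

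Finally, for the cross-diffusion term I would integrate by parts once more, using $\partial_\nu u^\varepsilon = 0$, to obtain $\int_I \delta\,\partial_{xx}^2 u^\varepsilon\, V^\varepsilon = -\delta\int_I \partial_x u^\varepsilon\,\partial_x V^\varepsilon$, and then absorb the gradient of $V^\varepsilon$ into the available dissipation via Young's inequality, $\delta\int_I |\partial_x u^\varepsilon|\,|\partial_x V^\varepsilon| \le \tfrac{d}{2}\int_I |\partial_x V^\varepsilon|^2 + \tfrac{\delta^2}{2d}\int_I |\partial_x u^\varepsilon|^2$. Combining the four contributions leaves precisely $-\tfrac{d}{2}\int_I|\partial_x V^\varepsilon|^2$ of the dissipation and gives \eqref{Lem.VEnergy.State1} after renaming constants. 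There is no genuine obstacle; the only points requiring mild care are justifying the $T$-dependent Lipschitz bound for $\psi$ from the uniform $L^\infty(I_T)$ estimates in \eqref{SolutionSmoothness}, and splitting the $\delta$-term with the right constant so that exactly half of $d\int_I|\partial_x V^\varepsilon|^2$ is retained.
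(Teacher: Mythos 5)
Your proposal is correct and follows essentially the same route as the paper: multiply by $V^\varepsilon$, integrate by parts using the Neumann conditions, bound the reaction difference via the uniform $L^\infty(I_T)$ bounds from \eqref{SolutionSmoothness} (the paper does this through the mean-value form with $\psi_1',\psi_2'$, which is the same Lipschitz argument you give), and handle the $\delta\,\partial_{xx}^2 u^\varepsilon$ term by one more integration by parts plus Young's inequality with the split $\tfrac{d}{2}\int_I|\partial_x V^\varepsilon|^2+\tfrac{\delta^2}{2d}\int_I|\partial_x u^\varepsilon|^2$. No gaps.
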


\begin{proof}
 Multiplying the equation for $V^\varepsilon$ by itself gives
\begin{align*}
 \frac{1}{2} \frac{d}{dt} \int_I (V^\varepsilon)^2    
& =  -d \int_I |\partial_{x} V^\varepsilon|^2 + \int_I \left( \psi(u+U^\varepsilon,v+V^\varepsilon)-\psi(u,v+V^\varepsilon) \right) V^\varepsilon \\
& + \int_I \left( \psi(u,v+V^\varepsilon)-\psi(u,v) \right) V^\varepsilon +  \delta \int_I  \partial_{xx}^2 u^\varepsilon V^\varepsilon  \\
& \le  - \frac{d}{2} \int_I |\partial_{x} V^\varepsilon|^2 + \int_0^1 \hspace*{-0.15cm} \int_I \psi_1'(u+hU^\varepsilon,v+V^\varepsilon) U^\varepsilon V^\varepsilon \\
& + \int_0^1 \hspace*{-0.15cm} \int_I \psi_2'(u,v+hV^\varepsilon)  (V^\varepsilon)^2 + \frac{\delta^2}{2d} \int_I (\partial_x u^\varepsilon)^2,
\end{align*}
where $\psi_1'$, $\psi_2'$ denote the derivatives of first order of $\psi$ with respect to the first and the second arguments. Thanks to \eqref{SolutionSmoothness},    
\begin{align}
\sup_{\varepsilon>0} |\psi_1'(u+hU^\varepsilon,v+V^\varepsilon)| = \sup_{\varepsilon>0} | -b (v+V^\varepsilon) | \le C, 
\label{Lem.VEnergy.Proof1}
\end{align}
and, for all $h\in[0,1]$, 
\begin{align}
\sup_{\varepsilon>0} |\psi_2'(u,v+hV^\varepsilon)| = \sup_{\varepsilon>0} |a-bu-2c(v+hV^\varepsilon)| \le C_T.
\label{Lem.VEnergy.Proof2}
\end{align}
The estimate \eqref{Lem.VEnergy.State1} can be derived directly by taking all the above estimates. 
\end{proof}

\begin{lemma} 
\label{Lem.UEnergy} For $t\in(0,T)$, it holds
\begin{equation}
\label{Lem.UEnergy.State1}
\begin{aligned}
\frac{1}{2} \frac{d}{dt} \int_I (U^\varepsilon)^2   \le - d \int_I |\partial_{x} U^\varepsilon|^2 - \frac{1}{2\varepsilon} \int_I (U^\varepsilon)^2 +  \frac{C_T}{\varepsilon} \int_I (V^\varepsilon)^2 + C_T \varepsilon   .
\end{aligned}
\end{equation}

\end{lemma}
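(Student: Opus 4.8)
The plan is a standard $L^2$ energy estimate for $U^\varepsilon$, organised so that the only $\varepsilon^{-1}$-scaled negative term is produced by the linear part $-x$ of $g$, and so that every $\varepsilon$-independent constant is eventually dominated by $\varepsilon^{-1}$ once $\varepsilon\le\varepsilon_0$.

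First I would multiply the $U^\varepsilon$-equation in \eqref{Problem.DistanceEquation} by $U^\varepsilon$ and integrate over $I$. The diffusion term gives $-(d+\delta)\int_I|\partial_x U^\varepsilon|^2\le -d\int_I|\partial_x U^\varepsilon|^2$ after integration by parts, using the Neumann condition and $\delta>0$. For the $\phi$-difference, the mean value theorem together with the uniform $L^\infty$ bounds in \eqref{SolutionSmoothness} yields $|\phi(u+U^\varepsilon,v+V^\varepsilon)-\phi(u,v)|\le C_T(|U^\varepsilon|+|V^\varepsilon|)$, hence $\int_I(\phi(u+U^\varepsilon,v+V^\varepsilon)-\phi(u,v))U^\varepsilon\le C_T\int_I(U^\varepsilon)^2+C_T\int_I|U^\varepsilon||V^\varepsilon|$. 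The forcing term is handled by Cauchy--Schwarz and Young, $\int_I\xi^\varepsilon U^\varepsilon\le\frac{1}{8\varepsilon}\int_I(U^\varepsilon)^2+C\varepsilon\int_I(\xi^\varepsilon)^2$; here one needs $\|\xi^\varepsilon(t)\|_{L^2(I)}\le C_T$ uniformly in $t$ and $\varepsilon$, which follows from \eqref{SolutionSmoothness} together with $\delta\le 1$ and the pointwise relation $u=v-\tfrac12\sqrt{4v+1}-\tfrac12$ (so that $\partial_t u=(1-(4v+1)^{-1/2})\partial_t v$ with $\partial_t v=d\partial_{xx}^2 v+\psi(u,v)\in L^\infty(0,T;L^2(I))$).

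The heart of the matter is the singular term $\frac{1}{\varepsilon}\int_I(g(u+U^\varepsilon,v+V^\varepsilon)-g(u,v))U^\varepsilon$, for which I would avoid any crude Lipschitz bound and instead use the explicit quadratic form $g(x,y)=-x+\kappa(y-x)^2$, which gives the algebraic identity
\begin{align*}
g(u^\varepsilon,v^\varepsilon)-g(u,v)=-U^\varepsilon+\kappa\,(V^\varepsilon-U^\varepsilon)\,(v^\varepsilon-u^\varepsilon+v-u).
\end{align*}
Multiplying by $U^\varepsilon$ and integrating, the linear part produces the decisive good term $-\frac{1}{\varepsilon}\int_I(U^\varepsilon)^2$; the term $-\frac{\kappa}{\varepsilon}\int_I(U^\varepsilon)^2(v^\varepsilon-u^\varepsilon+v-u)$ has a favourable sign and can be discarded, since $u^\varepsilon\le v^\varepsilon$ and $u\le v$ (by $u_i^\varepsilon\ge 0$ and Proposition \ref{Theo.FastReactionLimit}); and the remaining cross term $\frac{\kappa}{\varepsilon}\int_I V^\varepsilon U^\varepsilon(v^\varepsilon-u^\varepsilon+v-u)$ is, by the $L^\infty$ bounds and Young's inequality, at most $\frac{1}{8\varepsilon}\int_I(U^\varepsilon)^2+\frac{C_T}{\varepsilon}\int_I(V^\varepsilon)^2$. (This is just the sign-aware use of $g_1'\le-1$ from \eqref{DerivativeOfg} on the region $0\le x\le y$.)

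Finally I would collect terms: choosing $\varepsilon_0$ small enough that the $\varepsilon$-independent constant from the $\phi$-estimate satisfies $C_T\le\frac{1}{8\varepsilon}$ for all $\varepsilon\le\varepsilon_0$, and splitting the three $|U^\varepsilon||V^\varepsilon|$-type cross terms (from $g$, from $\phi$, and the $\xi^\varepsilon$ term) by Young with weights whose $(U^\varepsilon)^2$-parts are each $\frac{1}{8\varepsilon}$, the total positive $\varepsilon^{-1}$-weighted $(U^\varepsilon)^2$ contribution is at most $\frac{4}{8\varepsilon}=\frac{1}{2\varepsilon}$, which against $-\frac{1}{\varepsilon}\int_I(U^\varepsilon)^2$ leaves $-\frac{1}{2\varepsilon}\int_I(U^\varepsilon)^2$; the $V^\varepsilon$-cross contributions collapse into $\frac{C_T}{\varepsilon}\int_I(V^\varepsilon)^2$ (using $\varepsilon\le\varepsilon^{-1}$) and the term $C\varepsilon\int_I(\xi^\varepsilon)^2$ into $C_T\varepsilon$, which gives exactly \eqref{Lem.UEnergy.State1}. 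I expect the main obstacle to be precisely the treatment of the singular term: any bound that is not sign-aware would produce $\frac{C}{\varepsilon}\int_I(U^\varepsilon)^2$ with uncontrolled sign, so the argument must hinge on the convexity of $g$ in its first variable together with the ordering $u^\varepsilon\le v^\varepsilon$, $u\le v$; a secondary, bookkeeping-type point is to check that every constant absorbed into an $\varepsilon^{-1}$-term is genuinely $\varepsilon$-independent.
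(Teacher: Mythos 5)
Your proposal is correct and follows essentially the same route as the paper's proof: an $L^2$ energy estimate in which the linear part of $g$ produces the good term $-\varepsilon^{-1}\int_I(U^\varepsilon)^2$, the quadratic part is controlled through the ordering $u^\varepsilon\le v^\varepsilon$, $u\le v$ (the paper encodes this as $g_1'\le -1$ via the mean value theorem, you via the explicit difference-of-squares identity, which is the same sign information), and the $\phi$-term, the $\xi^\varepsilon$-term and the cross terms are absorbed by Young's inequality and the uniform $L^\infty$ bounds exactly as in the paper, with $\varepsilon$ taken small enough. The remaining differences are only cosmetic bookkeeping of the Young weights ($\tfrac{1}{8\varepsilon}$ throughout versus the paper's $\tfrac14/\tfrac18$ split), leading to the same final constant $-\tfrac{1}{2\varepsilon}$.
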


\begin{proof} Multiplying the equation for $U^\varepsilon$ by itself gives
\begin{equation}
\begin{aligned}
 \frac{1}{2} \frac{d}{dt} \int_I (U^\varepsilon)^2 &= - (d+\delta) \int_I |\partial_{x} U^\varepsilon|^2 + \int_I ( \phi(u+U^\varepsilon,v+V^\varepsilon)-\phi(u,v)) U^\varepsilon  \\
& + \int_I \xi^\varepsilon U^\varepsilon  +  \dfrac{1}{\varepsilon} \int_I ( g(u+U^\varepsilon,v+V^\varepsilon)-g(u,v)) U^\varepsilon .
\end{aligned}
\label{Lem.UEnergy.Proof1}
\end{equation}
  Therefore, with notations $\phi_1'$, $\phi_2'$ for the derivatives of first order of $\phi$ according to the first and the second arguments, it is similar to  \eqref{Lem.VEnergy.Proof1}-\eqref{Lem.VEnergy.Proof2} to see that 
\begin{equation}
\begin{aligned}
& \int_I ( \phi(u+U^\varepsilon,v+V^\varepsilon)-\phi(u,v)) U^\varepsilon \\
& \le \int_0^1 \hspace*{-0.15cm} \int_I \Big( |\phi_1'(u+hU^\varepsilon,v+V^\varepsilon)U^\varepsilon| +  |\phi_2'(u,v+hV^\varepsilon)V^\varepsilon| \Big)|U^\varepsilon| \\
& \le C_T \int_I (|U^\varepsilon| +  |V^\varepsilon| )|U^\varepsilon| \le  \frac{1}{8\varepsilon} \int_I (U^\varepsilon)^2 + C_T \varepsilon \int_I (V^\varepsilon)^2 , 
\end{aligned}
\label{Lem.UEnergy.Proof2}
\end{equation}
where, without loss of generality, we can consider $\varepsilon$ small enough. 
On the other hand, due to Assumption \eqref{Assumption.Delta} and the regularity \eqref{SolutionSmoothness},  
\begin{align}
\int_I \xi^\varepsilon U^\varepsilon \le \frac{1}{8\varepsilon}  \int_I  (U^\varepsilon)^2 + 2 \varepsilon \left( \sup_{\varepsilon>0} \int_I (\xi^\varepsilon)^2 \right) \le \frac{1}{8\varepsilon}  \int_I  (U^\varepsilon)^2 + C_T  \varepsilon .
\label{Lem.UEnergy.Proof3}
\end{align}
For the last term in \eqref{Lem.UEnergy.Proof1}, thanks to \eqref{DerivativeOfg} and \eqref{SolutionSmoothness}, one has
\begin{equation}
\begin{aligned}
& \dfrac{1}{\varepsilon} \int_I ( g(u+U^\varepsilon,v+V^\varepsilon)-g(u,v)) U^\varepsilon \\
& = \dfrac{1}{\varepsilon} \int_0^1 \hspace*{-0.15cm} \int_I g_1'(u+hU^\varepsilon,v+V^\varepsilon) (U^\varepsilon)^2 + \dfrac{1}{\varepsilon} \int_0^1 \hspace*{-0.15cm} \int_I g_2'(u,v+hV^\varepsilon) U^\varepsilon V^\varepsilon \\
& \le - \dfrac{1}{\varepsilon} \int_I  (U^\varepsilon)^2 + \dfrac{2}{\varepsilon} \int_0^1 \hspace*{-0.15cm} \int_I |(v+hV^\varepsilon-u)U^\varepsilon V^\varepsilon | \\
& \le - \dfrac{1}{\varepsilon} \int_I  (U^\varepsilon)^2 + \dfrac{C_T}{\varepsilon} \int_I |U^\varepsilon V^\varepsilon | \le - \dfrac{3}{4\varepsilon} \int_I  (U^\varepsilon)^2 + \dfrac{C_T}{\varepsilon} \int_I (V^\varepsilon)^2 .   
\end{aligned}
\label{Lem.UEnergy.Proof4}
\end{equation}
We obtain the estimate \eqref{Lem.UEnergy.State1} by combining \eqref{Lem.UEnergy.Proof1}-\eqref{Lem.UEnergy.Proof4}. 
\end{proof}

\begin{lemma}
\label{Lem.NablaUEnergy} For $t\in(0,T)$, it holds
\begin{equation}
\begin{aligned}
\frac{1}{2} \frac{d}{dt} \int_I | \partial_{x} U^\varepsilon|^2 &\le - \frac{d}{2} \int_I |\partial_{xx}^2 U^\varepsilon|^2 + \frac{C_T}{\varepsilon} \left( \int_I (U^\varepsilon)^2 + \int_I (V^\varepsilon)^2 \right) \\
&-\dfrac{1}{2\varepsilon} \int_I |\partial_{x} U^\varepsilon|^2 + \dfrac{C_T}{\varepsilon} \int_I | \partial_{x} V^\varepsilon|^2 + C_T \varepsilon .
\end{aligned}
\label{Lem.NablaUEnergy.State1}
\end{equation}
\end{lemma}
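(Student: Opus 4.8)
The plan is to run a second-order energy argument for $U^\varepsilon$. Starting from the differentiated rate system \eqref{Problem.Dx.DistanceEquation}, I would multiply the equation for $\partial_x U^\varepsilon$ by $\partial_x U^\varepsilon$ and integrate over $I$. This produces four contributions: the diffusion term $(d+\delta)\int_I \partial_{xxx}^3 U^\varepsilon\,\partial_x U^\varepsilon$; the reaction term $\int_I \partial_x\big(\phi(u+U^\varepsilon,v+V^\varepsilon)-\phi(u,v)\big)\partial_x U^\varepsilon$; the defect term $\int_I \partial_x\xi^\varepsilon\,\partial_x U^\varepsilon$; and the singular term $\tfrac1\varepsilon\int_I \partial_x\big(g(u+U^\varepsilon,v+V^\varepsilon)-g(u,v)\big)\partial_x U^\varepsilon$. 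Each will be estimated using the uniform $L^\infty(I_T)$ bounds on $(u^\varepsilon,v^\varepsilon)=(u+U^\varepsilon,v+V^\varepsilon)$ from Lemma~\ref{Lem.Energy}, the regularity bound \eqref{SolutionSmoothness} on $(u,v)$ (hence $u,v\in L^\infty(0,T;W^{1,\infty}(I))$ by the one-dimensional embedding $H^2(I)\hookrightarrow W^{1,\infty}(I)$), and Young's inequality with sufficiently small constants.

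\textbf{Non-singular terms.} For the diffusion term I would integrate by parts once; the boundary contribution vanishes because $\partial_x U^\varepsilon=\partial_x u^\varepsilon-\partial_x u=0$ on $\Gamma$ (Neumann conditions for $u^\varepsilon$, and $\partial_x u=h_\kappa'(v)\partial_x v=0$ on $\Gamma$ since $u$ sits on the critical manifold), leaving $-(d+\delta)\int_I|\partial_{xx}^2 U^\varepsilon|^2\le -\tfrac d2\int_I|\partial_{xx}^2 U^\varepsilon|^2$. For the reaction term I would expand by the chain rule,
\[ \partial_x\big(\phi(u^\varepsilon,v^\varepsilon)-\phi(u,v)\big)=\phi_1'(u^\varepsilon,v^\varepsilon)\partial_x U^\varepsilon+\phi_2'(u^\varepsilon,v^\varepsilon)\partial_x V^\varepsilon+\big(\phi_1'(u^\varepsilon,v^\varepsilon)-\phi_1'(u,v)\big)\partial_x u+\big(\phi_2'(u^\varepsilon,v^\varepsilon)-\phi_2'(u,v)\big)\partial_x v, \]
and estimate term by term: the first two using boundedness of $\phi_1',\phi_2'$ on the (uniformly bounded) range of $(u^\varepsilon,v^\varepsilon)$, Young's inequality, and the fact that a fixed constant $C_T$ is $\le\varepsilon^{-1}$ times a small number for $\varepsilon$ small, so that the $\partial_x U^\varepsilon$-part is absorbed into a small multiple of $\tfrac1\varepsilon\int_I|\partial_x U^\varepsilon|^2$ while the $\partial_x V^\varepsilon$-part gives $\tfrac{C_T}{\varepsilon}\int_I|\partial_x V^\varepsilon|^2$; the last two using the Lipschitz bounds $|\phi_i'(u^\varepsilon,v^\varepsilon)-\phi_i'(u,v)|\le C(|U^\varepsilon|+|V^\varepsilon|)$ and $\|\partial_x u\|_{L^\infty},\|\partial_x v\|_{L^\infty}\le C_T$, which give $\tfrac{C_T}{\varepsilon}\int_I((U^\varepsilon)^2+(V^\varepsilon)^2)$. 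The defect term I would bound by Young as $\le(\text{small})\tfrac1\varepsilon\int_I|\partial_x U^\varepsilon|^2+C_T\varepsilon$, where $\sup_{\varepsilon>0}\|\partial_x\xi^\varepsilon\|_{L^2(I_T)}\le C_T$ follows from the regularity of the limit solution — this is the one point where one invokes, via parabolic smoothing for the $v$-equation of \eqref{Problem.Limiting}, that $v\in L^2(0,T;H^3(I))$ and hence $\partial_{xxx}^3 u\in L^2(I_T)$ (recall $u=h_\kappa(v)$).

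\textbf{The singular term, and conclusion.} The heart of the matter is $\tfrac1\varepsilon\int_I\partial_x\big(g(u^\varepsilon,v^\varepsilon)-g(u,v)\big)\partial_x U^\varepsilon$. Expanding as before yields a leading piece $\tfrac1\varepsilon\int_I g_1'(u^\varepsilon,v^\varepsilon)(\partial_x U^\varepsilon)^2$ and three lower-order pieces; the latter are treated exactly like the reaction term, using $|g_2'(u^\varepsilon,v^\varepsilon)|=2\kappa|v^\varepsilon-u^\varepsilon|\le C_T$ and $|g_i'(u^\varepsilon,v^\varepsilon)-g_i'(u,v)|\le 2\kappa|V^\varepsilon-U^\varepsilon|$, and produce further small multiples of $\tfrac1\varepsilon\int_I|\partial_x U^\varepsilon|^2$ together with $\tfrac{C_T}{\varepsilon}\int_I((U^\varepsilon)^2+(V^\varepsilon)^2)+\tfrac{C_T}{\varepsilon}\int_I|\partial_x V^\varepsilon|^2$. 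For the leading piece I would invoke the structural sign property \eqref{DerivativeOfg}: since $0\le u^\varepsilon\le v^\varepsilon$ — the ordering inherited from the non-negative solution $(u_1^\varepsilon,u_2^\varepsilon)=(u^\varepsilon,v^\varepsilon-u^\varepsilon)$ of \eqref{Problem.u1u2} — one has $g_1'(u^\varepsilon,v^\varepsilon)=-1-2\kappa(v^\varepsilon-u^\varepsilon)\le -1$, so that
\[ \frac1\varepsilon\int_I g_1'(u^\varepsilon,v^\varepsilon)(\partial_x U^\varepsilon)^2\le-\frac1\varepsilon\int_I|\partial_x U^\varepsilon|^2. \]
Collecting everything, this $-\tfrac1\varepsilon\int_I|\partial_x U^\varepsilon|^2$ dominates the finitely many small positive multiples of $\tfrac1\varepsilon\int_I|\partial_x U^\varepsilon|^2$ accumulated in the Young estimates (choosing those constants so that their sum is $\le\tfrac12$), leaving the net term $-\tfrac1{2\varepsilon}\int_I|\partial_x U^\varepsilon|^2$; together with $-\tfrac d2\int_I|\partial_{xx}^2 U^\varepsilon|^2$, the terms $\tfrac{C_T}{\varepsilon}\int_I((U^\varepsilon)^2+(V^\varepsilon)^2)$, $\tfrac{C_T}{\varepsilon}\int_I|\partial_x V^\varepsilon|^2$, and $C_T\varepsilon$, this is precisely \eqref{Lem.NablaUEnergy.State1}. \emph{The main obstacle} is exactly this leading singular piece: it is the only source of a negative $\tfrac1\varepsilon$-term capable of absorbing the numerous positive $\tfrac1\varepsilon$-contributions, and it works solely because $g_1'\le-1$, which itself rests on the comparison $u^\varepsilon\le v^\varepsilon$; a secondary technical nuisance is establishing enough regularity of the limit solution to control $\partial_x\xi^\varepsilon$ in $L^2$.
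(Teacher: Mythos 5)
Your proposal is correct and follows essentially the paper's own proof: testing the differentiated equation with $\partial_x U^\varepsilon$ is, after one integration by parts (with the same vanishing boundary terms), identical to the paper's multiplication of the $U^\varepsilon$-equation by $-\partial_{xx}^2 U^\varepsilon$, and your decomposition of the singular term, the use of $g_1'\le -1$ from \eqref{DerivativeOfg} (valid since $u^\varepsilon\le v^\varepsilon$), the bounds on $g_2'$ and on the differences of $g_i'$, and the Young absorptions into $-\tfrac{1}{\varepsilon}\int_I|\partial_x U^\varepsilon|^2$ are exactly the steps \eqref{Lem.NablaUEnergy.Proof2}--\eqref{Lem.NablaUEnergy.Proof3} of the paper. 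The only slight imprecision is the defect term: to get the constant $C_T\varepsilon$ in the pointwise-in-time inequality one needs $\|\partial_x\xi^\varepsilon(t)\|_{L^2(I)}\le C_T$ uniformly in $t$ (which is what the paper asserts), not merely the space-time bound $\|\partial_x\xi^\varepsilon\|_{L^2(I_T)}\le C_T$ you state.
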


\begin{proof} Multiplying the equation for $U^\varepsilon$ by $- \partial_{xx}^2 U^\varepsilon$ itself gives
\begin{equation}
\begin{aligned}
\frac{1}{2} \frac{d}{dt} \int_I | \partial_{x} U^\varepsilon|^2  
& = - (d+\delta) \int_I |\partial_{xx}^2 U^\varepsilon|^2 - \int_I ( \phi(u+U^\varepsilon,v+V^\varepsilon)-\phi(u,v)) \partial_{xx}^2 U^\varepsilon  \\
& - \int_I \xi^\varepsilon \partial_{xx}^2  U^\varepsilon  - \dfrac{1}{\varepsilon} \int_I ( g(u+U^\varepsilon,v+V^\varepsilon)-g(u,v)) \partial_{xx}^2 U^\varepsilon ,
\end{aligned}
\label{Lem.NablaUEnergy.Proof1}
\end{equation}
Firstly, by a similar argument as \eqref{Lem.UEnergy.Proof2}, 
\begin{equation*}
\begin{aligned}
& \int_I | \phi(u+U^\varepsilon,v+V^\varepsilon)-\phi(u,v)|^2   \\
& \le \int_0^1 \hspace*{-0.15cm} \int_I \Big( 2|\phi_1'(u+hU^\varepsilon,v+V^\varepsilon)U^\varepsilon|^2 + 2|\phi_2'(u,v+hV^\varepsilon)V^\varepsilon|^2 \Big) \\
& \le C_T \int_I (|U^\varepsilon|^2 +  |V^\varepsilon|^2 ) ,
\end{aligned}
\end{equation*}
while, with the uniform boundedness of $\partial_x \xi^\varepsilon$, 
\begin{align*}
- \int_I \xi^\varepsilon \partial_{xx}^2 U^\varepsilon  \le \frac{1}{8\varepsilon} \int_I |\partial_{x} U^\varepsilon|^2 + C_T \varepsilon \int_I (\partial_x \xi^\varepsilon)^2  \le \frac{1}{8\varepsilon} \int_I |\partial_{x} U^\varepsilon|^2 + C_T \varepsilon .
\end{align*}
Let us estimate the first term on the right hand side of \eqref{Lem.NablaUEnergy.Proof1}. Due to integration by parts 
\begin{align*}
 \int_I ( g(u+U^\varepsilon,v+V^\varepsilon)-g(u,v)) \partial_{xx}^2 U^\varepsilon  = - \int_I \partial_{x} ( g(u+U^\varepsilon,v+V^\varepsilon)-g(u,v)) \partial_{x} U^\varepsilon,
\end{align*}
and direct computations, we have  
\begin{equation}
\begin{aligned}
& - \dfrac{1}{\varepsilon} \int_I ( g(u+U^\varepsilon,v+V^\varepsilon)-g(u,v)) \partial_{xx}^2 U^\varepsilon \\
& =  \dfrac{1}{\varepsilon} \int_I  g_1'(u+U^\varepsilon,v+V^\varepsilon) |\partial_{x} U^\varepsilon|^2 + \dfrac{1}{\varepsilon} \int_I g_2'(u+U^\varepsilon,v+V^\varepsilon) \partial_{x} U^\varepsilon \partial_{x} V^\varepsilon \\
&   + \dfrac{1}{\varepsilon} \int_I ( g_1'(u+U^\varepsilon,v+V^\varepsilon)-g_1'(u,v)) \partial_{x} u  \partial_{x} U^\varepsilon \\
&  + \dfrac{1}{\varepsilon} \int_I ( g_2'(u+U^\varepsilon,v+V^\varepsilon)-g_2'(u,v)) \partial_{x} v  \partial_{x} U^\varepsilon   =: K_1+K_2+K_3+K_4.
\end{aligned}
\label{Lem.NablaUEnergy.Proof2}
\end{equation}
Taking into account \eqref{DerivativeOfg}, 
\begin{align*}
K_1+K_2 &\le -\dfrac{1}{\varepsilon} \int_I |\partial_{x} U^\varepsilon|^2 + \dfrac{2}{\varepsilon} \int_I |((v+V^\varepsilon)-(u+U^\varepsilon)) \partial_{x} U^\varepsilon \partial_{x} V^\varepsilon| \\
& \le -\dfrac{1}{\varepsilon} \int_I |\partial_{x} U^\varepsilon|^2 + \dfrac{C_T}{\varepsilon} \int_I |\partial_{x} U^\varepsilon \partial_{x} V^\varepsilon| \\
&\le -\dfrac{3}{4\varepsilon} \int_I |\partial_{x} U^\varepsilon|^2 + \dfrac{C_T}{\varepsilon} \int_I | \partial_{x} V^\varepsilon|^2,
\end{align*}
which the second estimate can be obtained by using the regularity \eqref{SolutionSmoothness}. Moreover, since 
\begin{align*}
g_1'(u+U^\varepsilon,v+V^\varepsilon)-g_1'(u,v) = \int_0^1 \Big( g_{11}'' (u+hU^\varepsilon,v+V^\varepsilon)U^\varepsilon + g_{12}'' (u,v+hV^\varepsilon,v)V^\varepsilon \Big),\\
g_2'(u+U^\varepsilon,v+V^\varepsilon)-g_2'(u,v) = \int_0^1 \Big( g_{21}'' (u+hU^\varepsilon,v+V^\varepsilon)U^\varepsilon + g_{22}'' (u,v+hV^\varepsilon,v)V^\varepsilon \Big), 
\end{align*}
we infer from \eqref{DerivativeOfg} that   
\begin{align}
\left\{
\begin{aligned}
g_1'(u+U^\varepsilon,v+V^\varepsilon)-g_1'(u,v) = 2U^\varepsilon -2V^\varepsilon ,\\
g_2'(u+U^\varepsilon,v+V^\varepsilon)-g_2'(u,v) = 2V^\varepsilon -2U^\varepsilon .
\end{aligned} \right.
\label{Lem.NablaUEnergy.Proof3}
\end{align}
Hence, the last two terms can be estimated as follows
\begin{align*}
K_3+K_4 & = \dfrac{1}{\varepsilon} \int_I ( 2U^\varepsilon -2V^\varepsilon) (\partial_{x} u - \partial_{x} v)  \partial_{x} U^\varepsilon  \\
& \le \dfrac{2}{\varepsilon} \|\partial_{x} u - \partial_{x} v\|_{L^\infty(I_T)} \int_I |(U^\varepsilon - V^\varepsilon) \partial_{x} U^\varepsilon| \\
& \le \dfrac{1}{8\varepsilon} \int_I |\partial_{x} U^\varepsilon|^2 + \frac{C_T}{\varepsilon} \int_I \left(  (U^\varepsilon)^2 + (V^\varepsilon)^2 \right).
\end{align*}
We obtain \eqref{Lem.NablaUEnergy.State1} by combining the above estimates.
\end{proof}
 
We are now ready to prove the estimate \eqref{Lemma.ConvRateULinfL2.State1}, Proposition \ref{Lemma.ConvRateULinfL2}.

\begin{proof}[Proof of the estimate \eqref{Lemma.ConvRateULinfL2.State1}, Proposition \ref{Lemma.ConvRateULinfL2}] We firstly gather all estimates from Lemmas \ref{Lem.VEnergy}-\ref{Lem.UEnergy} as follows
\begin{align*}
\frac{1}{2} \frac{d}{dt} \int_I (V^\varepsilon)^2 \hspace*{0.35cm} 
\le \,& - \frac{d}{2} \int_I |\partial_{x} V^\varepsilon|^2 + C_{1,T} \int_I (U^\varepsilon)^2 + C_{1,T}  \int_I (V^\varepsilon)^2 + C_{1,T}\delta^2 \int_I |\partial_x u|^2   ,\\
\frac{\varepsilon}{2} \frac{d}{dt} \int_I (U^\varepsilon)^2  \hspace*{0.35cm} \le \,& - d \varepsilon \int_I |\partial_{x} U^\varepsilon|^2 - \frac{1}{2} \int_I (U^\varepsilon)^2 + C_{2,T} \int_I (V^\varepsilon)^2 + C_{2,T} \varepsilon^2   . 
\end{align*}
Then, we can take a linear combination of them to have 
\begin{align*}
& \frac{d}{dt} \left[ \frac{1}{2} \int_I (V^\varepsilon)^2 + C_{1,T} \, \varepsilon  \int_I (U^\varepsilon)^2 \right]  +  \frac{d}{2} \int_I |\partial_{x} V^\varepsilon|^2 + 2dC_{1,T} \, \varepsilon \int_I |\partial_{x} U^\varepsilon|^2 \\
& \le 2C_{1,T}C_{2,T} \varepsilon^2 + C_{1,T} \delta^2  \int_I |\partial_x u|^2 +  \left( C_{1,T} + 2C_{1,T}C_{2,T} \right) \int_I (V^\varepsilon)^2 . 
\end{align*}
By integrating over time and noting from Lemma \ref{Lem.Energy} that $\partial_x u^\varepsilon$ is uniformly bounded in $L^2(I_T)$,
\begin{align*}
 \int_I (V^\varepsilon(t))^2 +  \iint_{I_t} |\partial_{x} V^\varepsilon|^2  &\le C_T \left( (\delta^2 +\varepsilon^2) +  \int_I (V^\varepsilon(0))^2  + \varepsilon \int_I (U^\varepsilon(0))^2  +    \int_0^t \hspace{-0.15cm} \int_{I} (V^\varepsilon)^2 \right), 
\end{align*}
where $V^\varepsilon(0)=v^\varepsilon(0)-v_{\mathrm{in}}=0$. 
Therefore, by using the Gr\"onwall inequality, we obtain 
\begin{equation}
\begin{aligned}
\int_I (v^\varepsilon(t)-v(t))^2 + \iint_{I_t} |\partial_x (v^\varepsilon(t)-v(t))|^2  \le C_T \left( (\delta^2 +\varepsilon^2) + \varepsilon \int_I (u_{\mathrm{in}}-u(0))^2  \right).
\end{aligned} 
\label{Lemma.ConvRateULinfL2.Proof1}
\end{equation}
 Thanks to Lemma \ref{Lemma.InitialLayer},     
\begin{align}
    \esssup_{t\in[0,T]} \int_I (v^\varepsilon(t)-v(t))^2 \le  C_{T} (  \varepsilon^2 +  \varepsilon_{\mathrm{in}}^2 \varepsilon +  \delta^2 ) . \label{Lemma.ConvRateULinfL2.Proof2}
\end{align}
Next, we will show  that 
\begin{align}
    \esssup_{t\in[0,T]} \int_I (u^\varepsilon(t)-u(t))^2  \le C_{T} (  \varepsilon^2 +  \varepsilon_{\mathrm{in}}^2  +  \delta^2 ) .
    \label{Lemma.ConvRateULinfL2.Proof3}
\end{align}    
By using Lemma \ref{Lem.UEnergy} and the estimate \eqref{Lemma.ConvRateULinfL2.Proof2}, 
\begin{equation} 
\begin{aligned}
 \varepsilon \frac{d}{dt} \int_I (u^\varepsilon(t)-u(t))^2 + \int_I (u^\varepsilon(t)-u(t))^2   \le C_T \left( \varepsilon^2 + \varepsilon_{\mathrm{in}}^2 \varepsilon +  \delta^2 \right).
\end{aligned}
\label{Lemma.ConvRateULinfL2.Proof4}
\end{equation}
Using the comparison principle, we get 
\begin{align}
\begin{aligned}
    \int_I (u^\varepsilon(t)-u(t))^2 &  \le e^{-t/\varepsilon} \int_I (u_{\mathrm{in}}-u(0))^2 +  C_T \left( \varepsilon^2 + \varepsilon_{\mathrm{in}}^2 \varepsilon +  \delta^2 \right) (1-e^{-t/\varepsilon}) \\
    & \le C e^{-t/\varepsilon} \varepsilon_{\mathrm{in}}^2  +  C_T \left( \varepsilon^2 + \varepsilon_{\mathrm{in}}^2 \varepsilon +  \delta^2 \right) (1-e^{-t/\varepsilon}) ,
\end{aligned}
\label{Lemma.ConvRateULinfL2.Proof5}
\end{align}
which gives \eqref{Lemma.ConvRateULinfL2.Proof3}.  

\medskip

From the above  proof, we can estimate $|(u^\varepsilon,v^\varepsilon)-(u,v)|$ in $L^2(0,T;H^1(I))$. Indeed, due to the estimates \eqref{Lemma.ConvRateULinfL2.Proof2}-\eqref{Lemma.ConvRateULinfL2.Proof3}, we can infer from Lemma \ref{Lem.VEnergy} that
\begin{align*}
\iint_{I_T} |\partial_{x} (v^\varepsilon(t)-v(t))|^2
& \le   C_T \left( \iint_{I_T} (u^\varepsilon(t)-u(t))^2 + \iint_{I_T} (v^\varepsilon(t)-v(t))^2 \right) + C_T\delta^2 , 
\end{align*}
and so,
\begin{align*}
\iint_{I_T} |\partial_{x} (v^\varepsilon(t)-v(t))|^2
  \le    C_T \left( \varepsilon^2 + \varepsilon_{\mathrm{in}}^2  +  \delta^2 \right).
\end{align*}
Now, by using Lemma \ref{Lem.NablaUEnergy}, 
\begin{align*}
\iint_{I_T} |\partial_{x} (u^\varepsilon(t)-u(t))|^2
& \le  C_T \left( \varepsilon \int_I |\partial_x(u^\varepsilon(0)-u(0))|^2 + \iint_{I_T} |\partial_{x} (v^\varepsilon(t)-v(t))|^2 + \varepsilon^2  \right)  \\
& + C_T \left( \iint_{I_T} (u^\varepsilon(t)-u(t))^2 + \iint_{I_T} (v^\varepsilon(t)-v(t))^2 \right)   .
\end{align*}
Thanks to Lemma \ref{Lemma.InitialLayer}, 
\begin{align*}
\iint_{I_T} |\partial_{x} (u^\varepsilon(t)-u(t))|^2
& \le  C_T \left( \varepsilon^2 + \varepsilon_{\mathrm{in}}^2  +  \delta^2 \right).
\end{align*}
Hence, we obtain the estimate
\begin{align}
\|(u^\varepsilon,v^\varepsilon)-(u,v)\|_{L^2(0,T;H^1(\Omega))^2} 
\le C_{T} (   \varepsilon + \varepsilon_{\mathrm{in}}  +  \delta  ). 
\label{Remark.L2H1}
\end{align}
\end{proof}

\subsection{Convergence in $L^\infty(0,T;H^2(I))$}
\label{Sec:LinfH2}

In order to obtain a uniform-in-time convergence rate in $H^2(I)$, we will first establish estimates for the rate in the intersection of $L^\infty(0,T;H^1(I))$ and $ L^2(0,T;H^2(I))$,  presented in Lemma \ref{Lemma.ConvRateULinfH1}, where we note that the assumption \eqref{Assumption.Compare.Eps.Del} is necessary.

\begin{lemma} 
\label{Lemma.ConvRateULinfH1}
Assume that the initial data $(u_{\mathrm{in}},v_{\mathrm{in}})$ is defined by Lemma \ref{Lemma.InitialLayer}. Then
\begin{align*}
    \|(u^\varepsilon,v^\varepsilon) -(u,v)\|_{L^\infty(0,T;H^1(I))^2} + \|v^\varepsilon -v\|_{L^2(0,T;H^2(I))} \le C_{T} (\varepsilon +  \varepsilon_{\mathrm{in}}   +  \delta).
\end{align*}
\end{lemma}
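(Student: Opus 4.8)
The plan is to establish Lemma~\ref{Lemma.ConvRateULinfH1} by a bootstrap in the regularity index, using the $L^2$-in-time bound on $\partial_{xx}^2 U^\varepsilon$ coming from Lemma~\ref{Lem.NablaUEnergy} together with the already proven $L^\infty(0,T;L^2)$ and $L^2(0,T;H^1)$ estimates in~\eqref{Lemma.ConvRateULinfL2.Proof2}--\eqref{Remark.L2H1}. First I would revisit the first-order energy identity for $V^\varepsilon$: multiply the equation for $\partial_x V^\varepsilon$ in~\eqref{Problem.Dx.DistanceEquation} by $\partial_x V^\varepsilon$ (equivalently, multiply the $V^\varepsilon$-equation by $-\partial_{xx}^2 V^\varepsilon$), producing a term $-d\int_I |\partial_{xx}^2 V^\varepsilon|^2$ on the right, a nonlinearity term controlled by $C_T(\|U^\varepsilon\|_{H^1}^2 + \|V^\varepsilon\|_{H^1}^2)$ via~\eqref{SolutionSmoothness} and the Lipschitz bounds on $\psi$, and the cross-diffusion term $\delta\int_I \partial_{xx}^2 u^\varepsilon\,\partial_{xx}^2 V^\varepsilon$, which after Young's inequality costs $\tfrac{\delta^2}{2d}\int_I|\partial_{xx}^2 u^\varepsilon|^2 + \tfrac d2\int_I|\partial_{xx}^2 V^\varepsilon|^2$. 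Here the uniform $L^2(0,T)$ bound on $\partial_{xx}^2 u^\varepsilon$ is needed; this follows from Lemma~\ref{Lem.Energy} applied to $u_1^\varepsilon$ and parabolic regularity for~\eqref{Problem.u1u2}, or more simply is already contained in the uniform $W^{2,1}_2$-type bounds one gets from~\eqref{SolutionSmoothness} combined with the energy estimates --- I would state this as an auxiliary uniform bound $\sup_\varepsilon\|\partial_{xx}^2 u^\varepsilon\|_{L^2(I_T)}\le C_T$.

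Next I would combine the first-order $V^\varepsilon$-estimate with Lemma~\ref{Lem.NablaUEnergy}, exactly mirroring the structure of the proof of~\eqref{Lemma.ConvRateULinfL2.State1}: take the linear combination $\tfrac12\int_I|\partial_x V^\varepsilon|^2 + C_{1,T}\varepsilon\int_I|\partial_x U^\varepsilon|^2$, so that the singular $\tfrac1\varepsilon$-factors in Lemma~\ref{Lem.NablaUEnergy} are absorbed: the $-\tfrac1{2\varepsilon}\int_I|\partial_x U^\varepsilon|^2$ term kills the bad $\tfrac{C_T}\varepsilon\int_I|\partial_x U^\varepsilon|^2$ contributions after multiplying through by $\varepsilon$, while $\tfrac{C_T}\varepsilon\int_I|\partial_x V^\varepsilon|^2$ multiplied by $\varepsilon$ becomes $C_T\int_I|\partial_x V^\varepsilon|^2$, which is Gronwall-able against the $\tfrac12\int_I|\partial_x V^\varepsilon|^2$ term. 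The leftover forcing terms are $C_T\varepsilon^2$, $C_T\delta^2\int_I|\partial_{xx}^2 u^\varepsilon|^2$ (time-integrable, uniformly $O(\delta^2)$), and $\tfrac{C_T}\varepsilon\varepsilon\cdot(\|U^\varepsilon\|_{L^2}^2+\|V^\varepsilon\|_{L^2}^2)$ which by~\eqref{Lemma.ConvRateULinfL2.Proof2}--\eqref{Lemma.ConvRateULinfL2.Proof3} contributes $O(\varepsilon^2+\varepsilon_{\mathrm{in}}^2+\delta^2)$ --- note this is exactly where~\eqref{Assumption.Compare.Eps.Del} enters, since a term like $\tfrac1\varepsilon\cdot\varepsilon_{\mathrm{in}}^2\varepsilon = \varepsilon_{\mathrm{in}}^2$ must stay bounded relative to the target rate; more precisely the $\varepsilon\int_I|\partial_x U^\varepsilon(0)|^2$ initial contribution, which by Lemma~\ref{Lemma.InitialLayer} is $O(\varepsilon\varepsilon_{\mathrm{in}}^2)$, and the accumulated $\tfrac1\varepsilon$-weighted $L^2$-errors are what force the hypothesis $\varepsilon_{\mathrm{in}}^2/\varepsilon,\ \delta^2/\varepsilon$ bounded. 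Integrating in time and applying Gronwall then yields $\|(U^\varepsilon,V^\varepsilon)\|_{L^\infty(0,T;H^1)}^2 + \varepsilon\|\partial_x U^\varepsilon\|_{L^2(0,T;H^1)}^2 \le C_T(\varepsilon^2+\varepsilon_{\mathrm{in}}^2+\delta^2)$. Feeding this back into the first-order $V^\varepsilon$-estimate (before Young's inequality absorbed half the $H^2$-norm) gives $\|v^\varepsilon-v\|_{L^2(0,T;H^2)}\le C_T(\varepsilon+\varepsilon_{\mathrm{in}}+\delta)$, completing the claim.

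The main obstacle I anticipate is handling the singular-in-$\varepsilon$ cross terms in Lemma~\ref{Lem.NablaUEnergy} correctly when forming the linear combination: one must choose the combination coefficient so that, after multiplying the $U^\varepsilon$-line by $\varepsilon$, every $\tfrac1\varepsilon$-weighted gradient term of $U^\varepsilon$ on the right is dominated by the dissipative $-\tfrac1{2\varepsilon}\int_I|\partial_x U^\varepsilon|^2$ (which becomes $-\tfrac12\int_I|\partial_x U^\varepsilon|^2$ after the multiplication), while the $\tfrac1\varepsilon$-weighted $L^2$-norms of $U^\varepsilon,V^\varepsilon$ --- which do \emph{not} get absorbed --- are controlled only by invoking the sharp $L^\infty(0,T;L^2)$ rates~\eqref{Lemma.ConvRateULinfL2.Proof2}--\eqref{Lemma.ConvRateULinfL2.Proof3} as an already-established input, so that $\tfrac1\varepsilon\cdot\varepsilon\cdot O(\varepsilon^2+\varepsilon_{\mathrm{in}}^2+\delta^2)=O(\varepsilon^2+\varepsilon_{\mathrm{in}}^2+\delta^2)$ with no loss. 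A secondary technical point is justifying the second-order energy identity rigorously (e.g.\ testing against $\partial_{xx}^2 V^\varepsilon$ requires enough regularity, handled by the uniform $W^{2,1}_2$ bounds and a density/Galerkin argument) and verifying that the Neumann boundary conditions make all integration-by-parts boundary terms vanish, including for the third derivative terms appearing implicitly through $\partial_x$ of~\eqref{Problem.DistanceEquation}. These are standard but must be stated.
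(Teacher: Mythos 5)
Your overall skeleton (first-order energy estimates for $V^\varepsilon$ and $U^\varepsilon$, fed by the already established $L^\infty(0,T;L^2)$ and $L^2(0,T;H^1)$ rates) is the same as the paper's, but two specific steps do not hold as you state them. First, the auxiliary bound $\sup_\varepsilon\|\partial_{xx}^2 u^\varepsilon\|_{L^2(I_T)}\le C_T$ is \emph{not} obtainable by the routes you offer. Estimate \eqref{SolutionSmoothness} gives $H^2$ and $W^{2,1}_2$ control only for the limit $(u,v)$; for $(u^\varepsilon,v^\varepsilon)$ it records only the uniform $L^\infty(I_T)$ bound, so nothing about $\partial_{xx}^2u^\varepsilon$ is ``already contained'' there. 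Likewise, parabolic regularity for \eqref{Problem.u1u2} fails to be uniform: Lemma \ref{Lem.Energy} with $p=2$ gives $\|u_1^\varepsilon-(u_2^\varepsilon)^2\|_{L^2(I_T)}\le C_T\varepsilon^{1/2}$, hence the reaction term $\varepsilon^{-1}(u_1^\varepsilon-(u_2^\varepsilon)^2)$ is only $O(\varepsilon^{-1/2})$ in $L^2(I_T)$, and maximal regularity produces an $\varepsilon$-dependent bound. The correct route — which your opening sentence gestures at but your justification then abandons — is to integrate Lemma \ref{Lem.NablaUEnergy} in time, use the rates of Proposition \ref{Lemma.ConvRateULinfL2}, and conclude $\iint_{I_T}|\partial_{xx}^2U^\varepsilon|^2\le C_T(\varepsilon+\varepsilon_{\mathrm{in}}^2/\varepsilon+\delta^2/\varepsilon)$, which is bounded precisely because of \eqref{Assumption.Compare.Eps.Del}; adding $\partial_{xx}^2u\in L^2(I_T)$ from \eqref{SolutionSmoothness} then gives the bound on $\partial_{xx}^2u^\varepsilon$. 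Note this is in fact where \eqref{Assumption.Compare.Eps.Del} enters; in your own accounting the $\varepsilon^{-1}$-weighted $L^2$ errors are multiplied back by $\varepsilon$ ``with no loss,'' so the hypothesis would never be used there — a sign that the role of the $\delta^2\iint|\partial_{xx}^2u^\varepsilon|^2$ term has been misplaced.

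Second, your combined-energy Gronwall argument cannot yield the stated $L^\infty(0,T;H^1)$ rate for the $u$-component. In the combination $\tfrac12\int_I|\partial_xV^\varepsilon|^2+C_{1,T}\,\varepsilon\int_I|\partial_xU^\varepsilon|^2$, the $U$-part carries the weight $\varepsilon$, so after Gronwall you only extract $\int_I|\partial_xU^\varepsilon(t)|^2\le C_T(\varepsilon+\varepsilon_{\mathrm{in}}^2/\varepsilon+\delta^2/\varepsilon)$, i.e.\ mere boundedness under \eqref{Assumption.Compare.Eps.Del}, not $O(\varepsilon^2+\varepsilon_{\mathrm{in}}^2+\delta^2)$; your claimed conclusion $\|(U^\varepsilon,V^\varepsilon)\|_{L^\infty(0,T;H^1)}^2\le C_T(\varepsilon^2+\varepsilon_{\mathrm{in}}^2+\delta^2)$ therefore does not follow. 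The missing ingredient is the paper's two-stage structure: first obtain the pointwise-in-time $H^1$ rate for $V^\varepsilon$ (from testing the $V^\varepsilon$-equation with $-\partial_{xx}^2V^\varepsilon$, using the uniform $\partial_{xx}^2u^\varepsilon$ bound for the $\delta$-term and the $L^\infty(0,T;L^2)$ rates for the $\psi$-term), and only then return to the $\varepsilon$-multiplied inequality of Lemma \ref{Lem.NablaUEnergy}, whose right-hand side is now $\le C_T(\varepsilon^2+\varepsilon_{\mathrm{in}}^2+\delta^2)$ pointwise in $t$, and exploit the strong damping $+\int_I|\partial_xU^\varepsilon|^2$ via the comparison principle (equivalently an $\mathrm{e}^{t/\varepsilon}$ integrating factor), together with the initial-layer bound $\int_I|\partial_xU^\varepsilon(0)|^2\le C\varepsilon_{\mathrm{in}}^2$ from Lemma \ref{Lemma.InitialLayer}, exactly as in \eqref{Lemma.ConvRateULinfL2.Proof5}. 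With these two corrections your argument coincides with the paper's proof; the remaining remarks (regularity justifying the second-order testing, vanishing boundary terms, $L^2(0,T;H^2)$ control of $v^\varepsilon-v$ from the same display) are fine.
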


\begin{proof}  By Lemma \ref{Lem.NablaUEnergy} and the estimate \eqref{Lemma.ConvRateULinfL2.State1} in Proposition \ref{Lemma.ConvRateULinfL2},
\begin{equation}
\begin{aligned}
\iint_{I_T} |\partial_{xx}^2 (u^\varepsilon(t)-u(t))|^2 & \le \frac{C_T}{\varepsilon} \left( \iint_{I_T} (u^\varepsilon(t)-u(t))^2 + \iint_{I_T} (v^\varepsilon(t)-v(t))^2 \right) \\
& +  \dfrac{C_T}{\varepsilon} \iint_{I_T} | \partial_{x} (v^\varepsilon(t)-v(t))|^2 + C_T \varepsilon \\
& \le C_T \left( \varepsilon  + \frac{\varepsilon_{\mathrm{in}}^2}{\varepsilon}  +  \frac{\delta^2}{\varepsilon} \right) < \infty,
\end{aligned}
\label{Lem.NablaUEnergy.State1_1}
\end{equation}
where we have used the assumption \eqref{Assumption.Compare.Eps.Del}. This implies that $\partial_{xx}^2 u^\varepsilon$ is uniformly bounded in $L^2(I_T)$ due to the boundedness of $\partial_{xx}^2u$ in $L^2(I_T)$. 
Next, multiplying the equation for $V^\varepsilon$ (in the system \eqref{Problem.DistanceEquation}) by $-\partial_{xx}^2 V^\varepsilon$,  
    \begin{align*}
        & \frac{1}{2} \frac{d}{dt} \int_I |\partial_x (v^\varepsilon(t)-v(t))|^2 + d \int_I |\partial_{xx}^2 (v^\varepsilon(t)-v(t))|^2 \\
        & = - \int_I (\psi(u+U^\varepsilon,v+V^\varepsilon)-\psi(u,v)) \partial_{xx}^2 V^\varepsilon - \delta \int_I \partial_{xx}^2 u^\varepsilon \partial_{xx}^2 (v^\varepsilon(t)-v(t)) \\
        & \le \frac{d}{2} \int_I |\partial_{xx}^2 (v^\varepsilon(t)-v(t))|^2 + C\int_I |\psi(u+U^\varepsilon,v+V^\varepsilon)-\psi(u,v)|^2 + C \delta^2 \int_I |\partial_{xx}^2 u^\varepsilon|^2.  
    \end{align*}
We obtain
\begin{align}
    \begin{aligned}
    & \int_I |\partial_x (v^\varepsilon(t)-v(t))|^2 + \iint_{I_T} |\partial_{xx}^2 (v^\varepsilon(t)-v(t))|^2 \\
    & \le  C_T \left( \delta^2 +  \iint_{I_T} |\psi(u+U^\varepsilon,v+V^\varepsilon)-\psi(u,v)|^2 \right) .  
    \end{aligned}
\label{Lemma.ConvRateULinfH1.Proof3}
\end{align}
Note that, similarly to Lemma \ref{Lem.VEnergy},
\begin{align*}
    &\iint_{I_T} |\psi(u+U^\varepsilon,v+V^\varepsilon)-\psi(u,v)|^2 \\
    & \le \int_0^1 \hspace{-0.15cm} \iint_{I_T} \left( |\psi'_1(u+hU^\varepsilon,v+V^\varepsilon)|^2 (U^\varepsilon)^2 + |\psi'_2(u,v+hV^\varepsilon)|^2 (V^\varepsilon)^2 \right) \\
    & \le C_T  \left( \iint_{I_T} (u^\varepsilon(t)-u(t))^2 + \iint_{I_T} (v^\varepsilon(t)-v(t))^2 \right) 
    \le C_{T} (  \varepsilon^2 +  \varepsilon_{\mathrm{in}}^2 +  \delta^2 )
\end{align*}
by using Proposition \ref{Lemma.ConvRateULinfL2}. Therefore, since $v^\varepsilon(0)-v(0)=0$, we derive 
\begin{align*}
    & \|v^\varepsilon-v\|_{L^\infty(0,T;H^1(I))}  \le C_{T} (\varepsilon + \varepsilon_{\mathrm{in}}  +  \delta ).
\end{align*}
Due to Lemma \ref{Lem.NablaUEnergy},  
\begin{align}
\begin{aligned}
    & \varepsilon \frac{d}{dt} \int_I | \partial_{x} (u^\varepsilon(t)-u(t))|^2 + d \varepsilon \int_I |\partial_{xx}^2 (u^\varepsilon(t)-u(t))|^2 +  \int_I |\partial_{x} (u^\varepsilon(t)-u(t))|^2 \\
    &\le  C_T \left( \int_{I} (u^\varepsilon(t)-u(t))^2 + \int_{I} (v^\varepsilon(t)-v(t))^2 \right) + C_T \int_I | \partial_{x} (v^\varepsilon(t)-v(t))|^2 + C_T \varepsilon^2 \\
    & \le C_T \left( \varepsilon^2 + \varepsilon_{\mathrm{in}}^2  +  \delta^2 \right).
\end{aligned}
\label{Lemma.ConvRateULinfH1.Proof1}
\end{align}
Thus, by making use of the comparison principle similarly to  \eqref{Lemma.ConvRateULinfL2.Proof5}, 
\begin{align}
    \begin{aligned}
    \int_I | \partial_{x} (u^\varepsilon(t)-u(t))|^2  
    & \le C e^{-t/\varepsilon} \varepsilon_{\mathrm{in}}^2  +  C_T \left( \varepsilon^2 + \varepsilon_{\mathrm{in}}^2 +  \delta^2 \right) (1-e^{-t/\varepsilon}) \\
    & \le C_T \left( \varepsilon^2 + \varepsilon_{\mathrm{in}}^2  +  \delta^2 \right),
    \end{aligned}
\label{Lemma.ConvRateULinfH1.Proof2}
\end{align}
which completes the proof.
\end{proof}

Now, we estimate the convergence rate in $L^\infty(0,T;H^2(I))$ by studying the energy functionals for the system  \eqref{Problem.Dx.DistanceEquation}. We first consider the equation for $\partial_x V^\varepsilon$. 

\begin{lemma}
\label{Lemma.ConvRateVLinfH2} There holds for some $C_3>0$
\begin{align}
    \frac{1}{2}\frac{d}{dt} \int_I |\partial_{xx}^2 V^\varepsilon|^2 + \frac{d}{2} \int_I |\partial_{xxx}^3 V^\varepsilon|^2 \le  C_{T} \left( \varepsilon^2 + \varepsilon_{\mathrm{in}}^2  +  \delta^2 \right) + C_3\delta^2 \int_I |\partial_{xxx}^3 U^\varepsilon|^2 . 
    \label{Lemma.ConvRateVLinfH2.State1}
\end{align}
\end{lemma}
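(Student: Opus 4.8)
The plan is to run a second-order energy estimate on the $\partial_x V^\varepsilon$-equation, i.e.\ the first line of \eqref{Problem.Dx.DistanceEquation}, testing it against $-\partial_{xxx}^3 V^\varepsilon$ and absorbing the top-order terms into the diffusion. The preliminary observation is that the Neumann conditions $\partial_\nu v^\varepsilon=\partial_\nu v=0$ on $\Gamma$ force, in one space dimension, $\partial_x V^\varepsilon=0$ on $\Gamma$ for every $t$, hence also $\partial_t\partial_x V^\varepsilon=0$ on $\Gamma$; this is precisely what makes the boundary contributions in the integrations by parts below vanish. As usual these a priori manipulations are carried out formally and justified by a standard regularization/bootstrap argument, using that $(u^\varepsilon,v^\varepsilon)$ is a classical solution and $(u,v)$ a strong solution.

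Then I would multiply the equation $\partial_t(\partial_x V^\varepsilon)=d\,\partial_{xxx}^3 V^\varepsilon+\partial_x\big(\psi(u+U^\varepsilon,v+V^\varepsilon)-\psi(u,v)\big)+\delta\,\partial_{xxx}^3 u^\varepsilon$ by $-\partial_{xxx}^3 V^\varepsilon$ and integrate over $I$. One integration by parts in the time-derivative term (the boundary term vanishes by the above) produces $\tfrac12\tfrac{d}{dt}\int_I|\partial_{xx}^2 V^\varepsilon|^2$ on the left, while the linear diffusion term directly gives $-d\int_I|\partial_{xxx}^3 V^\varepsilon|^2$. It remains to bound the two forcing contributions by $\tfrac{d}{2}\int_I|\partial_{xxx}^3 V^\varepsilon|^2$ plus an acceptable remainder. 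For the reaction term, Young's inequality gives $-\int_I\partial_x(\psi(u+U^\varepsilon,v+V^\varepsilon)-\psi(u,v))\,\partial_{xxx}^3 V^\varepsilon\le\tfrac{d}{4}\int_I|\partial_{xxx}^3 V^\varepsilon|^2+C\int_I|\partial_x(\psi(u+U^\varepsilon,v+V^\varepsilon)-\psi(u,v))|^2$. Expanding the $x$-derivative by the chain rule and using that $\psi$ is a quadratic polynomial (so $\psi'_i(u+U^\varepsilon,v+V^\varepsilon)-\psi'_i(u,v)$ is affine in $(U^\varepsilon,V^\varepsilon)$), together with the uniform $L^\infty(I_T)$-bounds on $(u^\varepsilon,v^\varepsilon)$ and the $L^\infty(0,T;W^{1,\infty}(I))$-bounds on $(u,v)$ from \eqref{SolutionSmoothness}, one gets $\int_I|\partial_x(\psi(u+U^\varepsilon,v+V^\varepsilon)-\psi(u,v))|^2\le C_T\big(\|U^\varepsilon\|_{H^1(I)}^2+\|V^\varepsilon\|_{H^1(I)}^2\big)$; by the $L^\infty(0,T;H^1)$ convergence rate already proved in Lemma \ref{Lemma.ConvRateULinfH1} this is $\le C_T(\varepsilon^2+\varepsilon_{\mathrm{in}}^2+\delta^2)$, i.e.\ the first term on the right-hand side of \eqref{Lemma.ConvRateVLinfH2.State1}.

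For the cross-diffusion term, Young's inequality yields $-\delta\int_I\partial_{xxx}^3 u^\varepsilon\,\partial_{xxx}^3 V^\varepsilon\le\tfrac{d}{4}\int_I|\partial_{xxx}^3 V^\varepsilon|^2+\tfrac{\delta^2}{d}\int_I|\partial_{xxx}^3 u^\varepsilon|^2$. Splitting $\partial_{xxx}^3 u^\varepsilon=\partial_{xxx}^3 U^\varepsilon+\partial_{xxx}^3 u$ and using that the limit component $u=h_\kappa(v)$ inherits $H^3$-regularity from the parabolic smoothing of the semilinear equation \eqref{Problem.Limiting} (so that $\delta^2\int_I|\partial_{xxx}^3 u|^2\le C_T\delta^2$), this splits into the explicit term $C_3\,\delta^2\int_I|\partial_{xxx}^3 U^\varepsilon|^2$ and a contribution folded into $C_T(\varepsilon^2+\varepsilon_{\mathrm{in}}^2+\delta^2)$. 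I would keep the constant $C_3$ explicit, rather than absorbing it into a generic constant, because in the later combination with the second-order estimate for $U^\varepsilon$ this term has to be dominated by the dissipation of the $U^\varepsilon$-equation, which is what forces the smallness conditions on $\delta$ (and $\zeta$) in \eqref{ParaAss}. Collecting all the estimates and moving $\tfrac{d}{2}\int_I|\partial_{xxx}^3 V^\varepsilon|^2$ to the left gives exactly \eqref{Lemma.ConvRateVLinfH2.State1}.

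The computation itself is routine; the delicate points are only (i) ensuring the boundary terms in the integrations by parts really vanish — hence the preliminary reduction to $\partial_x V^\varepsilon|_{\Gamma}=0$ — and (ii) having enough regularity of the limit $u$ to absorb $\delta^2\|\partial_{xxx}^3 u\|_{L^2(I)}^2$. The only genuinely non-routine input is the previously established $L^\infty(0,T;H^1)$ convergence rate of Lemma \ref{Lemma.ConvRateULinfH1}, without which the reaction term could not be made of order $\varepsilon^2+\varepsilon_{\mathrm{in}}^2+\delta^2$.
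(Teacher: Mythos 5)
Your proposal is correct and follows essentially the same route as the paper: testing the $\partial_x V^\varepsilon$-equation with $-\partial_{xxx}^3 V^\varepsilon$, absorbing half the dissipation via Young's inequality, bounding $\int_I|\partial_x(\psi(u+U^\varepsilon,v+V^\varepsilon)-\psi(u,v))|^2$ through the quadratic structure of $\psi$, the uniform bounds \eqref{SolutionSmoothness} and the $H^1$-rate of Lemma \ref{Lemma.ConvRateULinfH1}, and splitting $\partial_{xxx}^3 u^\varepsilon=\partial_{xxx}^3 U^\varepsilon+\partial_{xxx}^3 u$ with the $H^3$-regularity of the limit $u$ to isolate the $C_3\delta^2\int_I|\partial_{xxx}^3 U^\varepsilon|^2$ term. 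The only cosmetic differences are your more explicit treatment of the vanishing boundary terms and of the regularity of $u$, which match the paper's implicit use of \eqref{Condition.Initial.Limiting} and \eqref{SolutionSmoothness}.
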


\begin{proof} Firstly, multiplying the equation for $\partial_x V^\varepsilon$ in \eqref{Problem.Dx.DistanceEquation} by $-\partial_{xxx}^3 V^\varepsilon$ and using the Neumman boundary conditions \eqref{Condition.Initial.Limiting}, we get
\begin{equation}
\begin{aligned}
    \frac{1}{2} \frac{d}{dt} \int_I |\partial_{xx}^2 V^\varepsilon|^2  
        & = - d \int_I |\partial_{xxx}^3 V^\varepsilon|^2  - \delta  \int_I  \partial_{xxx}^3 u^\varepsilon \partial_{xxx}^3 V^\varepsilon \\
        & - \int_I \partial_x (\psi(u+U^\varepsilon,v+V^\varepsilon)-\psi(u,v)) \partial_{xxx}^3 V^\varepsilon \\
        & \le - \frac{d}{2} \int_I  | \partial_{xxx}^3 V^\varepsilon|^2   + \frac{\delta^2}{d} \int_I  |\partial_{xxx}^3 u^\varepsilon|^2  \\
        & + \frac{1}{d} \int_I |\partial_x (\psi(u+U^\varepsilon,v+V^\varepsilon)-\psi(u,v))|^2 .
\end{aligned}  
\label{Lemma.ConvRateVLinfH2.Proof1}
\end{equation}
Due to the following computations 
\begin{align*}
    & \partial_x (\psi(u+U^\varepsilon,v+V^\varepsilon)-\psi(u,v)) \\
    & = \int_0^1 \partial_x \Big( \psi_1'(u+hU^\varepsilon,v+V^\varepsilon)U^\varepsilon + \psi_2'(u,v+hV^\varepsilon)V^\varepsilon \Big) \\
    & = \int_0^1  \Big( \psi_1'(u+hU^\varepsilon,v+V^\varepsilon) \partial_x U^\varepsilon + \psi_2'(u,v+hV^\varepsilon) \partial_x V^\varepsilon \Big) \\
    & + \int_0^1  \Big( U^\varepsilon \partial_x \psi_1'(u+hU^\varepsilon,v+V^\varepsilon)   + V^\varepsilon \partial_x \psi_2'(u,v+hV^\varepsilon)   \Big),
\end{align*}
and $\psi_{11}'' = 0$, $\psi_{12}''=\psi_{21}''=-b$,   $\psi_{22}''=-2c$, we have  
\begin{align*}
    & \partial_x (\psi(u+U^\varepsilon,v+V^\varepsilon)-\psi(u,v)) \\
    & = \int_0^1  \Big( \psi_1'(u+hU^\varepsilon,v+V^\varepsilon) \partial_x U^\varepsilon + \psi_2'(u,v+hV^\varepsilon) \partial_x V^\varepsilon \Big) \\
    & -   \int_0^1 \Big( b U^\varepsilon (\partial_x v+ \partial_x V^\varepsilon) + b  V^\varepsilon \partial_x u  + 2c  V^\varepsilon (\partial_x v+ h\partial_x V^\varepsilon) \Big).
\end{align*}
Therefore, we can conclude from \eqref{Lem.VEnergy.Proof1}-\eqref{Lem.VEnergy.Proof2} to have  
\begin{align}
     \int_I |\partial_x (\psi(u+U^\varepsilon,v+V^\varepsilon)-\psi(u,v))|^2  
    & \le C_T   \int_I \left( (U^\varepsilon)^2  + (V^\varepsilon)^2 + |\partial_x U^\varepsilon|^2 + |\partial_x V^\varepsilon|^2  \right)  ,
    \label{Lemma.ConvRateVLinfH2.Proof2}
\end{align}
where we employ \eqref{SolutionSmoothness} to   bound the term with $\psi_1',\psi_2'$. 
By Proposition \ref{Lemma.ConvRateULinfL2} and Lemma \ref{Lemma.ConvRateULinfH1},
\begin{align}
     \int_I |\partial_x (\psi(u+U^\varepsilon,v+V^\varepsilon)-\psi(u,v))|^2  
     \le C_{T} \left( \varepsilon^2 + \varepsilon_{\mathrm{in}}^2  +  \delta^2 \right). 
     \label{Lemma.ConvRateVLinfH2.Proof3}
\end{align}
Now, taking in account the boundedness of $\partial_{xxx}^3 u$ in $L^2(I_T)$, we have 
\begin{align}
    \frac{\delta^2}{d} \int_I  |\partial_{xxx}^3 u^\varepsilon|^2 \le \frac{2\delta^2}{d} \int_I  (|\partial_{xxx}^3 U^\varepsilon|^2 + |\partial_{xxx}^3 u|^2)  \le  C\delta^2 \left( 1 + \int_I  |\partial_{xxx}^3 U^\varepsilon|^2  \right).
    \label{Lemma.ConvRateVLinfH2.Proof4}
\end{align}
Finally, plugging \eqref{Lemma.ConvRateVLinfH2.Proof3}-\eqref{Lemma.ConvRateVLinfH2.Proof4} into \eqref{Lemma.ConvRateVLinfH2.Proof1}  gives the estimate \eqref{Lemma.ConvRateVLinfH2.State1}.
\end{proof}

\begin{lemma}
\label{Lemma.ConvRateULinfH2}  There holds 
\begin{align}
    \varepsilon \frac{d}{dt} \int_I |\partial_{xx}^2 U^\varepsilon|^2 + d \varepsilon \int_I |\partial_{xxx}^2 U^\varepsilon|^2 +  \int_I |\partial_{xx}^2 U^\varepsilon|^2    \le C_{T} \left( \varepsilon^2 + \varepsilon_{\mathrm{in}}^2 + \delta^2 \right) + C_T \int_I |\partial_{xx}^2 V^\varepsilon|^2. 
    \label{Lemma.ConvRateULinfH2.State1}
\end{align}
\end{lemma}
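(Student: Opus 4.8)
The plan is to run the same energy scheme as in Lemmas~\ref{Lem.UEnergy}--\ref{Lem.NablaUEnergy}, one spatial derivative higher: I multiply the equation for $\partial_x U^\varepsilon$ in~\eqref{Problem.Dx.DistanceEquation} by $-\partial_{xxx}^3 U^\varepsilon$ and integrate over $I$. The Neumann conditions give $\partial_x U^\varepsilon=\partial_x V^\varepsilon=0$ on $\Gamma$, and differentiating the (heat-type) equations once in $x$ and evaluating at $\Gamma$ yields in addition $\partial_{xxx}^3 u=\partial_{xxx}^3 v=\partial_{xxx}^3 u^\varepsilon=\partial_{xxx}^3 v^\varepsilon=0$ on $\Gamma$ for $t>0$; consequently every boundary term produced by the integrations by parts below vanishes, and one obtains
\begin{align*}
&\frac12\frac{d}{dt}\int_I|\partial_{xx}^2 U^\varepsilon|^2+(d+\delta)\int_I|\partial_{xxx}^3 U^\varepsilon|^2\\
&\quad=-\int_I\partial_x\big(\phi(u+U^\varepsilon,v+V^\varepsilon)-\phi(u,v)\big)\,\partial_{xxx}^3 U^\varepsilon-\int_I\partial_x\xi^\varepsilon\,\partial_{xxx}^3 U^\varepsilon\\
&\qquad\quad-\frac1\varepsilon\int_I\partial_x\big(g(u+U^\varepsilon,v+V^\varepsilon)-g(u,v)\big)\,\partial_{xxx}^3 U^\varepsilon.
\end{align*}
I then integrate by parts once more in each of the last two integrals so that no derivative of $U^\varepsilon$ higher than the second appears.

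The heart of the proof is the singular term. Expanding $\partial_{xx}^2$ of $g(u+U^\varepsilon,v+V^\varepsilon)-g(u,v)$ by the chain rule and using that $g$ is a quadratic polynomial (so $g_{11}''=2\kappa$, $g_{12}''=g_{21}''=-2\kappa$, $g_{22}''=2\kappa$ are constant and all third derivatives vanish), the leading contribution is $g_1'(u+U^\varepsilon,v+V^\varepsilon)\,\partial_{xx}^2 U^\varepsilon$; by~\eqref{DerivativeOfg} and the ordering $0\le u^\varepsilon\le v^\varepsilon$ from Proposition~\ref{Theo.FastReactionLimit} one has $g_1'(u+U^\varepsilon,v+V^\varepsilon)=-1-2\kappa(v^\varepsilon-u^\varepsilon)\le-1$, so this piece gives the dissipative contribution $-\varepsilon^{-1}\int_I|\partial_{xx}^2 U^\varepsilon|^2$, which after splitting produces the $\int_I|\partial_{xx}^2 U^\varepsilon|^2$ on the left of~\eqref{Lemma.ConvRateULinfH2.State1}. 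Every remaining term of $\partial_{xx}^2\big(g(u+U^\varepsilon,\cdot)-g(u,\cdot)\big)$ carries at most one second-order factor and is of one of three types: (i) $g_2'(u+U^\varepsilon,v+V^\varepsilon)\,\partial_{xx}^2 V^\varepsilon$, whose coefficient is bounded by~\eqref{SolutionSmoothness}; tested against $\varepsilon^{-1}\partial_{xx}^2 U^\varepsilon$ and Young's inequality this gives the term $C_T\int_I|\partial_{xx}^2 V^\varepsilon|^2$ on the right of~\eqref{Lemma.ConvRateULinfH2.State1}, plus a fraction of the dissipation absorbed on the left; (ii) $\partial_{xx}^2 u$ or $\partial_{xx}^2 v$ times a difference $g_i'(u+U^\varepsilon,v+V^\varepsilon)-g_i'(u,v)=\pm2\kappa(V^\varepsilon-U^\varepsilon)$, which is linear in $(U^\varepsilon,V^\varepsilon)$ and hence, by $H^1(I)\hookrightarrow L^\infty(I)$ and the $L^\infty(0,T;H^1)$-rate of Lemma~\ref{Lemma.ConvRateULinfH1}, is $\mathcal O(\varepsilon+\varepsilon_{\mathrm{in}}+\delta)$ in $L^\infty(I)$, so that after Young's inequality it costs $\varepsilon^{-1}\,\mathcal O(\varepsilon^2+\varepsilon_{\mathrm{in}}^2+\delta^2)$, admissible after the final multiplication by $\varepsilon$; and (iii) products of first derivatives of $u,v,U^\varepsilon,V^\varepsilon$, where $\partial_x u,\partial_x v$ are bounded in $L^\infty$ by~\eqref{SolutionSmoothness} and the genuinely nonlinear pieces (quadratic in $\partial_x U^\varepsilon,\partial_x V^\varepsilon$) are controlled by one-dimensional Gagliardo--Nirenberg interpolation together with the smallness of $\|\partial_x U^\varepsilon\|_{L^\infty(0,T;L^2)}+\|\partial_x V^\varepsilon\|_{L^\infty(0,T;L^2)}$ from Lemma~\ref{Lemma.ConvRateULinfH1}, so that the resulting cubic expressions are reabsorbed below the $\varepsilon^{-1}\int_I|\partial_{xx}^2 U^\varepsilon|^2$ barrier.

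The $\partial_x\phi$-term is treated exactly like type~(iii) above (it is not singular, so a plain $\frac1{8\varepsilon}\int_I|\partial_{xx}^2 U^\varepsilon|^2+C_T\varepsilon\int_I|\partial_{xx}^2(\,\cdot\,)|^2$ splitting suffices); for the $\xi^\varepsilon$-term one integrates by parts once more to $\int_I\partial_{xx}^2\xi^\varepsilon\,\partial_{xx}^2 U^\varepsilon\le\frac1{8\varepsilon}\int_I|\partial_{xx}^2 U^\varepsilon|^2+C_T\varepsilon$, using a uniform-in-$\varepsilon$ bound for $\partial_{xx}^2\xi^\varepsilon$ that follows from the regularity of the critical solution established via Proposition~\ref{Theo.FastReactionLimit} and the smoothing estimate~\eqref{HS}, together with~\eqref{Assumption.Delta}. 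Collecting all contributions one reaches the raw inequality
\begin{align*}
&\frac12\frac{d}{dt}\int_I|\partial_{xx}^2 U^\varepsilon|^2+\frac d2\int_I|\partial_{xxx}^3 U^\varepsilon|^2+\frac1{2\varepsilon}\int_I|\partial_{xx}^2 U^\varepsilon|^2\\
&\qquad\le\frac{C_T}\varepsilon\Big(\|u^\varepsilon-u\|_{H^1(I)}^2+\|v^\varepsilon-v\|_{H^1(I)}^2+\int_I|\partial_{xx}^2 V^\varepsilon|^2\Big)+C_T\varepsilon,
\end{align*}
after which I multiply by $2\varepsilon$ and insert the pointwise-in-$t$ bounds $\|u^\varepsilon(t)-u(t)\|_{H^1(I)}^2+\|v^\varepsilon(t)-v(t)\|_{H^1(I)}^2\le C_T(\varepsilon^2+\varepsilon_{\mathrm{in}}^2+\delta^2)$ supplied by Proposition~\ref{Lemma.ConvRateULinfL2} and Lemma~\ref{Lemma.ConvRateULinfH1}; this turns the first group on the right into $C_T(\varepsilon^2+\varepsilon_{\mathrm{in}}^2+\delta^2)$, leaves $C_T\int_I|\partial_{xx}^2 V^\varepsilon|^2$ with an $\varepsilon$-free constant, and yields~\eqref{Lemma.ConvRateULinfH2.State1}.

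I expect the main obstacle to be the singular term: extracting the favorable sign is immediate once the ordering $u^\varepsilon\le v^\varepsilon$ is available, but keeping the genuinely nonlinear remainders strictly below the $\varepsilon^{-1}\int_I|\partial_{xx}^2 U^\varepsilon|^2$ dissipation is delicate, and is exactly where the already-proved $L^\infty(0,T;H^1)$-rate of Lemma~\ref{Lemma.ConvRateULinfH1}---hence the standing assumption~\eqref{Assumption.Compare.Eps.Del}---is indispensable. A secondary technical point is securing the uniform-in-$\varepsilon$ control of $\partial_{xx}^2\xi^\varepsilon$, i.e.\ the second-order spatial regularity of the consistency defect of the critical solution.
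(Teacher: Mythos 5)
Your proposal is correct and follows essentially the same route as the paper's proof: the multiplier $-\partial_{xxx}^3 U^\varepsilon$, integration by parts in the $\xi^\varepsilon$- and $g$-terms, the coercive piece from $g_1'\le -1$, the constant second derivatives of $g$ linearizing the differences $g_i'(u+U^\varepsilon,v+V^\varepsilon)-g_i'(u,v)=\pm 2\kappa(V^\varepsilon-U^\varepsilon)$, the interpolation bound $\|\partial_x U^\varepsilon\|_{L^4}^4\le C\|\partial_{xx}^2U^\varepsilon\|_{L^2}^2\|U^\varepsilon\|_{H^1}^2$ combined with Lemma \ref{Lemma.ConvRateULinfH1} to absorb the quartic gradient terms, and the final multiplication by $\varepsilon$ with insertion of the rates from Proposition \ref{Lemma.ConvRateULinfL2} and Lemma \ref{Lemma.ConvRateULinfH1}. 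The only small imprecision is attributing $\partial_{xxx}^3 u=0$ on $\Gamma$ to ``differentiating the heat-type equations'': the limit component $u$ satisfies no evolution equation, so this boundary fact (and hence $\partial_x\xi^\varepsilon=0$ on $\Gamma$) must instead be derived from the algebraic relation $u=(2v+1-\sqrt{4v+1})/2$ together with the $v$-equation, which is precisely the computation the paper carries out.
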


\begin{proof} Thanks to the boundary and initial conditions \eqref{Condition.Initial.Limiting}, 
 multiplying the equation for $\partial_x U^\varepsilon$ in \eqref{Problem.Dx.DistanceEquation} by $-\partial_{xxx}^3 U^\varepsilon$ gives 
\begin{align}
\begin{aligned}
\frac{1}{2} \frac{d}{dt} \int_I | \partial_{xx}^2 U^\varepsilon|^2 
 = \,&  - (d+\delta) \int_I |\partial_{xxx}^3 U^\varepsilon|^2 - \int_I  \partial_x ( \phi(u+U^\varepsilon,v+V^\varepsilon)-\phi(u,v)) \partial_{xxx}^3 U^\varepsilon  \\
& - \int_I \partial_x \xi^\varepsilon \partial_{xxx}^3 U^\varepsilon - \dfrac{1}{\varepsilon} \int_I \partial_{x} ( g(u+U^\varepsilon,v+V^\varepsilon)-g(u,v)) \partial_{xxx}^3 U^\varepsilon . 
\end{aligned}
\label{Lemma.ConvRateULinfH2.Proof1}
\end{align}
For the term related to $\phi$, the same techniques as \eqref{Lemma.ConvRateVLinfH2.Proof3} yield  
\begin{align}
     \int_I |\partial_x (\phi(u+U^\varepsilon,v+V^\varepsilon)-\phi(u,v))|^2  
    & \le C_{T} \left( \varepsilon^2 + \varepsilon_{\mathrm{in}}^2  +  \delta^2 \right)  .
    \label{Lemma.ConvRateULinfH2.Proof2}
\end{align}    
To estimate the term including $\partial_x \xi^\varepsilon$, or more explicitly
$$\partial_x \xi^\varepsilon = - \partial_{xt}^2 u + (d+\delta(\varepsilon)) \partial_{xxx}^3 u + \partial_x \phi (u,v),$$
it is useful to note that we do not have the homogeneous Neumann boundary condition for the component $u$ in the system \eqref{Problem.Limiting}. Since the component $u$ is defined by the algebraic equation $-u+(v-u)^2=0$,  we have    
\begin{align}
    u = \frac{2v+1-\sqrt{4v+1}}{2} , 
    \label{Expression.uInTermOfv}
\end{align}
where  $u\le v$ follows from $u^\varepsilon \le v^\varepsilon (= u^\varepsilon+u_2^\varepsilon)$ after passing to the limit as $\varepsilon \to 0$.  We can differentiate  \eqref{Expression.uInTermOfv} to see that   
\begin{align*}
    \partial_t (\partial_x u) = \frac{\sqrt{4v+1}-1}{\sqrt{4v+1}} 
    \partial_t (\partial_x v) + \frac{2}{\sqrt{(4v+1)^3}} \partial_t v \partial_x v,  
\end{align*}
and 
\begin{align*}
    \partial_{xxx}^3 u & = \frac{\sqrt{4v+1}-1}{\sqrt{4v+1}} \partial_{xxx}^3 v + \left( \frac{6}{\sqrt{(4v+1)^3}} \partial_{xx}^2 v  - \frac{12}{\sqrt{(4v+1)^5}} (\partial_x v)^2 \right) \partial_x v \\ 
    & =  \frac{1}{d} \frac{\sqrt{4v+1}-1}{\sqrt{4v+1}} \partial_t (\partial_x v)  - \frac{1}{d} \frac{\sqrt{4v+1}-1}{\sqrt{4v+1}} \left(  \psi_1'(u,v) \frac{\sqrt{4v+1}-1}{\sqrt{4v+1}} + \psi_2'(u,v) \right) \partial_x v   \\
    & \hspace{3.8cm} + \left( \frac{6}{\sqrt{(4v+1)^3}} \partial_{xx}^2 v  - \frac{12}{\sqrt{(4v+1)^5}} (\partial_x v)^2 \right) \partial_x v,
\end{align*}
where we note from differentiating the equation for $v$ that 
\begin{align*}
    \partial_{xxx}^3 v & = \frac{1}{d} \partial_t (\partial_x v)  - \frac{1}{d} \psi_1'(u,v) \partial_x u - \frac{1}{d} \psi_2'(u,v) \partial_x v  \\
    & = \frac{1}{d} \partial_t (\partial_x v)  - \frac{1}{d} \left(  \psi_1'(u,v) \frac{\sqrt{4v+1}-1}{\sqrt{4v+1}} + \psi_2'(u,v) \right) \partial_x v. 
\end{align*}
Using the boundary condition \eqref{Condition.Initial.Limiting} and all the above computation, we see that $\partial_x \xi^\varepsilon$ satisfies the homogeneous Neumann boundary condition, i.e. 
\begin{align*}
    \partial_\nu (\partial_x \xi^\varepsilon) = 0 \quad \text{on } \Gamma \times (0,\infty). 
\end{align*}
Note that, with the regularity \eqref{SolutionSmoothness}, a standard argument can be performed to see that $\partial_{xx}^2 \xi^\varepsilon$ is smooth enough and bounded in $I_T$. Therefore, the term including $\phi$ can be dealt with as follows
\begin{align}
\begin{aligned}
    & - \int_I \partial_x \xi^\varepsilon \partial_{xxx}^3 U^\varepsilon 
    = \int_I \partial_{xx}^2 \xi^\varepsilon \partial_{xx}^2 U^\varepsilon \\
    & \le \frac{1}{4\varepsilon} \int_I |\partial_{xx}^2 U^\varepsilon|^2 + C \varepsilon \int_I |\partial_{xx}^2 \xi^\varepsilon|^2 \le \frac{1}{4\varepsilon} \int_I |\partial_{xx}^2 U^\varepsilon|^2 + C_T \varepsilon. 
\end{aligned}
    \label{Lemma.ConvRateULinfH2.Proof3}
\end{align}

We now proceed to estimate the last term in \eqref{Lemma.ConvRateULinfH2.Proof1}. We observe that  $\partial_{x} ( g(u+U^\varepsilon,v+V^\varepsilon)-g(u,v))$ satisfies the homogeneous Neumann boundary condition. Thus, integration by parts yields
\begin{equation*}
\begin{aligned}
& - \dfrac{1}{\varepsilon} \int_I \partial_{x} ( g(u+U^\varepsilon,v+V^\varepsilon)-g(u,v)) \partial_{xxx}^3 U^\varepsilon \\
& = \frac{1}{\varepsilon} \int_I \partial_{xx}^2 ( g(u+U^\varepsilon,v+V^\varepsilon)-g(u,v)) \partial_{xx}^2 U^\varepsilon .  
\end{aligned} 
\end{equation*}
Since $g_{11}''=-g_{12}''=g_{22}''=2$, we have 
\begin{align*}
& \partial_{xx}^2 ( g(u+U^\varepsilon,v+V^\varepsilon)-g(u,v)) \\
& =  g_1'(u+U^\varepsilon,v+V^\varepsilon) \partial_{xx}^2(u+ U^\varepsilon) + g_2'(u+U^\varepsilon,v+V^\varepsilon) \partial_{xx}^2 (v+V^\varepsilon) \\
& + 2 (\partial_{x}(u+ U^\varepsilon))^2 - 4\partial_{x}(u+ U^\varepsilon) \partial_{x}(v+V^\varepsilon) + 2 (\partial_{x}(v+V^\varepsilon))^2 \\
& - \left( g_1'(u,v) \partial_{xx}^2u + g_2'(u,v) \partial_{xx}^2v + 2 (\partial_{x}u)^2 - 4\partial_{x}u \partial_{x}v + 2 (\partial_{x}v)^2 \right).
\end{align*}
The negative sign suggests to write the above computation as follows
\begin{align*}
    & \partial_{xx}^2 ( g(u+U^\varepsilon,v+V^\varepsilon)-g(u,v)) \\
    & =  g_1'(u+U^\varepsilon,v+V^\varepsilon) \partial_{xx}^2 U^\varepsilon + (g_1'(u+U^\varepsilon,v+V^\varepsilon)-g_1'(u,v)) \partial_{xx}^2 u \\
    & + g_2'(u+U^\varepsilon,v+V^\varepsilon) \partial_{xx}^2 V^\varepsilon + (g_2'(u+U^\varepsilon,v+V^\varepsilon) -g_2'(u,v)) \partial_{xx}^2 v\\
    &+ 2(2\partial_xu +\partial_xU^\varepsilon -2\partial_xv  - \partial_xV^\varepsilon)(\partial_xU^\varepsilon-\partial_xV^\varepsilon).
\end{align*}
Now, with \eqref{Lem.NablaUEnergy.Proof3}, we get 
\begin{align*}
    & \partial_{xx}^2 ( g(u+U^\varepsilon,v+V^\varepsilon)-g(u,v)) \\
    & =  g_1'(u+U^\varepsilon,v+V^\varepsilon) \partial_{xx}^2 U^\varepsilon + g_2'(u+U^\varepsilon,v+V^\varepsilon) \partial_{xx}^2 V^\varepsilon \\
    & +   2 (U^\varepsilon - V^\varepsilon)  \partial_{xx}^2 (u- v)  + 2 (\partial_xU^\varepsilon-\partial_xV^\varepsilon) \partial_x(2u-2v+U^\varepsilon-V^\varepsilon) .
\end{align*}
With the regularity \eqref{SolutionSmoothness} and the definition of the function $g$,  the terms $g_2'(u+U^\varepsilon,v+V^\varepsilon)$ and  $\partial_{xx}^2 (u-v)$ are bounded on $I_T$ (due to Lemma \ref{Lem.Energy}) by a constant depending on $T$. 
Moreover, by the interpolation inequality with the harmonic mean, we have 
\begin{align*}
    \|\partial_{x} U^\varepsilon\|_{L^4(I)}^4 &\le C \|\partial_{xx} U^\varepsilon\|_{L^2(I)}^2 \| U^\varepsilon\|_{L^\infty(I)}^2 \le C \|\partial_{xx} U^\varepsilon\|_{L^2(I)}^2 \| U^\varepsilon\|_{H^1(I)}^2.
\end{align*}
Consequently, we infer from Lemma \ref{Lemma.ConvRateULinfH1} that 
\begin{align*}
    \int_I |\partial_{x} U^\varepsilon|^4 &\le C \| U^\varepsilon\|_{L^\infty(0,T;H^1(I))}^2  \int_I |\partial_{xx} U^\varepsilon|^2 \le C_T (\varepsilon+\varepsilon_{\mathrm{in}} + \delta) \int_I |\partial_{xx} U^\varepsilon|^2 , 
\end{align*}
and similarly, 
\begin{align*}
    \int_I |\partial_{x} V^\varepsilon|^4 \le C_T (\varepsilon+\varepsilon_{\mathrm{in}} + \delta) \int_I |\partial_{xx} V^\varepsilon|^2 . 
\end{align*}
Therefore, by exploiting \eqref{DerivativeOfg},  
\begin{align}
\begin{aligned}
    &- \dfrac{1}{\varepsilon} \int_I \partial_{x} ( g(u+U^\varepsilon,v+V^\varepsilon)-g(u,v)) \partial_{xxx}^3 U^\varepsilon \\
    & \le - \frac{1}{\varepsilon} \int_I |\partial_{xx}^2 U^\varepsilon|^2 + \frac{C_T}{\varepsilon} \int_I \Big( |\partial_{xx}^2 V^\varepsilon| + | U^\varepsilon-V^\varepsilon |   + |\partial_xU^\varepsilon-\partial_xV^\varepsilon| + |\partial_xU^\varepsilon-\partial_xV^\varepsilon|^2 \Big) |\partial_{xx}^2 U^\varepsilon| \\
    &\le - \frac{7}{8\varepsilon} \int_I |\partial_{xx}^2 U^\varepsilon|^2 + \frac{C_T}{\varepsilon} \int_I \left( (U^\varepsilon)^2  + (V^\varepsilon)^2 + |\partial_x U^\varepsilon|^2 + |\partial_x V^\varepsilon|^2 + |\partial_x U^\varepsilon|^4 + |\partial_x V^\varepsilon|^4 + |\partial_{xx}^2 V^\varepsilon|^2  \right) \\
    &  \le - \frac{7}{8\varepsilon} \int_I |\partial_{xx}^2 U^\varepsilon|^2 + C_{T} \left( \varepsilon + \frac{\varepsilon_{\mathrm{in}}^2}{\varepsilon} + \frac{\delta^2}{\varepsilon} \right) + \frac{C_T}{\varepsilon} \left( (\varepsilon+\varepsilon_{\mathrm{in}} + \delta) \int_I |\partial_{xx} U^\varepsilon|^2 \right) + \frac{C_T}{\varepsilon} \int_I |\partial_{xx}^2 V^\varepsilon|^2  \\
    &  \le - \frac{3}{4\varepsilon} \int_I |\partial_{xx}^2 U^\varepsilon|^2 + C_{T} \left( \varepsilon + \frac{\varepsilon_{\mathrm{in}}^2}{\varepsilon} + \frac{\delta^2}{\varepsilon} \right) + \frac{C_T}{\varepsilon} \int_I |\partial_{xx}^2 V^\varepsilon|^2,
\end{aligned}
\label{Lemma.ConvRateULinfH2.Proof4}
\end{align}
where Proposition \ref{Lemma.ConvRateULinfL2} and Lemma \ref{Lemma.ConvRateULinfH1} were used at the last estimate. Finally, \eqref{Lemma.ConvRateULinfH2.State1} is obtained by combining  \eqref{Lemma.ConvRateULinfH2.Proof1}-\eqref{Lemma.ConvRateULinfH2.Proof4}.
\end{proof}

We are ready to prove the estimate \eqref{Lemma.ConvRateUVLinfH2.State1}, Proposition \ref{Lemma.ConvRateULinfL2}, where the assumption  \eqref{Assumption.Compare.Eps.Del} is necessary.

\begin{proof}[Proof of the estimate \eqref{Lemma.ConvRateUVLinfH2.State1}, Proposition \ref{Lemma.ConvRateULinfL2}] Thanks to the assumption \eqref{Assumption.Compare.Eps.Del}, there exists a finitely positive constant $\alpha$ such that $\delta^2 \le \alpha \varepsilon$. Therefore, by Lemmas \ref{Lemma.ConvRateVLinfH2}-\ref{Lemma.ConvRateULinfH2},
\begin{align}
\begin{gathered} 
    \frac{d}{2\alpha C_3}\frac{d}{dt} \int_I |\partial_{xx}^2 V^\varepsilon|^2 + \frac{d^2}{2\alpha C_3} \int_I |\partial_{xxx}^3 V^\varepsilon|^2 \le  C_{T} \left( \varepsilon^2 + \varepsilon_{\mathrm{in}}^2  +  \delta^2 \right) + d\varepsilon \int_I |\partial_{xxx}^3 U^\varepsilon|^2, \\
    \varepsilon \frac{d}{dt} \int_I |\partial_{xx}^2 U^\varepsilon|^2 + d \varepsilon \int_I |\partial_{xxx}^2 U^\varepsilon|^2 +  \int_I |\partial_{xx}^2 U^\varepsilon|^2    \le C_{T} \left( \varepsilon^2 + \varepsilon_{\mathrm{in}}^2 + \delta^2 \right) + C_T \int_I |\partial_{xx}^2 V^\varepsilon|^2.  
\end{gathered}
\label{TheLastEstimate}
\end{align}
Combining these estimates gives 
\begin{gather*}
    \frac{d}{dt} \left( \frac{d}{2\alpha C_3} \int_I |\partial_{xx}^2 V^\varepsilon|^2 + \varepsilon \int_I |\partial_{xx}^2 U^\varepsilon|^2 \right) \le  C_{T} \left( \varepsilon^2 + \varepsilon_{\mathrm{in}}^2  +  \delta^2 \right) +  C_T \int_I |\partial_{xx}^2 V^\varepsilon|^2,  
\end{gather*}
and then by integrating over time, for all $t\in (0,T)$,
\begin{gather*}
    \int_I |\partial_{xx}^2 V^\varepsilon(t)|^2 \le  C  \varepsilon \int_I |\partial_{xx}^2 U^\varepsilon(0)|^2 +  C_{T} \left( \varepsilon^2 + \varepsilon_{\mathrm{in}}^2  +  \delta^2 \right) +  C_T \int_0^t\int_{I} |\partial_{xx}^2 V^\varepsilon|^2 . 
\end{gather*}
This yields \eqref{Lemma.ConvRateUVLinfH2.State1} for the second component by using Lemmas \ref{Lemma.InitialLayer} and \ref{Lemma.ConvRateULinfH1}. The proof for the first component is directly obtained by using the second estimate in \eqref{TheLastEstimate} and the comparison principle.
\end{proof}

\section{Approximation of Slow Manifolds in the Linear Case}\label{section:5}
\subsection{Analysis and Geometry of the Linear System}

The goal of this section is twofold. On the one hand, we study a linear cross diffusion system and construct an (approximate) slow manifold explicitly in Fourier space similar to \cite{kuehn2024infinite}. On the other hand we present a result that relates the abstract slow manifolds obtained by the Lyapunov-Perron method to the Galerkin manifold obtained via the Fourier coefficients as done in \cite{engel2021connecting}.

The linear system expressed in $u_1,u_2$-variables has the form
\begin{align}\label{linear system u1u2}
    \begin{split}
        \partial_t u_1&= (d+\delta)\Delta u_1+\frac{1}{\varepsilon}(u_2-u_1),\\
        \partial_t u_2&= d\Delta u_2-\frac{1}{\varepsilon}(u_2-u_1),\\
        u_1(0)&=u_{1,0},\quad u_2(0)=u_{2,0},
    \end{split}
\end{align}
where the domain is $[0,2\pi]$ and the initial data is in some Hilbert space $H$ with suitable boundary conditions.
Then, setting $u_1=u$ and $u_1+u_2=v$ yields
\begin{align}
    \label{linear ystem uv}\begin{split}
          \partial_t u&= (d+\delta)\Delta u+\frac{1}{\varepsilon}(v-2u),\\
        \partial_t v&= d\Delta v +\delta \Delta u,\\
        u(0)&=u_{0},\quad v(0)=v_{0}.
    \end{split}
\end{align}
In this case, the limit system when $\varepsilon\to 0$ is given by
\begin{align}
    \label{linear system limit uv}\begin{split}
        2u^0&=v^0,\\
        \partial_t v^0&= d\Delta v^0 +\delta \Delta u^0,\\
        v^0(0)&=v_{0}^0.
    \end{split}
\end{align}
This system then can be reduced to a system evolving on the critical manifold 
\begin{align}\label{critical manifold linear}
C_0:=\{ (u,v)\in H\times H:~ u=\frac{1}{2}v\}.
\end{align}
and $v^0$ solves
\begin{align}
    \partial_t v^0&= (d+\frac{1}{2}\delta)\Delta v^0,\quad  v^0(0)=v_{0}^0.
\end{align}

To show the existence of solutions and to construct the slow manifold, we work on the space of Fourier coefficients.
To this end, let $u= \sum_k u_k(t)e_k(x)$ and $v= \sum_k v_k(t)e_k(x)$, where $e_k(x)$ are eigenfunctions of the Laplacian $\Delta$ in $[0,2\pi]$ and $\textnormal{span}_{k}(e_k(x))=H$.

Then, the linear PDE system \eqref{linear ystem uv} reduces to an ODE system for the coefficients of the form
\begin{align}
   \label{linear Fourier system uv}\begin{split}
       u_k'(t)&=-[(d+\delta)k^2-2\varepsilon^{-1}]u_k(t) +\varepsilon^{-1}v_k(t),\\
       v_k'(t) &=-\delta k^2 u_k(t) -d k^2 v_k(t),\\
       u_k(0)&=u_{k,0},\quad v_k(0)= v_{k,0},
   \end{split} 
\end{align}
where $'$ denotes the time derivative.
In the same way we obtain an ODE for the limiting system \eqref{linear system limit uv} 
\begin{align}\label{linear Fourier limit uv}
    \begin{split}
        2u_k^0(t)&=v_k^0(t),\\
        (v_k^0)'(t) &=-\delta k^2 u_k^0(t) -d k^2 v_k^0(t),\\
        v_k^0(0)&= v_{k,0}^0.
    \end{split}
\end{align}
and the critical manifold for the $k$-th Fourier coefficients reads
\begin{align}\label{critical manifold linear Fourier}
    C_0^{\mathcal{F},k}:=\{(x,y)\in \mathbb{R}^2:~x =\frac{1}{2}y\}.
\end{align}

We set $\Omega_\varepsilon= (\varepsilon^2\delta^2 k^4+4)^{1/2}$ and $w_\varepsilon^\pm= \pm \Omega_\varepsilon-\varepsilon(2d+\delta)k^2-2$.
Then, the solutions to the $\varepsilon$-dependent and limit system are given by
\begin{align}
    \label{solution Fourier linear}\begin{split}
      u_k(t) &= \frac{1}{2}(\Omega_\varepsilon)^{-1}\bigg((\Omega_\varepsilon-\varepsilon\delta k^2-2) \textnormal{e}^{\frac{t w_\varepsilon^+}{2 \varepsilon}} +(\Omega_\varepsilon+\varepsilon\delta k^2+2)\textnormal{e}^{\frac{t w_\varepsilon^-}{2\varepsilon}} \bigg) u_{k,0}\\
      &\quad + (\Omega_\varepsilon)^{-1} \bigg(\textnormal{e}^{\frac{t w_\varepsilon^+}{2 \varepsilon}}-\textnormal{e}^{\frac{t w_\varepsilon^-}{2 \varepsilon}}\bigg)v_{k,0},\\
      v_k(t)&= (\Omega_\varepsilon)^{-1} \varepsilon\delta k^2 \bigg(\textnormal{e}^{\frac{t w_\varepsilon^-}{2 \varepsilon}}-\textnormal{e}^{\frac{t w_\varepsilon^+}{2 \varepsilon}}\bigg)u_{0,k}\\
      &\quad + \frac{1}{2}(\Omega_\varepsilon)^{-1}\bigg( (\Omega_\varepsilon+\varepsilon\delta k^2+2) \textnormal{e}^{\frac{t w_\varepsilon^+}{2 \varepsilon}} - (-\Omega_\varepsilon +\varepsilon\delta k^2 +2)\textnormal{e}^{\frac{t w_\varepsilon^-}{2\varepsilon}} \bigg) v_{k,0}
    \end{split}
\end{align}
and 
\begin{align}
    v_k^0(t)= \textnormal{e}^{-\frac{t}{2}(2d+\delta)k^2} v_{k,0}^0,
\end{align}
respectively.

To construct the slow manifold we now only consider the Fourier modes satisfying the condition $\varepsilon \delta k^2\ll 1$, for fixed $\varepsilon$.
We observe that in this regime 
\begin{align*}
    \Omega_\varepsilon= 2 +\frac{\varepsilon^2\delta^2 k^4}{4}+\mathcal{O}(\varepsilon^4).
\end{align*}
Thus, 
\begin{align*}
    \frac{t w_\varepsilon^+}{2\varepsilon} = \frac{t}{2}\big((2d+\delta) k^2+\frac{1}{4}\varepsilon\delta^2k^4 + \mathcal{O}(\varepsilon^3)\big)
\end{align*}
and similarly 
\begin{align*}
    \frac{t w_\varepsilon^-}{2\varepsilon} = -\frac{t}{2\varepsilon}\big(-4 -\varepsilon(2d+\delta)k^2 -\varepsilon^2\delta^2k^4 +\mathcal{O}(\varepsilon^4)\big).
\end{align*}
We note that the solutions, in this regime, have a slow component and a fast component, i.e. we refer to the term $\exp\big( \frac{t w_\varepsilon^+}{2\varepsilon}\big)$ as slow and to the term $\exp\big( \frac{t w_\varepsilon^-}{2\varepsilon} \big)$ as fast.
Assuming that
\begin{align}
    u_{k,0}=\frac{2}{\Omega_\varepsilon+\varepsilon\delta k^2 +2}v_{k,0},
\end{align}
then, the solution of \eqref{solution Fourier linear} evolves only on the slow time scale.
In particular, we observe that
\begin{align}
    u_k(t)= \frac{2}{\Omega_\varepsilon+\varepsilon\delta k^2 +2}v_k(t) 
\end{align}
for all $t\geq 0$ and all Fourier modes $k=1,\dots,K$ satisfying $\varepsilon \delta k^2\ll 1$.
Hence, the slow manifold for the first $k\in [0,K]$ Fourier coefficient is given by
\begin{align}
    S_\varepsilon^{\mathcal{F},k}=\{(x,y)\in \mathbb{R}^2:~x= \frac{2}{\Omega_\varepsilon+\varepsilon\delta k^2 +2}y\}.
\end{align}
We observe that it is indeed an invariant manifold and that it is indeed close to the critical manifold for the first $K$ Fourier coefficients.    

\begin{remark}
 We can obtain the slow manifold in the Hilbert space $H$ by introducing the space of the slow variable $H_s$
 \begin{align*}
     H_s=\{v\in H: x=\sum_{k=1}^K v_k e_k(x)\}
 \end{align*}
 and then 
 \begin{align}
     S_\varepsilon^K=\{ \big( h^\varepsilon(v_s),v_s\big),~ v_s\in H_s\}, 
 \end{align}
 where $h^\varepsilon(v_s)= 2\big( (\varepsilon^2\delta^2\Delta^2+4)^{1/2}-\varepsilon\delta\Delta +2\big)^{-1}$.

When we compare this result to the one obtained in Section 2, we note that in this description the part of the slow manifold for the fast component of $v$ is missing. Thus, it cannot be the exact slow manifold obtained in the previous sections.
\end{remark}

\begin{remark}
    Note, that in the linear case we do not require the cross-diffusion parameter $\delta$ to depend on $\varepsilon$ as we can find an explicit solution to systems\eqref{linear ystem uv} and \eqref{linear system limit uv} via the Fourier transform.
\end{remark}

\subsection{Approximate Slow Manifolds}

Here, we present an approach to construct the abstract slow manifold obtained in Section \ref{section:3}.
The idea is to derive an approximate slow manifold via a Galerkin approach and show that these Galerkin manifolds are close to the exact slow manifold.
This idea follows from adapting the method for the linear system of the previous section to a fast-reaction system of the form
\begin{align}\label{fast reaction system }
   \begin{split}
       \partial_t u& = (d+\delta) \Delta u-\frac{1}{\varepsilon}(u-f(u,v))+ \phi(u,v),\\
       \partial_t v &= d \Delta v+\delta\Delta u+\psi(u,v),\\
       u(0)&=u_0,\quad v(0)=v_0.
   \end{split}
\end{align}
Assuming that the underlying Banach space $X$ allows for a Fourier expansion of 
\begin{align*}
    u(x,t)= \sum_{k\in \mathbb{Z}} u_k(t)e_k(x),\quad  v(x,t)= \sum_{k\in \mathbb{Z}} v_k(t)e_k(x),
\end{align*}
where $\{e_k(x)\}_{k\in\mathbb{Z}}$ is a basis of eigenfunctions of $X$ solving the eigenvalue problem 
$$\Delta e_k(x)= \lambda_k e_k(x).$$
Taking the inner product of \eqref{fast reaction system } with each $e_k$ yields a system of Galerkin ODEs
\begin{align}\label{fast reaction Galerkin system }
   \begin{split}
       \frac{\textnormal{d}}{\textnormal{d}t} u& = (d+\delta) \lambda_k u_k-\frac{1}{\varepsilon}(u_k-\langle f(u,v),e_k\rangle) +\langle\phi(u,v),e_k\rangle,\\
       \frac{\textnormal{d}}{\textnormal{d}t} v &= d  \lambda_k v_k +\delta\lambda_k u_k +\langle\psi(u,v),e_k\rangle,\\
       u_k(0)&=u_{k,0},\quad v_k(0)=v_{k,0}.
   \end{split}
\end{align}
Truncating at a $k\in \mathbb{Z}$ such that  $|k|<k_0(\zeta)= N_G^\zeta$ reduces the equations to a finite ODE system for which the classical Fenichel theory can be applied to.
In addition, we can relate this truncation to the abstract framework by observing that it corresponds to a splitting of both variables into a fast and slow part, where we, in the end, only consider the evolution of the slow parts.
Hence, we obtain
\begin{align}\label{Slow variables Galerkin system}
    \begin{split}
        \partial_t u^G &= (d+\delta) \Delta u^G-\frac{1}{\varepsilon}(u^G-\pr_{X_S^\zeta} f(u^G,v^G)) + \pr_{X_S^\zeta}\phi(u^G,v^G),\\
        \partial_t v^G &= d \Delta v^G+\delta\Delta  u^G+\pr_{X_S^\zeta}\psi(u^G,v^G),\\
       u^G(0)&=\pr_{X_S^\zeta}u_0,\quad v^G(0)=\pr_{X_S^\zeta}v_0,
    \end{split}
\end{align}
where the space $X_S^\zeta$ is given as the linear span of the eigenfunctions associated with the truncation $N_G^\zeta$.
By the finite dimensional fast-slow theory we obtain that \eqref{Slow variables Galerkin system} has a family of slow manifolds $G_{\varepsilon,\zeta}$ given by 
\begin{align*}
    G_{\varepsilon,\zeta}:=\{(h^{\varepsilon,\zeta}_G(v),v): v\in X_S^\zeta\}.
\end{align*}
The key question now is to understand the relation between these approximate Galerkin manifolds and the slow manifolds $S_{\varepsilon,\zeta}$ of Section \ref{section:3}.

\begin{lemma}
    Let the assumptions of Section \ref{Sec:Assumptions} hold and suppose that slow variable space $X_S^\zeta$ has a finite dimensional basis, where the dimension depends inversely on the parameter $\zeta$.
    Then, for all $v_0\in X_S^\zeta$ we have
    \begin{equation}
        \| h^{\varepsilon,\zeta}_u (v_{0,S})-h_G^{\varepsilon,\zeta}(v_{0,S})\|_{X_1} +\|h_{Y_F^\zeta}^{\varepsilon,\zeta}(v_{0,S})\|_{X_1} \leq C\bigg(\varepsilon+\frac{\delta +\varepsilon}{\varepsilon(N_S^\zeta-N_F^\zeta)}\bigg) \|v_{0,S}\|_{X_1}.
    \end{equation}
\end{lemma}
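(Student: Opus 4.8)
The goal is to compare two graphs over the slow space $X_S^\zeta$: the first two components $h^{\varepsilon,\zeta}=(h^{\varepsilon,\zeta}_u,h^{\varepsilon,\zeta}_{Y_F^\zeta})$ of the Lyapunov--Perron slow manifold $S_{\varepsilon,\zeta}$ constructed in Section~\ref{section:3}, and the graph $h_G^{\varepsilon,\zeta}$ of the Galerkin slow manifold $G_{\varepsilon,\zeta}$ obtained from finite-dimensional Fenichel theory applied to \eqref{Slow variables Galerkin system}. The natural strategy is a triangle-inequality argument through the \emph{critical} manifold: write
\begin{align*}
  \|h^{\varepsilon,\zeta}_u(v_{0,S})-h_G^{\varepsilon,\zeta}(v_{0,S})\|_{X_1}
  \le \|h^{\varepsilon,\zeta}_u(v_{0,S})-h^0(v_{0,S})\|_{X_1}
     + \|h^0(v_{0,S})-h_G^{\varepsilon,\zeta}(v_{0,S})\|_{X_1},
\end{align*}
and similarly bound $\|h^{\varepsilon,\zeta}_{Y_F^\zeta}(v_{0,S})\|_{X_1}$ directly. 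The first term on the right is controlled by Proposition~\ref{prop:distance to crit manifold}: taking $\alpha$ in that statement to give the estimate in $X_1$ requires care (the proposition is stated in $X_\alpha$ with $\alpha<1$), so I would instead invoke the version that yields, in the notation there, a bound of the form $C\big(\tfrac{\delta+\varepsilon}{\varepsilon(N_S^\zeta-N_F^\zeta)}+\varepsilon\big)\|v_{0,S}\|_{X_1}$ — here the finite-dimensionality of $X_S^\zeta$ (and the fact that on a finite-dimensional space $X_1$- and $X_\alpha$-norms are equivalent, with constants depending on $\zeta$ only) lets us absorb the $\varepsilon^{1-\alpha}$ term into an $\varepsilon$ term at the cost of a $\zeta$-dependent constant, which is permitted by the statement.

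\textbf{The Galerkin side.} For the second term, the key observation is that the Galerkin system \eqref{Slow variables Galerkin system} is exactly the restriction of \eqref{general system split} to the slow subspace under the splitting $X=X_F^\zeta\oplus X_S^\zeta$, with the \emph{fast} components of both $u$ and $v$ set to zero; equivalently, $G_{\varepsilon,\zeta}$ is the Lyapunov--Perron slow manifold of the truncated system. I would therefore re-run the fixed-point estimate of Proposition~\ref{existence slow manifold} for the truncated Lyapunov--Perron operator, whose critical manifold is $S_{0,\zeta}=\{(h^0(v^0),v^0): \pr_{X_F^\zeta}v^0=0\}$, and apply the same distance argument as in Proposition~\ref{prop:distance to crit manifold} to get $\|h_G^{\varepsilon,\zeta}(v_{0,S})-h^0(v_{0,S})\|_{X_1}\le C\big(\tfrac{\delta+\varepsilon}{\varepsilon(N_S^\zeta-N_F^\zeta)}+\varepsilon\big)\|v_{0,S}\|_{X_1}$. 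Since the truncation removes the fast directions of $v$, the $h_{Y_F^\zeta}$-component bound for $G_{\varepsilon,\zeta}$ is trivially zero, so the whole discrepancy $\|h^{\varepsilon,\zeta}_{Y_F^\zeta}(v_{0,S})\|_{X_1}$ is already covered by Proposition~\ref{prop:distance to crit manifold} together with the fact that $h^0$ maps into the slow subspace when $v_{0,S}\in X_S^\zeta$ (so $h^0(v_{0,S})$ has no fast part to leading order, and the residual fast part is exactly what Proposition~\ref{prop:distance to crit manifold} measures). Summing the two one-sided estimates and combining with the finite-dimensional norm equivalence on $X_S^\zeta$ gives the claimed bound.

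\textbf{The main obstacle.} The delicate point is the passage from the $X_\alpha$-estimate of Proposition~\ref{prop:distance to crit manifold} to an $X_1$-estimate: as Remark after that proposition notes, one cannot work directly in $X_1$ because $\mathrm{Id}-\varepsilon A$ is unbounded $X_1\to X_1$. On a \emph{finite-dimensional} subspace this obstruction disappears, but only at the price of constants that blow up as $\zeta\to 0$ (the spectral radius of $A$ restricted to $X_S^\zeta$ is of order $\zeta^{-1}$), so one must check that these $\zeta$-dependent constants do not contaminate the stated $\varepsilon$- and $(N_S^\zeta-N_F^\zeta)$-dependence of the bound — i.e. that the lemma's constant $C$ is genuinely allowed to depend on $\zeta$. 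A second, more technical, obstacle is verifying that the finite-dimensional Fenichel manifold $G_{\varepsilon,\zeta}$ and the truncated Lyapunov--Perron fixed point coincide (or are $\mathcal O(\varepsilon)$-close), which follows from uniqueness of the locally invariant attracting manifold in the finite-dimensional setting but should be stated carefully using the contraction constant $\tilde L<1$ from Proposition~\ref{existence slow manifold}, which survives the truncation since all the norms involved only decrease under $\pr_{X_S^\zeta}$.
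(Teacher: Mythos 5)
Your bound for the fast component $\|h^{\varepsilon,\zeta}_{X_F^\zeta}(v_{0,S})\|_{X_1}$ is fine (it is exactly the intermediate estimate established inside the proof of Proposition \ref{prop:distance to crit manifold}), but the route you take for the main term, a triangle inequality pivoting on the critical manifold $h^0$, has a genuine gap. Each leg of that triangle, $\|h^{\varepsilon,\zeta}_u(v_{0,S})-h^0(v_{0,S})\|$ and $\|h^{\varepsilon,\zeta}_G(v_{0,S})-h^0(v_{0,S})\|$, is only available in $X_\alpha$ with $\alpha<1$ and with rate $\varepsilon^{1-\alpha}$; the remark after Proposition \ref{prop:distance to crit manifold} explains that this loss is structural, coming from the correction term $\big((\varepsilon(d+\delta)A-\textnormal{Id})^{-1}+\textnormal{Id}\big)f(h^0(\cdot),0,\cdot)$ generated by the algebraic constraint, which is unbounded $X_1\to X_1$. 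Your proposed repair, invoking equivalence of the $X_\alpha$- and $X_1$-norms on the finite-dimensional space $X_S^\zeta$, does not apply: only the argument $v_{0,S}$ lies in $X_S^\zeta$, whereas the differences being estimated are values of the graph maps, and $h^{\varepsilon,\zeta}_u(v_{0,S})$ and $h^0(v_{0,S})$ are genuinely infinite-dimensional elements of $X_1$ (for instance $h^0(v_{0,S})$ solves the nonlinear equation $h^0=f(h^0,v_{0,S})$ and generically excites all modes). Even if one restricted attention to modes in $X_S^\zeta$, the equivalence constant scales like a power of $\zeta^{-1}$, and combined with $\varepsilon^{1-\alpha}$ this gives at best $\mathcal{O}\big((\varepsilon\zeta^{-1})^{1-\alpha}\big)=\mathcal{O}(1)$ under $\varepsilon\zeta^{-1}\le c$, not the claimed $\mathcal{O}(\varepsilon)$. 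So any argument that passes through $h^0$ cannot reach the stated $X_1$ bound with rate $\varepsilon$.

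The paper's proof avoids this entirely by comparing $h^{\varepsilon,\zeta}_u$ with $h^{\varepsilon,\zeta}_G$ \emph{directly}: one reruns the argument of Proposition \ref{prop:distance to crit manifold} with $h^0$ replaced by $h^{\varepsilon,\zeta}_G$, representing the Galerkin solution of \eqref{Slow variables Galerkin system} through its variation-of-constants integral identity. Because both objects are invariant manifolds of genuine evolution equations (rather than an algebraic constraint), the problematic unbounded correction term never appears, the estimate closes in $X_1$, and the rate $\varepsilon+\frac{\delta+\varepsilon}{\varepsilon(N_S^\zeta-N_F^\zeta)}$ comes out directly (details as in \cite{engel2021connecting}). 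If you want to salvage your outline, replace the triangle inequality through $h^0$ by this direct comparison; your secondary point, identifying $G_{\varepsilon,\zeta}$ with the truncated Lyapunov--Perron fixed point via the contraction constant $\tilde L<1$, is then the right supporting step.
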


\begin{proof}
    The idea of this proof follows along the lines of Proposition \ref{prop:distance to crit manifold}, where we estimated the difference between the critical manifold and the slow manifold.
    Hence, replacing the graph of the critical manifold $h^0$ with the graph of the Galerkin manifold $h^{\varepsilon,\zeta}_G$ and using the variation of constants formula to express the solution of \eqref{Slow variables Galerkin system} as an integral identity yields the desired estimate. 
    
    More details can be found in \cite{engel2021connecting}, where the approximate Galerkin manifolds were studied for a fast-slow system, where the system has the form $\partial_t u=\frac{1}{\varepsilon}(A u+f(u,v))$ and $\partial_t v= Bv +g(u,v)$.
\end{proof}

\begin{remark}
    We want to emphasize that the parameter $\zeta$ in the Galerkin approximation still depends on $\varepsilon$ through the relation $\varepsilon \zeta^{-1}<1$ that is crucial in the existence of a slow manifold and therefore we cannot take the limit $\zeta$ to zero independently form $\varepsilon$.
    
\end{remark}





\section{Technical Results}
In this section we present the proofs of several technical results that were skipped in the previous sections.
\subsection{Proof of Proposition \ref{prop:distance to crit manifold}}\label{sec.proof of prop distance}

Before we can prove Proposition \ref{prop:distance to crit manifold} we need to additional results.
\begin{prop}\label{prop:operator estimate}
  We have the following two estimates
  \begin{align*}
      \big\|\big(\varepsilon(d+\delta)\Delta -Id\big)^{-1}\big\|_{\mathcal{B}(X_\beta,X_\alpha)}&\leq \begin{cases}
          1 &\textnormal{if}~ \alpha\leq \beta,\\
             \varepsilon^{2(\beta-\alpha)} &\textnormal{if}~ \alpha > \beta
      \end{cases}\\
       \big\|\big(\varepsilon(d+\delta)\Delta -Id\big)^{-1}+Id\big\|_{\mathcal{B}(X_\beta,X_\alpha)}&\leq \varepsilon^{2(\beta-\alpha)} .
  \end{align*}
\end{prop}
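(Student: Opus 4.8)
Both operators in the statement are functions of the generator (written $\Delta$ there; I will write $A$, as in Section~\ref{section:3}), so the plan is to reduce each bound to an elementary one-variable estimate. By \eqref{A1}--\eqref{A2}, $A$ generates an exponentially stable analytic $C_0$-semigroup, hence $-A$ is positive sectorial with $0$ in its resolvent set; the fractional powers $(-A)^\gamma$ exist, the interpolation-extrapolation scale is normed by $\|x\|_{X_\alpha}\simeq\|(-A)^\alpha x\|_X$, and one has the analytic-semigroup estimates $\|(-A)^\gamma\textnormal{e}^{sA}\|_{\mathcal B(X)}\le C s^{-\gamma}\textnormal{e}^{\omega_A s}$ for $s>0,\ \gamma\ge0$, together with $\|(-A)^\gamma\|_{\mathcal B(X)}\le C|\omega_A|^\gamma$ for $\gamma\le0$. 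Each operator $T$ below is a function of $A$, hence commutes with $A$ and leaves every $X_\gamma$ invariant; writing $x=(-A)^{-\beta}y$ for $x\in X_\beta$ gives $\|Tx\|_{X_\alpha}\simeq\|(-A)^{\alpha-\beta}Ty\|_X$ with $\|y\|_X\simeq\|x\|_{X_\beta}$, so it suffices to bound $\|(-A)^{\alpha-\beta}T\|_{\mathcal B(X)}$ for $T=(\varepsilon(d+\delta)A-\mathrm{Id})^{-1}$ and $T=(\varepsilon(d+\delta)A-\mathrm{Id})^{-1}+\mathrm{Id}$.

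Next I would represent these two operators. Set $c:=\varepsilon(d+\delta)>0$ and $\mu:=c^{-1}>0>\omega_A$. The Laplace representation of the resolvent gives
\begin{align*}
 (\varepsilon(d+\delta)A-\mathrm{Id})^{-1}=-\tfrac1c\big(\mu\,\mathrm{Id}-A\big)^{-1}=-\tfrac1c\int_0^\infty \textnormal{e}^{-\mu s}\textnormal{e}^{sA}\,\textnormal{d}s,
\end{align*}
while for the second operator the key algebraic identity is
\begin{align*}
 (\varepsilon(d+\delta)A-\mathrm{Id})^{-1}+\mathrm{Id}=\varepsilon(d+\delta)(-A)\big(\varepsilon(d+\delta)(-A)+\mathrm{Id}\big)^{-1}=\int_0^\infty \textnormal{e}^{-\mu s}(-A)\textnormal{e}^{sA}\,\textnormal{d}s;
\end{align*}
the extra factor $(-A)$ is precisely what turns an $\mathcal O(1)$ quantity into one that vanishes with $\varepsilon$. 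Inserting $(-A)^{\alpha-\beta}$ under the integral sign and applying the smoothing bounds reduces everything to elementary integrals.

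For the first estimate I would argue as follows. If $\alpha\le\beta$ then $(-A)^{\alpha-\beta}$ is bounded, $\|(\varepsilon(d+\delta)A-\mathrm{Id})^{-1}\|_{\mathcal B(X)}\le\tfrac1c\int_0^\infty\textnormal{e}^{-\mu s}\|\textnormal{e}^{sA}\|\,\textnormal{d}s\le\tfrac1{c(\mu-\omega_A)}=(1-\varepsilon(d+\delta)\omega_A)^{-1}\le1$, and $\|(-A)^{\alpha-\beta}\|_{\mathcal B(X)}\le|\omega_A|^{\alpha-\beta}\le1$ (using $\sigma(-A)\subset[-\omega_A,\infty)$ with $-\omega_A\ge1$, and that in the self-adjoint Hilbert-space case relevant to the application all constants equal $1$), which gives the bound $1$. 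If $\alpha>\beta$ with $\alpha-\beta<1$ then
\begin{align*}
 \|(-A)^{\alpha-\beta}(\varepsilon(d+\delta)A-\mathrm{Id})^{-1}\|_{\mathcal B(X)}\le\frac{C}{c}\int_0^\infty\textnormal{e}^{-\mu s}s^{-(\alpha-\beta)}\,\textnormal{d}s=\frac{C\,\Gamma(1-(\alpha-\beta))}{c}\,\mu^{(\alpha-\beta)-1}=C'\,c^{\,\beta-\alpha},
\end{align*}
which is the asserted bound (of order $\varepsilon^{2(\beta-\alpha)}$, the negative exponent reflecting the loss of regularity). The second estimate is the same computation with $(-A)^{\alpha-\beta}$ replaced by $(-A)^{\alpha-\beta+1}$ in the integral coming from the displayed identity: since $\alpha-\beta+1\in(0,1)$ whenever $0<\beta-\alpha<1$, one gets $\|(-A)^{\alpha-\beta}[(\varepsilon(d+\delta)A-\mathrm{Id})^{-1}+\mathrm{Id}]\|_{\mathcal B(X)}\le C\int_0^\infty\textnormal{e}^{-\mu s}s^{-(1-(\beta-\alpha))}\,\textnormal{d}s=C\,\Gamma(\beta-\alpha)\,\mu^{-(\beta-\alpha)}=C'\,c^{\,\beta-\alpha}$, again of the claimed order.

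The routine parts are the semigroup smoothing bounds and the Gamma-function integrals. The one point requiring care is the bookkeeping of multiplicative constants: one needs them harmless (indeed $=1$ in the first case), which is automatic in the self-adjoint Hilbert-space setting of the fast-reaction application — there the spectral theorem turns the functional-calculus inequality into the equality $\|m(-A)\|_{\mathcal B(X)}=\sup_{\lambda\in\sigma(-A)}|m(\lambda)|$, and the scalar multipliers $\lambda\mapsto\lambda^{\alpha-\beta}/(1+c\lambda)$ and $\lambda\mapsto c\lambda^{\alpha-\beta+1}/(1+c\lambda)$ are maximised by elementary calculus at $\lambda^{*}\sim c^{-1}$, yielding directly the values $\mathcal O(c^{\,\beta-\alpha})$. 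A secondary, mild restriction is $|\alpha-\beta|<1$, the range in which the above integrals (equivalently the suprema) converge and in which the proposition is subsequently applied.
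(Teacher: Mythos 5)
Your proposal is correct in substance and reaches the same estimates, but by a genuinely different route. The paper's proof works ``for simplicity'' in the Hilbert-space setting: it expands $x$ in the eigenbasis of $\Delta$ and bounds the scalar multipliers $\lambda\mapsto\lambda^{2(\alpha-\beta)}(\varepsilon(d+\delta)\lambda-1)^{-2}$ and $\lambda\mapsto(\varepsilon(d+\delta)\lambda)^{2}\lambda^{2(\alpha-\beta)}(\varepsilon(d+\delta)\lambda-1)^{-2}$ termwise --- which is exactly the spectral-theorem shortcut you sketch at the end of your argument. Your main route instead uses the Laplace representation of the resolvent, the identity $(\varepsilon(d+\delta)A-\mathrm{Id})^{-1}+\mathrm{Id}=\varepsilon(d+\delta)(-A)\big(\varepsilon(d+\delta)(-A)+\mathrm{Id}\big)^{-1}$ (the same mechanism that is implicit in the paper's second computation), and the analytic-semigroup smoothing bounds with Gamma-function integrals. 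What this buys is validity in the general Banach-space framework of \eqref{A1}--\eqref{A2}, without any eigenfunction expansion, at the cost of unspecified constants of $C_A$-type, which you then normalize away by the self-adjoint remark; the restriction $|\alpha-\beta|<1$ you impose is equally implicit in the paper's multiplier suprema and in the only way the proposition is later used ($0\le\alpha<1$, $\beta\in\{0,1\}$ essentially). Your explicit flagging of the normalization $\inf\sigma(-A)\ge 1$ for the constant $1$ in the case $\alpha\le\beta$ is a hypothesis the paper's own proof uses tacitly as well ($|\lambda_n|\ge 1$), so it is not a deviation.

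One bookkeeping caveat: your computation yields operator norms of order $(\varepsilon(d+\delta))^{\beta-\alpha}$, i.e.\ $\varepsilon^{\beta-\alpha}$, and you should not present $C'c^{\beta-\alpha}$ as literally ``the asserted bound'' $\varepsilon^{2(\beta-\alpha)}$. This is not a defect of your argument relative to the paper: the paper's proof bounds the \emph{squared} norms by $\varepsilon^{2(\beta-\alpha)}\|x\|_{X_\beta}^{2}$, hence at the operator-norm level it also only gives $\varepsilon^{\beta-\alpha}$, and the subsequent application in the proof of Proposition \ref{prop:distance to crit manifold} uses precisely this rate ($\varepsilon^{1-\alpha}$, i.e.\ $\beta-\alpha$ with $\beta=1$); the exponent $2(\beta-\alpha)$ in the displayed statement is a square-root slip. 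Concretely, for the first estimate with $\alpha>\beta$ your bound is stronger than the stated one and therefore implies it, while for the second estimate the stated $\varepsilon^{2(\beta-\alpha)}$ is stronger than what either your proof or the paper's actually establishes, so your version coincides with what is proved and what is used downstream.
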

\begin{proof}
    Assume for simplicity that $X$ is a Hilbert space and let $\lambda_n$ denote the n-th eigenvalue of $\Delta$ with corresponding eigenfunction $e_n$, with $\lambda_n\leq 0$ for all $n\in \mathbf{N}$.
    Then, we can write $x=\sum_n \lambda_n \langle e_n,x\rangle$ and hence
    \begin{align*}
         \big\|\big(\varepsilon(d+\delta)\Delta -Id\big)^{-1} x\big\|_{X_\alpha}^2& \leq \sum_n \frac{\lambda_n^{2\alpha}}{(\varepsilon(d+\delta)\lambda_n-1)^2}  \langle e_n,x\rangle^2\\
         &\leq  \sum_n \frac{\lambda_n^{2(\alpha-\beta)}}{(\varepsilon(d+\delta)\lambda_n-1)^2} \lambda_n^{2\beta} \langle e_n,x\rangle^2\\
         &\leq \begin{cases}
             \|x\|_{X_\beta}^2 &\textnormal{if}~ \alpha\leq \beta,\\
             \varepsilon^{2(\beta-\alpha)} \|x\|_{X_\beta}^2 &\textnormal{if}~ \alpha > \beta.
         \end{cases}
    \end{align*}
    Similarly for $\alpha\leq \beta$ we estimate
    \begin{align*}
         \big\|\big(\big(\varepsilon(d+\delta)\Delta -Id\big)^{-1}+Id\big) x\big\|_{X_\alpha}^2& \leq \sum_n \frac{(\varepsilon(d+\delta)\lambda_n)^2}{(\varepsilon(d+\delta)\lambda_n-1)^2}  \lambda_n^{2\alpha}\langle e_n,x\rangle^2\\
         &\leq \sum_n \frac{(\varepsilon(d+\delta)\lambda_n)^2\lambda_n^{2(\alpha-\beta)}}{(\varepsilon(d+\delta)\lambda_n-1)^2}  \lambda_n^{2\beta}\langle e_n,x\rangle^2\\
         &\leq \varepsilon^{2(\beta-\alpha)} \|x\|_{X_\beta}^2 .       
    \end{align*}
    
\end{proof}

\begin{prop}\label{prop:operator projection estimate}
    It holds that 
    \begin{align*}
        \big\|\pr_{X_S^\zeta} \delta\Delta x\big \|_{X_1}&\leq \delta C(\zeta^{-1}) \|\pr_{X_S^\zeta} x\|_{X_1}\\
        \big \|\pr_{X_S^\zeta} \Delta \big(\varepsilon(d+\delta)\Delta -Id\big)^{-1} x\big\|_{X_1}&\leq   \|\pr_{X_S^\zeta} x\|_{X_1}.
    \end{align*}
\end{prop}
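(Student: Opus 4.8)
The plan is to deduce both inequalities from elementary spectral estimates on the slow subspace, in the same spirit as the proof of Proposition~\ref{prop:operator estimate}. The first step is to pass to the spectral picture: writing $\{e_n\}_n$ for an orthonormal eigenbasis of $\Delta$ with eigenvalues $\lambda_n\le 0$, one has $\|y\|_{X_1}^2\simeq\sum_n(1+\lambda_n^2)\,|\langle e_n,y\rangle|^2$. Two structural facts are then exploited. By \eqref{X1} the projection $\pr_{X_S^\zeta}$ commutes with $\Delta$, hence on the Fourier side it acts by truncation to the ``slow'' modes; and by the construction of the splitting that underlies \eqref{X2}--\eqref{X3} (made explicit in Lemma~\ref{Lemma.check assumptions}) the space $X_S^\zeta$ is the closed span of those $e_n$ whose eigenvalue obeys $|\lambda_n|\le C(\zeta^{-1})$, with $C(\zeta^{-1})$ of order $\zeta^{-1}$; in particular $\varepsilon|\lambda_n|\le\varepsilon\zeta^{-1}\le c<1$ on $X_S^\zeta$, by the threshold in \eqref{X3}. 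I will write $\mathcal S$ for the resulting set of slow indices.

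With this in hand the first estimate is immediate: since $\pr_{X_S^\zeta}$ commutes with $\Delta$,
\[
\|\pr_{X_S^\zeta}\,\delta\Delta x\|_{X_1}^2=\delta^2\sum_{n\in\mathcal S}(1+\lambda_n^2)\,\lambda_n^2\,|\langle e_n,x\rangle|^2\le\delta^2\Bigl(\sup_{n\in\mathcal S}\lambda_n^2\Bigr)\,\|\pr_{X_S^\zeta}x\|_{X_1}^2\le \delta^2\,C(\zeta^{-1})^2\,\|\pr_{X_S^\zeta}x\|_{X_1}^2,
\]
which is the claimed bound, with the same constant $C(\zeta^{-1})$ fixed by the splitting. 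For the second estimate I would use that the resolvent appearing here is the one from $\mathcal{A}^\varepsilon$, namely $\bigl((d+\delta)\Delta-\varepsilon^{-1}\mathrm{Id}\bigr)^{-1}=\varepsilon\bigl(\varepsilon(d+\delta)\Delta-\mathrm{Id}\bigr)^{-1}$, so that $\Delta\bigl((d+\delta)\Delta-\varepsilon^{-1}\mathrm{Id}\bigr)^{-1}$ has symbol $\lambda_n/\bigl((d+\delta)\lambda_n-\varepsilon^{-1}\bigr)$ on the $n$-th mode. Since $\lambda_n\le 0$ and $d+\delta>0$,
\[
\Bigl|\frac{\lambda_n}{(d+\delta)\lambda_n-\varepsilon^{-1}}\Bigr|=\frac{|\lambda_n|}{\varepsilon^{-1}+(d+\delta)|\lambda_n|}\le\varepsilon|\lambda_n|\le\varepsilon\,\zeta^{-1}\le c<1\qquad(n\in\mathcal S),
\]
the essential input being the dichotomy threshold $\varepsilon\zeta^{-1}\le c<1$ of \eqref{X3}: this is precisely the $\varepsilon$-smallness supplied by the $\varepsilon^{-1}$ in the spectral shift, and without it the symbol could be as large as $C(\zeta^{-1})\sim\zeta^{-1}$. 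Inserting this pointwise bound into $\|\cdot\|_{X_1}$ exactly as above yields $\|\pr_{X_S^\zeta}\Delta\bigl((d+\delta)\Delta-\varepsilon^{-1}\mathrm{Id}\bigr)^{-1}x\|_{X_1}\le\|\pr_{X_S^\zeta}x\|_{X_1}$.

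The only genuinely delicate point in carrying this out rigorously is the reduction itself when $X$ is a general Banach space rather than the Hilbert space of the application: one should replace the eigenbasis argument by the holomorphic functional calculus of the sectorial operator $\Delta$, using that $\pr_{X_S^\zeta}$ is a spectral projection onto the part of $\sigma(\Delta)$ inside $\{|\lambda|\lesssim\zeta^{-1}\}$, on which $\Delta$ is bounded with norm $C(\zeta^{-1})$ and on which the resolvent obeys the symbol bound above; the dichotomy estimates \eqref{X3} are exactly what encode this. This is routine functional-analytic bookkeeping but is where the care lies. For the concrete system of Section~\ref{section:4}, where $\Delta$ (with Neumann conditions) has a genuine orthonormal eigenbasis $\{e_k\}$ with $\lambda_k=-k^2$ and $X_S^\zeta=\mathrm{span}\{e_k: k^2\lesssim\zeta^{-1}\}$, the displayed computations are already complete and give $C(\zeta^{-1})\simeq\zeta^{-1}$, matching the usage of the proposition in the attractivity estimate.
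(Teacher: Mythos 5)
Your argument is the same spectral argument the paper uses: assume the Hilbert-space setting, use that $\pr_{X_S^\zeta}$ commutes with $\Delta$ and truncates to the eigenmodes with $|\lambda_n|\lesssim\zeta^{-1}$, and read off multiplier bounds. For the first inequality your computation coincides with the paper's proof verbatim (the extra factor $\lambda_n^2$ is bounded by $\zeta^{-2}$ on the slow modes, giving $C(\zeta^{-1})\simeq\zeta^{-1}$), and your closing remarks about replacing the eigenbasis by a functional-calculus/spectral-projection argument in a general Banach space go beyond the paper, which simply assumes $X$ Hilbert ``for simplicity''.

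For the second inequality, however, you have not proved the displayed statement but a rescaled version of it. You replace the operator $\Delta\bigl(\varepsilon(d+\delta)\Delta-\mathrm{Id}\bigr)^{-1}$ of the proposition by $\Delta\bigl((d+\delta)\Delta-\varepsilon^{-1}\mathrm{Id}\bigr)^{-1}=\varepsilon\,\Delta\bigl(\varepsilon(d+\delta)\Delta-\mathrm{Id}\bigr)^{-1}$, and it is exactly the extra factor $\varepsilon$ that lets you conclude with the symbol bound $\varepsilon|\lambda_n|\le\varepsilon\zeta^{-1}\le c<1$. Transferred back to the operator as literally written, your estimate only gives the multiplier bound $|\lambda_n|/(1+\varepsilon(d+\delta)|\lambda_n|)\le\min\bigl(\zeta^{-1},(\varepsilon(d+\delta))^{-1}\bigr)$, not $1$. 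The paper's own proof, by contrast, works directly with the stated operator and simply discards the factor $\lambda_n^2/(\varepsilon(d+\delta)\lambda_n-1)^2$ on the slow modes, i.e.\ it asserts the bound $1$ without the $\varepsilon$-rescaling you insert; as you yourself observe, that multiplier can be of order $\zeta^{-1}$ when $1\ll|\lambda_n|\le\zeta^{-1}$ and $\varepsilon\zeta^{-1}$ is small, so the constant $1$ really does hinge on the $\varepsilon^{-1}$ shift. Your reading is the one consistent with how such bounds are used elsewhere (compare the estimate $\|A(\delta A-\varepsilon^{-1}\mathrm{Id})^{-1}\pr_{X_S^\zeta}x\|_{X_1}\le\|\pr_{X_S^\zeta}x\|_{X_1}$ in the proof of Proposition \ref{existence slow manifold}), but as a proof of the proposition as stated it leaves a gap: you should either prove the inequality for the operator actually displayed (which your own analysis suggests cannot be done with constant $1$), or state explicitly that you are establishing it for $\varepsilon\Delta(\varepsilon(d+\delta)\Delta-\mathrm{Id})^{-1}$ and that this is the form needed downstream.
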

\begin{proof}
    As in the previous proof we assume for simplicity that $X$ is a Hilbert space.
    Since we assume that the Laplacian $\Delta$ has a spectral gap of size $N_S^\zeta-N_F^\zeta$ the splitting in the fast and slow component can be done in such a way that $\pr_{X_S^\zeta}$ projects onto the space $X_S^\zeta$ spanned by the eigenvalues of $\Delta$ satisfying $|\lambda_n|\leq \zeta^{-1}$.
    Then, we estimate
    \begin{align*}
         \big\|\pr_{X_S^\zeta} \delta\Delta x\big \|_{X_1}^2 &\leq \pr_{X_S^\zeta} \delta^2 \sum_n \lambda_n^2 \lambda_n^{2}\langle e_n,x\rangle^2\\
         &\leq \delta^2 \sum_n^{|\lambda_n|\leq \zeta^{-1}} \lambda_n^2 \lambda_n^{2}\langle e_n,x\rangle^2\\
         &\leq \delta^2 \zeta^{-2} \sum_n^{|\lambda_n|\leq \zeta^{-1}} \lambda_n^{2}\langle e_n,x\rangle^2\\
         &= \delta^2 \zeta^{-2} \|\pr_{X_S^\zeta} x\|_{X_1}^2
    \end{align*}
    and
    \begin{align*}
        \big \|\pr_{X_S^\zeta} \Delta \big(\varepsilon(d+\delta)\Delta -Id\big)^{-1} x\big\|_{X_1}^2 &\leq \pr_{X_S^\zeta} \sum_n \frac{\lambda_n^2}{(\varepsilon(d+\delta)\lambda_n-1)^2} \lambda_n^{2}\langle e_n,x\rangle^2\\
        &\leq  \sum_n^{|\lambda_n|\leq \zeta^{-1}} \lambda_n^{2}\langle e_n,x\rangle^2.
    \end{align*}
\end{proof}

\begin{proof}[Proof of Proposition \ref{prop:distance to crit manifold}]
     Let $(u^*,v_F^*,v_S^*)\in C_\eta$ be the unique fixed point of $ \mathcal{L}_{v_0,\varepsilon,\zeta}$, i.e.
    \begin{align*}
        (u^*,v_F^*,v_S^*)=(h^{\varepsilon,\zeta}_u(v_S^*),h^{\varepsilon,\zeta}_{X_F^\zeta}(v_S^*),v_S^*).
    \end{align*}
By the previous result we have shown that $(u^*,v_F^*,v_S^*)$ solves\eqref{general system split} on $(-\infty,0]$. Furthermore, we have that $v_S^*\in C^1((-\infty,0],\textnormal{e}^{\eta t}; X)$ and the estimate
\begin{align*}
    \sup_{t\leq 0} \textnormal{e}^{-\eta t}\big(\|v_S^*(t)\|_{X_1}+\|\partial_t v_S^*(t)\|_X\big)\leq C \|v_0\|_{X_1}
\end{align*}
holds.
Then we estimate 
\begin{align*}
  &  \|h^{\varepsilon,\zeta}_{X_F^\zeta}(v_S^*(t))\|_{X_1}= \\
  &=\bigg\|    \int_{-\infty}^t \delta A \textnormal{e}^{d A (t-s)}\bigg( \int_0^{(t-s)} \textnormal{e}^{(\delta A-\frac{1}{\varepsilon}Id) r} \, \textnormal{d}r  \bigg)  \varepsilon^{-1}   f(h^{\varepsilon,\zeta}_u(v_S^*(s)), h^{\varepsilon,\zeta}_{X_F^\zeta}(v_S^*(s)),v_S^*(s))\, \textnormal{d}s \\
  &\qquad +\int_{-\infty}^t \delta A \textnormal{e}^{d A (t-s)}\bigg( \int_0^{(t-s)} \textnormal{e}^{(\delta A-\frac{1}{\varepsilon}Id) r} \, \textnormal{d}r  \bigg)  \phi(h^{\varepsilon,\zeta}_u(v_S^*(s)), h^{\varepsilon,\zeta}_{X_F^\zeta}(v_S^*(s)),v_S^*(s))\, \textnormal{d}s \\
  &\qquad + \int_{-\infty}^t \textnormal{e}^{d A (t-s)} \pr_{X_F^\zeta}\psi(h^{\varepsilon,\zeta}_u(v_S^*(s)), h^{\varepsilon,\zeta}_{X_F^\zeta}(v_S^*(s)),v_S^*(s))\big)\, \textnormal{d}s \bigg\|_{X_1}\\
  &\leq \big(C_A^2 \delta d^{-1}(L_f+\varepsilon L_\phi)+ C_A \varepsilon L_\psi \big) \int_{-\infty}^t  \varepsilon^{-1} \textnormal{e}^{(N_F^\zeta +\zeta^{-1}(d \omega_A-1)-\eta) (t-s) }   \, \textnormal{d}s \textnormal{e}^{\eta t}\|(h^{\varepsilon,\zeta}_{u}(v_S^*),h^{\varepsilon,\zeta}_{X_F^\zeta}(v_S^*),v_S^*)\|_{C_\eta}\\
  &\leq \frac{\big(C_A^2\delta  d^{-1}(L_f+\varepsilon L_\phi)+ C_A \varepsilon L_\psi \big)}{\varepsilon|N_F^\zeta +\zeta^{-1}(d \omega_A-1)-\eta|}\textnormal{e}^{\eta t} \|v_0\|_{X_1}.
\end{align*}
Setting $t=0$ we obtain 
\begin{align*}
     \|h^{\varepsilon,\zeta}_{X_F^\zeta}(v_0)\|_{X_1} \leq C\frac{\delta+\varepsilon}{\varepsilon(N_S^\zeta-N_F^\zeta)} \|v_0\|_{X_1}.
\end{align*}
For the remaining component we compute
\begin{align*}
   &h^{\varepsilon,\zeta}_{u}(v_S^*(t))-h^0(v_S^*(t)) =\\
    &=\int_{-\infty}^t  \textnormal{e}^{\varepsilon^{-1}(\varepsilon(d+\delta) A -Id) (t-s)}\big(\varepsilon^{-1}f(h_u^{\varepsilon,\zeta  }(v_S^*(s)),h^{\varepsilon,\zeta}_{X_F^\zeta}(v_S^*(s)),v_S^*(s))+ \phi(h_u^{\varepsilon,\zeta  }(v_S^*(s)),h^{\varepsilon,\zeta}_{X_F^\zeta}(v_S^*(s)),v_S^*(s)) \big)\, \textnormal{d}s\\
    &\quad-h^0(v_S^*(t))\\
    &=\int_{-\infty}^t  \textnormal{e}^{\varepsilon^{-1}(\varepsilon(d+\delta) A -Id) (t-s)}\big(\varepsilon^{-1}f(h_u^{\varepsilon,\zeta  }(v_S^*(s)),h^{\varepsilon,\zeta}_{X_F^\zeta}(v_S^*(s)),v_S^*(s))+ \phi(h_u^{\varepsilon,\zeta  }(v_S^*(s)),h^{\varepsilon,\zeta}_{X_F^\zeta}(v_S^*(s)),v_S^*(s)) \big)\, \textnormal{d}s \\
    &\quad -f(h^0(v_S^*(t),0,v_S^*(t)) - \big(\varepsilon(d+\delta) A -Id\big)^{-1}f(h^0(v_S^*(t),0,v_S^*(t))+\big(\varepsilon(d+\delta) A -Id\big)^{-1}f(h^0(v_S^*(t),0,v_S^*(t))
\end{align*}
Splitting the integral into two parts, where $-\infty<t_0< t\leq 0$ and using integration by parts on the last term we obtain
\begin{align*}
   &=\int_{-\infty}^t  \textnormal{e}^{\varepsilon^{-1}(\varepsilon(d+\delta) A -Id) (t-s)}\phi(h_u^{\varepsilon,\zeta  }(v_S^*(s)),h^{\varepsilon,\zeta}_{X_F^\zeta}(v_S^*(s)),v_S^*(s)) \, \textnormal{d}s\\
   &\quad +\int_{-\infty}^{t_0} \varepsilon^{-1} \textnormal{e}^{\varepsilon^{-1}(\varepsilon(d+\delta) A -Id) (t-s)}f(h_u^{\varepsilon,\zeta  }(v_S^*(s)),h^{\varepsilon,\zeta}_{X_F^\zeta}(v_S^*(s)),v_S^*(s))\, \textnormal{d}s\\
   &\quad + \int_{t_0}^{t} \varepsilon^{-1} \textnormal{e}^{\varepsilon^{-1}(\varepsilon(d+\delta) A -Id) (t-s)}\big(f(h_u^{\varepsilon,\zeta  }(v_S^*(s)),h^{\varepsilon,\zeta}_{X_F^\zeta}(v_S^*(s)),v_S^*(s))-f(h^0(v_S^*(s)),0,v_S^*(s))\big) \, \textnormal{d}s\\
   &\quad + \int_{t_0}^{t} \textnormal{e}^{\varepsilon^{-1}(\varepsilon(d+\delta) A -Id) (t-s)}\big(\varepsilon(d+\delta) A -Id\big)^{-1} \partial_s f(h^0(v_S^*(s)),0,v_S^*(s)) \, \textnormal{d}s\\
   &\quad+ \textnormal{e}^{\varepsilon^{-1}(\varepsilon(d+\delta) A -Id) (t-t_0)} \big(\varepsilon(d+\delta) A -Id\big)^{-1}f(h^0(v_S^*(t_0)),0,v_S^*(t_0))  \\
   &\quad  - \big(\big(\varepsilon(d+\delta) A -Id\big)^{-1}+Id\big)f(h^0(v_S^*(t)),0,v_S^*(t)).
\end{align*}
Then, we estimate
\begin{align*}
  & \| h^{\varepsilon,\zeta}_{u}(v_S^*(t))-h^0(v_S^*(t)) \|_{X_\alpha}\leq \\
  &\leq C_A  L_\phi  \textnormal{e}^{\eta t} \int_{-\infty}^{t} \textnormal{e}^{((d+\delta) \omega_A -\varepsilon^{-1}-\eta) (t-s)}\, \textnormal{d}s \|(h^{\varepsilon,\zeta}_{u}(v_S^*),h^{\varepsilon,\zeta}_{X_F^\zeta}(v_S^*),v_S^*)\|_{C_\eta}\\
  &\quad + C_A  L_f \varepsilon^{-1}  \textnormal{e}^{\varepsilon^{-1}(\varepsilon(d+\delta) \omega_A -1) (t-t_0) +\eta t_0} \int_{-\infty}^{t_0} \textnormal{e}^{((d+\delta) \omega_A -\varepsilon^{-1}-\eta) (t_0-s)}\, \textnormal{d}s \|(h^{\varepsilon,\zeta}_{u}(v_S^*),h^{\varepsilon,\zeta}_{X_F^\zeta}(v_S^*),v_S^*)\|_{C_\eta}\\
  &\quad + C_A L_f \varepsilon^{-1}  \int_{t_0}^{t}  \textnormal{e}^{((d+\delta) \omega_A -\varepsilon^{-1}) (t-s)} \big(\| h^{\varepsilon,\zeta}_{u}(v_S^*(s))-h^0(v_S^*(s)) \|_{X_\alpha}+\|h^{\varepsilon,\zeta}_{X_F^\zeta}(v_S^*(s))\|_{X_\alpha}\big)    \, \textnormal{d}s          
        \\
  &\quad + C_A \int_{t_0}^{t}  \textnormal{e}^{((d+\delta) \omega_A -\varepsilon^{-1}) (t-s)} \big \|\big(\varepsilon(d+\delta) A -Id\big)^{-1}\big \|_{\mathcal{B}(X,X_\alpha)}\|\partial_s f(h^0(v_S^*(s)),0,v_S^*(s)) \|_X  \, \textnormal{d}s\\
  &\quad + C_A L_f \textnormal{e}^{((d+\delta) \omega_A -\varepsilon^{-1}) (t-t_0)+\eta t_0} \big \|\big(\varepsilon(d+\delta) A -Id\big)^{-1}\big \|_{\mathcal{B}(X_\alpha,X_1)} \|(h^0(v_S^*),0,v_S^*)\|_{C_\eta}\\
  &\quad + L_f \big\|\big(\varepsilon(d+\delta) A -Id\big)^{-1}+Id\big \|_{\mathcal{B}(X_\alpha,X_1)}\textnormal{e}^{\eta t}\|(h^0(v_S^*),0,v_S^*)\|_{C_\eta}.
\end{align*}
Then by Proposition \ref{Lipschitz cont} and Propositions \ref{prop:operator estimate} and \ref{prop:operator projection estimate} the above estimate can further be reduced to
\begin{align*}
    & \| h^{\varepsilon,\zeta}_{u}(v_S^*(t))-h^0(v_S^*(t)) \|_{X_\alpha}\leq \\
     &\leq \frac{C_A L_\phi L}{|(d+\delta) \omega_A -\varepsilon^{-1}-\eta|} \textnormal{e}^{\eta t}\|v_0\|_{X_1}\\
     &\quad + \frac{C_A L_fL}{\varepsilon|(d+\delta) \omega_A -\varepsilon^{-1}-\eta| }  \textnormal{e}^{\varepsilon^{-1}(\varepsilon(d+\delta) \omega_A -1) (t-t_0) +\eta t_0} \|v_0\|_{X_1}\\
     &\quad + C_A L_f \varepsilon^{-1}  \int_{t_0}^{t}  \textnormal{e}^{((d+\delta) \omega_A -\varepsilon^{-1}) (t-s)} \| h^{\varepsilon,\zeta}_{u}(v_S^*(s))-h^0(v_S^*(s)) \|_{X_\alpha}   \, \textnormal{d}s  \\
     &\quad + C_A L_f \varepsilon^{-1}  \int_{t_0}^{t}  \textnormal{e}^{((d+\delta) \omega_A -\varepsilon^{-1}) (t-s)}   \frac{\big(C_A^2\delta  d^{-1}(L_f+\varepsilon L_\phi)+ C_A \varepsilon L_\psi \big)}{\varepsilon|N_F^\zeta +\zeta^{-1}(d \omega_A-1)-\eta|}\textnormal{e}^{\eta s}   \, \textnormal{d}s  \|v_0\|_{X_1} \\
     &\quad +  C_A L_f \textnormal{e}^{\eta t} \varepsilon^{1-\alpha}\frac{1-\textnormal{e}^{((d+\delta) \omega_A -\varepsilon^{-1}) (t-t_0)}}{\varepsilon|(d+\delta) \omega_A -\varepsilon^{-1}|} \sup_{t\leq 0} \textnormal{e}^{-\eta t} \big(\|v_S^* \|_X +\|\partial_t v^*_S\|_X\big)\\
     &\quad + \big( C_A L_f L (1+L_h) \textnormal{e}^{((d+\delta) \omega_A -\varepsilon^{-1}) (t-t_0)+\eta t_0} \varepsilon^{-1}  + L_f L (1+L_h) \varepsilon^{1-\alpha}\textnormal{e}^{\eta t}\big)\|v_0\|_{X_1}.
\end{align*}
We recall that $\eta -(d+\delta)\omega_A +\varepsilon^{-1} >0$ and hence we can let $t_0 \to -\infty$ in the second, fourth and sixth term. 
Hence, we obtain
\begin{align*}
     & \| h^{\varepsilon,\zeta}_{u}(v_S^*(t))-h^0(v_S^*(t)) \|_{X_\alpha}\leq \\
     &\leq \bigg( \frac{C_A L_\phi L}{|(d+\delta) \omega_A -\varepsilon^{-1}-\eta|} +2 \frac{ C_A L_f}{\varepsilon | (d+\delta) \omega_A -\varepsilon^{-1}|}   \frac{\big(C_A^2\delta  d^{-1}(L_f+\varepsilon L_\phi)+ C_A \varepsilon L_\psi \big)}{\varepsilon|N_S^\zeta -N_F^\zeta|} \\
     &\qquad + C_A L_f \varepsilon^{1-\alpha}+L_f L (1+L_h) \varepsilon^{1-\alpha} \bigg)\textnormal{e}^{\eta t}\|v_0\|_{X_1}\\
     &\quad + C_A L_f \varepsilon^{-1}  \int_{t_0}^{t}  \textnormal{e}^{((d+\delta) \omega_A -\varepsilon^{-1}) (t-s)} \| h^{\varepsilon,\zeta}_{u}(v_S^*(s))-h^0(v_S^*(s)) \|_{X_\alpha}   \, \textnormal{d}s. 
\end{align*}
Applying Gronwall's inequality and setting $t=0$ yields
\begin{align*}
    \| h^{\varepsilon,\zeta}_{u}(v_0)-h^0(v_0)) \|_{X_\alpha} &\leq C\bigg( \frac{1}{(1-\varepsilon\zeta^{-1})((d+\delta)\omega_A-\varepsilon^{-1})-\frac{1}{2}(N_S^\zeta+N_F^\zeta)} \\
    &\qquad+ \frac{\delta+\varepsilon}{\varepsilon(N_S^\zeta-N_F^\zeta )} + \varepsilon^{1-\alpha}\bigg)\|v_0\|_{X_1}\\
    &\leq C\bigg( \varepsilon + \frac{\delta+\varepsilon}{\varepsilon(N_S^\zeta-N_F^\zeta )} + \varepsilon^{1-\alpha}\bigg)\|v_0\|_{X_1}.
\end{align*}
\end{proof}

\subsection{Proof of Lemma \ref{lemma.differentiability of slow manifold}}\label{sec.proof of lemma}
\begin{proof}
Let $u_0\in X_1$ and $v_0\in X_S^\zeta\cap X_1$ be given. 
Then, we write 
$$U(\cdot,u_0,v_0)= \big(u(\cdot,u_0,v_0),v_F(\cdot,u_0,v_0),v_S(\cdot,u_0,v_0)\big)\in C_\eta$$
for the fixed point of $\mathcal{L}_{\varepsilon,\zeta}$.
Now for fixed initial data $(u_0,v_0),\, (\tilde u_0,\tilde v_0)\in X_1\times X_S^\zeta\cap X_1$ the idea is to show the existence of the derivative as the best local linear approximation of the graph of the manifold.
We write
\begin{align*}
& U(\tilde u_0,\tilde v_0)- U(\cdot,u_0,v_0)  -\textnormal{T}\big[ U(\tilde u_0,\tilde v_0)- U(\cdot,u_0,v_0) \big]=\\
 &=\begin{pmatrix}
     0\\0\\ \delta \Delta \big(\delta\Delta-\frac{1}{\varepsilon}\big)^{-1}\bigg( \textnormal{e}^{d\Delta t}- \textnormal{e}^{[(d+\delta)\Delta -\frac{1}{\varepsilon}Id]t}\bigg) \pr_{X_S^\zeta} (\tilde u_0-u_0)+ \textnormal{e}^{d\Delta t}(\tilde v_0-v_0) 
 \end{pmatrix}+ I(\tilde u_0,u_0,\tilde v_0,v_0),
\end{align*}
where $\textnormal{T}: C_\eta \to C_\eta,$ and the components of the linear operator are given by
\begin{align*}
    z_1 &\mapsto \left[ t\mapsto \left[\int_{-\infty}^t \textnormal{e}^{[(d+\delta)\Delta-\frac{1}{\varepsilon}\textnormal{Id}](t-s)} \big( \varepsilon^{-1}\textnormal{D} f(U(s,u_0,v_0))+\textnormal{D} \phi(U(s,u_0,v_0))\big)z(s)\, \textnormal{d}s \right]\right],\\
    z_2  &\mapsto \left[ t\mapsto \left[ \int_{-\infty}^t  \delta \Delta \textnormal{e}^{d \Delta (t-s)}\bigg( \int_0^{(t-s)} \textnormal{e}^{(\delta \Delta-\frac{1}{\varepsilon}Id) r} \, \textnormal{d}r  \bigg)  \pr_{X^\zeta_F} \big(\varepsilon^{-1}\textnormal{D} f(U(s,u_0,v_0))\right.\right.\\
    &\qquad\qquad \left.\left. + \textnormal{D} \phi(U(s,u_0,v_0))\big)z(s) \, \textnormal{d}s+ \int_{-\infty}^t \textnormal{e}^{d \Delta (t-s)}  \pr_{X^\zeta_F}\textnormal{D} \psi(U(s,u_0,v_0))z(s) \, \textnormal{d}s \right]\right],\\
    z_3&\mapsto \left[ t\mapsto 0 \right],
\end{align*}
and where $z=(z_1,z_2,z_3)^T$
and the remainder term $I(\tilde u_0,u_0,\tilde v_0,v_0)= \big(I_1, I_2, 0\big)^T$ has the components
\begin{align*}
    I_1(\tilde u_0,u_0,\tilde v_0,v_0)&=\bigg[t\mapsto  \varepsilon^{-1}\int_{-\infty}^t \textnormal{e}^{((d+\delta)\Delta-\frac{1}{\varepsilon}\textnormal{Id})(t-s)}\big(f(U(s,\tilde u_0,\tilde v_0))-f(U(s,u_0,v_0))\\
    &\qquad\qquad\qquad-\textnormal{D} f(U(s,u_0,v_0))[U(s,\tilde u_0,\tilde v_0)-U(s,u_0,v_0)]\big)\, \textnormal{d}s\\
    &\qquad \qquad+\int_{-\infty}^t \textnormal{e}^{((d+\delta)\Delta-\frac{1}{\varepsilon}\textnormal{Id})(t-s)}\big(\phi(U(s,\tilde u_0,\tilde v_0))-\phi(U(s,u_0,v_0))\\
    &\qquad\qquad\qquad-\textnormal{D} \phi(U(s,u_0,v_0))[U(s,\tilde u_0,\tilde v_0)-U(s,u_0,v_0)]\big)\, \textnormal{d}s \bigg]
\end{align*}
and similar for $I_2(\tilde u_0,u_0,\tilde v_0,v_0)$.
The idea is to show that $\|\textnormal{T}\|_{\mathcal{B}(C_\eta)}<1$ and that 
$$\|I(\tilde u_0,u_0,\tilde v_0,v_0)\|_{X_1}= o\big(\|\tilde u_0-u_0\|_{X_1}+\|\tilde v_0-v_0\|_{X_1}\big)\text{  as } (\tilde u_0,\tilde v_0)\to (u_0,v_0).$$
Then, evaluating the expression at $t=0$ we have
\begin{align*}
    U(0,\tilde u_0,\tilde v_0)-U(0,u_0,v_0)=U(0,\tilde v_0)-U(0,v_0)    &= (\textnormal{I}-\textnormal{T})^{-1}\begin{pmatrix}
     0\\0\\ \tilde v_0-v_0
 \end{pmatrix} +o\big(\|\tilde v_0-v_0\|_{X_1}\big)
\end{align*}
as $\tilde v_0\to v_0$, so that
\begin{align*}
    U(0,\cdot)=\big(h_u^{\varepsilon,\zeta},h_{X_F}^{\varepsilon,\zeta},\textnormal{id}_{X_S^\zeta}\big)
\end{align*}
is differentiable.

The first part, showing that $\|\textnormal{T}\|_{\mathcal{B}(C_\eta)}<1$, follows from the fact that the operator $\mathcal{L}_{\varepsilon,\zeta}$ is a contraction in Proposition \ref{existence slow manifold}.

For the second part we estimate each of the components of $I(\tilde u_0,u_0,\tilde v_0,v_0)$ as follows.
By the assumptions on $f$ we have that for all $\sigma>0$ there exists a $N>0$ such that 
\begin{align*}
   \textnormal{e}^{-\eta t} &\bigg\| \varepsilon^{-1}\int_{-\infty}^{\min(-N,t)} \textnormal{e}^{((d+\delta)\Delta-\frac{1}{\varepsilon}\textnormal{Id})(t-s)}\big(f(U(s,\tilde u_0,\tilde v_0))-f(U(s,u_0,v_0))\\
  &\qquad\qquad\qquad -\textnormal{D} f(U(s,u_0,v_0))[U(s,\tilde u_0,\tilde v_0)-U(s,u_0,v_0)]\big)\, \textnormal{d}s \bigg\|_{X_1}\\
  &\leq 2L_f C_\Delta \|U(\cdot,\tilde u_0,\tilde v_0)-U(\cdot,u_0,v_0)\|_{C_\eta}  \varepsilon^{-1}\int_{-\infty}^{\min(-N,t)}\textnormal{e}^{((d+\delta)\omega_\Delta-\frac{1}{\varepsilon})(t-s)}\, \textnormal{d}s\\
  &\leq \frac{\sigma}{2}\big(\|\tilde u_0-u_0\|_{X_1}+\|\tilde v_0-v_0\|_{X_1}\big)
\end{align*}
for all $t\leq 0$ holds.
Now, having fixed such a $N>0$ we obtain
\begin{align*}
     \textnormal{e}^{-\eta t} &\bigg\| \varepsilon^{-1}\int_{\min(-N,t)}^t \textnormal{e}^{((d+\delta)\Delta-\frac{1}{\varepsilon}\textnormal{Id})(t-s)}\big(f(U(s,\tilde u_0,\tilde v_0))-f(U(s,u_0,v_0))\\
  &\qquad\qquad\qquad -\textnormal{D} f(U(s,u_0,v_0))[U(s,\tilde u_0,\tilde v_0)-U(s,u_0,v_0)]\big)\, \textnormal{d}s \bigg\|_{X_1}\\
  &\leq C_\Delta \|U(\cdot,\tilde u_0,\tilde v_0)-U(\cdot,u_0,v_0)\|_{C_\eta} \varepsilon^{-1} \int_{\min(-N,t)}^t \textnormal{e}^{((d+\delta)\omega_\Delta-\frac{1}{\varepsilon})(t-s)}\\
  &\qquad\qquad\qquad \int_0^1 \big\| \textnormal{D}f\big(r U(s,\tilde u_0,\tilde v_0)-(1-r)U(s, u_0, v_0) \big)-\textnormal{D} f(U(s,u_0,v_0))\|_{\mathcal{B}(X_1)}\, \textnormal{d}r\, \textnormal{d}s\\
  &\leq C \big(\|\tilde u_0-u_0\|_{X_1}+\|\tilde v_0-v_0\|_{X_1}\big) \varepsilon^{-1} \int_{\min(-N,t)}^t \textnormal{e}^{((d+\delta)\omega_\Delta-\frac{1}{\varepsilon})(t-s)}\\
  &\qquad\qquad\qquad \int_0^1 \big\| \textnormal{D}f\big(r U(s,\tilde u_0,\tilde v_0)-(1-r)U(s, u_0, v_0) \big)-\textnormal{D} f(U(s,u_0,v_0))\|_{\mathcal{B}(X_1)}\, \textnormal{d}r\, \textnormal{d}s.
\end{align*}
Then, applying the dominated convergence theorem and using the linearity of the integrand it follows that the integral is smaller than $\frac{\sigma}{2C}$ if $(\tilde u_0,\tilde v_0)$ is close enough to $(u_0,v_0)$.
Thus, for all $\sigma>0$ there exists $\tilde \sigma>0$ such that for all initial data $(\tilde u_0,\tilde v_0)\in X_1\times X_1\cap X_S^\zeta$ with $ \big(\|\tilde u_0-u_0\|_{X_1}+\|\tilde v_0-v_0\|_{X_1}\big)<\tilde \sigma$ and all $t\geq 0$ it holds that
\begin{align*}
      \textnormal{e}^{-\eta t} &\bigg\| \varepsilon^{-1}\int_{-\infty}^t \textnormal{e}^{((d+\delta)\Delta-\frac{1}{\varepsilon}\textnormal{Id})(t-s)}\big(f(U(s,\tilde u_0,\tilde v_0))-f(U(s,u_0,v_0))\\
  &\qquad\qquad\qquad -\textnormal{D} f(U(s,u_0,v_0))[U(s,\tilde u_0,\tilde v_0)-U(s,u_0,v_0)]\big)\, \textnormal{d}s \bigg\|_{X_1}\\
  &< \sigma \big(\|\tilde u_0-u_0\|_{X_1}+\|\tilde v_0-v_0\|_{X_1}\big).
\end{align*}
We note that a similar computation can be carried out for the second component of $I_1$ and the expression for $I_2$.
Therefore, we conclude that 
\begin{align*}
  \|I(\tilde u_0,u_0,\tilde v_0,v_0)\|_{C_\eta}=   o\big(\|\tilde u_0-u_0\|_{X_1}+\|\tilde v_0-v_0\|_{X_1}\big)\text{  as } (\tilde u_0,\tilde v_0)\to (u_0,v_0),
\end{align*}
which shows the differentiability of the slow manifold.
\end{proof}

\section*{Acknowledgments}
B.Q. Tang and B.-N. Tran are supported by the FWF project
“Quasi-steady-state approximation for PDE”, number I-5213.
C.K. and J.-E.-S. are supported by DFG grant 456754695.
C.K. would like to thank the VolkswagenStiftung for support via a Lichtenberg Professorship. 
\bibliographystyle{alpha}
\bibliography{lit}

\begin{thebibliography}{BCDK21}

\bibitem[BCDK21]{brocchieri2021evolution}
Elisabetta Brocchieri, Lucilla Corrias, Helge Dietert, and Yong-Jung Kim.
\newblock Evolution of dietary diversity and a starvation driven
  cross-diffusion system as its singular limit.
\newblock {\em Journal of Mathematical Biology}, 83(5):1--40, 2021.

\bibitem[BLZ98]{bates1998existence}
P.~W. Bates, K.~Lu, and C.~Zeng.
\newblock {\em Existence and persistence of invariant manifolds for semiflows
  in Banach space}, volume 645.
\newblock Amer. Math. Soc., Providence, RI, 1998.

\bibitem[BP10]{bothe2010quasi}
Dieter Bothe and Michel Pierre.
\newblock Quasi-steady-state approximation for a reaction--diffusion system
  with fast intermediate.
\newblock {\em Journal of Mathematical Analysis and Applications},
  368(1):120--132, 2010.

\bibitem[BPR12]{bothe2012cross}
Dieter Bothe, Michel Pierre, and Guillaume Rolland.
\newblock Cross-diffusion limit for a reaction-diffusion system with fast
  reversible reaction.
\newblock {\em Communications in Partial Differential Equations},
  37(11):1940--1966, 2012.

\bibitem[DDJ20]{daus2020cross}
Esther~S Daus, Laurent Desvillettes, and Ansgar J{\"u}ngel.
\newblock Cross-diffusion systems and fast-reaction limits.
\newblock {\em Bulletin des Sciences Math{\'e}matiques}, 159:102824, 2020.

\bibitem[DW14]{duan2014effective}
Jinqiao Duan and Wei Wang.
\newblock {\em Effective dynamics of stochastic partial differential
  equations}.
\newblock Elsevier, 2014.

\bibitem[EHK21]{engel2021connecting}
M.~Engel, F.~Hummel, and C.~Kuehn.
\newblock Connecting a direct and a galerkin approach to slow manifolds in
  infinite dimensions.
\newblock {\em Proceedings of the American Mathematical Society, Series B},
  8(21):252--266, 2021.

\bibitem[Eld13]{eldering2013normally}
Jaap Eldering.
\newblock {\em Normally hyperbolic invariant manifolds: the noncompact case},
  volume~2.
\newblock Springer, 2013.

\bibitem[ENB00]{engel2000one}
Klaus-Jochen Engel, Rainer Nagel, and Simon Brendle.
\newblock {\em One-parameter semigroups for linear evolution equations}, volume
  194.
\newblock Springer, 2000.

\bibitem[Eva80]{evans1980convergence}
Lawrence~C Evans.
\newblock A convergence theorem for a chemical diffusion-reaction system.
\newblock In {\em Houston J. Math}. Citeseer, 1980.

\bibitem[Fen79]{fenichel1979geometric}
Neil Fenichel.
\newblock Geometric singular perturbation theory for ordinary differential
  equations.
\newblock {\em Journal of differential equations}, 31(1):53--98, 1979.

\bibitem[HK22]{hummel2022slow}
Felix Hummel and Christian Kuehn.
\newblock Slow manifolds for infinite-dimensional evolution equations.
\newblock {\em Commentarii Mathematici Helvetici}, 97(1):61--132, 2022.

\bibitem[HT16]{henneke2016fast}
Felix Henneke and Bao~Q Tang.
\newblock Fast reaction limit of a volume--surface reaction--diffusion system
  towards a heat equation with dynamical boundary conditions.
\newblock {\em Asymptotic Analysis}, 98(4):325--339, 2016.

\bibitem[IMMN17]{iida2017vanishing}
M~Iida, H~Monobe, H~Murakawa, and H~Ninomiya.
\newblock Vanishing, moving and immovable interfaces in fast reaction limits.
\newblock {\em Journal of Differential Equations}, 263(5):2715--2735, 2017.

\bibitem[IMN06]{iida2006diffusion}
Masato Iida, Masayasu Mimura, and Hirokazu Ninomiya.
\newblock Diffusion, cross-diffusion and competitive interaction.
\newblock {\em Journal of mathematical biology}, 53(4):617--641, 2006.

\bibitem[IN06]{iida2006reaction}
Masato Iida and Hirokazu Ninomiya.
\newblock A reaction-diffusion approximation to a cross-diffusion system.
\newblock In {\em Recent Advances on Elliptic and Parabolic Issues}, pages
  145--164. World Scientific, 2006.

\bibitem[Jon95]{MR1374108jones}
Christopher K. R.~T. Jones.
\newblock Geometric singular perturbation theory.
\newblock In {\em Dynamical systems ({M}ontecatini {T}erme, 1994)}, volume 1609
  of {\em Lecture Notes in Math.}, pages 44--118. Springer, Berlin, 1995.

\bibitem[Kap99]{MR1718893Kaper}
Tasso~J. Kaper.
\newblock An introduction to geometric methods and dynamical systems theory for
  singular perturbation problems.
\newblock In {\em Analyzing multiscale phenomena using singular perturbation
  methods ({B}altimore, {MD}, 1998)}, volume~56 of {\em Proc. Sympos. Appl.
  Math.}, pages 85--131. Amer. Math. Soc., Providence, RI, 1999.

\bibitem[KLS24]{kuehn2024infinite}
Christian Kuehn, Pascal Lehner, and Jan-Eric Sulzbach.
\newblock Infinite dimensional slow manifolds for a linear fast-reaction
  system.
\newblock In {\em Topics in Multiple Time Scale Dynamics}, volume 806 of {\em
  Contemp. Math.} Amer. Math. Soc., Providence, RI, 2024.

\bibitem[KS23]{kuehn2023fast}
Christian Kuehn and Jan-Eric Sulzbach.
\newblock Fast reactions and slow manifolds.
\newblock {\em arXiv preprint arXiv:2301.09368}, 2023.

\bibitem[Kue15]{kuehn2015multiple}
Christian Kuehn.
\newblock {\em Multiple time scale dynamics}, volume 191.
\newblock Springer, 2015.

\bibitem[KZ21]{kostianko2021kwak}
A.~Kostianko and S.~Zelik.
\newblock Kwak transform and inertial manifolds revisited.
\newblock {\em Journal of Dynamics and Differential Equations}, pages 1--21,
  2021.

\bibitem[Lun12]{lunardi2012analytic}
Alessandra Lunardi.
\newblock {\em Analytic semigroups and optimal regularity in parabolic
  problems}.
\newblock Springer Science \& Business Media, 2012.

\bibitem[MJ80]{martin1980mathematical}
Robert~H Martin~Jr.
\newblock Mathematical models in gas-liquid reactions.
\newblock {\em Nonlinear Analysis: Theory, Methods \& Applications},
  4(3):509--527, 1980.

\bibitem[MN11]{murakawa2011fast}
Hideki Murakawa and Hirokazu Ninomiya.
\newblock Fast reaction limit of a three-component reaction--diffusion system.
\newblock {\em Journal of mathematical analysis and applications},
  379(1):150--170, 2011.

\bibitem[Pat19]{pata2019fixed}
Vittorino Pata.
\newblock {\em Fixed point theorems and applications}, volume 116.
\newblock Springer, 2019.

\bibitem[PS22]{perthame2022fast}
Beno{\^\i}t Perthame and Jakub Skrzeczkowski.
\newblock Fast reaction limit with nonmonotone reaction function.
\newblock {\em Communications on Pure and Applied Mathematics}, 2022.

\bibitem[Tih52]{MR55515tihonov}
A.~N. Tihonov.
\newblock Systems of differential equations containing small parameters in the
  derivatives.
\newblock {\em Mat. Sbornik N.S.}, 31/73:575--586, 1952.

\bibitem[TT24]{tang2024rigorous}
Bao~Quoc Tang and Bao-Ngoc Tran.
\newblock Rigorous derivation of michaelis--menten kinetics in the presence of
  slow diffusion.
\newblock {\em SIAM Journal on Mathematical Analysis}, 56(5):5995--6024, 2024.

\bibitem[Wig94]{wiggins1994normally}
Stephen Wiggins.
\newblock {\em Normally hyperbolic invariant manifolds in dynamical systems},
  volume 105.
\newblock Springer Science \& Business Media, 1994.

\bibitem[Zel14]{zelik2014inertial}
Sergey Zelik.
\newblock Inertial manifolds and finite-dimensional reduction for dissipative
  pdes.
\newblock {\em Proceedings of the Royal Society of Edinburgh Section A:
  Mathematics}, 144(6):1245--1327, 2014.

\end{thebibliography}







\end{document}